\DeclareMathAlphabet{\mathpzc}{OT1}{pzc}{m}{it}
\newcommand{\TheTitle}{A pointwise tracking optimal control problem for the stationary Navier--Stokes equations}
\newcommand{\ShortTitle}{A pointwise tracking problem for the Navier--Stokes equations}
\newcommand{\TheAuthors}{F. Fuica, E. Ot\'arola}
\headers{\ShortTitle}{\TheAuthors}
\title{{\TheTitle}\thanks{FF is supported by ANID through FONDECYT postdoctoral project 3230126. EO is partially supported by ANID through FONDECYT Project 1220156.}}
\author{Francisco Fuica\thanks{Facultad de Matem\'aticas, Pontificia Universidad Cat\'olica de Chile, Avenida Vicu\~{n}a Mackenna 4860, Santiago, Chile.
(\email{francisco.fuica@mat.uc.cl}).}
\and
Enrique Ot\'arola\thanks{Departamento de Matem\'atica, Universidad T\'ecnica Federico Santa Mar\'ia, Valpara\'iso, Chile.
(\email{enrique.otarola@usm.cl}, \url{http://eotarola.mat.utfsm.cl/}).}
}
\date{Draft version of \today.}
\begin{document}

\maketitle

\begin{abstract}
We study a pointwise tracking optimal control problem for the stationary Navier--Stokes equations; control constraints are also considered. The problem entails the minimization of a cost functional involving point evaluations of the state velocity field, thus leading to an adjoint problem with a linear combination of Dirac measures as a forcing term in the momentum equation, and whose solution has reduced regularity properties. We analyze the existence of optimal solutions and derive first and, necessary and sufficient, second order optimality conditions in the framework of regular solutions for the Navier--Stokes equations. We develop two discretization strategies: a semidiscrete strategy in which the control variable is not discretized, and a fully discrete scheme in which the control variable is discretized with piecewise constant functions. For each solution technique, we analyze convergence properties of discretizations and derive a priori error estimates.
\end{abstract}

\begin{keywords}
optimal control problem, Navier--Stokes equations, Dirac measures, first and second order optimality conditions, finite element approximations, convergence, error estimates.
\end{keywords}

\begin{AMS}
35Q30,         
35Q35,         
35R06,         
49J20,   	   
49K20,         
49M25,		   
65N15,         
65N30.         
\end{AMS}


\section{Introduction}\label{sec:intro}
We study existence results, optimality conditions, and finite element methods for a pointwise tracking optimal control problem of the stationary Navier--Stokes equations. This control-constrained optimization problem entails the minimization of a cost functional containing pointwise evaluations of the state velocity field. More precisely, for $d\in \{2, 3\}$, let $\Omega\subset\mathbb{R}^d$ be an open and bounded domain with Lipschitz boundary $\partial\Omega$ and $\mathcal{D} \subset \Omega$ be a finite ordered set. Given a set of desired velocity fields $\{\mathbf{y}_t\}_{t\in \mathcal{D}}\subset\mathbb{R}^d$, a regularization parameter $\alpha>0$, and the functional
\begin{equation}\label{def:cost_func}
 J(\mathbf{y},\mathbf{u}):=\frac{1}{2}\sum_{t\in \mathcal{D}} |\mathbf{y}(t)-\mathbf{y}_t|^{2}+\frac{\alpha}{2}\|\mathbf{u}\|_{\mathbf{L}^2(\Omega)}^2,
\end{equation} 
the problem reads as follows: Find $\min J(\mathbf{y},\mathbf{u})$ subject to the Navier--Stokes equations
\begin{equation}\label{eq:state_equations}
-\nu \Delta \mathbf{y} + (\mathbf{y}\cdot\nabla) \mathbf{y} + \nabla p =  \mathbf{u}  \text{ in }  \Omega, 
\qquad 
\text{div }\mathbf{y} = 0 \text{ in }  \Omega, 
\qquad
\mathbf{y}  =  \mathbf{0}  \text{ on }  \partial\Omega,
\end{equation}
and the control constraints
\begin{equation}\label{def:box_constraints}
\mathbf{u} \in \mathbf{U}_{ad},
\qquad 
\mathbf{U}_{ad}:=\{\mathbf{v} \in \mathbf{L}^2(\Omega):  \mathbf{a} \leq \mathbf{v}(x) \leq \mathbf{b} \text{ a.e.}~x \in \Omega \}.
\end{equation}
The control bounds $\mathbf{a},\mathbf{b} \in \mathbb{R}^d$ are chosen so that $\mathbf{a} < \mathbf{b}$. At the outset, we note that vector inequalities must always be understood componentwise in this work. In \eqref{def:cost_func}, $|\cdot|$ stands for the Euclidean norm.

The study of finite element discretization schemes for optimal control problems governed by the stationary Navier--Stokes equations has already been addressed in the literature. For a different cost functional $J$ which, in contrast to \eqref{def:cost_func}, considers the square of $\|\mathbf{y} - \mathbf{y}_{d}\|_{\mathbf{L}^2(\Omega)}$, with $\mathbf{y}_{d} \in \mathbf{L}^2(\Omega)$, instead of $\sum |\mathbf{y}(t)-\mathbf{y}_t|^{2}$, the authors of \cite{MR2338434} derive a priori error estimates for appropriate finite element discretizations of the corresponding optimal control problem. More precisely, the authors develop two discretization schemes: a fully discrete scheme that discretizes the admissible control set with piecewise constant functions, and a semidiscrete scheme based on the so-called variational approach. The authors prove that strict local nonsingular solutions can be approximated by a sequence of solutions of the discrete control problems \cite[Theorem 4.11]{MR2338434}, and then, assuming that the local solution satisfies a second order optimality condition, derive error estimates in $\mathbf{L}^2(\Omega)$ for the error committed in the control approximation: $\mathcal{O}(h)$ for the fully discrete scheme and $\mathcal{O}(h^2)$ for the semidiscrete scheme \cite[Theorem 4.18]{MR2338434}. Later, the authors of the paper \cite{MR4301394} develop residual-type a posteriori error estimators for the finite element schemes presented in \cite{MR2338434}. 

For optimal control problems involving a pointwise tracking cost functional, there are several finite element discretization methods in the literature, but mostly for linear and elliptic state equations. We mention the works \cite{MR3449612,MR3523574,MR3800041,MR3973329} in which a priori and a posteriori error estimates were derived for a pointwise tracking optimal control problem subject to a Poisson problem. We also mention the works \cite{MR4013930,MR4304887,MR4218114} for extensions to the Stokes system. In contrast to these advances, the analysis of approximation techniques for pointwise tracking optimal control problems subject to nonlinear partial differential equations (PDEs) is rather scarce. To our knowledge, the only work that considers this type of problem in a semilinear elliptic scenario is \cite{MR4438718}. Here, the authors derive the existence of solutions, analyze first and second order optimality conditions, and prove error estimates for two discretization strategies for approximating a strict local solution of the optimal control problem.

As far as we know, this is the first paper dealing with approximation techniques for a pointwise optimal control problem of the Navier–Stokes equations. In the following, we list what we consider to be the most important contributions of the manuscript:
\begin{itemize}
\item[$\bullet$] \emph{Existence of an optimal control:} We show on Lipschitz domains that our optimal control problem admits at least one solution (Theorem \ref{thm:existence_of_sol_ocp}).

\item[$\bullet$] \emph{Well-posedness of the adjoint problem:} 
Since the cost functional of our problem considers point evaluations of the velocity field of the state, the momentum equation of the adjoint equations involves a linear combination of Dirac measures as a forcing term. On Lipschitz domains, we prove that the adjoint problem is well posed in $\mathbf{W}_0^{1,\mathsf{p}}(\Omega)\times L_0^{\mathsf{p}}(\Omega)$, where $\mathsf{p}<d/(d-1)$ is arbitrarily close to $d/(d-1)$; see Theorem \ref{thm:well-posedness-adjoint}. The proof relies on the fact that $(\mathbf{y},p)$ is a solution of the Navier--Stokes equations such that $\mathbf{y}$ is regular.
\item[$\bullet$] \emph{Optimality conditions:} Assuming that $\Omega$ is Lipschitz and $(\bar{\mathbf{y}},\bar{p},\bar{\mathbf{u}})$ is a local nonsingular solution, we obtain first order optimality conditions in Theorem \ref{thm:optimality_cond} and second order necessary and sufficient optimality conditions with a minimal gap in section \ref{sec:2nd_order}.
Since the adjoint velocity field $\bar{\mathbf{z}}\in \mathbf{W}_0^{1,\mathsf{p}}(\Omega) \setminus (\mathbf{H}_0^1(\Omega) \cap \mathbf{C}(\bar{\Omega}))$, where $\mathsf{p}<d/(d-1)$ is arbitrarily close to $d/(d-1)$, the analysis requires a suitable adaptation of the arguments available in \cite{MR2338434}.
\item[$\bullet$] \emph{Error estimates:}  We develop two discretization strategies: a semidiscrete strategy in which the control variable is not discretized, and a fully discrete scheme in which the control variable is discretized with piecewise constant functions. Assuming that $\Omega$ is a convex polytope, we derive error bounds in $\mathbf{L}^{2}(\Omega)$ for the error approximation of a suitable optimal control variable. The analysis involves estimates in the $\mathbf{L}^{\infty}(\Omega)$-norm and suitable $\mathbf{W}^{1,\mathsf{p}}(\Omega)$-spaces, combined with the treatment of first and second order optimality conditions. This interweaving of concepts is one of the main contributions of this manuscript.
\end{itemize}

The outline of our work is as follows. In section \ref{sec:not_and_prel}, we establish the notation and introduce the functional framework we will work with. In section \ref{sec:ocp}, we analyze a weak version of the optimal control problem \eqref{def:cost_func}--\eqref{def:box_constraints}. In particular, we present the existence of solutions. In section \ref{sec:opt_cond}, we establish first and second order optimality conditions. In section \ref{sec:fem}, we present two finite element discretizations for \eqref{def:cost_func}--\eqref{def:box_constraints}, we discuss some results related to the discretization of the state and adjoint equations, and we prove convergence results for the discretizations. In section \ref{sec:error_estimates}, we derive error estimates for the approximation of a suitable optimal control variable.


\section{Notation and preliminary remarks}\label{sec:not_and_prel}
Let us establish the notation and describe the framework we will work with.


\subsection{Notation}\label{sec:notation}

Throughout the paper, we use standard notation for Lebesgue and Sobolev spaces and their norms. We denote by $L^2_0(\Omega)$ the space of functions in $L^2(\Omega)$ that have zero average on $\Omega$. We use uppercase bold letters to denote the vector-valued counterparts of the aforementioned spaces, while lowercase bold letters are used to denote vector-valued functions. In particular, we set $\mathbf{V}(\Omega):= \{ \mathbf{v} \in \mathbf{H}_{0}^{1}(\Omega): \text{div }\mathbf{v}  = 0\}$.

If $\mathfrak{X}$ and $\mathfrak{Y}$ are normed vector spaces, we write $\mathfrak{X}\hookrightarrow\mathfrak{Y}$ to denote that $\mathfrak{X}$ is continuously embedded in $\mathfrak{Y}$. We denote by $\mathfrak{X}'$ and $\|\cdot\|_{\mathfrak{X}}$ the dual and norm, respectively, of $\mathfrak{X}$. We denote by $\langle \cdot,\cdot \rangle_{\mathfrak{X}',\mathfrak{X}}$ the duality pairing between $\mathfrak{X}'$ and $\mathfrak{X}$. When the spaces $\mathfrak{X}'$ and $\mathfrak{X}$ are clear from the context, we simply denote $\langle \cdot,\cdot \rangle_{\mathfrak{X}',\mathfrak{X}}$ by $\langle \cdot,\cdot \rangle$. Given $q \in (1,\infty)$, we denote by $q'$ the real number such that $1/q + 1/q' =1$. The relation $a \lesssim b$ indicates that $a \leq C b$, with a positive constant that does not depend on $a$, $b$, or the discretization parameters. The value of $C$ might change at each occurrence. If the particular value of a constant is significant, then we assign it a name.


\subsection{Preliminary remarks on the Navier--Stokes equations}\label{sec:preliminaries}

In this section, we present some standard results in the analysis of the Navier--Stokes equations, which will be used frequently in the following sections.

To present a weak formulation, we introduce 
$
b(\mathbf{v}_1;\mathbf{v}_2,\mathbf{v}_3):=((\mathbf{v}_1\cdot \nabla)\mathbf{v}_2,\mathbf{v}_3)_{\mathbf{L}^2(\Omega)}.
$ 
The form $b$ satisfies the following properties \cite[Chapter IV, Lemma 2.2]{MR851383}, \cite[Chapter II, Lemma 1.3]{MR603444}: If $\mathbf{v}_1\in \mathbf{V}(\Omega)$ and $\mathbf{v}_2, \mathbf{v}_3 \in \mathbf{H}_0^1(\Omega)$, then
\begin{equation}\label{eq:properties_trilinear}
b(\mathbf{v}_1;\mathbf{v}_2,\mathbf{v}_3)=-b(\mathbf{v}_1;\mathbf{v}_3,\mathbf{v}_2), 
\qquad  
b(\mathbf{v}_1;\mathbf{v}_2,\mathbf{v}_2)=0.
\end{equation}
Moreover, the form $b$ is well-defined and continuous on $\mathbf{H}_0^1(\Omega)^3$ and satisfies the bound
\begin{equation}\label{eq:trilinear_embedding}
|b(\mathbf{v}_1;\mathbf{v}_2,\mathbf{v}_3)|\leq \mathcal{C}_b\|\nabla \mathbf{v}_1\|_{\mathbf{L}^2(\Omega)}\|\nabla \mathbf{v}_2\|_{\mathbf{L}^2(\Omega)}\|\nabla \mathbf{v}_3\|_{\mathbf{L}^2(\Omega)},
\end{equation}
where $\mathcal{C}_b>0$; see \cite[Lemma IX.1.1]{Gal11} and \cite[Chapter II, Lemma 1.1]{MR603444}.

We note that the divergence operator is surjective from $\mathbf{H}_0^1(\Omega)$ to $L_0^2(\Omega)$. This implies that there exists $\beta > 0$ such that \cite[Chapter I, \S 5.1]{MR851383}, \cite[Corollary B. 71]{MR2050138}
\begin{equation}\label{eq:inf_sup_cond}
\sup_{\mathbf{v}\in\mathbf{H}_0^1(\Omega)}\frac{(q,\text{div }\mathbf{v})_{L^2(\Omega)}}{\|\nabla \mathbf{v}\|_{\mathbf{L}^2(\Omega)}}\geq \beta\|q\|_{L^2(\Omega)}  \quad \forall q\in L_0^2(\Omega),
\qquad
\beta = \beta(d,\Omega).
\end{equation}

Let us now present a weak formulation for the Navier--Stokes system \cite[Chapter IV, (2.8)]{MR851383}: Given $\mathbf{f} \in \mathbf{H}^{-1}(\Omega)$, find $(\mathbf{y},p) \in \mathbf{V}(\Omega)\times L_0^2(\Omega)$ such that 
\begin{equation}\label{eq:weak_ns_divergence_free}
\nu(\nabla \mathbf{y}, \nabla \mathbf{v})_{\mathbf{L}^2(\Omega)}+b(\mathbf{y};\mathbf{y},\mathbf{v})-(p,\text{div } \mathbf{v})_{L^2(\Omega)}  = \langle \mathbf{f},\mathbf{v} \rangle_{\mathbf{H}^{-1}(\Omega),\mathbf{H}_0^1(\Omega)}  ~~~  \forall \mathbf{v} \in \mathbf{H}_0^1(\Omega).
\end{equation}

The following result states the existence of solutions to \eqref{eq:weak_ns_divergence_free} for general data (see \cite[Chapter IV, Theorem 2.1]{MR851383} and \cite[Chapter II, Theorem 1.2]{MR603444}).
 
\begin{theorem}[existence of solutions]\label{thm:well_posedness_navier_stokes}
If $\nu>0$ and $\mathbf{f} \in \mathbf{H}^{-1}(\Omega)$, then problem \eqref{eq:weak_ns_divergence_free} admits at least one solution $(\mathbf{y},p) \in \mathbf{V}(\Omega)\times L_0^2(\Omega)$ which satisfies the estimate
\begin{equation}\label{eq:stability_state_eq}
\|\nabla \mathbf{y}\|_{\mathbf{L}^2(\Omega)} \leq \nu^{-1}\|\mathbf{f}\|_{\mathbf{H}^{-1}(\Omega)}.
\end{equation}
\end{theorem}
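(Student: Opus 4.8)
The statement is classical, and the plan is to follow the Galerkin method of \cite[Chapter IV, Theorem 2.1]{MR851383}: first solve the problem in the divergence-free space $\mathbf{V}(\Omega)$, then recover the pressure from the inf-sup condition \eqref{eq:inf_sup_cond}. Testing \eqref{eq:weak_ns_divergence_free} with $\mathbf{v}\in\mathbf{V}(\Omega)$ annihilates the pressure term, so it suffices to find $\mathbf{y}\in\mathbf{V}(\Omega)$ with $\nu(\nabla\mathbf{y},\nabla\mathbf{v})_{\mathbf{L}^2(\Omega)}+b(\mathbf{y};\mathbf{y},\mathbf{v})=\langle\mathbf{f},\mathbf{v}\rangle$ for all $\mathbf{v}\in\mathbf{V}(\Omega)$. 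Since $\mathbf{V}(\Omega)$ is a closed subspace of the separable space $\mathbf{H}_0^1(\Omega)$, fix a basis $\{\mathbf{w}_k\}_{k\geq1}$, set $\mathbf{V}_m:=\mathrm{span}\{\mathbf{w}_1,\dots,\mathbf{w}_m\}$, and seek $\mathbf{y}_m\in\mathbf{V}_m$ solving the finite-dimensional system $\nu(\nabla\mathbf{y}_m,\nabla\mathbf{w}_k)_{\mathbf{L}^2(\Omega)}+b(\mathbf{y}_m;\mathbf{y}_m,\mathbf{w}_k)=\langle\mathbf{f},\mathbf{w}_k\rangle$ for $k=1,\dots,m$. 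I would encode this as $(\nabla P_m\mathbf{v},\nabla\mathbf{w})_{\mathbf{L}^2(\Omega)}=\nu(\nabla\mathbf{v},\nabla\mathbf{w})_{\mathbf{L}^2(\Omega)}+b(\mathbf{v};\mathbf{v},\mathbf{w})-\langle\mathbf{f},\mathbf{w}\rangle$ for a continuous map $P_m:\mathbf{V}_m\to\mathbf{V}_m$; continuity follows from \eqref{eq:trilinear_embedding}, and $b(\mathbf{v};\mathbf{v},\mathbf{v})=0$ from \eqref{eq:properties_trilinear} gives $(\nabla P_m\mathbf{v},\nabla\mathbf{v})_{\mathbf{L}^2(\Omega)}=\nu\|\nabla\mathbf{v}\|_{\mathbf{L}^2(\Omega)}^2-\langle\mathbf{f},\mathbf{v}\rangle\geq\|\nabla\mathbf{v}\|_{\mathbf{L}^2(\Omega)}(\nu\|\nabla\mathbf{v}\|_{\mathbf{L}^2(\Omega)}-\|\mathbf{f}\|_{\mathbf{H}^{-1}(\Omega)})$, which is strictly positive on a sphere of radius $\rho>\nu^{-1}\|\mathbf{f}\|_{\mathbf{H}^{-1}(\Omega)}$. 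The standard consequence of Brouwer's fixed-point theorem then produces $\mathbf{y}_m\in\mathbf{V}_m$ with $P_m\mathbf{y}_m=\mathbf{0}$.

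Next I would test the discrete equations with $\mathbf{y}_m$ itself and use $b(\mathbf{y}_m;\mathbf{y}_m,\mathbf{y}_m)=0$ to obtain $\nu\|\nabla\mathbf{y}_m\|_{\mathbf{L}^2(\Omega)}^2=\langle\mathbf{f},\mathbf{y}_m\rangle\leq\|\mathbf{f}\|_{\mathbf{H}^{-1}(\Omega)}\|\nabla\mathbf{y}_m\|_{\mathbf{L}^2(\Omega)}$, hence the uniform bound $\|\nabla\mathbf{y}_m\|_{\mathbf{L}^2(\Omega)}\leq\nu^{-1}\|\mathbf{f}\|_{\mathbf{H}^{-1}(\Omega)}$. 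By reflexivity, a subsequence (not relabeled) satisfies $\mathbf{y}_m\rightharpoonup\mathbf{y}$ in $\mathbf{V}(\Omega)$, and since $d\in\{2,3\}$ the embedding $\mathbf{H}_0^1(\Omega)\hookrightarrow\mathbf{L}^4(\Omega)$ is compact, so $\mathbf{y}_m\to\mathbf{y}$ strongly in $\mathbf{L}^4(\Omega)$. Fixing $k$ and letting $m\geq k$, the only delicate term is the convective one: by \eqref{eq:properties_trilinear}, $b(\mathbf{y}_m;\mathbf{y}_m,\mathbf{w}_k)=-b(\mathbf{y}_m;\mathbf{w}_k,\mathbf{y}_m)=-\int_\Omega(\mathbf{y}_m\cdot\nabla)\mathbf{w}_k\cdot\mathbf{y}_m$, and Hölder's inequality with exponents $(4,2,4)$, the strong $\mathbf{L}^4(\Omega)$-convergence of $\mathbf{y}_m$, and the fact that $\nabla\mathbf{w}_k\in\mathbf{L}^2(\Omega)$ is fixed yield $b(\mathbf{y}_m;\mathbf{y}_m,\mathbf{w}_k)\to-b(\mathbf{y};\mathbf{w}_k,\mathbf{y})=b(\mathbf{y};\mathbf{y},\mathbf{w}_k)$. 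Passing to the limit in the discrete equations shows that $\mathbf{y}$ solves the reduced problem against every $\mathbf{w}_k$, hence against all of $\mathbf{V}(\Omega)$ by density, while \eqref{eq:stability_state_eq} follows from weak lower semicontinuity of $\mathbf{v}\mapsto\|\nabla\mathbf{v}\|_{\mathbf{L}^2(\Omega)}$ applied to the uniform bound.

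Finally, the linear functional $\mathbf{v}\mapsto\nu(\nabla\mathbf{y},\nabla\mathbf{v})_{\mathbf{L}^2(\Omega)}+b(\mathbf{y};\mathbf{y},\mathbf{v})-\langle\mathbf{f},\mathbf{v}\rangle$ is bounded on $\mathbf{H}_0^1(\Omega)$ and vanishes on $\mathbf{V}(\Omega)$; by the surjectivity of the divergence operator encoded in \eqref{eq:inf_sup_cond} (equivalently, de Rham's lemma together with the closed range theorem) there is a unique $p\in L_0^2(\Omega)$ such that $(p,\mathrm{div}\,\mathbf{v})_{L^2(\Omega)}$ equals that functional for every $\mathbf{v}\in\mathbf{H}_0^1(\Omega)$, which is precisely \eqref{eq:weak_ns_divergence_free}. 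I expect the only real obstacle to be the passage to the limit in the nonlinear convective term in the second step — this is exactly where compactness of the Sobolev embedding and the restriction $d\in\{2,3\}$ enter — whereas the Galerkin construction, the a priori estimate, and the pressure recovery are routine.
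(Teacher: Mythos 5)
Your proposal is correct and coincides with the proof the paper relies on: the paper does not prove Theorem \ref{thm:well_posedness_navier_stokes} itself but cites \cite[Chapter IV, Theorem 2.1]{MR851383} and \cite[Chapter II, Theorem 1.2]{MR603444}, and your argument (Galerkin approximation in $\mathbf{V}(\Omega)$, Brouwer fixed point, the a priori bound from $b(\mathbf{y}_m;\mathbf{y}_m,\mathbf{y}_m)=0$, compactness of $\mathbf{H}_0^1(\Omega)\hookrightarrow\mathbf{L}^4(\Omega)$ to pass to the limit in the convective term, and de Rham's lemma for the pressure) is exactly the classical proof contained in those references. The estimate \eqref{eq:stability_state_eq} then follows from weak lower semicontinuity as you say, so there is nothing to add.
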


\begin{remark}[equivalent formulation]\label{rmk:equivalent}
We note that problem \eqref{eq:weak_ns_divergence_free} can be equivalently formulated as follows: Find $(\mathbf{y},p)\in\mathbf{H}_0^1(\Omega)\times L_0^2(\Omega)$ such that 
\begin{equation}\label{eq:weak_navier_stokes_eq}
\nu(\nabla \mathbf{y}, \nabla \mathbf{v})_{\mathbf{L}^2(\Omega)}+b(\mathbf{y};\mathbf{y},\mathbf{v})-(p,\text{div } \mathbf{v})_{L^2(\Omega)} \\
= \langle \mathbf{f},\mathbf{v} \rangle, \qquad (q,\text{div } \mathbf{y})_{L^2(\Omega)}  =   0,
\end{equation}
for all $(\mathbf{v},q) \in \mathbf{H}_0^1(\Omega)\times L^2_0(\Omega)$ \cite[Chapter IV, Section 2.1]{MR851383}.
\end{remark}

\begin{theorem}[regularity estimates]\label{thm:reg_velocity}
Let $\nu>0$ and $\mathbf{f} \in \mathbf{L}^2(\Omega)$. If $(\mathbf{y},p) \in \mathbf{H}_0^1(\Omega) \times L_0^2(\Omega)$ denotes a solution to \eqref{eq:weak_navier_stokes_eq}, then there exists $\kappa$ such that $(\mathbf{y},p) \in \mathbf{W}^{1,\kappa}_{0}(\Omega)\times L_0^{\kappa}(\Omega)$. Here, $\kappa > 4$ if $d = 2$ and $\kappa > 3$ if $d=3$. Consequently, $\mathbf{y}\in \mathbf{C}(\bar{\Omega})$.
\end{theorem}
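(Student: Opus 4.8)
The plan is to bootstrap from the basic energy estimate $\mathbf{y}\in\mathbf{H}_0^1(\Omega)$ by treating the convective term $(\mathbf{y}\cdot\nabla)\mathbf{y}$ as a datum and invoking regularity for the (linear) Stokes problem on a convex polytope. First I would record that, by Theorem \ref{thm:well_posedness_navier_stokes}, $\mathbf{y}\in\mathbf{V}(\Omega)$ with $\|\nabla\mathbf{y}\|_{\mathbf{L}^2(\Omega)}\lesssim\nu^{-1}\|\mathbf{f}\|_{\mathbf{H}^{-1}(\Omega)}$; combined with the Sobolev embedding $\mathbf{H}_0^1(\Omega)\hookrightarrow\mathbf{L}^6(\Omega)$ (valid for $d\le 3$), one gets $(\mathbf{y}\cdot\nabla)\mathbf{y}\in\mathbf{L}^r(\Omega)$ for some $r>1$: indeed, $\nabla\mathbf{y}\in\mathbf{L}^2$ and $\mathbf{y}\in\mathbf{L}^6$, so by Hölder $(\mathbf{y}\cdot\nabla)\mathbf{y}\in\mathbf{L}^{3/2}(\Omega)$ in dimension three (and with a better exponent in dimension two). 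Hence $\mathbf{f}-(\mathbf{y}\cdot\nabla)\mathbf{y}\in\mathbf{L}^{3/2}(\Omega)$, and $(\mathbf{y},p)$ solves a Stokes system with this right-hand side.

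Next I would apply $\mathbf{W}^{2,r}\times W^{1,r}$ (or at least $\mathbf{W}^{1,s}\times L^s$) regularity for the stationary Stokes problem on a convex polytope — the relevant references are the standard ones for polyhedral/polygonal domains (e.g.\ \cite{MR851383}, together with the Stokes-regularity results used elsewhere in the optimal-control literature such as \cite{MR4218114}). From $\mathbf{f}-(\mathbf{y}\cdot\nabla)\mathbf{y}\in\mathbf{L}^{3/2}(\Omega)$ one obtains $\mathbf{y}\in\mathbf{W}^{2,3/2}(\Omega)$, whence by Sobolev embedding $\nabla\mathbf{y}\in\mathbf{L}^{q_1}(\Omega)$ with $q_1>3$ when $d=3$ (and $\mathbf{y}\in\mathbf{L}^\infty$ already at this stage in $d=2$). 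Then I would re-estimate the convective term: with $\mathbf{y}\in\mathbf{L}^\infty$ (which follows once $\nabla\mathbf{y}\in\mathbf{L}^{q}$ with $q>d$ via $\mathbf{W}^{1,q}\hookrightarrow\mathbf{C}(\bar\Omega)$) and $\nabla\mathbf{y}\in\mathbf{L}^{q}$, the product $(\mathbf{y}\cdot\nabla)\mathbf{y}$ lies in $\mathbf{L}^{q}(\Omega)$, so the full datum $\mathbf{f}-(\mathbf{y}\cdot\nabla)\mathbf{y}\in\mathbf{L}^{\min\{2,q\}}(\Omega)$. Since $\mathbf{f}\in\mathbf{L}^2(\Omega)$ by hypothesis, a final application of Stokes regularity pushes $(\mathbf{y},p)$ into $\mathbf{W}^{1,\kappa}_0(\Omega)\times L_0^\kappa(\Omega)$ for some $\kappa>4$ ($d=2$) resp.\ $\kappa>3$ ($d=3$); the continuous embedding $\mathbf{W}^{1,\kappa}_0(\Omega)\hookrightarrow\mathbf{C}(\bar\Omega)$ for $\kappa>d$ then yields $\mathbf{y}\in\mathbf{C}(\bar\Omega)$.

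The main obstacle — and the reason the statement only claims existence of \emph{some} exponent $\kappa$ rather than a specific optimal one — is that on a convex polytope the Stokes operator does not deliver full $\mathbf{W}^{2,r}$ regularity for all $r$: there is a limiting integrability exponent governed by the corner/edge singularities of $\Omega$ (the leading singular exponent of the Stokes problem on the reentrant-free polytope). One must therefore be careful that the bootstrap gain at each step stays below this threshold, and that after at most two iterations one has crossed the Sobolev threshold $q=d$ needed for $\mathbf{L}^\infty$-control of $\mathbf{y}$; convexity of $\Omega$ is exactly what guarantees the threshold exceeds $d$, so that two steps suffice. A secondary technical point is tracking the pressure: one recovers $p\in L_0^\kappa(\Omega)$ from the momentum equation together with the surjectivity/inf–sup property \eqref{eq:inf_sup_cond} in its $L^\kappa$ variant, or directly from the Stokes regularity estimate which controls $\|\nabla\mathbf{y}\|_{\mathbf{L}^\kappa}+\|p\|_{L^\kappa}$ simultaneously. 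I would cite the precise polytope Stokes-regularity statement rather than reprove it, and present the argument as a short two-step bootstrap.
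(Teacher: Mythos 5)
Your bootstrap is a genuinely different route from the paper's, and as written it proves a strictly weaker statement than the one the paper needs. Theorem \ref{thm:reg_velocity} sits in section \ref{sec:preliminaries}, where $\Omega$ is only assumed Lipschitz; convexity of $\Omega$ is introduced only in section \ref{sec:fem}, and the $\mathbf{H}^2\times H^1$ regularity on convex polytopes is deliberately separated out into Remark \ref{remark:further_reg_NS}. The theorem is then invoked on Lipschitz domains (e.g.\ in the existence proof of Theorem \ref{thm:existence_of_sol_ocp} and in the adjoint well-posedness of Theorem \ref{thm:well-posedness-adjoint}), so a proof that hinges on $\mathbf{W}^{2,r}\times W^{1,r}$ Stokes regularity for a convex polytope does not cover the setting in which the result is used. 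The paper instead never leaves the first-order level: it views $b(\mathbf{y};\mathbf{y},\cdot)$ as an element of $\mathbf{W}^{-1,\mathfrak{q}}(\Omega)$ (via H\"older and Sobolev embeddings applied to the trilinear form, exactly as in \eqref{eq:convective_W13}) and invokes the $\mathbf{W}^{1,\kappa}_0(\Omega)\times L_0^{\kappa}(\Omega)$ regularity for the Stokes system with $\mathbf{W}^{-1,\kappa}$ data on Lipschitz domains \cite[Theorem 2.9]{Brown_Shen}, \cite[Corollary 1.7]{MR2987056}. This is also why the thresholds are $\kappa>4$ ($d=2$) and $\kappa>3$ ($d=3$): they are the sharp integrability ranges of the Lipschitz-domain Stokes theory, not a consequence of corner or edge singularities of a convex polytope as you suggest in your closing paragraph. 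To repair your argument in the paper's generality you would have to replace the $\mathbf{W}^{2,r}$ step by exactly this negative-norm formulation, at which point you recover the paper's proof.

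Two further technical points. First, in $d=3$ the embedding $\mathbf{W}^{2,3/2}(\Omega)\hookrightarrow\mathbf{W}^{1,q_1}(\Omega)$ holds only up to the critical exponent $q_1=3$, not for some $q_1>3$ as you claim; likewise $\mathbf{W}^{2,3/2}(\Omega)$ does not embed into $\mathbf{L}^\infty(\Omega)$, so after one step you only have $\mathbf{y}\in\mathbf{L}^{\mu}(\Omega)$ for all $\mu<\infty$ and $\nabla\mathbf{y}\in\mathbf{L}^{3}(\Omega)$. The bootstrap still closes (the convective term then lands in $\mathbf{L}^{2}(\Omega)$, and with $\mathbf{f}\in\mathbf{L}^2(\Omega)$ one concludes via $\mathbf{H}^2$-regularity), but the intermediate claim as written is false and should be corrected. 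Second, your remark that ``two steps suffice because convexity guarantees the threshold exceeds $d$'' conflates the convex-polytope and Lipschitz mechanisms; in the paper's argument the second iteration is needed because the first one only reaches the endpoint $\kappa=3$ ($d=3$), and the gain beyond $d$ comes from the strict inequality $\kappa>3$ (resp.\ $\kappa>4$) in the Lipschitz Stokes theory together with $\mathbf{f}\in\mathbf{L}^2(\Omega)\hookrightarrow\mathbf{W}^{-1,\kappa}(\Omega)$.
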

\begin{proof}
We begin the proof by writing the momentum equation of the Navier--Stokes equations as $-\nu\Delta \mathbf{y} + \nabla p =  \mathbf{f} -  (\mathbf{y}\cdot\nabla) \mathbf{y}  \text{ in }  \Omega$.
Let us now define the functional 
$
\mathcal{G}: \mathbf{H}_0^1(\Omega) \rightarrow \mathbb{R}
$
by
$
\mathcal{G}(\mathbf{v}):= (\mathbf{f},\mathbf{v})_{\mathbf{L}^2(\Omega)}  - b (\mathbf{y};\mathbf{y},\mathbf{v}).
$

Depending on the spatial dimension, we proceed differently. Let $d=3$. We prove that there exists $\kappa>3$ such that $\mathcal{G} \in \mathbf{W}^{-1,\kappa}(\Omega)$ based on a bootstrap argument. As a first step, we observe that, in view of H\"older's inequality and the continuous embeddings $\mathbf{H}_0^1(\Omega)\hookrightarrow \mathbf{L}^6(\Omega)$ and $\mathbf{W}^{1,3/2}(\Omega)\hookrightarrow \mathbf{L}^3(\Omega)$ \cite[Theorem 4.12]{MR2424078}, we have
\begin{equation}
|b(\mathbf{y};\mathbf{y},\mathbf{v})|
\leq \|\mathbf{y}\|_{\mathbf{L}^{6}(\Omega)}\|\nabla \mathbf{y}\|_{\mathbf{L}^{2}(\Omega)}\|\mathbf{v}\|_{\mathbf{L}^{3}(\Omega)}
\lesssim
\|\nabla \mathbf{y}\|^2_{\mathbf{L}^{2}(\Omega)}\|\nabla\mathbf{v}\|_{\mathbf{L}^{3/2}(\Omega)}.
\label{eq:convective_W13}
\end{equation}
It follows that $b(\mathbf{y};\mathbf{y},\cdot) \in \mathbf{W}^{-1,3}(\Omega)$ and hence $\mathcal{G} \in \mathbf{W}^{-1,3}(\Omega)$. Since $\Omega$ is Lipschitz, we can use the regularity results for the Stokes problem from \cite[Theorem 2.9]{Brown_Shen} (see also \cite[Corollary 1.7]{MR2987056} with $\alpha=-1$ and $q=2$) to conclude that $(\mathbf{y},p)\in \mathbf{W}_{0}^{1,3}(\Omega)\times L_0^3(\Omega)$. With this regularity result, we can rebound the convective term as follows:
\[
|b(\mathbf{y};\mathbf{y},\mathbf{v})|
\leq \|\mathbf{y}\|_{\mathbf{L}^{\mu}(\Omega)}\|\nabla \mathbf{y}\|_{\mathbf{L}^{3}(\Omega)}\|\mathbf{v}\|_{\mathbf{L}^{\tau}(\Omega)},
\qquad
\mu^{-1} + \tau^{-1} = \tfrac{2}{3}.
\]
Since in three dimensions, $\mathbf{W}^{1,3}_0(\Omega)\hookrightarrow \mathbf{L}^\mu(\Omega)$ holds for every $\mu<\infty$, we consider $\tau > 3/2$, where $\tau$ is sufficiently close to $3/2$. We now invoke the embedding $\mathbf{W}_{0}^{1,\sigma}(\Omega)\hookrightarrow \mathbf{L}^{\tau}(\Omega)$, which holds for $\sigma > 1$, sufficiently close to $1$, to control the convective term:
\[
|b(\mathbf{y};\mathbf{y},\mathbf{v})| \lesssim \|\nabla \mathbf{y}\|^2_{\mathbf{L}^{3}(\Omega)}\|\nabla\mathbf{v}\|_{\mathbf{L}^{\sigma}(\Omega)}.
\]
This implies that $b(\mathbf{y};\mathbf{y},\cdot) \in \mathbf{W}^{-1,\mathfrak{q}}(\Omega)$, where $\mathfrak{q}=\sigma'<\infty$. Since $\mathbf{f} \in \mathbf{L}^2(\Omega)$, we deduce the existence of some $\kappa >3$ such that $\mathcal{G}  \in \mathbf{W}^{-1,\kappa}(\Omega)$. We again invoke \cite[Theorem 2.9]{Brown_Shen} to conclude that $(\mathbf{y},p) \in \mathbf{W}_0^{1,\kappa}(\Omega)\times L_0^{\kappa}(\Omega)$ for some $\kappa>3$. 

Let $d=2$. We have the bound $|b(\mathbf{y};\mathbf{y},\mathbf{v})|
\leq \|\mathbf{y}\|_{\mathbf{L}^{\mu}(\Omega)} \|\nabla \mathbf{y}\|_{\mathbf{L}^{2}(\Omega)}\|\mathbf{v}\|_{\mathbf{L}^{\tau}(\Omega)}$, where $\mu^{-1} + \tau^{-1} = 2^{-1}$. Since in two dimensions, $\mathbf{H}_0^1(\Omega)\hookrightarrow \mathbf{L}^{\mu}(\Omega)$ holds for every $\mu < \infty$, we consider $\tau>2$, where $\tau$ is sufficiently close to $2$. Consequently, for $\sigma >1$, sufficiently close to $1$, we have the following control of the convective term:
\[
|b(\mathbf{y};\mathbf{y},\mathbf{v})| \lesssim \|\nabla \mathbf{y}\|^2_{\mathbf{L}^{2}(\Omega)}\|\nabla \mathbf{v}\|_{\mathbf{L}^{\sigma}(\Omega)}.
\]
This shows that $b(\mathbf{y};\mathbf{y},\cdot) \in \mathbf{W}^{-1,\mathfrak{q}}(\Omega)$, where $\mathfrak{q} = \sigma' < \infty$. Exploiting the fact that $\mathbf{f} \in \mathbf{L}^2(\Omega)$, we thus obtain the existence of some $\kappa >4$ such that $\mathcal{G}  \in \mathbf{W}^{-1,\kappa}(\Omega)$. Based on \cite[Corollary 1.7]{MR2987056}, with $\alpha=-1$ and $q=2$, we can conclude.
\end{proof}

\begin{remark}[regularity properties on convex domains]
Let $(\mathbf{y},p)$ be a solution to \eqref{eq:weak_navier_stokes_eq}. If $\Omega$ is a convex polygon/polyhedron and $\mathbf{f}\in \mathbf{L}^2(\Omega)$, then $(\mathbf{y},p) \in \mathbf{H}^2(\Omega) \times H^1(\Omega)$; see \cite[Corollary 7.3.3.5]{MR775683} for $d=2$ and \cite[Theorem 11.3.1]{MR2641539} for $d=3$.
\label{remark:further_reg_NS}
\end{remark}


\subsection{Regular solutions to the Navier--Stokes equations}

We begin this section by introducing the concept of \emph{regular solution} to the Navier--Stokes equations; see \cite[Definition 2.3]{MR2338434} and \cite[Definition 2.7]{MR3936891}.

\begin{definition}[regular solution]\label{def:reg_sol}
Let $(\mathbf{y},p)$ be a solution to \eqref{eq:state_equations} associated to some $\mathbf{u} \in \mathbf{U}_{ad}$. We say that $\mathbf{y}$ is regular if for every $\mathbf{g}\in\mathbf{H}^{-1}(\Omega)$ the problem: Find
\begin{multline}\label{eq:first_deriv_S*}
(\boldsymbol{\varphi},\zeta)\in \mathbf{H}_0^1(\Omega)\times L_0^2(\Omega):
\quad
\nu(\nabla \boldsymbol{\varphi}, \nabla \mathbf{v})_{\mathbf{L}^2(\Omega)} + b(\mathbf{y};\boldsymbol{\varphi},\mathbf{v}) + b(\boldsymbol{\varphi};\mathbf{y},\mathbf{v}) \\
- (\zeta, \textnormal{div }\mathbf{v})_{L^2(\Omega)}
= \langle \mathbf{g},\mathbf{v} \rangle_{\mathbf{H}^{-1}(\Omega),\mathbf{H}_0^1(\Omega)}, \qquad  (q,\textnormal{div }\boldsymbol{\varphi})_{L^2(\Omega)} = 0,
\end{multline}
for all $(\mathbf{v},q)\in\mathbf{H}_0^1(\Omega)\times L_0^2(\Omega)$, is well posed.
\label{def:regular_solution}
\end{definition}

\begin{remark}[isomorphism]
We note that, if $\mathbf{y}$ is regular, then the map
\begin{equation}
T: \mathbf{V}(\Omega) \times L_0^2(\Omega) \rightarrow \mathbf{H}^{-1}(\Omega),
\quad
( \boldsymbol{\varphi},\zeta) \mapsto -\nu \Delta  \boldsymbol{\varphi} + (\mathbf{y} \cdot \nabla)  \boldsymbol{\varphi} + ( \boldsymbol{\varphi} \cdot \nabla) \mathbf{y} + \nabla \zeta,
\label{eq:T}
\end{equation}
is an isomorphism.
\label{rmk:isomorphism}
\end{remark}

\begin{remark}[$\mathbf{y}$ is regular if $\nu$ is sufficiently large]
Let $\mathfrak{M}_{ad}:=\sup_{\mathbf{u}\in\mathbf{U}_{ad}}\|\mathbf{u}\|_{\mathbf{L}^2(\Omega)}$ and suppose that $\nu$ is such that $\nu^{-2}\mathcal{C}_b C_{2} \mathfrak{M}_{ad} < 1$ holds \cite{Ru,MR2192070}. Here, $C_2$ denotes the best constant in the embedding $\mathbf{H}_0^1(\Omega) \hookrightarrow \mathbf{L}^2(\Omega)$ and $\mathcal{C}_b$ is given as in \eqref{eq:trilinear_embedding}. We prove that under this assumption \eqref{eq:first_deriv_S*} is well posed. To achieve this, we define
\begin{equation*}
\mathcal{B}:\mathbf{H}_0^1(\Omega)\times \mathbf{H}_0^1(\Omega) \rightarrow \mathbb{R},
\qquad
\mathcal{B}(\mathbf{w},\mathbf{v}) 
:= 
\nu(\nabla \mathbf{w}, \nabla \mathbf{v})_{\mathbf{L}^2(\Omega)} +
b(\mathbf{y};\mathbf{w},\mathbf{v}) + b(\mathbf{w};\mathbf{y},\mathbf{v}).
\end{equation*}
Since $\mathcal{C}_b C_{2} \mathfrak{M}_{ad} < \nu^{2}$, \cite[Chapter IV, Theorem 2.2]{MR851383} reveals that, for each $\mathbf{u}\in\mathbf{U}_{ad}$, there exists a unique solution $(\mathbf{y},p)$ to \eqref{eq:state_equations} such that $\|\nabla \mathbf{y}\|_{\mathbf{L}^2(\Omega)}< \mathcal{C}_{b}^{-1}\nu$. Hence,
\begin{equation*}
\mathcal{B}(\mathbf{v},\mathbf{v}) \geq (\nu - \mathcal{C}_{b}\|\nabla\mathbf{y}\|_{\mathbf{L}^2(\Omega)}) \|\nabla\mathbf{v}\|_{\mathbf{L}^2(\Omega)}^2  \geq C \|\mathbf{v}\|_{\mathbf{L}^2(\Omega)}^2, \quad
C>0.
\end{equation*}
Thus, we have established that $\mathcal{B}$ is continuous and coercive on $\mathbf{H}_0^1(\Omega) \times \mathbf{H}_0^1(\Omega)$. Based on the inequality \eqref{eq:inf_sup_cond}, the standard inf-sup theory for saddle point problems \cite[Theorem 2.34]{MR2050138} yields the well-posedness of problem \eqref{eq:first_deriv_S*}.
\end{remark}

In what follows, $\mathtt{p}, \mathtt{q} \in (1,\infty)$ are such that $\mathtt{p}^{-1} + \mathtt{q}^{-1}  = 1$.

\begin{lemma}[regularity result]
Let $\mathbf{u} \in \mathbf{U}_{ad}$ and let $(\mathbf{y},p)$ be a solution to \eqref{eq:state_equations} such that $\mathbf{y}$ is regular. Then, there exists $\mathtt{q} > 4$ if $d=2$ and $\mathtt{q}>3$ if $d=3$ such that $(\boldsymbol{\varphi},\zeta)$, the solution to \eqref{eq:first_deriv_S*}, belongs to $\mathbf{W}_{0}^{1,\mathtt{q}}(\Omega) \times L_{0}^{\mathtt{q}}(\Omega)$ provided $\mathbf{g} \in \mathbf{W}^{-1,\mathtt{q}}(\Omega)$.
\label{lemma:regularity_estimate}
\end{lemma}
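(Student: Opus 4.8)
The plan is to recast the momentum equation in \eqref{eq:first_deriv_S*} as a Stokes system whose right-hand side belongs to $\mathbf{L}^2(\Omega)$, and then to invoke $\mathbf{W}^{1,\mathtt{q}}$-regularity for the Stokes problem on the Lipschitz domain $\Omega$. First I would use Theorem \ref{thm:reg_velocity}: since $\mathbf{u}\in\mathbf{U}_{ad}\subset\mathbf{L}^{\infty}(\Omega)\hookrightarrow\mathbf{L}^2(\Omega)$, the state satisfies $(\mathbf{y},p)\in\mathbf{W}_0^{1,\kappa}(\Omega)\times L_0^{\kappa}(\Omega)$ with $\kappa>4$ if $d=2$ and $\kappa>3$ if $d=3$; in particular $\mathbf{y}\in\mathbf{C}(\bar{\Omega})\hookrightarrow\mathbf{L}^{\infty}(\Omega)$, $\nabla\mathbf{y}\in\mathbf{L}^{\kappa}(\Omega)$, and $\kappa>d$. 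Since $\mathtt{q}>d\geq 2$ gives $\mathbf{W}^{-1,\mathtt{q}}(\Omega)\hookrightarrow\mathbf{H}^{-1}(\Omega)$, the regularity of $\mathbf{y}$ ensures that \eqref{eq:first_deriv_S*} has a unique solution $(\boldsymbol{\varphi},\zeta)\in\mathbf{H}_0^1(\Omega)\times L_0^2(\Omega)$, and moving the convective terms of \eqref{eq:first_deriv_S*} to the right-hand side shows that $(\boldsymbol{\varphi},\zeta)$ solves the Stokes problem
\begin{equation*}
-\nu\Delta\boldsymbol{\varphi}+\nabla\zeta=\mathcal{H}:=\mathbf{g}-(\mathbf{y}\cdot\nabla)\boldsymbol{\varphi}-(\boldsymbol{\varphi}\cdot\nabla)\mathbf{y}\ \text{ in }\Omega,\quad \textnormal{div }\boldsymbol{\varphi}=0\ \text{ in }\Omega,\quad \boldsymbol{\varphi}=\mathbf{0}\ \text{ on }\partial\Omega.
\end{equation*}

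The key step is to verify $\mathcal{H}\in\mathbf{W}^{-1,\mathtt{q}}(\Omega)$. Since $\mathbf{y}\in\mathbf{L}^{\infty}(\Omega)$ and $\nabla\boldsymbol{\varphi}\in\mathbf{L}^2(\Omega)$, we have $(\mathbf{y}\cdot\nabla)\boldsymbol{\varphi}\in\mathbf{L}^2(\Omega)$. For the remaining term, H\"older's inequality gives $\|(\boldsymbol{\varphi}\cdot\nabla)\mathbf{y}\|_{\mathbf{L}^{t}(\Omega)}\leq\|\boldsymbol{\varphi}\|_{\mathbf{L}^{r}(\Omega)}\|\nabla\mathbf{y}\|_{\mathbf{L}^{\kappa}(\Omega)}$ with $t^{-1}=r^{-1}+\kappa^{-1}$; choosing $r=6$ and using $\mathbf{H}_0^1(\Omega)\hookrightarrow\mathbf{L}^{6}(\Omega)$ when $d=3$, the bound $\kappa>3$ forces $t^{-1}<\tfrac{1}{2}$, hence $t>2$; choosing $r=2\kappa/(\kappa-2)<\infty$ together with $\mathbf{H}_0^1(\Omega)\hookrightarrow\mathbf{L}^{r}(\Omega)$ when $d=2$ gives $t=2$. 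In either case $(\boldsymbol{\varphi}\cdot\nabla)\mathbf{y}\in\mathbf{L}^t(\Omega)\hookrightarrow\mathbf{L}^2(\Omega)$, so $\mathcal{H}-\mathbf{g}\in\mathbf{L}^2(\Omega)$. Since $\mathbf{W}_0^{1,\mathtt{p}}(\Omega)\hookrightarrow\mathbf{L}^2(\Omega)$ whenever $\mathtt{p}=\mathtt{q}'$ satisfies $\mathtt{q}\leq 6$ (if $d=3$) or $\mathtt{q}<\infty$ (if $d=2$), duality yields $\mathbf{L}^2(\Omega)\hookrightarrow\mathbf{W}^{-1,\mathtt{q}}(\Omega)$ for such $\mathtt{q}$, and therefore $\mathcal{H}\in\mathbf{W}^{-1,\mathtt{q}}(\Omega)$.

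To conclude, I would fix $\mathtt{q}$ greater than $d$ but close enough to $d$ that, besides the restriction above, $\mathtt{q}$ lies in the range of exponents for which the Stokes problem on the bounded Lipschitz domain $\Omega$ is $\mathbf{W}_0^{1,\mathtt{q}}(\Omega)\times L_0^{\mathtt{q}}(\Omega)$-regular; concretely one may take $\mathtt{q}\in(3,\min\{3+\epsilon_{\Omega},6\})$ if $d=3$ and $\mathtt{q}\in(4,4+\epsilon_{\Omega})$ if $d=2$, for a suitable $\epsilon_{\Omega}>0$, and these intervals are nonempty. Applying \cite[Theorem 2.9]{Brown_Shen} together with \cite[Corollary 1.7]{MR2987056} (with $\alpha=-1$ and $q=2$), which also covers the case $d=2$, to the Stokes problem above with datum $\mathcal{H}\in\mathbf{W}^{-1,\mathtt{q}}(\Omega)$ yields a solution in $\mathbf{W}_0^{1,\mathtt{q}}(\Omega)\times L_0^{\mathtt{q}}(\Omega)$; by uniqueness of the solution to \eqref{eq:first_deriv_S*} in $\mathbf{H}_0^1(\Omega)\times L_0^2(\Omega)$ and the embedding $\mathbf{W}_0^{1,\mathtt{q}}(\Omega)\times L_0^{\mathtt{q}}(\Omega)\hookrightarrow\mathbf{H}_0^1(\Omega)\times L_0^2(\Omega)$, it coincides with $(\boldsymbol{\varphi},\zeta)$, which proves the claim. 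The only point requiring care is the simultaneous compatibility of exponents---that some $\mathtt{q}>d$ is admissible at once for the Lipschitz Stokes estimate and for the embedding $\mathbf{L}^2(\Omega)\hookrightarrow\mathbf{W}^{-1,\mathtt{q}}(\Omega)$---which is immediate since the Stokes range extends past $d$ and, for $d=3$, $3<6$; everything else is routine H\"older and Sobolev bookkeeping, parallel to the proof of Theorem \ref{thm:reg_velocity}.
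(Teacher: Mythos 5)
Your proposal is correct and follows essentially the same route as the paper: obtain the unique $\mathbf{H}_0^1(\Omega)\times L_0^2(\Omega)$ solution from the regularity of $\mathbf{y}$, recast \eqref{eq:first_deriv_S*} as a Stokes system with the convective terms on the right-hand side, verify that this forcing lies in $\mathbf{W}^{-1,\mathtt{q}}(\Omega)$, and conclude via the Lipschitz-domain Stokes regularity of \cite{Brown_Shen} and \cite{MR2987056}. The only cosmetic difference is that you show the convective terms actually belong to $\mathbf{L}^2(\Omega)$ and then embed $\mathbf{L}^2(\Omega)\hookrightarrow\mathbf{W}^{-1,\mathtt{q}}(\Omega)$, whereas the paper estimates them directly in duality against $\mathbf{W}_0^{1,\mathtt{p}}(\Omega)$ test functions with $\mathtt{p}\geq 6/5$ --- the same exponent restriction $\mathtt{q}\leq 6$ in either formulation.
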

\begin{proof}
We begin by noting that since $\mathbf{g} \in \mathbf{W}^{-1,\mathtt{q}}(\Omega)$, $\mathbf{W}^{-1,\mathtt{q}}(\Omega) \hookrightarrow \mathbf{H}^{-1}(\Omega)$, and $\mathbf{y}$ is regular, there exists a unique $(\boldsymbol{\varphi},\zeta) \in \mathbf{H}_0^1(\Omega)\times L_0^{2}(\Omega)$ that solves \eqref{eq:first_deriv_S*}. To prove the desired regularity property, we rewrite \eqref{eq:first_deriv_S*} as the following Stokes system:
\[
\nu(\nabla \boldsymbol{\varphi}, \nabla \mathbf{v})_{\mathbf{L}^2(\Omega)} - (\zeta, \textnormal{div }\mathbf{v})_{L^2(\Omega)}
=  
 \langle \mathbf{g},\mathbf{v} \rangle
-
 \mathcal{H}(\mathbf{v}), 
\qquad
(q,\textnormal{div } \boldsymbol{\varphi})_{L^2(\Omega)} = 0,
\]
for all $(\mathbf{v},q)\in\mathbf{H}_0^1(\Omega)\times L_0^2(\Omega)$. Here, $\mathcal{H}: \mathbf{H}_0^1(\Omega) \rightarrow \mathbb{R}$ is given by $\mathcal{H}(\mathbf{v}) := b(\mathbf{y};\boldsymbol{\varphi},\mathbf{v}) +b(\boldsymbol{\varphi};\mathbf{y},\mathbf{v})$. In what follows, we analyze the boundedness of $\mathcal{H}$ in $\mathbf{W}^{-1,\mathtt{q}}(\Omega)$ in three dimensions; the arguments in two dimensions are simpler. Let $ d=3$. In view of the embedding $\mathbf{H}_0^1(\Omega) \hookrightarrow \mathbf{L}^6(\Omega)$ and the regularity results of Theorem \ref{thm:reg_velocity}, we obtain
\begin{align}
\| b(\boldsymbol{\varphi};\mathbf{y},\cdot) \|_{\mathbf{W}^{-1,\mathtt{q}}(\Omega)} 
& \leq 
\sup_{\mathbf{v} \in \mathbf{W}_0^{1,\mathtt{p}}(\Omega) } 
\frac{
\| \boldsymbol{\varphi} \|_{\mathbf{L}^6(\Omega)} \| \nabla  \mathbf{y} \|_{\mathbf{L}^3(\Omega)}\| \mathbf{v}\|_{\mathbf{L}^2(\Omega)}}
{ \|  \nabla \mathbf{v} |_{\mathbf{L}^{\mathtt{p}}(\Omega)}},
\label{eq:convective_estimate_1}
\\
\| b(\mathbf{y};\boldsymbol{\varphi},\cdot) \|_{\mathbf{W}^{-1,\mathtt{q}}(\Omega)} 
& \leq 
\sup_{\mathbf{v} \in \mathbf{W}_0^{1,\mathtt{p}}(\Omega) } 
\frac{ \| \mathbf{y} \|_{\mathbf{L}^{\infty}(\Omega)} \|  \nabla \boldsymbol{\varphi} \|_{\mathbf{L}^{2}(\Omega)}\| \mathbf{v}\|_{\mathbf{L}^2(\Omega)} }{\| \nabla \mathbf{v} \|_{\mathbf{L}^{\mathtt{p}} (\Omega)}}.
\label{eq:convective_estimate_2}
\end{align}
Let us now consider $\mathtt{q}$ such that $\mathtt{p} \geq 6/5$. Since $\mathbf{W}_{0}^{1,6/5}(\Omega) \hookrightarrow \mathbf{L}^{2}(\Omega)$, we can thus conclude that $\mathcal{H} \in \mathbf{W}^{-1,\mathtt{q}}(\Omega)$. We are thus able to refer to \cite[Corollary 1.7]{MR2987056}, with $\alpha=-1$ and $q=2$, to obtain 
the existence of $\mathtt{q}>3$ such that $(\boldsymbol{\varphi},\zeta)\in \mathbf{W}_{0}^{1,\mathtt{q}}(\Omega) \times L_{0}^{\mathtt{q}}(\Omega)$. 
Moreover, we have the bounds \cite[Theorem 2.9]{Brown_Shen} (see also \cite[Corollary 1.7]{MR2987056})
\begin{equation}
\label{eq:stab_rho'}
\begin{aligned}
\|\nabla\boldsymbol{\varphi}\|_{\mathbf{L}^{\mathtt{q}}(\Omega)} + \|\zeta\|_{L^{\mathtt{q}}(\Omega)} 
& \lesssim
\|\mathbf{g}\|_{\mathbf{W}^{-1,\mathtt{q}}(\Omega)}
+
\| \nabla  \mathbf{y} \|_{\mathbf{L}^{\kappa}(\Omega)} \|\nabla \boldsymbol{\varphi} \|_{\mathbf{L}^2(\Omega)}
\\
& \lesssim \|\mathbf{g}\|_{\mathbf{W}^{-1,\mathtt{q}}(\Omega)} ( 1 + \| \nabla  \mathbf{y} \|_{\mathbf{L}^{\kappa}(\Omega)} ),
\end{aligned}
\end{equation}
where $\kappa$ is as in the statement of Theorem \ref{thm:reg_velocity}. This concludes the proof.
\end{proof}

We now obtain the well-posedness of \eqref{eq:first_deriv_S_W01p} in $\mathbf{W}_0^{1,\mathsf{q}}(\Omega)\times L_0^{\mathsf{q}}(\Omega)$.

\begin{theorem}[well-posedness]
\label{thm:well-posedness-derivative}
Let $\mathbf{u} \in \mathbf{U}_{ad}$ and let $(\mathbf{y},p)$ be a solution to \eqref{eq:state_equations}  such that $\mathbf{y}$ is regular. Let $\mathsf{p}<d/(d-1)$ be such that it is arbitrarily close to $d/(d-1)$ and let $\mathsf{q}$ be such that $\mathsf{p}^{-1} + \mathsf{q}^{-1} = 1$. Let $\mathbf{g} \in \mathbf{W}^{-1,\mathsf{q}}(\Omega)$. Then, the problem: Find $(\boldsymbol{\varphi},\zeta)\in \mathbf{W}_0^{1,\mathsf{q}}(\Omega)\times L_0^{\mathsf{q}}(\Omega)$ such that
\begin{equation}
\label{eq:first_deriv_S_W01p}
\begin{aligned}
\nu(\nabla \boldsymbol{\varphi}, \nabla \mathbf{v})_{\mathbf{L}^2(\Omega)} + b(\mathbf{y};\boldsymbol{\varphi},\mathbf{v}) + b(\boldsymbol{\varphi};\mathbf{y},\mathbf{v}) - (\zeta, \textnormal{div }\mathbf{v})_{L^2(\Omega)}
& = \langle \mathbf{g},\mathbf{v} \rangle,
\\ 
(q,\textnormal{div }\boldsymbol{\varphi})_{L^2(\Omega)} & = 0,
\end{aligned}
\end{equation}
for all $(\mathbf{v},q)\in \mathbf{W}_0^{1,\mathsf{p}}(\Omega)\times L_0^{\mathsf{p}}(\Omega)$, is well posed.
\end{theorem}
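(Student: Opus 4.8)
The plan is to reduce problem \eqref{eq:first_deriv_S_W01p} to the already–well-posed problem \eqref{eq:first_deriv_S*} by exploiting, on the trial side, the embeddings $\mathbf{W}_0^{1,\mathsf{q}}(\Omega)\hookrightarrow\mathbf{H}_0^1(\Omega)$ and $L_0^{\mathsf{q}}(\Omega)\hookrightarrow L_0^2(\Omega)$ (valid since $\mathsf{q}>d\geq 2$), and, on the test side, the density of $\mathbf{H}_0^1(\Omega)$ in $\mathbf{W}_0^{1,\mathsf{p}}(\Omega)$ and of $L_0^2(\Omega)$ in $L_0^{\mathsf{p}}(\Omega)$ (valid since $\mathsf{p}<2$). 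Before anything else I would record the continuity of every form in \eqref{eq:first_deriv_S_W01p} on the product space $(\mathbf{W}_0^{1,\mathsf{q}}(\Omega)\times L_0^{\mathsf{q}}(\Omega))\times(\mathbf{W}_0^{1,\mathsf{p}}(\Omega)\times L_0^{\mathsf{p}}(\Omega))$: the Stokes part and the pressure term are bounded by H\"older's inequality with exponents $\mathsf{q}$ and $\mathsf{p}$; for $b(\mathbf{y};\boldsymbol{\varphi},\mathbf{v})$ I use $\mathbf{y}\in\mathbf{C}(\bar\Omega)$ (Theorem \ref{thm:reg_velocity}) to obtain $|b(\mathbf{y};\boldsymbol{\varphi},\mathbf{v})|\lesssim\|\mathbf{y}\|_{\mathbf{L}^\infty(\Omega)}\|\nabla\boldsymbol{\varphi}\|_{\mathbf{L}^{\mathsf{q}}(\Omega)}\|\mathbf{v}\|_{\mathbf{L}^{\mathsf{p}}(\Omega)}$; for $b(\boldsymbol{\varphi};\mathbf{y},\mathbf{v})$ I use $\mathbf{W}_0^{1,\mathsf{q}}(\Omega)\hookrightarrow\mathbf{C}(\bar\Omega)$ together with $\nabla\mathbf{y}\in\mathbf{L}^2(\Omega)$ and $\mathbf{W}_0^{1,\mathsf{p}}(\Omega)\hookrightarrow\mathbf{L}^2(\Omega)$ (available for $d\in\{2,3\}$ when $\mathsf{p}$ is close to $d/(d-1)$), so that $|b(\boldsymbol{\varphi};\mathbf{y},\mathbf{v})|\lesssim\|\boldsymbol{\varphi}\|_{\mathbf{L}^\infty(\Omega)}\|\nabla\mathbf{y}\|_{\mathbf{L}^2(\Omega)}\|\mathbf{v}\|_{\mathbf{L}^2(\Omega)}$. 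In particular the left-hand side of \eqref{eq:first_deriv_S_W01p} is a bounded bilinear form, and its right-hand side is a bounded functional because $\mathbf{g}\in\mathbf{W}^{-1,\mathsf{q}}(\Omega)=(\mathbf{W}_0^{1,\mathsf{p}}(\Omega))'$.

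For existence, since $\mathsf{q}>2$ we have $\mathbf{g}\in\mathbf{W}^{-1,\mathsf{q}}(\Omega)\hookrightarrow\mathbf{H}^{-1}(\Omega)$, so the regularity of $\mathbf{y}$ (Definition \ref{def:regular_solution}) yields a unique $(\boldsymbol{\varphi},\zeta)\in\mathbf{H}_0^1(\Omega)\times L_0^2(\Omega)$ solving \eqref{eq:first_deriv_S*}. I then improve its integrability exactly as in the proof of Lemma \ref{lemma:regularity_estimate}: writing \eqref{eq:first_deriv_S*} as a Stokes system with data $\mathbf{g}-\mathcal{H}$, where $\mathcal{H}(\mathbf{v})=b(\mathbf{y};\boldsymbol{\varphi},\mathbf{v})+b(\boldsymbol{\varphi};\mathbf{y},\mathbf{v})$, I estimate $|b(\mathbf{y};\boldsymbol{\varphi},\mathbf{v})|\le\|\mathbf{y}\|_{\mathbf{L}^\infty(\Omega)}\|\nabla\boldsymbol{\varphi}\|_{\mathbf{L}^2(\Omega)}\|\mathbf{v}\|_{\mathbf{L}^2(\Omega)}$ and $|b(\boldsymbol{\varphi};\mathbf{y},\mathbf{v})|\le\|\boldsymbol{\varphi}\|_{\mathbf{L}^{\mathsf{r}}(\Omega)}\|\nabla\mathbf{y}\|_{\mathbf{L}^{\kappa}(\Omega)}\|\mathbf{v}\|_{\mathbf{L}^{\mathsf{s}}(\Omega)}$ with $\mathbf{H}_0^1(\Omega)\hookrightarrow\mathbf{L}^{\mathsf{r}}(\Omega)$, $\mathbf{W}_0^{1,\mathsf{p}}(\Omega)\hookrightarrow\mathbf{L}^{\mathsf{s}}(\Omega)$, $\mathsf{r}^{-1}+\kappa^{-1}+\mathsf{s}^{-1}=1$, and $\kappa$ as in Theorem \ref{thm:reg_velocity}; since $\mathsf{q}$ is close to $d$ (so that $\mathsf{p}\geq 6/5$ when $d=3$, while any $\mathsf{p}>1$ works when $d=2$), these estimates show $\mathcal{H}\in\mathbf{W}^{-1,\mathsf{q}}(\Omega)$, hence $\mathbf{g}-\mathcal{H}\in\mathbf{W}^{-1,\mathsf{q}}(\Omega)$. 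The Stokes regularity theory on Lipschitz domains — \cite[Theorem 2.9]{Brown_Shen}, equivalently \cite[Corollary 1.7]{MR2987056} with $\alpha=-1$, $q=2$, whose admissible range contains exponents close to $d$ — then gives $(\boldsymbol{\varphi},\zeta)\in\mathbf{W}_0^{1,\mathsf{q}}(\Omega)\times L_0^{\mathsf{q}}(\Omega)$ together with $\|\nabla\boldsymbol{\varphi}\|_{\mathbf{L}^{\mathsf{q}}(\Omega)}+\|\zeta\|_{L^{\mathsf{q}}(\Omega)}\lesssim\|\mathbf{g}\|_{\mathbf{W}^{-1,\mathsf{q}}(\Omega)}+\|\mathcal{H}\|_{\mathbf{W}^{-1,\mathsf{q}}(\Omega)}$. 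Finally, \eqref{eq:first_deriv_S*} holds against all $(\mathbf{v},q)\in\mathbf{H}_0^1(\Omega)\times L_0^2(\Omega)$, which is dense in $\mathbf{W}_0^{1,\mathsf{p}}(\Omega)\times L_0^{\mathsf{p}}(\Omega)$; by the continuity of the forms recorded in the first paragraph I pass to the limit and conclude that $(\boldsymbol{\varphi},\zeta)$ solves \eqref{eq:first_deriv_S_W01p}.

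For uniqueness, let $(\boldsymbol{\varphi},\zeta)\in\mathbf{W}_0^{1,\mathsf{q}}(\Omega)\times L_0^{\mathsf{q}}(\Omega)$ solve \eqref{eq:first_deriv_S_W01p} with $\mathbf{g}=\mathbf{0}$. Since $\mathbf{W}_0^{1,\mathsf{q}}(\Omega)\subset\mathbf{H}_0^1(\Omega)$ and $L_0^{\mathsf{q}}(\Omega)\subset L_0^2(\Omega)$, while $\mathbf{H}_0^1(\Omega)\times L_0^2(\Omega)$ is an admissible set of test functions for \eqref{eq:first_deriv_S_W01p}, the pair $(\boldsymbol{\varphi},\zeta)$ solves \eqref{eq:first_deriv_S*} with $\mathbf{g}=\mathbf{0}$ in the $\mathbf{H}_0^1(\Omega)\times L_0^2(\Omega)$ setting, and the regularity of $\mathbf{y}$ forces $(\boldsymbol{\varphi},\zeta)=(\mathbf{0},0)$. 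Existence and uniqueness, combined with the a priori bound above — in which $\|\mathcal{H}\|_{\mathbf{W}^{-1,\mathsf{q}}(\Omega)}\lesssim(1+\|\mathbf{y}\|_{\mathbf{L}^\infty(\Omega)}+\|\nabla\mathbf{y}\|_{\mathbf{L}^{\kappa}(\Omega)})\|\nabla\boldsymbol{\varphi}\|_{\mathbf{L}^2(\Omega)}$ is absorbed using $\|\nabla\boldsymbol{\varphi}\|_{\mathbf{L}^2(\Omega)}\lesssim\|\mathbf{g}\|_{\mathbf{H}^{-1}(\Omega)}\lesssim\|\mathbf{g}\|_{\mathbf{W}^{-1,\mathsf{q}}(\Omega)}$ — show that the solution operator $\mathbf{g}\mapsto(\boldsymbol{\varphi},\zeta)$ is a bounded bijection onto $\mathbf{W}_0^{1,\mathsf{q}}(\Omega)\times L_0^{\mathsf{q}}(\Omega)$, i.e. \eqref{eq:first_deriv_S_W01p} is well posed. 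The main obstacle I anticipate is making the regularity bootstrap close the loop at an exponent $\mathsf{q}$ only slightly larger than $d$ (rather than at the possibly larger, non-explicit exponent produced in Lemma \ref{lemma:regularity_estimate}): one must check that the convective remainder $\mathcal{H}$ lands in $\mathbf{W}^{-1,\mathsf{q}}(\Omega)$ for such $\mathsf{q}$ — which is exactly where $\mathbf{y}\in\mathbf{C}(\bar\Omega)\cap\mathbf{W}_0^{1,\kappa}(\Omega)$ and the choice ``$\mathsf{p}$ close to $d/(d-1)$'' enter — and that this $\mathsf{q}$ remains inside the range of validity of the Stokes regularity results on Lipschitz domains; the density extension of the variational identity to the larger test space is then routine.
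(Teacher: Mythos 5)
Your proposal is correct and follows essentially the same route as the paper: existence comes from solving \eqref{eq:first_deriv_S*} in $\mathbf{H}_0^1(\Omega)\times L_0^2(\Omega)$ via the regularity of $\mathbf{y}$ and then upgrading integrability through the Stokes bootstrap of Lemma \ref{lemma:regularity_estimate}, while uniqueness follows from linearity and the well-posedness of the $\mathbf{H}_0^1$-problem. Your write-up is in fact slightly more careful than the paper's on two minor points — the density/continuity argument needed to extend the variational identity to test functions in $\mathbf{W}_0^{1,\mathsf{p}}(\Omega)\times L_0^{\mathsf{p}}(\Omega)$, and the observation that a $\mathbf{W}_0^{1,\mathsf{q}}$-solution of the homogeneous problem can be tested against $\mathbf{H}_0^1(\Omega)\times L_0^2(\Omega)$ and hence coincides with the trivial solution of \eqref{eq:first_deriv_S*} — whereas the paper simply invokes estimate \eqref{eq:stab_rho'}; these are equivalent ways of exploiting the same facts.
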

\begin{proof}
Since $\mathbf{g} \in \mathbf{W}^{-1,\mathsf{q}}(\Omega)$ and $\mathbf{W}^{-1,\mathsf{q}}(\Omega) \hookrightarrow \mathbf{H}^{-1}(\Omega)$, there exists a unique pair $(\boldsymbol{\varphi},\zeta)\in \mathbf{H}_0^{1}(\Omega)\times L_0^{2}(\Omega)$ that solves \eqref{eq:first_deriv_S*}. As a consequence of the regularity arguments in the proof of Lemma \ref{lemma:regularity_estimate}, we deduce that $(\boldsymbol{\varphi},\zeta)\in \mathbf{W}_0^{1,\mathsf{q}}(\Omega)\times L_0^{\mathsf{q}}(\Omega)$. Consequently, problem \eqref{eq:first_deriv_S_W01p} admits at least one solution. Since problem \eqref{eq:first_deriv_S_W01p} is linear, estimate \eqref{eq:stab_rho'} shows that such a solution is unique. This concludes the proof.
\end{proof}


\section{The optimal control problem}\label{sec:ocp}
In this section, we analyze the following weak formulation of the pointwise tracking optimal control problem \eqref{def:cost_func}--\eqref{def:box_constraints}: Find
\begin{equation}\label{eq:weak_cost}
\min \{J(\mathbf{y},\mathbf{u}): (\mathbf{y},p,\mathbf{u})\in \mathbf{H}_0^1(\Omega)\times L_0^2(\Omega)\times\mathbf{U}_{ad}\},
\end{equation}
subject to the following weak formulation of the stationary Navier--Stokes equations: Find $(\mathbf{y},p)\in \mathbf{H}_0^1(\Omega)\times L_0^2(\Omega)$ such that
\begin{equation}
\label{eq:weak_st_eq}
\nu(\nabla \mathbf{y},\nabla \mathbf{v})_{\mathbf{L}^2(\Omega)} + b(\mathbf{y};\mathbf{y},\mathbf{v}) - (p,\text{div }\mathbf{v})_{L^2(\Omega)} = (\mathbf{u},\mathbf{v})_{\mathbf{L}^2(\Omega)}, ~~ (q,\text{div }\mathbf{y})_{L^2(\Omega)} = 0,
\end{equation}
for all $(\mathbf{v},q)\in \mathbf{H}_0^1(\Omega)\times L_0^2(\Omega)$. We immediately note that, in view of the results in Theorem \ref{thm:well_posedness_navier_stokes}, for every $\mathbf{u}\in\mathbf{L}^2(\Omega)$ there exists at least one solution $(\mathbf{y},p) \in \mathbf{H}_0^1(\Omega)\times L_0^2(\Omega)$ to problem \eqref{eq:weak_st_eq} \emph{without having to assume any smallness conditions}. Moreover, since the corresponding velocity component $\mathbf{y}\in \mathbf{C}(\bar{\Omega})$ (see Theorem \ref{thm:reg_velocity}), point evaluations of $\mathbf{y}$ in the cost functional $J$ are well-defined.


\subsection{Existence of optimal solutions} 
The existence of at least one optimal solution follows from the direct method of calculus of variations \cite[Chapter 1]{MR1201152}; see, for instance, \cite[Theorem 3.1]{MR2109045}. For the sake of completeness, we give a proof.

\begin{theorem}[existence of an optimal solution]\label{thm:existence_of_sol_ocp}
The control problem \eqref{eq:weak_cost}--\eqref{eq:weak_st_eq} admits at least one global solution $(\bar{\mathbf{y}},\bar{p},\bar{\mathbf{u}})\in \mathbf{H}_0^1(\Omega)\times L_0^2(\Omega)\times\mathbf{U}_{ad}$.
\end{theorem}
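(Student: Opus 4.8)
The plan is to use the direct method of the calculus of variations. First I would observe that the feasible set is nonempty: for any $\mathbf{u}\in\mathbf{U}_{ad}$ (e.g. $\mathbf{u}=\mathbf{0}$, noting $\mathbf{a}<\mathbf{0}<\mathbf{b}$ is not guaranteed, so simply pick any admissible $\mathbf{u}$) Theorem \ref{thm:well_posedness_navier_stokes} provides a solution $(\mathbf{y},p)$ to \eqref{eq:weak_st_eq}, and since $\mathbf{y}\in\mathbf{C}(\bar\Omega)$ by Theorem \ref{thm:reg_velocity} the value $J(\mathbf{y},\mathbf{u})$ is finite. Since $J\geq 0$, the infimum $\mathfrak{j}:=\inf J$ is a finite nonnegative number, and we may take a minimizing sequence $\{(\mathbf{y}_n,p_n,\mathbf{u}_n)\}$ of feasible triples with $J(\mathbf{y}_n,\mathbf{u}_n)\to\mathfrak{j}$.

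Next I would extract weak limits. The term $\tfrac{\alpha}{2}\|\mathbf{u}_n\|^2_{\mathbf{L}^2(\Omega)}$ being bounded gives $\{\mathbf{u}_n\}$ bounded in $\mathbf{L}^2(\Omega)$, so along a subsequence $\mathbf{u}_n\rightharpoonup\bar{\mathbf{u}}$ in $\mathbf{L}^2(\Omega)$; since $\mathbf{U}_{ad}$ is convex and closed in $\mathbf{L}^2(\Omega)$, it is weakly closed, hence $\bar{\mathbf{u}}\in\mathbf{U}_{ad}$. The stability estimate \eqref{eq:stability_state_eq}, namely $\|\nabla\mathbf{y}_n\|_{\mathbf{L}^2(\Omega)}\leq\nu^{-1}\|\mathbf{u}_n\|_{\mathbf{H}^{-1}(\Omega)}\leq\nu^{-1}C\|\mathbf{u}_n\|_{\mathbf{L}^2(\Omega)}$, shows $\{\mathbf{y}_n\}$ is bounded in $\mathbf{H}^1_0(\Omega)$, so along a further subsequence $\mathbf{y}_n\rightharpoonup\bar{\mathbf{y}}$ in $\mathbf{H}^1_0(\Omega)$ and, by Rellich, $\mathbf{y}_n\to\bar{\mathbf{y}}$ strongly in $\mathbf{L}^r(\Omega)$ for $r<2d/(d-2)$ (in particular $r=4$, which covers $d\in\{2,3\}$); the divergence constraint $\mathrm{div}\,\mathbf{y}_n=0$ passes to the limit, so $\bar{\mathbf{y}}\in\mathbf{V}(\Omega)$. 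For the pressures, the inf-sup condition \eqref{eq:inf_sup_cond} together with the momentum equation bounds $\{p_n\}$ in $L^2_0(\Omega)$, yielding $p_n\rightharpoonup\bar p$ in $L^2_0(\Omega)$.

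Then I would pass to the limit in the state equation \eqref{eq:weak_st_eq} tested against a fixed $(\mathbf{v},q)$. The linear terms $\nu(\nabla\mathbf{y}_n,\nabla\mathbf{v})$, $(p_n,\mathrm{div}\,\mathbf{v})$, $(\mathbf{u}_n,\mathbf{v})$, and $(q,\mathrm{div}\,\mathbf{y}_n)$ converge by weak convergence. The only delicate term is the trilinear form $b(\mathbf{y}_n;\mathbf{y}_n,\mathbf{v})=-b(\mathbf{y}_n;\mathbf{v},\mathbf{y}_n)=-((\mathbf{y}_n\cdot\nabla)\mathbf{v},\mathbf{y}_n)_{\mathbf{L}^2(\Omega)}$ (using the antisymmetry \eqref{eq:properties_trilinear}, valid since $\mathbf{y}_n\in\mathbf{V}(\Omega)$): here $\nabla\mathbf{v}\in\mathbf{L}^2(\Omega)$ is fixed and $\mathbf{y}_n\otimes\mathbf{y}_n\to\bar{\mathbf{y}}\otimes\bar{\mathbf{y}}$ strongly in $\mathbf{L}^2(\Omega)$ by the strong $\mathbf{L}^4(\Omega)$ convergence, so $b(\mathbf{y}_n;\mathbf{y}_n,\mathbf{v})\to -((\bar{\mathbf{y}}\cdot\nabla)\mathbf{v},\bar{\mathbf{y}})_{\mathbf{L}^2(\Omega)}=b(\bar{\mathbf{y}};\bar{\mathbf{y}},\mathbf{v})$. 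Hence $(\bar{\mathbf{y}},\bar p)$ solves \eqref{eq:weak_st_eq} with control $\bar{\mathbf{u}}$, so $(\bar{\mathbf{y}},\bar p,\bar{\mathbf{u}})$ is feasible. Finally, for optimality: since $\mathbf{y}_n\to\bar{\mathbf{y}}$ in $\mathbf{C}(\bar\Omega)$ is \emph{not} available directly from Rellich, I would instead argue that the point-evaluation term converges — the cleanest route is to note that each $\mathbf{y}_n$ satisfies the regularity of Theorem \ref{thm:reg_velocity} with a uniform bound (the $\mathbf{W}^{1,\kappa}$-estimate depends only on $\|\mathbf{u}_n\|_{\mathbf{L}^2(\Omega)}$, which is bounded), so $\{\mathbf{y}_n\}$ is bounded in $\mathbf{W}^{1,\kappa}_0(\Omega)\hookrightarrow\hookrightarrow\mathbf{C}(\bar\Omega)$, giving $\mathbf{y}_n\to\bar{\mathbf{y}}$ uniformly along a subsequence; thus $\sum_{t\in\mathcal{D}}|\mathbf{y}_n(t)-\mathbf{y}_t|^2\to\sum_{t\in\mathcal{D}}|\bar{\mathbf{y}}(t)-\mathbf{y}_t|^2$. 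Combined with weak lower semicontinuity of $\|\cdot\|_{\mathbf{L}^2(\Omega)}^2$, we get $J(\bar{\mathbf{y}},\bar{\mathbf{u}})\leq\liminf J(\mathbf{y}_n,\mathbf{u}_n)=\mathfrak{j}$, so equality holds and $(\bar{\mathbf{y}},\bar p,\bar{\mathbf{u}})$ is a global minimizer.

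The main obstacle is the passage to the limit in the nonlinear convective term combined with the need to control the point evaluations: weak $\mathbf{H}^1$-convergence alone handles neither, and both require exploiting compactness — Rellich for the convective term and the uniform higher Sobolev regularity (Theorem \ref{thm:reg_velocity}) with compact embedding into $\mathbf{C}(\bar\Omega)$ for the tracking term. Care must be taken that the regularity estimate's constant is genuinely uniform over the minimizing sequence, which it is since it depends only on $\nu$ and $\|\mathbf{u}_n\|_{\mathbf{L}^2(\Omega)}$.
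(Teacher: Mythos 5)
Your proposal is correct and follows essentially the same route as the paper: direct method, weak compactness of $\mathbf{U}_{ad}$, uniform boundedness of the states in $\mathbf{H}_0^1(\Omega)\cap\mathbf{W}^{1,\kappa}(\Omega)$ via Theorems \ref{thm:well_posedness_navier_stokes} and \ref{thm:reg_velocity}, compactness (Rellich into $\mathbf{L}^4(\Omega)$) to pass to the limit in the convective term, the compact embedding $\mathbf{W}^{1,\kappa}(\Omega)\hookrightarrow\mathbf{C}(\bar{\Omega})$ to handle the point evaluations, and weak lower semicontinuity of the $\mathbf{L}^2$-norm for the control cost. The only cosmetic difference is that you move the derivative onto the test function via the antisymmetry \eqref{eq:properties_trilinear} before exploiting strong $\mathbf{L}^4$ convergence, whereas the paper splits the difference of the trilinear forms directly; both are standard and equivalent here.
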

\begin{proof}
Let $\{(\mathbf{y}_{k},  p_k, \mathbf{u}_{k})\}_{k\in\mathbb{N}}$ be a minimizing sequence, i.e., for $k\in \mathbb{N}$, $(\mathbf{y}_{k},  p_k) \in \mathbf{H}_0^1(\Omega)\times L_0^2(\Omega)$ solves  \eqref{eq:weak_st_eq} where $\mathbf{u}$ is replaced by $\mathbf{u}_{k}$ and $\{ (\mathbf{y}_{k}, p_k,\mathbf{u}_k) \}_{k \in \mathbb{N}}$ is such that $J(\mathbf{y}_{k},\mathbf{u}_{k}) \rightarrow \mathfrak{i}:=\inf\{ J(\mathbf{y}, \mathbf{u}):(\mathbf{y}, p, \mathbf{u})\in \mathbf{H}_0^1(\Omega)\times L_0^2(\Omega) \times\mathbf{U}_{ad}\}$ as $k\uparrow \infty$. Since $\mathbf{U}_{ad}$ is weakly sequentially compact in $\mathbf{L}^2(\Omega)$, there exists a nonrelabeled subsequence $\{\mathbf{u}_{k}\}_{k\in\mathbb{N}} \subset \mathbf{U}_{ad}$ such that $\mathbf{u}_{k}\rightharpoonup \bar{\mathbf{u}}$ in $\mathbf{L}^2(\Omega)$; $\bar{\mathbf{u}}\in\mathbf{U}_{ad}$. On the other hand, Theorems \ref{thm:well_posedness_navier_stokes} and \ref{thm:reg_velocity} show that $\{(\mathbf{y}_{k},p_k)\}_{k\in\mathbb{N}}$ is uniformly bounded in $\mathbf{H}_0^1(\Omega)\cap \mathbf{W}^{1,\kappa}(\Omega)\times L_0^2(\Omega)$. Thus, we deduce the existence of a nonrelabeled subsequence $\{(\mathbf{y}_{k},p_k)\}_{k\in\mathbb{N}}$ such that $(\mathbf{y}_{k},p_k)\rightharpoonup (\bar{\mathbf{y}},\bar{p})$ in $\mathbf{H}_0^1(\Omega)\cap \mathbf{W}^{1,\kappa}(\Omega)\times L_0^2(\Omega)$ as $k\uparrow \infty$; $(\bar{\mathbf{y}},\bar{p})$ is the natural candidate for an optimal state. We now prove that $(\bar{\mathbf{y}},\bar{p})$ solves \eqref{eq:weak_st_eq} where $\mathbf{u}$ is replaced by $\bar{\mathbf{u}}$ and that $(\bar{\mathbf{y}},\bar{p},\bar{\mathbf{u}})$ is optimal.

With the weak convergence $(\mathbf{y}_{k},p_k)\rightharpoonup (\bar{\mathbf{y}},\bar{p})$ in $\mathbf{H}_0^1(\Omega)\cap \mathbf{W}^{1,\kappa}(\Omega)\times L_0^2(\Omega)$, as $k\uparrow \infty$, at hand, we obtain that, for every $\mathbf{v} \in \mathbf{H}_0^1(\Omega)$ and $q \in L_0^2(\Omega)$,
\begin{equation*}
|\nu (\nabla (\bar{\mathbf{y}} - \mathbf{y}_{k}),\nabla \mathbf{v})_{\mathbf{L}^2(\Omega)}| \rightarrow 0, 
~~
|(\bar{p} - p_{k},\text{div }\mathbf{v})_{L^2(\Omega)}| \rightarrow 0,
~~
|(q,\text{div}(\bar{\mathbf{y}} - \mathbf{y}_{k}))_{L^2(\Omega)}| \rightarrow 0,
\end{equation*}
as $k\uparrow \infty$. On the other hand, $\mathbf{u}_{k}\rightharpoonup \bar{\mathbf{u}}$ in $\mathbf{L}^2(\Omega)$ yields $|(\bar{\mathbf{u}} - \mathbf{u}_{k},\mathbf{v})_{\mathbf{L}^2(\Omega)}|\rightarrow 0$ as $k\uparrow \infty$. Thus, it is sufficient to analyze the convergence of the convective term. For this purpose, we use the weak convergence $\mathbf{y}_{k}\rightharpoonup \bar{\mathbf{y}}$ in $\mathbf{H}_0^1(\Omega)$ as $k\uparrow \infty$ and the compact embedding $\mathbf{H}_0^1(\Omega)\hookrightarrow \mathbf{L}^4(\Omega)$  \cite[Theorem 6.3, Part I]{MR2424078} to obtain
\begin{equation*}
|b(\bar{\mathbf{y}};\bar{\mathbf{y}},\mathbf{v}) - b(\mathbf{y}_{k};\mathbf{y}_{k},\mathbf{v})| \leq | b(\bar{\mathbf{y}};\bar{\mathbf{y}} - \mathbf{y}_{k},\mathbf{v})| + |b(\bar{\mathbf{y}} - \mathbf{y}_{k};\mathbf{y}_{k},\mathbf{v}) | \rightarrow 0, \quad k\uparrow \infty.
\end{equation*}

Finally, we prove that $(\bar{\mathbf{y}}, \bar p, \bar{\mathbf{u}})$ is optimal. Note that  $\mathbf{y}_{k}\rightharpoonup \bar{\mathbf{y}}$ in $\mathbf{W}^{1,\kappa}(\Omega)$, as $k\uparrow \infty$, together with the compact embedding $\mathbf{W}^{1,\kappa}(\Omega)\hookrightarrow \mathbf{C}(\bar{\Omega})$ \cite[Theorem 6.3, Part III]{MR2424078} yields the strong convergence $\mathbf{y}_{k}\rightarrow \bar{\mathbf{y}}$ in $\mathbf{C}(\bar{\Omega})$ as $k\uparrow \infty$. The fact that $\mathbf{L}^2(\Omega) \ni \mathbf{v} \mapsto \|\mathbf{v}\|_{\mathbf{L}^2(\Omega)}^2 \in \mathbb{R}$ is weakly lower semicontinuous allows us to conclude.
\end{proof}


\section{Optimality conditions}\label{sec:opt_cond}
In this section, we analyze first and second order optimality conditions for the optimal control problem \eqref{eq:weak_cost}--\eqref{eq:weak_st_eq}. 


\subsection{First order optimality conditions} 
We begin our studies with a result establishing differentiability properties for the solution map $\mathbf{u}\mapsto (\mathbf{y},p)$ associated with problem \eqref{eq:weak_st_eq} around a regular velocity field $\bar{\mathbf{y}}$. 

\subsubsection{Differentiability properties for the solution map}

\begin{theorem}[differentiability of $\mathbf{u}\mapsto (\mathbf{y},p)$]
\label{thm:properties_C_to_S}
Let $(\bar{\mathbf{y}},\bar p)$ be a solution to \eqref{eq:weak_st_eq} associated to $\bar{\mathbf{u}} \in \mathbf{U}_{ad}$. If $\bar{\mathbf{y}}$ is regular, then there exist open neighborhoods $\mathcal{O}(\bar{\mathbf{u}}) \subset \mathbf{L}^{2}(\Omega)$, $\mathcal{O}(\bar{\mathbf{y}}) \subset \mathbf{V}(\Omega)$, and $\mathcal{O}(\bar{p}) \subset L_0^2(\Omega)$ of  $\bar{\mathbf{u}}$, $\bar{\mathbf{y}}$, and $\bar{p}$, respectively, and a $C^2$ map
\begin{equation}
\mathcal{S}: \mathcal{O}(\bar{\mathbf{u}}) \to \mathcal{O}(\bar{\mathbf{y}}) \times \mathcal{O}(\bar{p}),
\label{eq:mathcalS}
\end{equation}
such that $\mathcal{S}(\bar{\mathbf{u}}) = (\bar{\mathbf{y}},\bar{p})$. In addition, the neighborhood $\mathcal{O}(\bar{\mathbf{u}})$ can be taken such that, for each $\mathbf{u}\in \mathcal{O}(\bar{\mathbf{u}})$, 
\begin{itemize}
\item the pair $(\mathbf{y},p) = \mathcal{S}(\mathbf{u})$ uniquely solves \eqref{eq:weak_st_eq} in $\mathcal{O}(\bar{\mathbf{y}}) \times \mathcal{O}(\bar{p})$,
\item $\mathcal{S}'(\mathbf{u}):\mathbf{H}^{-1}(\Omega)\to \mathbf{V}(\Omega)\times L_0^2(\Omega)$ is an isomorphism,
\item if $\mathbf{g}\in \mathbf{H}^{-1}(\Omega)$, then $(\boldsymbol{\varphi},\zeta)=\mathcal{S}'(\mathbf{u})\mathbf{g}\in \mathbf{H}_0^1(\Omega)\times L_0^2(\Omega)$ corresponds to the unique solution to \eqref{eq:first_deriv_S*}, and
\item if $\mathbf{g}_1,\mathbf{g}_2\in \mathbf{H}^{-1}(\Omega)$, then $(\boldsymbol{\Psi},\xi)=\mathcal{S}''(\mathbf{u})(\mathbf{g}_1,\mathbf{g}_2)\in \mathbf{H}_0^1(\Omega) \times L_0^2(\Omega)$ corresponds to the unique solution to
\begin{multline}\label{eq:second_deriv_2}
\nu(\nabla \boldsymbol{\Psi}, \nabla \mathbf{v})_{\mathbf{L}^2(\Omega)} + b(\mathbf{y};\boldsymbol{\Psi},\mathbf{v}) + b(\boldsymbol{\Psi};\mathbf{y},\mathbf{v}) - (\xi, \textnormal{div }\mathbf{v})_{L^2(\Omega)} \\
= - b(\boldsymbol{\varphi}_{\mathbf{g}_1};\boldsymbol{\varphi}_{\mathbf{g}_2},\mathbf{v}) - b(\boldsymbol{\varphi}_{\mathbf{g}_2};\boldsymbol{\varphi}_{\mathbf{g}_1},\mathbf{v}), \qquad (q,\textnormal{div }\boldsymbol{\Psi})_{L^2(\Omega)} = 0,
\end{multline}
for all $(\mathbf{v},q)\in\mathbf{H}_0^1(\Omega)\times L_0^2(\Omega)$, where $(\boldsymbol{\varphi}_{\mathbf{g}_i},\zeta_{\mathbf{g}_i})=\mathcal{S}'(\mathbf{u})\mathbf{g}_i$ and $i \in \{ 1,2 \}$.
\end{itemize}
\end{theorem}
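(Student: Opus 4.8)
The plan is to obtain the $C^2$ solution map $\mathcal{S}$ as a direct application of the implicit function theorem in Banach spaces, using the isomorphism property encoded in Remark \ref{rmk:isomorphism}. First I would set up the abstract framework: define the map
\[
\mathcal{F}: \mathbf{L}^2(\Omega) \times \big(\mathbf{V}(\Omega) \times L_0^2(\Omega)\big) \to \mathbf{H}^{-1}(\Omega),
\]
by $\langle \mathcal{F}(\mathbf{u},\mathbf{y},p),\mathbf{v}\rangle := \nu(\nabla\mathbf{y},\nabla\mathbf{v})_{\mathbf{L}^2(\Omega)} + b(\mathbf{y};\mathbf{y},\mathbf{v}) - (p,\mathrm{div}\,\mathbf{v})_{L^2(\Omega)} - (\mathbf{u},\mathbf{v})_{\mathbf{L}^2(\Omega)}$, acting on test functions $\mathbf{v}\in\mathbf{H}_0^1(\Omega)$; note that working on the divergence-free space $\mathbf{V}(\Omega)$ automatically encodes the incompressibility constraint, and the pressure is recovered via the inf-sup condition \eqref{eq:inf_sup_cond}. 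Then $(\mathbf{y},p)=\mathcal{S}(\mathbf{u})$ iff $\mathcal{F}(\mathbf{u},\mathbf{y},p)=\mathbf{0}$, and by hypothesis $\mathcal{F}(\bar{\mathbf{u}},\bar{\mathbf{y}},\bar{p})=\mathbf{0}$. The map $\mathcal{F}$ is of class $C^\infty$ in all arguments: it is affine in $\mathbf{u}$ and $p$ and quadratic in $\mathbf{y}$ through the bounded trilinear form $b$, whose continuity on $\mathbf{H}_0^1(\Omega)^3$ is \eqref{eq:trilinear_embedding}; hence all derivatives beyond second order in $\mathbf{y}$ vanish.

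Next I would compute the partial derivative of $\mathcal{F}$ with respect to $(\mathbf{y},p)$ at $(\bar{\mathbf{u}},\bar{\mathbf{y}},\bar{p})$. A direct computation gives
\[
\langle D_{(\mathbf{y},p)}\mathcal{F}(\bar{\mathbf{u}},\bar{\mathbf{y}},\bar{p})(\boldsymbol{\varphi},\zeta),\mathbf{v}\rangle = \nu(\nabla\boldsymbol{\varphi},\nabla\mathbf{v})_{\mathbf{L}^2(\Omega)} + b(\bar{\mathbf{y}};\boldsymbol{\varphi},\mathbf{v}) + b(\boldsymbol{\varphi};\bar{\mathbf{y}},\mathbf{v}) - (\zeta,\mathrm{div}\,\mathbf{v})_{L^2(\Omega)},
\]
which is precisely the left-hand side of the linearized system \eqref{eq:first_deriv_S*} (restricted to $\mathbf{V}(\Omega)$, this is exactly the operator $T$ of \eqref{eq:T}). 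Since $\bar{\mathbf{y}}$ is regular, Definition \ref{def:regular_solution} together with Remark \ref{rmk:isomorphism} says this linear operator is an isomorphism from $\mathbf{V}(\Omega)\times L_0^2(\Omega)$ onto $\mathbf{H}^{-1}(\Omega)$. The implicit function theorem then yields open neighborhoods $\mathcal{O}(\bar{\mathbf{u}})\subset\mathbf{L}^2(\Omega)$, $\mathcal{O}(\bar{\mathbf{y}})\subset\mathbf{V}(\Omega)$, $\mathcal{O}(\bar{p})\subset L_0^2(\Omega)$ and a $C^\infty$ (in particular $C^2$) map $\mathcal{S}:\mathcal{O}(\bar{\mathbf{u}})\to\mathcal{O}(\bar{\mathbf{y}})\times\mathcal{O}(\bar{p})$ with $\mathcal{S}(\bar{\mathbf{u}})=(\bar{\mathbf{y}},\bar{p})$, such that for each $\mathbf{u}\in\mathcal{O}(\bar{\mathbf{u}})$ the pair $(\mathbf{y},p)=\mathcal{S}(\mathbf{u})$ is the unique zero of $\mathcal{F}(\mathbf{u},\cdot,\cdot)$ in $\mathcal{O}(\bar{\mathbf{y}})\times\mathcal{O}(\bar{p})$, i.e., the unique solution of \eqref{eq:weak_st_eq} there. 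Shrinking $\mathcal{O}(\bar{\mathbf{u}})$ if necessary and using that the set of isomorphisms is open, $D_{(\mathbf{y},p)}\mathcal{F}(\mathbf{u},\mathcal{S}(\mathbf{u}))$ remains an isomorphism for every $\mathbf{u}\in\mathcal{O}(\bar{\mathbf{u}})$; since this operator coincides with the one defining problem \eqref{eq:first_deriv_S*} with $\mathbf{y}$ replaced by the first component of $\mathcal{S}(\mathbf{u})$, and since the right-hand side $\mathbf{g}\mapsto\langle\mathbf{g},\cdot\rangle$ defines the forcing, the identity $\mathcal{S}'(\mathbf{u})=-\big(D_{(\mathbf{y},p)}\mathcal{F}\big)^{-1}\circ D_{\mathbf{u}}\mathcal{F}$ and $D_{\mathbf{u}}\mathcal{F}(\mathbf{u},\mathbf{y},p)\mathbf{g}=-(\mathbf{g},\cdot)_{\mathbf{L}^2}$ show that $(\boldsymbol{\varphi},\zeta)=\mathcal{S}'(\mathbf{u})\mathbf{g}$ solves \eqref{eq:first_deriv_S*}; this also gives the claimed isomorphism property of $\mathcal{S}'(\mathbf{u})$.

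For the second derivative, I would differentiate the identity $\mathcal{F}(\mathbf{u},\mathcal{S}(\mathbf{u}))=\mathbf{0}$ twice in the directions $\mathbf{g}_1,\mathbf{g}_2$. Writing $(\boldsymbol{\varphi}_{\mathbf{g}_i},\zeta_{\mathbf{g}_i})=\mathcal{S}'(\mathbf{u})\mathbf{g}_i$ and $(\boldsymbol{\Psi},\xi)=\mathcal{S}''(\mathbf{u})(\mathbf{g}_1,\mathbf{g}_2)$, the chain rule gives
\[
D_{(\mathbf{y},p)}\mathcal{F}(\mathbf{u},\mathcal{S}(\mathbf{u}))(\boldsymbol{\Psi},\xi) + D^2_{(\mathbf{y},p)}\mathcal{F}(\mathbf{u},\mathcal{S}(\mathbf{u}))\big((\boldsymbol{\varphi}_{\mathbf{g}_1},\zeta_{\mathbf{g}_1}),(\boldsymbol{\varphi}_{\mathbf{g}_2},\zeta_{\mathbf{g}_2})\big) = \mathbf{0},
\]
since $\mathcal{F}$ is affine in $\mathbf{u}$ so the mixed and pure-$\mathbf{u}$ second derivatives vanish. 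The only nonvanishing second derivative of $\mathcal{F}$ in $(\mathbf{y},p)$ comes from the quadratic term $b(\mathbf{y};\mathbf{y},\mathbf{v})$, whose second derivative in direction $(\boldsymbol{\varphi}_{\mathbf{g}_1},\boldsymbol{\varphi}_{\mathbf{g}_2})$ equals $b(\boldsymbol{\varphi}_{\mathbf{g}_1};\boldsymbol{\varphi}_{\mathbf{g}_2},\mathbf{v})+b(\boldsymbol{\varphi}_{\mathbf{g}_2};\boldsymbol{\varphi}_{\mathbf{g}_1},\mathbf{v})$. Moving this term to the right-hand side produces exactly \eqref{eq:second_deriv_2}, and uniqueness follows again from the isomorphism property of $D_{(\mathbf{y},p)}\mathcal{F}(\mathbf{u},\mathcal{S}(\mathbf{u}))$. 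The main obstacle here is essentially bookkeeping: one must be careful that the implicit function theorem is applied on the divergence-free space $\mathbf{V}(\Omega)\times L_0^2(\Omega)$ (so that the constraint $\mathrm{div}\,\mathbf{y}=0$ is built into the domain rather than into the range of $\mathcal{F}$), verify that the neighborhoods can be taken in $\mathbf{V}(\Omega)$ and $L_0^2(\Omega)$ as stated, and match the abstract derivatives with the concrete weak formulations \eqref{eq:first_deriv_S*} and \eqref{eq:second_deriv_2}; the regularity hypothesis on $\bar{\mathbf{y}}$ does all the analytic work through Remark \ref{rmk:isomorphism}, while the openness of the invertible operators and the smoothness of $\mathcal{F}$ are standard.
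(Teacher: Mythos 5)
Your proposal is correct and follows essentially the same route as the paper, which simply invokes the implicit function theorem argument of \cite[Theorem 2.10]{MR3936891} and \cite[Theorem 2.5]{MR2338434}: the regularity of $\bar{\mathbf{y}}$ supplies the isomorphism of the linearized operator (Remark \ref{rmk:isomorphism}), the IFT yields the $C^2$ map $\mathcal{S}$, and the characterizations of $\mathcal{S}'$ and $\mathcal{S}''$ follow by differentiating $\mathcal{F}(\mathbf{u},\mathcal{S}(\mathbf{u}))=\mathbf{0}$ exactly as you describe. Your write-up just supplies the details the paper omits.
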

\begin{proof}
The proof follows from a slight modification of that in \cite[Theorem 2.10]{MR3936891}, which is based on the implicit function theorem; see also \cite[Theorem 2.5]{MR2338434}. For the sake of brevity, we omit details.
\end{proof}

We now present a basic Lipschitz property for the map $\mathcal{S}$.

\begin{proposition}[$\mathcal{S}$ is Lipschitz]
In the framework of Theorem \ref{thm:properties_C_to_S}, we have 
\[
\| \nabla( \bar{\mathbf{y}}- \mathbf{y}) \|_{\mathbf{L}^2(\Omega)}
+
\| \bar{p}- p\|_{L^2(\Omega)}
\lesssim
\|  \bar{\mathbf{u}}- \mathbf{u} \|_{\mathbf{H}^{-1}(\Omega)},
\]
with a hidden constant depending on $\mathcal{S}'$ and a suitable neighborhood $\mathcal{O}(\bar{\mathbf{u}})$ of $\bar{\mathbf{u}}$.
\label{pro:Lipschitz_property}
\end{proposition}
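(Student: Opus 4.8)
The plan is to derive the estimate directly from the differentiability properties of $\mathcal{S}$ established in Theorem \ref{thm:properties_C_to_S}, together with the weak formulations of the state equation satisfied by $(\bar{\mathbf{y}},\bar p)$ and $(\mathbf{y},p)$. The key structural fact is that $\mathcal{S}$ is a $C^2$ map on the neighborhood $\mathcal{O}(\bar{\mathbf{u}})$, hence in particular locally Lipschitz; the content of the proposition is that the Lipschitz constant with respect to the $\mathbf{H}^{-1}(\Omega)$-topology on the control side can be taken uniform on a suitable neighborhood. I would shrink $\mathcal{O}(\bar{\mathbf{u}})$ if necessary so that it is convex and bounded, and so that the closure of $\mathcal{S}(\mathcal{O}(\bar{\mathbf{u}}))$ stays within the region where the regularity/isomorphism statements of Theorem \ref{thm:properties_C_to_S} and Remark \ref{rmk:isomorphism} apply.

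The main computation is as follows. Set $\boldsymbol{\delta} := \bar{\mathbf{y}} - \mathbf{y}$ and $\eta := \bar p - p$, and subtract the two copies of \eqref{eq:weak_st_eq}. Using the bilinearity of $(\nabla\cdot,\nabla\cdot)_{\mathbf{L}^2(\Omega)}$, $(\cdot,\textnormal{div}\,\cdot)_{L^2(\Omega)}$ and the identity $b(\bar{\mathbf{y}};\bar{\mathbf{y}},\mathbf{v}) - b(\mathbf{y};\mathbf{y},\mathbf{v}) = b(\boldsymbol{\delta};\bar{\mathbf{y}},\mathbf{v}) + b(\mathbf{y};\boldsymbol{\delta},\mathbf{v})$, one obtains for all $(\mathbf{v},q)\in\mathbf{H}_0^1(\Omega)\times L_0^2(\Omega)$
\begin{equation*}
\nu(\nabla\boldsymbol{\delta},\nabla\mathbf{v})_{\mathbf{L}^2(\Omega)} + b(\mathbf{y};\boldsymbol{\delta},\mathbf{v}) + b(\boldsymbol{\delta};\bar{\mathbf{y}},\mathbf{v}) - (\eta,\textnormal{div}\,\mathbf{v})_{L^2(\Omega)} = (\bar{\mathbf{u}}-\mathbf{u},\mathbf{v})_{\mathbf{L}^2(\Omega)},
\end{equation*}
together with $(q,\textnormal{div}\,\boldsymbol{\delta})_{L^2(\Omega)} = 0$. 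This is precisely the system \eqref{eq:first_deriv_S*} for the linearization at $\mathbf{y}$ (not at $\bar{\mathbf{y}}$), with datum $\mathbf{g} = \bar{\mathbf{u}}-\mathbf{u} \in \mathbf{H}^{-1}(\Omega)$ — observe that $b(\boldsymbol{\delta};\bar{\mathbf{y}},\mathbf{v})$ matches the term $b(\boldsymbol{\varphi};\mathbf{y},\mathbf{v})$ in \eqref{eq:first_deriv_S*} because the linearization point there is the state $\mathbf{y}$ itself. After possibly shrinking $\mathcal{O}(\bar{\mathbf{u}})$ so that $\mathbf{y} = \mathcal{S}_1(\mathbf{u})$ is regular for every $\mathbf{u}\in\mathcal{O}(\bar{\mathbf{u}})$ (which holds near $\bar{\mathbf{u}}$ since regularity is an open condition, being equivalent to invertibility of the map $T$ in \eqref{eq:T}, and $T$ depends continuously on $\mathbf{y}$, which in turn depends continuously on $\mathbf{u}$), Theorem \ref{thm:properties_C_to_S} tells us that $\mathcal{S}'(\mathbf{u}):\mathbf{H}^{-1}(\Omega)\to\mathbf{V}(\Omega)\times L_0^2(\Omega)$ is an isomorphism and that $(\boldsymbol{\delta},\eta) = \mathcal{S}'(\mathbf{u})(\bar{\mathbf{u}}-\mathbf{u})$. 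Hence
\begin{equation*}
\|\nabla\boldsymbol{\delta}\|_{\mathbf{L}^2(\Omega)} + \|\eta\|_{L^2(\Omega)} \leq \|\mathcal{S}'(\mathbf{u})\|_{\mathcal{L}(\mathbf{H}^{-1}(\Omega),\,\mathbf{V}(\Omega)\times L_0^2(\Omega))}\,\|\bar{\mathbf{u}}-\mathbf{u}\|_{\mathbf{H}^{-1}(\Omega)},
\end{equation*}
and it remains to bound $\|\mathcal{S}'(\mathbf{u})\|$ uniformly for $\mathbf{u}\in\mathcal{O}(\bar{\mathbf{u}})$: this follows because $\mathcal{S}$ is $C^2$, so $\mathbf{u}\mapsto\mathcal{S}'(\mathbf{u})$ is continuous into $\mathcal{L}(\mathbf{H}^{-1}(\Omega),\mathbf{V}(\Omega)\times L_0^2(\Omega))$, hence bounded on a sufficiently small (relatively compact) neighborhood of $\bar{\mathbf{u}}$; the resulting bound is the hidden constant in the statement.

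The main obstacle, and the only genuinely delicate point, is justifying that the linearized operator at the \emph{varying} point $\mathbf{y}=\mathcal{S}_1(\mathbf{u})$ remains a uniformly bounded isomorphism: one must confirm that regularity of $\bar{\mathbf{y}}$ propagates to nearby states $\mathbf{y}$ and that the norm of the inverse does not blow up as $\mathbf{u}\to\bar{\mathbf{u}}$. Both are consequences of the $C^2$ (indeed $C^1$ suffices here) regularity of $\mathcal{S}$ on $\mathcal{O}(\bar{\mathbf{u}})$ asserted in Theorem \ref{thm:properties_C_to_S}, since $\mathcal{S}'$ being continuous and everywhere an isomorphism on a neighborhood gives, after shrinking, a uniform bound on $\|\mathcal{S}'(\mathbf{u})\|$ by continuity. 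Alternatively, if one prefers to avoid invoking the isomorphism property of $\mathcal{S}'(\mathbf{u})$ at points other than $\bar{\mathbf{u}}$, one can instead apply the fundamental theorem of calculus along the segment $[\mathbf{u},\bar{\mathbf{u}}]\subset\mathcal{O}(\bar{\mathbf{u}})$, writing $\mathcal{S}(\bar{\mathbf{u}})-\mathcal{S}(\mathbf{u}) = \int_0^1 \mathcal{S}'(\mathbf{u}+s(\bar{\mathbf{u}}-\mathbf{u}))(\bar{\mathbf{u}}-\mathbf{u})\,\mathrm{d}s$ and bounding the integrand by $\sup_{s\in[0,1]}\|\mathcal{S}'(\cdot)\|\,\|\bar{\mathbf{u}}-\mathbf{u}\|_{\mathbf{H}^{-1}(\Omega)}$, which again uses only continuity of $\mathcal{S}'$ and convexity of $\mathcal{O}(\bar{\mathbf{u}})$. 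Either route yields the claim; I would present the first, since the subtraction-of-equations computation makes the structure transparent and identifies the datum explicitly as $\bar{\mathbf{u}}-\mathbf{u}$ in $\mathbf{H}^{-1}(\Omega)$.
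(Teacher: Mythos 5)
Your primary route contains a genuine error at its key step. Subtracting the two copies of \eqref{eq:weak_st_eq} does yield
\begin{equation*}
\nu(\nabla\boldsymbol{\delta},\nabla\mathbf{v})_{\mathbf{L}^2(\Omega)} + b(\mathbf{y};\boldsymbol{\delta},\mathbf{v}) + b(\boldsymbol{\delta};\bar{\mathbf{y}},\mathbf{v}) - (\eta,\textnormal{div}\,\mathbf{v})_{L^2(\Omega)} = (\bar{\mathbf{u}}-\mathbf{u},\mathbf{v})_{\mathbf{L}^2(\Omega)},
\end{equation*}
but this is \emph{not} the system \eqref{eq:first_deriv_S*} linearized at $\mathbf{y}$: that system carries $b(\boldsymbol{\varphi};\mathbf{y},\mathbf{v})$ in the second convective slot, whereas your equation carries $b(\boldsymbol{\delta};\bar{\mathbf{y}},\mathbf{v}) = b(\boldsymbol{\delta};\mathbf{y},\mathbf{v}) + b(\boldsymbol{\delta};\boldsymbol{\delta},\mathbf{v})$. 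The extra quadratic term $b(\boldsymbol{\delta};\boldsymbol{\delta},\mathbf{v})$ is precisely the second-order remainder of the nonlinear solution map, so the identity $(\boldsymbol{\delta},\eta)=\mathcal{S}'(\mathbf{u})(\bar{\mathbf{u}}-\mathbf{u})$ is false in general (it would make the first-order Taylor expansion of $\mathcal{S}$ exact, i.e.\ $\mathcal{S}$ affine on the neighborhood), and the bound $\|\nabla\boldsymbol{\delta}\|_{\mathbf{L}^2(\Omega)}+\|\eta\|_{L^2(\Omega)}\leq\|\mathcal{S}'(\mathbf{u})\|\,\|\bar{\mathbf{u}}-\mathbf{u}\|_{\mathbf{H}^{-1}(\Omega)}$ does not follow as stated. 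The subtraction argument could be salvaged, but only by treating $b(\cdot\,;\boldsymbol{\delta},\cdot)$ as a small perturbation of the isomorphism $T$ from \eqref{eq:T} (a Neumann-series step, valid after shrinking the neighborhood so that $\mathcal{C}_b\|\nabla\boldsymbol{\delta}\|_{\mathbf{L}^2(\Omega)}$ is dominated by the inverse norm of $T$); that step is missing. A secondary quibble: in an infinite-dimensional space a small ball is not relatively compact, so the uniform bound on $\|\mathcal{S}'(\mathbf{u})\|$ should be obtained from continuity of $\mathcal{S}'$ at the single point $\bar{\mathbf{u}}$, not from compactness.

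The alternative you mention only in passing and then set aside --- writing $\mathcal{S}(\bar{\mathbf{u}})-\mathcal{S}(\mathbf{u})=\int_0^1\mathcal{S}'(\mathbf{u}+s(\bar{\mathbf{u}}-\mathbf{u}))(\bar{\mathbf{u}}-\mathbf{u})\,\mathrm{d}s$ on a convex neighborhood and bounding the integrand by $\sup_{s}\|\mathcal{S}'(\cdot)\|\,\|\bar{\mathbf{u}}-\mathbf{u}\|_{\mathbf{H}^{-1}(\Omega)}$ --- is exactly the paper's proof (a mean value theorem for operators together with the uniform bound $\|\mathcal{S}'(\mathbf{u})\|\leq\mathcal{M}_{\mathcal{S}}$), and it is the route that closes. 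Present that version, or complete the perturbation step if you want to keep the subtraction-of-equations computation.
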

\begin{proof}
Using the results of Theorem \ref{thm:properties_C_to_S}, we can choose an open, bounded, and convex neighborhood $\mathcal{O}(\bar{\mathbf{u}})$ of $\bar{\mathbf{u}}$ such that, for every $\mathbf{u} \in \mathcal{O}(\bar{\mathbf{u}})$, $\mathcal{S}'(\mathbf{u}):\mathbf{H}^{-1}(\Omega)\to \mathbf{V}(\Omega)\times L_0^2(\Omega)$ is an isomorphism and $\| \mathcal{S}'(\mathbf{u}) \| \leq \mathcal{M}_{\mathcal{S}}$. Here, $\mathcal{M}_{\mathcal{S}} >0$ and $\| \cdot \|$ denotes the norm in the space of linear and continuous operators from $\mathbf{H}^{-1}(\Omega)$ into $\mathbf{V}(\Omega)\times L_0^2(\Omega)$. An application of a mean value theorem for operators shows that
\[
\| \mathcal{S}(\bar{\mathbf{u}}) -  \mathcal{S}(\mathbf{u}) \|_{\mathbf{H}_0^1(\Omega) \times L^2(\Omega)} \leq \sup_{t \in [0,1]} \| \mathcal{S}'( (1-t) \bar{\mathbf{u}}  + t \mathbf{u} )\|  
\| \bar{\mathbf{u}} - \mathbf{u}  \|_{\mathbf{H}^{-1}(\Omega)}
\]
for every $\mathbf{u} \in \mathcal{O}(\bar{\mathbf{u}})$ \cite[Proposition 5.3.11]{MR2511061}. Since the line segment connecting $\bar{\mathbf{u}}$ and $\mathbf{u}$ is contained in $\mathcal{O}(\bar{\mathbf{u}})$, the bound $\| \mathcal{S}'(\mathbf{u}) \| \leq \mathcal{M}_{\mathcal{S}}$, for $\mathbf{u} \in \mathcal{O}(\bar{\mathbf{u}})$, allows us to conclude.
\end{proof}

We conclude this section with the following improved Lipschitz property.

\begin{theorem}[Lipschitz property]
In the framework of Theorem \ref{thm:properties_C_to_S}, we have 
\[
\| \nabla( \bar{\mathbf{y}}- \mathbf{y}) \|_{\mathbf{L}^{\kappa}(\Omega)}
+
\| \bar{p}- p\|_{L^{\kappa}(\Omega)}
\lesssim
\|  \bar{\mathbf{u}}- \mathbf{u} \|_{\mathbf{W}^{-1,\kappa}(\Omega)},
\]
where $\kappa$ is as in the statement of Theorem \ref{thm:reg_velocity} and the involved hidden constant depends on $\| \nabla \bar{\mathbf{y}} \|_{\mathbf{L}^{\kappa}(\Omega)}$, $\| \nabla \mathbf{y} \|_{\mathbf{L}^{\kappa}(\Omega)}$, $\mathcal{S}'$, and a suitable neighborhood $\mathcal{O}(\bar{\mathbf{u}})$ of $\bar{\mathbf{u}}$.
\label{thm:Lipschitz_property}
\end{theorem}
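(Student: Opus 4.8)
The plan is to upgrade the $\mathbf{H}^{-1}$-Lipschitz estimate of Proposition~\ref{pro:Lipschitz_property} to the $\mathbf{W}^{-1,\kappa}$-norm by subtracting the two Navier--Stokes systems and viewing the difference as a Stokes problem with a convective right-hand side, then invoking the $\mathbf{W}^{1,\kappa}$-regularity theory for the Stokes operator already used in the proofs of Theorems~\ref{thm:reg_velocity} and \ref{thm:well-posedness-derivative}. Write $(\bar{\mathbf{y}},\bar p)=\mathcal{S}(\bar{\mathbf{u}})$, $(\mathbf{y},p)=\mathcal{S}(\mathbf{u})$, set $\boldsymbol{\delta}:=\bar{\mathbf{y}}-\mathbf{y}$, $\pi:=\bar p-p$. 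Subtracting the weak formulations \eqref{eq:weak_st_eq} and using bilinearity of $(\mathbf{v}_1,\mathbf{v}_2)\mapsto b(\mathbf{v}_1;\mathbf{v}_2,\cdot)$ gives, for all $(\mathbf{v},q)$,
\begin{equation*}
\nu(\nabla\boldsymbol{\delta},\nabla\mathbf{v})_{\mathbf{L}^2(\Omega)} - (\pi,\textnormal{div }\mathbf{v})_{L^2(\Omega)} = (\bar{\mathbf{u}}-\mathbf{u},\mathbf{v})_{\mathbf{L}^2(\Omega)} - b(\boldsymbol{\delta};\bar{\mathbf{y}},\mathbf{v}) - b(\mathbf{y};\boldsymbol{\delta},\mathbf{v}),
\end{equation*}
together with $(q,\textnormal{div }\boldsymbol{\delta})_{L^2(\Omega)}=0$. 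So $(\boldsymbol{\delta},\pi)$ solves a Stokes system with forcing $\mathbf{F}:=(\bar{\mathbf{u}}-\mathbf{u}) - b(\boldsymbol{\delta};\bar{\mathbf{y}},\cdot) - b(\mathbf{y};\boldsymbol{\delta},\cdot)\in\mathbf{W}^{-1,\kappa}(\Omega)$, and the Stokes regularity estimate from \cite[Theorem 2.9]{Brown_Shen} (equivalently \cite[Corollary 1.7]{MR2987056} with $\alpha=-1$, $q=2$) yields $\|\nabla\boldsymbol{\delta}\|_{\mathbf{L}^\kappa(\Omega)} + \|\pi\|_{L^\kappa(\Omega)} \lesssim \|\mathbf{F}\|_{\mathbf{W}^{-1,\kappa}(\Omega)}$.

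Next I would estimate the two convective contributions to $\|\mathbf{F}\|_{\mathbf{W}^{-1,\kappa}(\Omega)}$ exactly as in the proofs of Theorem~\ref{thm:reg_velocity} and Lemma~\ref{lemma:regularity_estimate}. For $d=3$: testing against $\mathbf{v}\in\mathbf{W}_0^{1,\kappa'}(\Omega)$ and using $\mathbf{W}_0^{1,\kappa'}(\Omega)\hookrightarrow\mathbf{L}^2(\Omega)$ (valid since $\kappa>3$ forces $\kappa'<3/2$, so one instead uses the embedding into an appropriate $\mathbf{L}^\tau$), Hölder, $\bar{\mathbf{y}},\mathbf{y}\in\mathbf{W}^{1,\kappa}_0(\Omega)\hookrightarrow\mathbf{C}(\bar\Omega)$ from Theorem~\ref{thm:reg_velocity}, and $\boldsymbol{\delta}\in\mathbf{H}_0^1(\Omega)\hookrightarrow\mathbf{L}^6(\Omega)$, one obtains
\begin{equation*}
|b(\boldsymbol{\delta};\bar{\mathbf{y}},\mathbf{v})| + |b(\mathbf{y};\boldsymbol{\delta},\mathbf{v})| \lesssim \big(\|\nabla\bar{\mathbf{y}}\|_{\mathbf{L}^\kappa(\Omega)} + \|\mathbf{y}\|_{\mathbf{L}^\infty(\Omega)}\big)\,\|\nabla\boldsymbol{\delta}\|_{\mathbf{L}^2(\Omega)}\,\|\nabla\mathbf{v}\|_{\mathbf{L}^{\kappa'}(\Omega)},
\end{equation*}
with the $d=2$ case handled identically but with simpler exponents (one may in fact keep $\|\nabla\boldsymbol{\delta}\|_{\mathbf{L}^2(\Omega)}$ throughout). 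Also $\|\bar{\mathbf{u}}-\mathbf{u}\|_{\mathbf{W}^{-1,\kappa}(\Omega)}$ is of course bounded by itself. Combining, $\|\mathbf{F}\|_{\mathbf{W}^{-1,\kappa}(\Omega)} \lesssim \|\bar{\mathbf{u}}-\mathbf{u}\|_{\mathbf{W}^{-1,\kappa}(\Omega)} + C(\bar{\mathbf{y}},\mathbf{y})\|\nabla\boldsymbol{\delta}\|_{\mathbf{L}^2(\Omega)}$.

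Finally I would close the estimate: plug the last bound into the Stokes estimate to get $\|\nabla\boldsymbol{\delta}\|_{\mathbf{L}^\kappa(\Omega)} + \|\pi\|_{L^\kappa(\Omega)} \lesssim \|\bar{\mathbf{u}}-\mathbf{u}\|_{\mathbf{W}^{-1,\kappa}(\Omega)} + C(\bar{\mathbf{y}},\mathbf{y})\|\nabla\boldsymbol{\delta}\|_{\mathbf{L}^2(\Omega)}$, and then control the remaining lower-order term $\|\nabla\boldsymbol{\delta}\|_{\mathbf{L}^2(\Omega)}$ by Proposition~\ref{pro:Lipschitz_property}, which gives $\|\nabla\boldsymbol{\delta}\|_{\mathbf{L}^2(\Omega)}\lesssim\|\bar{\mathbf{u}}-\mathbf{u}\|_{\mathbf{H}^{-1}(\Omega)}\lesssim\|\bar{\mathbf{u}}-\mathbf{u}\|_{\mathbf{W}^{-1,\kappa}(\Omega)}$ (the last embedding holding since $\kappa>d/(d-1)$, hence $\mathbf{W}^{-1,\kappa}(\Omega)\hookrightarrow\mathbf{H}^{-1}(\Omega)$). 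This yields the claimed bound with a hidden constant depending on $\|\nabla\bar{\mathbf{y}}\|_{\mathbf{L}^\kappa(\Omega)}$, $\|\nabla\mathbf{y}\|_{\mathbf{L}^\kappa(\Omega)}$, $\mathcal{S}'$, and $\mathcal{O}(\bar{\mathbf{u}})$. The main obstacle is purely bookkeeping: choosing the Hölder exponents in the convective estimates so that each factor lands in a space with a valid Sobolev embedding (using the $\mathbf{C}(\bar\Omega)$-regularity of the velocities to absorb the worst factor), while keeping the $\boldsymbol{\delta}$-factor at the $\mathbf{L}^2$-gradient level so that Proposition~\ref{pro:Lipschitz_property} can finish the argument — there is no fixed-point or smallness issue here since the problematic term is already lower order.
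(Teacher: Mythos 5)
Your proposal is correct and follows essentially the same route as the paper's proof: recast $(\bar{\mathbf{y}}-\mathbf{y},\bar p-p)$ as a Stokes system, show the convective part of the forcing lies in $\mathbf{W}^{-1,\kappa}(\Omega)$ via H\"older, the $\mathbf{W}^{1,\kappa}$-regularity of $\bar{\mathbf{y}},\mathbf{y}$ from Theorem \ref{thm:reg_velocity}, and the $\mathbf{H}^{-1}$-Lipschitz bound of Proposition \ref{pro:Lipschitz_property}, then conclude with \cite[Corollary 1.7]{MR2987056}. The only differences are cosmetic (a symmetric variant of the splitting of $b(\bar{\mathbf{y}};\bar{\mathbf{y}},\cdot)-b(\mathbf{y};\mathbf{y},\cdot)$, and note that the embedding $\mathbf{W}^{-1,\kappa}(\Omega)\hookrightarrow\mathbf{H}^{-1}(\Omega)$ is justified by $\kappa\geq 2$ rather than $\kappa>d/(d-1)$, which holds here since $\kappa>3$).
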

\begin{proof}
We begin the proof by noting that $(\bar{\mathbf{y}} - \mathbf{y},\bar{p} - p)\in \mathbf{H}_0^1(\Omega)\times L_0^2(\Omega)$ can be seen as the solution to the following Stokes problem:
\begin{multline}
\nu(\nabla (\bar{\mathbf{y}} - \mathbf{y}),\nabla \mathbf{v})_{\mathbf{L}^2(\Omega)} 
- (\bar{p} - p,\text{div }\mathbf{v})_{L^2(\Omega)}
= (\bar{\mathbf{u}} - \mathbf{u},\mathbf{v})_{\mathbf{L}^2(\Omega)}
\\
- 
b(\bar{\mathbf{y}};\bar{\mathbf{y}} - \mathbf{y} ,\mathbf{v}) 
- 
b(\bar{\mathbf{y}} - \mathbf{y};\mathbf{y},\mathbf{v}) 
, \qquad (q,\text{div}(\bar{\mathbf{y}}- \mathbf{y}))_{L^2(\Omega)} = 0,
\label{eq:Lipschitz_property_Stokes}
\end{multline}
for all $(\mathbf{v},q)\in \mathbf{H}_0^1(\Omega)\times L_0^2(\Omega)$. We now prove that the forcing term of the momentum equation in \eqref{eq:Lipschitz_property_Stokes} belongs to $\mathbf{W}^{-1,\kappa}(\Omega)$, where $\kappa$ is the same as in the statement of Theorem \ref{thm:reg_velocity}. Since $\bar{\mathbf{y}}, \mathbf{y}$ belong to $\mathbf{W}_0^{1,\kappa}(\Omega)$ (see Theorem \ref{thm:reg_velocity}) and $\mathbf{W}_0^{1,\kappa}(\Omega) \hookrightarrow \mathbf{C}(\bar \Omega)$, similar arguments to those elaborated in the proof of Lemma \ref{lemma:regularity_estimate} reveal that
\[
\|b(\bar{\mathbf{y}};\bar{\mathbf{y}} - \mathbf{y} ,\cdot) \|_{\mathbf{W}^{-1,\kappa}(\Omega)}
\lesssim 
\| \bar{\mathbf{y}} \|_{\mathbf{L}^{\infty}(\Omega)}
\| \nabla( \bar{\mathbf{y}} - \mathbf{y}) \|_{\mathbf{L}^{2}(\Omega)}
\lesssim
\| \bar{\mathbf{y}} \|_{\mathbf{L}^{\infty}(\Omega)}\| \bar{\mathbf{u}} - \mathbf{u}  \|_{\mathbf{W}^{-1,\kappa}(\Omega)},
\]
upon first utilizing that $\mathbf{W}_0^{1,\kappa'}(\Omega) \hookrightarrow \mathbf{L}^2(\Omega)$ and then the Lipschitz property of Proposition \ref{pro:Lipschitz_property} combined with the Sobolev embedding $\mathbf{W}^{-1,\kappa}(\Omega) \hookrightarrow \mathbf{H}^{-1}(\Omega)$. Similarly,
\[
\|b(\bar{\mathbf{y}} - \mathbf{y};\mathbf{y},\cdot) \|_{\mathbf{W}^{-1,\kappa}(\Omega)}
\lesssim 
\| \bar{\mathbf{y}} - \mathbf{y}\|_{\mathbf{L}^{\mu}(\Omega)}\| \nabla \mathbf{y} \|_{\mathbf{L}^{\kappa}(\Omega)},
\quad
\mu^{-1} + \kappa^{-1} + \sigma^{-1} = 1,
\]
upon utilizing that $\mathbf{W}_0^{1,\kappa'}(\Omega) \hookrightarrow \mathbf{L}^{\sigma}(\Omega)$. Here, $\kappa$ is dictated by Theorem \ref{thm:reg_velocity} and $\mu$ is such that $\mu < \infty$ when $d=2$ and $\mu \leq 6$ when $d=3$. We thus invoke the Sobolev embedding $\mathbf{H}_0^1(\Omega)\hookrightarrow \mathbf{L}^{\mu}(\Omega)$, the Lipschitz property of Proposition \ref{pro:Lipschitz_property}, and the fact that $\mathbf{W}^{-1,\kappa}(\Omega) \hookrightarrow \mathbf{H}^{-1}(\Omega)$ to obtain
\[
\|b(\bar{\mathbf{y}} - \mathbf{y};\mathbf{y},\cdot) \|_{\mathbf{W}^{-1,\kappa}(\Omega)} 
\lesssim 
\| \nabla( \bar{\mathbf{y}} - \mathbf{y}) \|_{\mathbf{L}^{2}(\Omega)}
\lesssim
\| \bar{\mathbf{u}} - \mathbf{u}  \|_{\mathbf{W}^{-1,\kappa}(\Omega)}.
\]
We have thus proved that the forcing term of the momentum equation in \eqref{eq:Lipschitz_property_Stokes} belongs to $\mathbf{W}^{-1,\kappa}(\Omega)$. Applying \cite[Corollary 1.7]{MR2987056}, with $\alpha=-1$ and $q=2$, we conclude.
\end{proof}


\subsubsection{Local solutions}
\label{subsec:local_solutions}
In the absence of convexity, we discuss optimality conditions in the context of local solutions in $\mathbf{L}^2(\Omega)$ \cite[Section 4.4.2]{Troltzsch}, \cite[Definition 3.1]{MR2338434}: We say that $(\bar{\mathbf{y}},\bar{p},\bar{\mathbf{u}})$ is a local solution to \eqref{eq:weak_cost}--\eqref{eq:weak_st_eq} if there exist neighborhoods $\mathcal{A} \subset \mathbf{H}_0^1(\Omega) \times L_0^2(\Omega)$ and $\mathcal{B} \subset \mathbf{L}^2(\Omega) \cap \mathbf{U}_{ad}$ of $(\bar{\mathbf{y}},\bar{p})$ and $\bar{\mathbf{u}}$, respectively, such that $J(\bar{\mathbf{y}},\bar{\mathbf{u}}) \leq J(\mathbf{y},\mathbf{u})$ for all $( (\mathbf{y},p),\mathbf{u}) \in \mathcal{A} \times \mathcal{B}$. If the inequality is strict for every $( (\mathbf{y},p),\mathbf{u}) \in \mathcal{A} \times \mathcal{B} \setminus \{ ((\bar{\mathbf{y}},\bar{p}),\bar{\mathbf{u}}) \}$, we say that $(\bar{\mathbf{y}},\bar{p},\bar{\mathbf{u}})$ is a strict local solution.

From now on we assume that $(\bar{\mathbf{y}},\bar{p},\bar{\mathbf{u}})$ is a local solution to \eqref{eq:weak_cost}--\eqref{eq:weak_st_eq} such that $\bar{\mathbf{y}}$ is regular. These local solutions are called \emph{local nonsingular solutions}. We note that in this framework the results of Theorem \ref{thm:properties_C_to_S} hold.

Having introduced the concept of local nonsingular solution, we consider the optimal control problem \cite[Section 3.1]{MR2338434}: Find
\begin{equation}\label{eq:weak_cost_reduced}
\min \{j(\mathbf{u}): \mathbf{u}\in \mathbf{U}_{ad} \cap \mathcal{O}(\bar{\mathbf{u}})\},
\end{equation}
where $j:\mathcal{O}(\bar{\mathbf{u}}) \to \mathbb{R}$ is defined by $j(\mathbf{u}):=J(\mathbf{y},\mathbf{u})$,
where $(\mathbf{y},p) = \mathcal{S}\mathbf{u}$. We note that $\bar{\mathbf{u}}$ is a local solution of \eqref{eq:weak_cost_reduced}. Moreover, in view of Theorem \ref{thm:properties_C_to_S}, $j$ is Gate\^aux differentiable in $\mathcal{O}(\bar{\mathbf{u}})$. Then, $\bar{\mathbf{u}}$ satisfies the variational inequality \cite[Lemma 4.18]{Troltzsch}
\begin{equation}\label{eq:variational_inequality}
j'(\bar{\mathbf{u}}) (\mathbf{u} - \bar{\mathbf{u}}) \geq 0 \quad \forall \mathbf{u} \in \mathbf{U}_{ad}.
\end{equation}


\subsubsection{The adjoint problem} In order to explore \eqref{eq:variational_inequality}, we introduce the \emph{adjoint variable} $(\mathbf{z},r)$ as the solution to the \emph{adjoint equations}: Find $(\mathbf{z},r)$ such that
\begin{equation}
-\nu \Delta \mathbf{z} - (\mathbf{y}\cdot\nabla) \mathbf{z} + (\nabla \mathbf{y})^{\intercal}  \mathbf{z}  +  \nabla r =  \sum_{t\in\mathcal{D}}(\mathbf{y}(t)-\mathbf{y}_{t})\delta_{t}   \text{ in }  \Omega, 
\quad 
\text{div }\mathbf{z} = 0\text{ in }  \Omega, 
\label{eq:adj_eq_strong}
\end{equation}
complemented with the Dirichlet boundary condition $\mathbf{z}  =  \mathbf{0}$ on $\partial\Omega$.
A weak formulation for \eqref{eq:adj_eq_strong} reads as follows: Find $(\mathbf{z},r) \in \mathbf{W}_{0}^{1,\mathsf{p} }(\Omega)\times L^{\mathsf{p}}_{0}(\Omega)$ such that
\begin{multline}\label{eq:adj_eq}
\nu(\nabla \mathbf{w}, \nabla \mathbf{z})_{\mathbf{L}^2(\Omega)} +
b(\mathbf{y};\mathbf{w},\mathbf{z}) + b(\mathbf{w};\mathbf{y},\mathbf{z}) - (r,\text{div } \mathbf{w})_{L^2(\Omega)} \\ =  \sum_{t\in\mathcal{D}}\langle (\mathbf{y}(t)-\mathbf{y}_{t})\delta_{t},\mathbf{w}\rangle_{\mathbf{W}^{-1,\mathsf{p} }(\Omega),\mathbf{W}^{1,\mathsf{q} }_0(\Omega)}, \qquad
(s,\text{div } \mathbf{z})_{L^2(\Omega)} = 0,
\end{multline}
for all $(\mathbf{w},s)\in \mathbf{W}_{0}^{1,\mathsf{q} }(\Omega)\times L_0^{\mathsf{q}}(\Omega)$. Here, $\mathsf{p} < d/(d-1)$ is arbitrarily close to $d/(d-1)$, $\mathsf{q}$ is such that $\mathsf{p}^{-1} + \mathsf{q}^{-1}  = 1$, and $\delta_{t}$ corresponds to the Dirac delta supported at the interior point $t \in \Omega$. We immediately notice that  $\mathsf{q} >d$.

We continue with the study of the well-posedness of the adjoint problem \eqref{eq:adj_eq}.

\begin{theorem}[well-posedness]
\label{thm:well-posedness-adjoint}
Let $\mathbf{u} \in \mathbf{U}_{ad}$ and let $(\mathbf{y},p)$ be a solution to  \eqref{eq:weak_st_eq} such that $\mathbf{y}$ is regular. Let $\mathsf{p} < d/(d-1)$ be arbitrarily close to $d/(d-1)$ and let $\mathbf{h}\in\mathbf{W}^{-1,\mathsf{p}}(\Omega)$. Thus, the weak problem: Find
\begin{multline}\label{eq:aux_eq_adj}
(\boldsymbol\Upsilon,\vartheta)\in \mathbf{W}_{0}^{1,\mathsf{p}}(\Omega)\times L^{\mathsf{p}}_{0}(\Omega):
\quad
\nu(\nabla \mathbf{w}, \nabla \boldsymbol\Upsilon)_{\mathbf{L}^2(\Omega)} +
b(\mathbf{y};\mathbf{w},\boldsymbol\Upsilon) + b(\mathbf{w};\mathbf{y},\boldsymbol\Upsilon)  \\
- (\vartheta,\textnormal{div }  \mathbf{w})_{L^2(\Omega)} =  \langle \mathbf{h},\mathbf{w}\rangle_{\mathbf{W}^{-1,\mathsf{p}}(\Omega),\mathbf{W}^{1,\mathsf{q}}_0(\Omega)}, 
\qquad
(s,\textnormal{div } \boldsymbol\Upsilon)_{L^2(\Omega)} = 0,
\end{multline}
for all $(\mathbf{w},s)\in \mathbf{W}_{0}^{1,\mathsf{q}}(\Omega)\times L_0^{\mathsf{q}}(\Omega)$, admits a unique solution. In addition, we have 
\begin{equation}\label{eq:stab_aux_adj}
\|\nabla\boldsymbol\Upsilon\|_{\mathbf{L}^{\mathsf{p}}(\Omega)} + \|\vartheta\|_{L^{\mathsf{p}}(\Omega)} 
\lesssim
\|\mathbf{h}\|_{\mathbf{W}^{-1,\mathsf{p}}(\Omega)}.
\end{equation}
\end{theorem}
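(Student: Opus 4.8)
The plan is to recognize problem \eqref{eq:aux_eq_adj} as the \emph{transpose} of problem \eqref{eq:first_deriv_S_W01p} and to deduce its well-posedness from Theorem \ref{thm:well-posedness-derivative} by a duality argument. Note that a bootstrap as in Lemma \ref{lemma:regularity_estimate} is \emph{not} available here: since $\mathbf{h}\in\mathbf{W}^{-1,\mathsf{p}}(\Omega)$ need not lie in $\mathbf{H}^{-1}(\Omega)$ (e.g.\ the Dirac data of \eqref{eq:adj_eq} do not), there is no $\mathbf{H}_0^1$-solution to start from.

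First I would set $X:=\mathbf{W}_0^{1,\mathsf{p}}(\Omega)\times L_0^{\mathsf{p}}(\Omega)$ and $Y:=\mathbf{W}_0^{1,\mathsf{q}}(\Omega)\times L_0^{\mathsf{q}}(\Omega)$; since $1<\mathsf{p},\mathsf{q}<\infty$ both are reflexive, with $X'=\mathbf{W}^{-1,\mathsf{q}}(\Omega)\times L_0^{\mathsf{q}}(\Omega)$ and $Y'=\mathbf{W}^{-1,\mathsf{p}}(\Omega)\times L_0^{\mathsf{p}}(\Omega)$ under the usual identifications (recall $\mathsf{q}>d$, so $\mathbf{W}_0^{1,\mathsf{q}}(\Omega)\hookrightarrow\mathbf{C}(\bar{\Omega})$). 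Next I would introduce
\[
a\big((\mathbf{f},\rho),(\mathbf{g},\tau)\big):=\nu(\nabla \mathbf{f},\nabla \mathbf{g})_{\mathbf{L}^2(\Omega)}+b(\mathbf{y};\mathbf{f},\mathbf{g})+b(\mathbf{f};\mathbf{y},\mathbf{g})-(\rho,\textnormal{div }\mathbf{g})_{L^2(\Omega)}-(\tau,\textnormal{div }\mathbf{f})_{L^2(\Omega)},
\]
and check, using $\mathbf{y}\in\mathbf{C}(\bar{\Omega})\cap\mathbf{W}_0^{1,\kappa}(\Omega)$ (Theorem \ref{thm:reg_velocity}), the embedding $\mathbf{W}_0^{1,\mathsf{q}}(\Omega)\hookrightarrow\mathbf{L}^{\infty}(\Omega)$, and H\"older/Sobolev inequalities exactly as in the proof of Lemma \ref{lemma:regularity_estimate}, that $a$ is bounded on $Y\times X$. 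With this notation \eqref{eq:first_deriv_S_W01p} is the problem: find $(\boldsymbol{\varphi},\zeta)\in Y$ with $a((\boldsymbol{\varphi},\zeta),(\mathbf{v},q))=\langle\mathbf{g},\mathbf{v}\rangle$ for all $(\mathbf{v},q)\in X$, while \eqref{eq:aux_eq_adj} is: find $(\boldsymbol{\Upsilon},\vartheta)\in X$ with $a((\mathbf{w},s),(\boldsymbol{\Upsilon},\vartheta))=\langle\mathbf{h},\mathbf{w}\rangle$ for all $(\mathbf{w},s)\in Y$. Writing $A\in\mathcal{L}(Y,X')$ for the operator with $\langle A(\boldsymbol{\varphi},\zeta),(\mathbf{v},q)\rangle:=a((\boldsymbol{\varphi},\zeta),(\mathbf{v},q))$ and $A^{*}\in\mathcal{L}(X,Y')$ for its adjoint (using $X''=X$), these two problems read $A(\boldsymbol{\varphi},\zeta)=(\mathbf{g},0)$ and $A^{*}(\boldsymbol{\Upsilon},\vartheta)=(\mathbf{h},0)$.

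It then suffices to prove that $A:Y\to X'$ is an isomorphism, since then so is $A^{*}$, with $(A^{*})^{-1}=(A^{-1})^{*}$ and $\|(A^{*})^{-1}\|=\|A^{-1}\|$, whence $(\boldsymbol{\Upsilon},\vartheta):=(A^{*})^{-1}(\mathbf{h},0)$ is the unique solution of \eqref{eq:aux_eq_adj} and, $\|\nabla\cdot\|_{\mathbf{L}^{\mathsf{p}}(\Omega)}+\|\cdot\|_{L^{\mathsf{p}}(\Omega)}$ being an equivalent norm on $X$ by Poincar\'e, $\|\nabla\boldsymbol{\Upsilon}\|_{\mathbf{L}^{\mathsf{p}}(\Omega)}+\|\vartheta\|_{L^{\mathsf{p}}(\Omega)}\lesssim\|\mathbf{h}\|_{\mathbf{W}^{-1,\mathsf{p}}(\Omega)}$, which is \eqref{eq:stab_aux_adj}. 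For the isomorphism: Theorem \ref{thm:well-posedness-derivative} provides a unique preimage, with the bound \eqref{eq:stab_rho'}, for every datum of the form $(\mathbf{g},0)$, $\mathbf{g}\in\mathbf{W}^{-1,\mathsf{q}}(\Omega)$; $A$ is injective on all of $Y$, since $A(\boldsymbol{\varphi},\zeta)=0$ forces $\textnormal{div }\boldsymbol{\varphi}=0$ (testing against $L_0^{\mathsf{p}}(\Omega)$ and using $\int_\Omega\textnormal{div }\boldsymbol{\varphi}=0$) and then $(\boldsymbol{\varphi},\zeta)=0$ by the uniqueness in Theorem \ref{thm:well-posedness-derivative}; and to hit a general element of $X'$, i.e.\ to allow a nonzero datum $g_{2}\in L_0^{\mathsf{q}}(\Omega)\cong(L_0^{\mathsf{p}}(\Omega))'$ in the divergence equation, I would pick $\boldsymbol{\varphi}_0\in\mathbf{W}_0^{1,\mathsf{q}}(\Omega)$ with $\textnormal{div }\boldsymbol{\varphi}_0=g_{2}$ (surjectivity of the divergence on the Lipschitz domain $\Omega$), subtract $A(\boldsymbol{\varphi}_0,0)$---whose momentum component is in $\mathbf{W}^{-1,\mathsf{q}}(\Omega)$ by boundedness of $a$---and solve the remaining momentum-only problem by Theorem \ref{thm:well-posedness-derivative}. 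The open mapping theorem then gives that $A$ is an isomorphism.

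I expect the technical core to be twofold: verifying boundedness of $a$ on $Y\times X$ in this low-integrability regime, where the convective terms survive only thanks to the $\mathbf{C}(\bar{\Omega})\cap\mathbf{W}_0^{1,\kappa}(\Omega)$-regularity of $\mathbf{y}$; and upgrading Theorem \ref{thm:well-posedness-derivative} from ``unique solvability with momentum data and solenoidal test functions'' to ``$A$ is onto $X'$'' via the divergence lifting. Once these are in place, the conclusion is a routine unwinding of $\langle A^{*}x,y\rangle=\langle Ay,x\rangle$.
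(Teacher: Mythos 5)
Your argument is correct, but it is organized quite differently from the paper's proof. The paper proceeds by regularization and compactness: it first establishes well-posedness in $\mathbf{H}_0^1(\Omega)\times L_0^2(\Omega)$ for $\mathbf{h}\in\mathbf{H}^{-1}(\Omega)$ (by showing the map $S$ is an isomorphism via the isomorphism $T$ of Remark \ref{rmk:isomorphism}), then approximates a general $\mathbf{h}\in\mathbf{W}^{-1,\mathsf{p}}(\Omega)$ by a sequence $\{\mathbf{h}_k\}\subset\mathbf{H}^{-1}(\Omega)$, derives uniform $\mathbf{W}_0^{1,\mathsf{p}}\times L_0^{\mathsf{p}}$ bounds on the corresponding solutions by testing against solutions of \eqref{eq:first_deriv_S_W01p}, and passes to a weak limit (checking convergence of the convective terms); uniqueness and \eqref{eq:stab_aux_adj} come from the same testing. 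You bypass the regularization and the limit passage entirely by packaging that very testing identity as the Banach-adjoint relation $\langle A^{*}x,y\rangle=\langle Ay,x\rangle$ and inverting $A^{*}$ abstractly. The duality core is therefore the same — both proofs ultimately rest on Theorem \ref{thm:well-posedness-derivative} — but your route trades the paper's Step 1 and the density/weak-compactness machinery for two ingredients the paper does not explicitly need: boundedness of the full bilinear form $a$ on $Y\times X$ (which does hold, by the $\mathbf{C}(\bar{\Omega})\cap\mathbf{W}_0^{1,\kappa}(\Omega)$-regularity of $\mathbf{y}$ and the argument of Lemma \ref{lemma:regularity_estimate}, and which the paper uses implicitly anyway), and the surjectivity of $\textnormal{div}:\mathbf{W}_0^{1,\mathsf{q}}(\Omega)\to L_0^{\mathsf{q}}(\Omega)$ needed to show $A$ hits data with a nonzero divergence component — this is the Bogovski\u{\i} lifting, available on Lipschitz domains and equivalent to the $L^{\mathsf{q}}$ inf-sup condition the paper already cites as \cite[Corollary B.71]{MR2050138}. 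Your version is shorter and yields the stability constant directly as $\|A^{-1}\|$; the paper's version is more self-contained in that it only ever solves problems with momentum data and solenoidal unknowns. Your opening observation that no bootstrap from an $\mathbf{H}_0^1$-solution is possible for general $\mathbf{h}\in\mathbf{W}^{-1,\mathsf{p}}(\Omega)$ is exactly right and is precisely why the paper resorts to density in its Step 2.
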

\begin{proof}
We follow the duality argument elaborated in the proof of \cite[Theorem 2.9]{MR3936891} and proceed in three steps.

\emph{Step 1.} \emph{Well-posedness of \eqref{eq:aux_eq_adj} on $\mathbf{H}_0^1(\Omega)\times L_0^2(\Omega)$}. Let $\mathbf{h} \in \mathbf{H}^{-1}(\Omega)$. As a first step we prove that the problem: Find $(\boldsymbol{\Upsilon},\vartheta)\in\mathbf{H}_0^1(\Omega)\times L_0^2(\Omega)$ such that
\begin{multline}\label{eq:aux_eq_adj_H01*}
\nu(\nabla \mathbf{w}, \nabla \boldsymbol\Upsilon)_{\mathbf{L}^2(\Omega)} +
b(\mathbf{y};\mathbf{w},\boldsymbol\Upsilon) + b(\mathbf{w};\mathbf{y},\boldsymbol\Upsilon) - (\vartheta,\textnormal{div }  \mathbf{w})_{L^2(\Omega)} \\ =  \langle \mathbf{h},\mathbf{w}\rangle_{\mathbf{H}^{-1}(\Omega),\mathbf{H}_0^{1}(\Omega)}, 
\qquad
(s,\textnormal{div } \boldsymbol\Upsilon)_{L^2(\Omega)} = 0 \quad \forall (\mathbf{w},s)\in \mathbf{H}_0^1(\Omega)\times L_0^2(\Omega),
\end{multline}
is well posed. To accomplish this task, we introduce the map 
\[
S: \mathbf{V}(\Omega) \times L_0^2(\Omega) \rightarrow \mathbf{H}^{-1}(\Omega),
\quad
(\boldsymbol{\Upsilon},\vartheta) \mapsto -\nu \Delta \boldsymbol{\Upsilon} - (\mathbf{y}\cdot\nabla) \boldsymbol{\Upsilon}+ (\nabla \mathbf{y})^{\intercal}  \boldsymbol{\Upsilon} +  \nabla \vartheta,
\]
and prove that $S$ is an isomorphism. Note that $S$ is linear and bounded. Since $\mathbf{y}$ is regular, $T:  \mathbf{V}(\Omega) \times L_0^2(\Omega) \rightarrow \mathbf{H}^{-1}(\Omega)$, defined in \eqref{eq:T}, is an isomorphism. Let $(\boldsymbol{\phi},\pi) \in \mathbf{V}(\Omega) \times L_0^2(\Omega)$ be such that $T(\boldsymbol{\phi},\pi) = \mathbf{h}$ and let $(\boldsymbol{\Upsilon},\vartheta) \in \mathbf{V}(\Omega) \times L_0^2(\Omega)$. Integration by parts combined with the fact that $ \boldsymbol{\Upsilon}, \mathbf{y}, \boldsymbol{\phi} \in \mathbf{V}(\Omega)$ reveal that
\[
\langle \mathbf{h},\boldsymbol{\Upsilon} \rangle_{\mathbf{H}^{-1}(\Omega),\mathbf{H}_0^{1}(\Omega)}  
=
\langle T(\boldsymbol{\phi},\pi), \boldsymbol{\Upsilon} \rangle 
=
\langle  \boldsymbol{\phi},S(\boldsymbol{\Upsilon},\vartheta)  \rangle 
\leq \| \nabla \boldsymbol{\phi} \|_{\mathbf{L}^2(\Omega)}  \| S(\boldsymbol{\Upsilon},\vartheta) \|_{\mathbf{H}^{-1}(\Omega)}.
\]
The fact that $T$ is an isomorphism yields $\| \nabla \boldsymbol{\phi} \|_{\mathbf{L}^2(\Omega)} \lesssim \| \mathbf{h} \|_{\mathbf{H}^{-1}(\Omega)}$. This estimate, given the arbitrariness of $\mathbf{h} \in \mathbf{H}^{-1}(\Omega)$, implies that $ \| \nabla \boldsymbol{\Upsilon} \|_{\mathbf{L}^2(\Omega)} \lesssim \| S(\boldsymbol{\Upsilon},\vartheta) \|_{\mathbf{H}^{-1}(\Omega)}$. On the other hand, the definition of $S$ and estimates for the convective term yield
\begin{multline*}
\| \nabla \vartheta \|_{\mathbf{H}^{-1}(\Omega)} \leq \| S(\boldsymbol{\Upsilon},\vartheta) \|_{\mathbf{H}^{-1}(\Omega)} + \| \nu \Delta \boldsymbol{\Upsilon} + (\mathbf{y}\cdot\nabla) \boldsymbol{\Upsilon}- (\nabla \mathbf{y})^{\intercal}  \boldsymbol{\Upsilon}  \|_{\mathbf{H}^{-1}(\Omega)} 
\\
\leq 
\| S(\boldsymbol{\Upsilon},\vartheta) \|_{\mathbf{H}^{-1}(\Omega)}
+ \nu \|\Delta \boldsymbol{\Upsilon} \|_{\mathbf{H}^{-1}(\Omega)}
+C\|\nabla \boldsymbol{\Upsilon} \|_{\mathbf{L}^2(\Omega)} \|\nabla \mathbf{y} \|_{\mathbf{L}^2(\Omega)}
\lesssim  \| S(\boldsymbol{\Upsilon},\vartheta) \|_{\mathbf{H}^{-1}(\Omega)}.
\end{multline*}
Since the analysis is arbitrary on $(\boldsymbol{\Upsilon}, \vartheta)$, we have thus deduced that the map $S$ is such that $\| \nabla \boldsymbol{\Upsilon} \|_{\mathbf{L}^2(\Omega)} + \|  \vartheta \|_{L^{2}(\Omega)} \lesssim  \| S(\boldsymbol{\Upsilon},\vartheta) \|_{\mathbf{H}^{-1}(\Omega)}$ for every $(\boldsymbol{\Upsilon}, \vartheta) \in \mathbf{V}(\Omega) \times L_0^2(\Omega)$. This bound implies immediately that $S$ is injective in $\mathbf{V}(\Omega) \times L_0^2(\Omega)$ with closed range in $\mathbf{H}^{-1}(\Omega)$. To prove that $S$ is surjective we proceed as in the proof of Step 1 in \cite[Theorem 2.9]{MR3936891}. These arguments show that $S:\mathbf{V}(\Omega) \times L_0^2(\Omega) \rightarrow \mathbf{H}^{-1}(\Omega)$ is an isomorphism and that \eqref{eq:aux_eq_adj_H01*} is therefore well posed.

\emph{Step 2.} \emph{Existence of solutions in} $ \mathbf{W}_{0}^{1,\mathsf{p}}(\Omega)\times L_{0}^{\mathsf{p}}(\Omega)$. Let $\mathbf{h}\in \mathbf{W}^{-1,\mathsf{p}}(\Omega)$.  We prove that problem \eqref{eq:aux_eq_adj} has a solution. To do so, we use a density argument based on the fact that $\mathbf{H}^{-1}(\Omega)$ is dense in $\mathbf{W}^{-1,\mathsf{p}}(\Omega)$ to derive the existence of a sequence $\{\mathbf{h}_{k}\}_{k\in\mathbb{N}}\subset \mathbf{H}^{-1}(\Omega)$ such that $\mathbf{h}_{k}\rightarrow \mathbf{h}$ in $\mathbf{W}^{-1,\mathsf{p}}(\Omega)$ as $k\uparrow\infty$. From the results obtained in \emph{Step} 1, it follows that for every $k \in \mathbb{N}$ there exists a unique pair $(\boldsymbol\Upsilon_{k},\vartheta_{k})\in\mathbf{H}_0^1(\Omega)\times L_0^2(\Omega)$ that solves \eqref{eq:aux_eq_adj_H01*}, where $\mathbf{h}$ is replaced by $\mathbf{h}_{k}$.

We now prove the boundedness of $\{ (\boldsymbol\Upsilon_{k},\vartheta_{k}) \}_{k\in\mathbb{N}}$ in $\mathbf{W}_0^{1,\mathsf{p}}(\Omega)\times L_0^{\mathsf{p}}(\Omega)$ based on a duality argument. Let $\mathbf{g}\in \mathbf{W}^{-1,\mathsf{q}}(\Omega)$. The results of Theorem \ref{thm:well-posedness-derivative} guarantee the existence of a unique pair $(\boldsymbol{\varphi},\zeta)\in \mathbf{W}_0^{1,\mathsf{q}}(\Omega)\times L_0^{\mathsf{q}}(\Omega)$ that solves \eqref{eq:first_deriv_S_W01p}. We now set $(\mathbf{v},q)=(\boldsymbol\Upsilon_{k},0)$ in \eqref{eq:first_deriv_S_W01p} and $(\mathbf{w},s)=(\boldsymbol{\varphi},0)$ in \eqref{eq:aux_eq_adj_H01*}, and use the stability estimate \eqref{eq:stab_rho'} and the fact that $\{\mathbf{h}_{k}\}_{k\in\mathbb{N}}$ is convergent in $\mathbf{W}^{-1,\mathsf{p}}(\Omega)$ to obtain
\begin{multline}\label{eq:stab_aux_var}
\langle \mathbf{g}, \boldsymbol\Upsilon_{k} \rangle_{\mathbf{W}^{-1,\mathsf{q}}(\Omega),\mathbf{W}_0^{1,\mathsf{p}}(\Omega)}
=\mathcal{B}(\boldsymbol{\varphi},\boldsymbol\Upsilon_{k}) 
=
\langle \mathbf{h}_k,\boldsymbol{\varphi}
\rangle_{\mathbf{W}^{-1,\mathsf{p}}(\Omega),\mathbf{W}_0^{1,\mathsf{q}}(\Omega)} 
\\
\leq \|\mathbf{h}_k\|_{\mathbf{W}^{-1,\mathsf{p}}(\Omega)}\|\boldsymbol{\varphi}\|_{\mathbf{W}_0^{1,\mathsf{q}}(\Omega)} 
\lesssim \|\mathbf{g}\|_{\mathbf{W}^{-1,\mathsf{q}}(\Omega)} ( 1 + \| \nabla  \mathbf{y} \|_{\mathbf{L}^{\kappa}(\Omega)}).
\end{multline}
Since $\mathbf{g}$ is arbitrary, one can conclude that $\{ \nabla \boldsymbol{\Upsilon}_{k} \}_{k\in\mathbb{N}}$ is uniformly bounded in $\mathbf{L}^{\mathsf{p}}(\Omega)$. On the other hand, standard estimates for the convective term and the regularity result of Theorem \ref{thm:reg_velocity} combined with the inf-sup condition of \cite[Corollary B. 71]{MR2050138} show that $\|\vartheta_{k}\|_{L^{\mathsf{p}}(\Omega)} \lesssim 1$ for every $k\in\mathbb{N}$. Thus, there exists a nonrelabeled subsequence
\begin{equation}
\{ (\boldsymbol\Upsilon_{k},\vartheta_{k}) \}_{k\in\mathbb{N}} \subset \mathbf{W}_0^{1,\mathsf{p}}(\Omega)\times L_0^{\mathsf{p}}(\Omega):
\quad
(\boldsymbol\Upsilon_{k},\vartheta_{k}) 
\rightharpoonup (\boldsymbol\Upsilon,\vartheta) 
~\mathrm{in}~
\mathbf{W}_0^{1,\mathsf{p}}(\Omega)\times L_0^{\mathsf{p}}(\Omega)
\label{eq:upsilon_theta_convergence}
\end{equation}  
as $k \uparrow \infty$. The rest of this step is devoted to prove that $(\boldsymbol\Upsilon,\vartheta)$ solves  \eqref{eq:aux_eq_adj}. Applying \eqref{eq:upsilon_theta_convergence} and 
$\mathbf{h}_{k}\rightarrow \mathbf{h}$ in $\mathbf{W}^{-1,\mathsf{p}}(\Omega)$, we obtain for each $(\mathbf{w},s)\in\mathbf{W}_0^{1,\mathsf{q}}(\Omega)\times L_0^{\mathsf{q}}(\Omega)$ that
\begin{equation*}
\left|\nu(\nabla \mathbf{w}, \nabla (\boldsymbol\Upsilon_{k} - \boldsymbol\Upsilon))_{\mathbf{L}^2(\Omega)}\right| \to 0,  
\quad 
\left|\langle \mathbf{h}_{k} - \mathbf{h},\mathbf{w}\rangle_{\mathbf{W}^{-1,\mathsf{p}}(\Omega),\mathbf{W}_0^{1,\mathsf{q}}(\Omega)}\right| \to 0,
\end{equation*}
$|(\vartheta_k - \vartheta,\textnormal{div }\mathbf{w})_{L^2(\Omega)}| \to 0$, and $|(s,\textnormal{div} (\boldsymbol\Upsilon_{k} - \boldsymbol\Upsilon))_{L^2(\Omega)}| \to 0$ as $k\uparrow\infty$. To analyze the convective terms, we use the compact Sobolev embedding $\mathbf{W}^{1,\mathsf{p}}(\Omega)\hookrightarrow \mathbf{L}^{\mathsf{r}}(\Omega)$, which holds for every $\mathsf{r} < d\mathsf{p}/(d-\mathsf{p})$ \cite[Theorem 6.3, Part I]{MR2424078}, and the regularity results of Theorem \ref{thm:reg_velocity}, which guarantee that $\mathbf{y} \in \mathbf{W}^{1,3}(\Omega) \cap \mathbf{L}^{\infty}(\Omega)$, to conclude that
\begin{equation*}
\begin{aligned}
\left|b\left(\mathbf{y};\mathbf{w},\boldsymbol\Upsilon_{k} - \boldsymbol\Upsilon\right)\right|
& 
\lesssim
\|\mathbf{y}\|_{\mathbf{L}^\infty(\Omega)} \|\nabla \mathbf{w}\|_{\mathbf{L}^{\mathsf{q}}(\Omega)}\|\boldsymbol\Upsilon_{k} - \boldsymbol\Upsilon\|_{\mathbf{L}^{\mathsf{p}}(\Omega)} \to 0,
\quad
 k\uparrow\infty,
 \\
 \left|b\left(\mathbf{w};\mathbf{y},\boldsymbol\Upsilon_{k} - \boldsymbol\Upsilon\right)\right|
 & 
 \lesssim 
 \|\mathbf{w}\|_{\mathbf{L}^\infty(\Omega)} \|\nabla \mathbf{y}\|_{\mathbf{L}^{3}(\Omega)}\|\boldsymbol\Upsilon_{k} - \boldsymbol\Upsilon\|_{\mathbf{L}^{3/2}(\Omega)} \to 0,
 \quad
 k\uparrow\infty.
 \end{aligned}
\end{equation*}
A collection of these arguments reveals that the pair $(\boldsymbol\Upsilon,\vartheta)$ solves \eqref{eq:aux_eq_adj}.

\emph{Step 3.} \emph{Uniqueness and stability.} We begin by proving that $(\boldsymbol\Upsilon,\vartheta)$ satisfies the stability bound \eqref{eq:stab_aux_adj}. Let $\mathbf{g}\in \mathbf{W}^{-1,\mathsf{q}}(\Omega)$ and let $(\boldsymbol{\varphi},\zeta) \in \mathbf{W}_0^{1,\mathsf{q}}(\Omega)\times L_0^{\mathsf{q}}(\Omega)$ be the unique solution to \eqref{eq:first_deriv_S_W01p}. By arguments similar to those used to obtain the relations and inequalities in \eqref{eq:stab_aux_var}, one can deduce that
\begin{equation*}
\langle \mathbf{g}, \boldsymbol\Upsilon\rangle_{\mathbf{W}^{-1,\mathsf{q}}(\Omega),\mathbf{W}_0^{1,\mathsf{p}}(\Omega)}
=
\langle \mathbf{h},\boldsymbol{\varphi} \rangle_{\mathbf{W}^{-1,\mathsf{p}}(\Omega),\mathbf{W}_0^{1,\mathsf{q}}(\Omega)}
\lesssim \|\mathbf{h}\|_{\mathbf{W}^{-1,\mathsf{p}}(\Omega)}
\|\mathbf{g}\|_{\mathbf{W}^{-1,\mathsf{q}}(\Omega)}.
\end{equation*}
Since $\mathbf{g}$ is arbitrary, we can conclude that $\| \nabla \boldsymbol\Upsilon\|_{\mathbf{L}^{\mathsf{p}}(\Omega)}\lesssim \|\mathbf{h}\|_{\mathbf{W}^{-1,\mathsf{p}}(\Omega)}$. From this, due to an inf-sup condition \cite[Corollary B. 71]{MR2050138}, we obtain $\|\vartheta\|_{L^{\mathsf{p}}(\Omega)} \lesssim \|\mathbf{h}\|_{\mathbf{W}^{-1,\mathsf{p}}(\Omega)}$. A collection of these estimates immediately yields the bound \eqref{eq:stab_aux_adj}.

We now show the uniqueness of solutions to \eqref{eq:aux_eq_adj}. To do so, we assume that there is another pair $({\boldsymbol\Upsilon}_{\star},\vartheta_{\star})\in \mathbf{W}_0^{1,\mathsf{p}}(\Omega)\times L_0^{\mathsf{p}}(\Omega)$ that solves \eqref{eq:aux_eq_adj}. Let $\mathbf{g}\in \mathbf{W}^{-1,\mathsf{q}}(\Omega)$ and let $(\boldsymbol{\varphi},\zeta)\in \mathbf{W}_0^{1,\mathsf{q}}(\Omega)\times L_0^{\mathsf{q}}(\Omega)$ be the unique solution to \eqref{eq:first_deriv_S_W01p}. Set $(\mathbf{v},q)=({\boldsymbol\Upsilon}_{\star} - \boldsymbol\Upsilon,0)$ in \eqref{eq:first_deriv_S_W01p} and $(\mathbf{w},s)=(\boldsymbol{\varphi},0)$ in the problem that  $({\boldsymbol\Upsilon}_{\star} - \boldsymbol\Upsilon,\vartheta_{\star} - \vartheta)$ solves to obtain
\begin{equation}
\langle \mathbf{g}, {\boldsymbol\Upsilon}_{\star} - \boldsymbol\Upsilon\rangle_{\mathbf{W}^{-1,\mathsf{q}}(\Omega),\mathbf{W}_0^{1,\mathsf{p}}(\Omega)}
=
0.
\label{eq:identity=0}
\end{equation}
Since $\mathbf{g}$ is arbitrary, \eqref{eq:identity=0} holds for every $\mathbf{g} \in \mathbf{W}^{-1,\mathsf{q}}(\Omega)$. Consequently, $ {\boldsymbol\Upsilon}_{\star} = \boldsymbol\Upsilon$ and $\vartheta_{\star} = \vartheta$; the latter follows from an inf-sup condition. We have thus proved that \eqref{eq:aux_eq_adj} admits a unique solution. This concludes the proof.
\end{proof}


\subsubsection{The variational inequality}

We are now in a position to establish necessary first order optimality conditions. For this purpose, in the context of the setting described in section \ref{subsec:local_solutions}, we introduce the map 
\begin{equation}
\mathcal{G}: \mathcal{O}(\bar{\mathbf{u}}) \subset \mathbf{L}^2(\Omega ) \rightarrow \mathcal{O}(\bar{\mathbf{y}}) \subset \mathbf{V}(\Omega) :
\qquad
\mathbf{u} \mapsto \mathbf{y},
\label{eq:mathcal_G}
\end{equation}
where $\mathbf{y}$ corresponds to the velocity component of the pair $(\mathbf{y},p) = \mathcal{S}(\mathbf{u})$.

\begin{theorem}[first order necessary optimality conditions]
\label{thm:optimality_cond}
If $(\bar{\mathbf{y}}, \bar p, \bar{\mathbf{u}})$ denotes a local nonsingular solution to \eqref{eq:weak_cost}--\eqref{eq:weak_st_eq}, then
$\bar{\mathbf{u}}$ satisfies the variational inequality
\begin{equation}\label{eq:var_ineq}
(\bar{\mathbf{z}}+\alpha \bar{\mathbf{u}},\mathbf{u}-\bar{\mathbf{u}})_{\mathbf{L}^2(\Omega)}\geq 0 \quad \forall \mathbf{u}\in \mathbf{U}_{ad},
\end{equation}
where $(\bar{\mathbf{z}}, \bar{r}) \in \mathbf{W}_{0}^{1,\mathsf{p}}(\Omega) \times L_0^{\mathsf{p}}(\Omega)$ is the solution to \eqref{eq:adj_eq}, where $\mathbf{y}$ is replaced by $\bar{\mathbf{y}}= \mathcal{G} \bar{\mathbf{u}}$.
\end{theorem}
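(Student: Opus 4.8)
The plan is to derive the variational inequality \eqref{eq:var_ineq} from the abstract first order condition \eqref{eq:variational_inequality}, namely $j'(\bar{\mathbf{u}})(\mathbf{u}-\bar{\mathbf{u}}) \geq 0$ for all $\mathbf{u}\in\mathbf{U}_{ad}$, by computing $j'(\bar{\mathbf{u}})$ explicitly through the chain rule and then introducing the adjoint state to rewrite the term involving $\mathcal{S}'$. First I would write $j(\mathbf{u}) = J(\mathcal{G}\mathbf{u}, \mathbf{u})$ and use the differentiability of $\mathcal{S}$ (hence of $\mathcal{G}$) furnished by Theorem \ref{thm:properties_C_to_S} to obtain, for any direction $\mathbf{v}\in\mathbf{L}^2(\Omega)$,
\begin{equation*}
j'(\bar{\mathbf{u}})\mathbf{v} = \sum_{t\in\mathcal{D}}(\bar{\mathbf{y}}(t)-\mathbf{y}_t)\cdot(\mathcal{G}'(\bar{\mathbf{u}})\mathbf{v})(t) + \alpha(\bar{\mathbf{u}},\mathbf{v})_{\mathbf{L}^2(\Omega)},
\end{equation*}
where $\boldsymbol{\varphi}_{\mathbf{v}} := \mathcal{G}'(\bar{\mathbf{u}})\mathbf{v}$ is the velocity component of $\mathcal{S}'(\bar{\mathbf{u}})\mathbf{v}$, i.e. the solution of the linearized problem \eqref{eq:first_deriv_S*} with right-hand side $\mathbf{g}=\mathbf{v}$ (viewed in $\mathbf{H}^{-1}(\Omega)$). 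Here I must check that $\mathbf{v}\mapsto \sum_t (\bar{\mathbf{y}}(t)-\mathbf{y}_t)\cdot\boldsymbol{\varphi}_{\mathbf{v}}(t)$ is the Gâteaux derivative of $\mathbf{u}\mapsto \tfrac12\sum_t|\mathbf{y}(t)-\mathbf{y}_t|^2$; this uses that point evaluation is continuous on $\mathbf{W}^{1,\mathtt{q}}(\Omega)$ with $\mathtt{q}>d$ (Lemma \ref{lemma:regularity_estimate}), so $\boldsymbol{\varphi}_{\mathbf{v}}\in\mathbf{C}(\bar\Omega)$ and the point values make sense, together with the $C^2$ regularity of $\mathcal{S}$.

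The heart of the argument is to eliminate $\boldsymbol{\varphi}_{\mathbf{v}}$ using the adjoint equation. Note that $\sum_{t\in\mathcal{D}}(\bar{\mathbf{y}}(t)-\mathbf{y}_t)\cdot\boldsymbol{\varphi}_{\mathbf{v}}(t) = \sum_{t\in\mathcal{D}}\langle(\bar{\mathbf{y}}(t)-\mathbf{y}_t)\delta_t,\boldsymbol{\varphi}_{\mathbf{v}}\rangle_{\mathbf{W}^{-1,\mathsf{p}}(\Omega),\mathbf{W}^{1,\mathsf{q}}_0(\Omega)}$, which is precisely the right-hand side of the adjoint problem \eqref{eq:adj_eq} tested against $\mathbf{w}=\boldsymbol{\varphi}_{\mathbf{v}}$. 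The key compatibility point is that $\boldsymbol{\varphi}_{\mathbf{v}}\in\mathbf{W}^{1,\mathtt{q}}_0(\Omega)$ with $\mathtt{q}>d$ (hence $\mathtt{q}\geq\mathsf{q}$ by taking $\mathsf{p}$ close enough to $d/(d-1)$), so $\boldsymbol{\varphi}_{\mathbf{v}}$ is an admissible test function for \eqref{eq:adj_eq}, while $\bar{\mathbf{z}}\in\mathbf{W}^{1,\mathsf{p}}_0(\Omega)$ is an admissible test function for the linearized equation \eqref{eq:first_deriv_S_W01p}. I would then insert $\mathbf{w}=\boldsymbol{\varphi}_{\mathbf{v}}$ into \eqref{eq:adj_eq} and $\mathbf{v}=\bar{\mathbf{z}}$ into the weak form of the linearized equation for $\boldsymbol{\varphi}_{\mathbf{v}}$; the viscous terms $\nu(\nabla\boldsymbol{\varphi}_{\mathbf{v}},\nabla\bar{\mathbf{z}})$ and the trilinear terms $b(\bar{\mathbf{y}};\boldsymbol{\varphi}_{\mathbf{v}},\bar{\mathbf{z}})+b(\boldsymbol{\varphi}_{\mathbf{v}};\bar{\mathbf{y}},\bar{\mathbf{z}})$ coincide in both, and the pressure–divergence pairings vanish because $\operatorname{div}\boldsymbol{\varphi}_{\mathbf{v}}=0$ and $\operatorname{div}\bar{\mathbf{z}}=0$. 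Subtracting yields
\begin{equation*}
\sum_{t\in\mathcal{D}}(\bar{\mathbf{y}}(t)-\mathbf{y}_t)\cdot\boldsymbol{\varphi}_{\mathbf{v}}(t) = (\mathbf{v},\bar{\mathbf{z}})_{\mathbf{L}^2(\Omega)},
\end{equation*}
so that $j'(\bar{\mathbf{u}})\mathbf{v} = (\bar{\mathbf{z}}+\alpha\bar{\mathbf{u}},\mathbf{v})_{\mathbf{L}^2(\Omega)}$. Substituting $\mathbf{v}=\mathbf{u}-\bar{\mathbf{u}}$ for $\mathbf{u}\in\mathbf{U}_{ad}$ into \eqref{eq:variational_inequality} gives \eqref{eq:var_ineq}.

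The main obstacle I anticipate is the bookkeeping of integrability exponents: one must verify that $\boldsymbol{\varphi}_{\mathbf{v}}$ genuinely lies in a space that is simultaneously dense-compatible with the test space $\mathbf{W}^{1,\mathsf{q}}_0(\Omega)$ of the adjoint formulation, i.e. that for $\mathsf{p}$ sufficiently close to $d/(d-1)$ one has $\mathtt{q}\geq\mathsf{q}$, so that the duality pairing $\langle(\bar{\mathbf{y}}(t)-\mathbf{y}_t)\delta_t,\boldsymbol{\varphi}_{\mathbf{v}}\rangle$ is legitimate; this is exactly where Lemma \ref{lemma:regularity_estimate} (with $\mathbf{g}=\mathbf{v}\in\mathbf{L}^2(\Omega)\hookrightarrow\mathbf{W}^{-1,\mathtt{q}}(\Omega)$) is used. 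A secondary technical point is justifying the interchange of point evaluation and Gâteaux differentiation, which again rests on the continuity of the embedding $\mathbf{W}^{1,\mathtt{q}}(\Omega)\hookrightarrow\mathbf{C}(\bar\Omega)$ and the $C^1$-dependence $\mathbf{u}\mapsto\mathcal{S}(\mathbf{u})$ in the $\mathbf{W}^{1,\mathtt{q}}$-topology; alternatively one argues directly with difference quotients and the Lipschitz estimate of Theorem \ref{thm:Lipschitz_property}. Everything else is routine.
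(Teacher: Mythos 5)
Your proposal is correct and follows essentially the same route as the paper: compute $j'(\bar{\mathbf{u}})$ via the chain rule, identify $\boldsymbol{\varphi}=\mathcal{S}'(\bar{\mathbf{u}})(\mathbf{u}-\bar{\mathbf{u}})$ as an element of $\mathbf{W}_0^{1,\mathsf{q}}(\Omega)$ (the paper cites Theorem \ref{thm:well-posedness-derivative} for this, which rests on the regularity argument of Lemma \ref{lemma:regularity_estimate} that you invoke), and then test the adjoint equation with $\boldsymbol{\varphi}$ and the linearized equation with $\bar{\mathbf{z}}$ so that the common bilinear terms cancel and the pressure--divergence pairings vanish. The exponent bookkeeping you flag as the main obstacle is exactly the point the paper handles with Theorem \ref{thm:well-posedness-derivative}, so no gap remains.
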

\begin{proof}
We begin the proof by invoking the map $\mathcal{G}$ defined in \eqref{eq:mathcal_G} and rewriting the variational inequality \eqref{eq:variational_inequality} as follows:
\begin{equation}\label{eq:derivative_cost}
\sum_{t\in\mathcal{D}}\left(\mathcal{G}\bar{\mathbf{u}}(t)-\mathbf{y}_t\right)\cdot\mathcal{G}'(\bar{\mathbf{u}})(\mathbf{u}-\bar{\mathbf{u}})(t)+\alpha(\bar{\mathbf{u}},\mathbf{u}-\bar{\mathbf{u}})_{\mathbf{L}^2(\Omega)}\geq 0 \qquad \forall \mathbf{u}\in\mathbf{U}_{ad}.
\end{equation}

Since $\alpha(\bar{\mathbf{u}},\mathbf{u}-\bar{\mathbf{u}})_{\mathbf{L}^2(\Omega)}$ is already present in \eqref{eq:var_ineq}, we focus on the sum on the left-hand side of \eqref{eq:derivative_cost}. Define $(\boldsymbol{\varphi},\zeta):=\mathcal{S}'(\bar{\mathbf{u}})(\mathbf{u}-\bar{\mathbf{u}})$ and note that $(\boldsymbol{\varphi},\zeta)$ corresponds to the unique solution to \eqref{eq:first_deriv_S*}, replacing $\mathbf{y}$ and $\mathbf{g}$ by $\bar{\mathbf{y}}$ and $\mathbf{u}-\bar{\mathbf{u}}$, respectively. Since $\mathbf{u}, \bar{\mathbf{u}} \in \mathbf{W}^{-1,\mathsf{q}}(\Omega)$, Theorem \ref{thm:well-posedness-derivative} shows that $(\boldsymbol{\varphi},\zeta) \in \mathbf{W}_0^{1,\mathsf{q}}(\Omega)\times L_0^{\mathsf{q}}(\Omega)$ corresponds to the unique solution to \eqref{eq:first_deriv_S_W01p}. We can thus substitute $(\mathbf{w},s) = (\boldsymbol{\varphi},0)$ into the adjoint problem \eqref{eq:adj_eq} and use the fact that $(\bar{r},\text{div }\boldsymbol{\varphi})_{L^2(\Omega)}=0$ to obtain
\begin{equation}\label{eq:adj_eq_w=z}
\nu(\nabla\boldsymbol{\varphi}, \nabla \bar{\mathbf{z}})_{\mathbf{L}^2(\Omega)} + b(\bar{\mathbf{y}};\boldsymbol{\varphi},\bar{\mathbf{z}}) + b(\boldsymbol{\varphi};\bar{\mathbf{y}},\bar{\mathbf{z}})
= 
\sum_{t\in\mathcal{D}}(\bar{\mathbf{y}}(t)-\mathbf{y}_{t})\cdot\boldsymbol{\varphi}(t).
\end{equation}
On the other hand, we set  $(\mathbf{v},q) = (\bar{\mathbf{z}},0)$ into the problem that $(\boldsymbol{\varphi},\zeta) = \mathcal{S}'(\bar{\mathbf{u}})(\mathbf{u}-\bar{\mathbf{u}})$ solves, i.e., problem \eqref{eq:first_deriv_S_W01p} with $\mathbf{y}=\bar{\mathbf{y}}$ and $\mathbf{g}=\mathbf{u} - \bar{\mathbf{u}}$ to arrive at
\begin{equation}\label{eq:aux_adjoint_2}
\nu(\nabla\boldsymbol{\varphi}, \nabla \bar{\mathbf{z}})_{\mathbf{L}^2(\Omega)} + b(\bar{\mathbf{y}};\boldsymbol{\varphi},\bar{\mathbf{z}}) + b(\boldsymbol{\varphi};\bar{\mathbf{y}},\bar{\mathbf{z}})
= 
(\mathbf{u} - \bar{\mathbf{u}},\bar{\mathbf{z}})_{\mathbf{L}^2(\Omega)}.
\end{equation}
Notice that, since $\zeta \in L_0^{\mathsf{q}}(\Omega)$, $(\zeta,\text{div }\bar{\mathbf{z}})_{L^2(\Omega)} = 0$.

Consequently, the desired result \eqref{eq:var_ineq} follows from \eqref{eq:derivative_cost}, \eqref{eq:adj_eq_w=z}, and \eqref{eq:aux_adjoint_2}.
\end{proof}

The following projection formula result is classical:
If $\bar{\mathbf{u}}$ denotes a locally optimal control for problem \eqref{eq:weak_cost}--\eqref{eq:weak_st_eq}, then \cite[Section 4.6]{Troltzsch}, \cite[equation (3.9)]{MR2338434}
\begin{equation}\label{eq:projection_control} 
\bar{\mathbf{u}}(x):=\Pi_{[\textbf{a},\textbf{b}]}(-\alpha^{-1}\bar{\mathbf{z}}(x)) \textrm{ a.e.}~x \in \Omega,
\end{equation}
where $\Pi_{[\textbf{a},\textbf{b}]} : \mathbf{L}^1(\Omega) \rightarrow  \mathbf{U}_{ad}$ is defined by $\Pi_{[\textbf{a},\textbf{b}]}(\mathbf{v}) := \min\{ \textbf{b}, \max\{ \mathbf{v}, \textbf{a}\} \}$ a.e.~in  $\Omega$. We immediately notice the following basic regularity result: Since $\bar{\mathbf{z}}\in \mathbf{W}^{1,\mathsf{p}}(\Omega)$, where $\mathsf{p} < d/(d-1)$ is arbitrarily close to $d/(d-1)$, then $\bar{\mathbf{u}}$ also belongs to $\mathbf{W}^{1,\mathsf{p}}(\Omega)$; see \cite[Theorem A.1]{MR1786735} and \cite[Theorem 1]{MR1173747}.


\subsection{Second order optimality conditions}\label{sec:2nd_order}
In this section, we derive necessary and sufficient second order optimality conditions. 


\subsubsection{Preliminaries}
We begin our studies with an auxiliary estimate.

\begin{lemma}[auxiliary estimate]
\label{lemma:auxiliary_estimate}
Let $(\bar{\mathbf{y}}, \bar{p}, \bar{\mathbf{u}})$ be a local nonsingular solution to \eqref{eq:weak_cost}--\eqref{eq:weak_st_eq}. Let $\mathbf{u}_{1},\mathbf{u}_{2}\in \mathcal{O}(\bar{\mathbf{u}})$ and $\mathbf{g}\in \mathbf{L}^{2}(\Omega)$. Let $(\mathbf{y}_{1},p_1)=\mathcal{S}(\mathbf{u}_{1})$, $(\mathbf{y}_{2},p_2)=\mathcal{S}(\mathbf{u}_{2})$, $(\boldsymbol{\varphi}_1,\zeta_1)= \mathcal{S}'(\mathbf{u}_{1})\mathbf{g}$, and $(\boldsymbol{\varphi}_2,\zeta_2) = \mathcal{S}'(\mathbf{u}_{2})\mathbf{g}$. Let $\mathsf{p}<d/(d-1)$ and $\mathsf{q}$ be such that $\mathsf{p}$ is arbitrarily close to $d/(d-1)$ and $\mathsf{p}^{-1} + \mathsf{q}^{-1} = 1$. Then, we have the estimate
\begin{equation}\label{eq:estimate_varphis}
\|\nabla(\boldsymbol{\varphi}_1 - \boldsymbol{\varphi}_2)\|_{\mathbf{L}^{\mathsf{q}}(\Omega)} \lesssim \|\mathbf{u}_{1} - \mathbf{u}_{2}\|_{\mathbf{L}^2(\Omega)} \|\mathbf{g}\|_{\mathbf{L}^2(\Omega)}.
\end{equation}
\end{lemma}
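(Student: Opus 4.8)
The key observation is that $\boldsymbol{\varphi}_1 - \boldsymbol{\varphi}_2$ satisfies a linearized Navier--Stokes (Oseen-type) system at $\mathbf{y}_1$ whose right-hand side measures the difference between the coefficients at $\mathbf{y}_1$ and at $\mathbf{y}_2$. Concretely, since $(\boldsymbol{\varphi}_i,\zeta_i)$ solves \eqref{eq:first_deriv_S_W01p} with $\mathbf{y}$ replaced by $\mathbf{y}_i$ and $\mathbf{g}$ on the right, subtracting the equation for $i=2$ from the one for $i=1$ gives, for all $(\mathbf{v},q)\in \mathbf{W}_0^{1,\mathsf{p}}(\Omega)\times L_0^{\mathsf{p}}(\Omega)$,
\begin{equation*}
\begin{aligned}
\nu(\nabla(\boldsymbol{\varphi}_1-\boldsymbol{\varphi}_2),\nabla\mathbf{v})_{\mathbf{L}^2(\Omega)} + b(\mathbf{y}_1;\boldsymbol{\varphi}_1-\boldsymbol{\varphi}_2,\mathbf{v}) + b(\boldsymbol{\varphi}_1-\boldsymbol{\varphi}_2;\mathbf{y}_1,\mathbf{v}) - (\zeta_1-\zeta_2,\textnormal{div }\mathbf{v})_{L^2(\Omega)}&\\
= -b(\mathbf{y}_1-\mathbf{y}_2;\boldsymbol{\varphi}_2,\mathbf{v}) - b(\boldsymbol{\varphi}_2;\mathbf{y}_1-\mathbf{y}_2,\mathbf{v}),&
\end{aligned}
\end{equation*}
together with $(q,\textnormal{div}(\boldsymbol{\varphi}_1-\boldsymbol{\varphi}_2))_{L^2(\Omega)}=0$. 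The left-hand side is exactly the operator whose well-posedness in $\mathbf{W}_0^{1,\mathsf{q}}(\Omega)\times L_0^{\mathsf{q}}(\Omega)$ (against test functions in the $\mathsf{p}$-spaces) is granted by Theorem \ref{thm:well-posedness-derivative}, linearized at $\mathbf{y}_1$ (which is regular, being close to $\bar{\mathbf{y}}$). Hence it suffices to show the forcing term on the right belongs to $\mathbf{W}^{-1,\mathsf{q}}(\Omega)$ with the quantitative bound $\lesssim \|\mathbf{u}_1-\mathbf{u}_2\|_{\mathbf{L}^2(\Omega)}\|\mathbf{g}\|_{\mathbf{L}^2(\Omega)}$, and then invoke the stability estimate \eqref{eq:stab_rho'}.

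\emph{Estimating the two convective terms.} For the term $b(\mathbf{y}_1-\mathbf{y}_2;\boldsymbol{\varphi}_2,\cdot)$ I would bound, with $\mathbf{v}\in\mathbf{W}_0^{1,\mathsf{p}}(\Omega)$,
\[
|b(\mathbf{y}_1-\mathbf{y}_2;\boldsymbol{\varphi}_2,\mathbf{v})| \leq \|\mathbf{y}_1-\mathbf{y}_2\|_{\mathbf{L}^{s_1}(\Omega)}\|\nabla\boldsymbol{\varphi}_2\|_{\mathbf{L}^{\mathsf{q}}(\Omega)}\|\mathbf{v}\|_{\mathbf{L}^{s_2}(\Omega)},
\]
choosing the exponents so that $s_1^{-1}+\mathsf{q}^{-1}+s_2^{-1}=1$; since $\mathsf{p}$ is close to $d/(d-1)$ one has $\mathsf{q}$ large, so $s_1,s_2$ can both be taken finite (in $d=2$, any finite value; in $d=3$ one needs $s_1\le 6$, which is fine because $\mathbf{H}_0^1(\Omega)\hookrightarrow\mathbf{L}^6(\Omega)$), and $\mathbf{W}_0^{1,\mathsf{p}}(\Omega)\hookrightarrow\mathbf{L}^{s_2}(\Omega)$ holds by Sobolev embedding for $\mathsf{p}$ sufficiently close to $d/(d-1)$. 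Then Proposition \ref{pro:Lipschitz_property} (with $\mathbf{H}_0^1(\Omega)\hookrightarrow\mathbf{L}^{s_1}(\Omega)$ and $\mathbf{L}^2(\Omega)\hookrightarrow\mathbf{H}^{-1}(\Omega)$) gives $\|\mathbf{y}_1-\mathbf{y}_2\|_{\mathbf{L}^{s_1}(\Omega)}\lesssim\|\mathbf{u}_1-\mathbf{u}_2\|_{\mathbf{L}^2(\Omega)}$, while Lemma \ref{lemma:regularity_estimate} applied at $\mathbf{y}_2$ (regular) gives $\|\nabla\boldsymbol{\varphi}_2\|_{\mathbf{L}^{\mathsf{q}}(\Omega)}\lesssim\|\mathbf{g}\|_{\mathbf{W}^{-1,\mathsf{q}}(\Omega)}\le\|\mathbf{g}\|_{\mathbf{L}^2(\Omega)}$ (using $\mathbf{L}^2(\Omega)\hookrightarrow\mathbf{W}^{-1,\mathsf{q}}(\Omega)$ since $\mathsf{p}<d/(d-1)\le 2$ gives $\mathsf{q}\ge 2$), with hidden constants uniform over the bounded neighborhood $\mathcal{O}(\bar{\mathbf{u}})$. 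The second term $b(\boldsymbol{\varphi}_2;\mathbf{y}_1-\mathbf{y}_2,\cdot)$ is handled symmetrically: bound by $\|\boldsymbol{\varphi}_2\|_{\mathbf{L}^{t_1}(\Omega)}\|\nabla(\mathbf{y}_1-\mathbf{y}_2)\|_{\mathbf{L}^2(\Omega)}\|\mathbf{v}\|_{\mathbf{L}^{t_2}(\Omega)}$ with $t_1^{-1}+t_2^{-1}=1/2$, use $\mathbf{W}_0^{1,\mathsf{q}}(\Omega)\hookrightarrow\mathbf{L}^{t_1}(\Omega)$ (indeed $\hookrightarrow\mathbf{L}^\infty$ since $\mathsf{q}>d$), $\mathbf{W}_0^{1,\mathsf{p}}(\Omega)\hookrightarrow\mathbf{L}^{t_2}(\Omega)$, Lemma \ref{lemma:regularity_estimate} for $\|\nabla\boldsymbol{\varphi}_2\|_{\mathbf{L}^{\mathsf{q}}(\Omega)}$, and Proposition \ref{pro:Lipschitz_property} again for $\|\nabla(\mathbf{y}_1-\mathbf{y}_2)\|_{\mathbf{L}^2(\Omega)}\lesssim\|\mathbf{u}_1-\mathbf{u}_2\|_{\mathbf{L}^2(\Omega)}$.

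\emph{Conclusion and main obstacle.} Combining the two estimates shows the forcing term is in $\mathbf{W}^{-1,\mathsf{q}}(\Omega)$ with norm $\lesssim\|\mathbf{u}_1-\mathbf{u}_2\|_{\mathbf{L}^2(\Omega)}\|\mathbf{g}\|_{\mathbf{L}^2(\Omega)}$; applying the linear stability bound \eqref{eq:stab_rho'} (at $\mathbf{y}_1$, using $\|\nabla\mathbf{y}_1\|_{\mathbf{L}^\kappa(\Omega)}\lesssim 1$ uniformly on $\mathcal{O}(\bar{\mathbf{u}})$ via Theorem \ref{thm:Lipschitz_property}) yields \eqref{eq:estimate_varphis}. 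The main technical point to watch is the bookkeeping of Sobolev exponents: one must verify that for $\mathsf{p}$ arbitrarily close to (but strictly below) $d/(d-1)$ all the required embeddings $\mathbf{W}_0^{1,\mathsf{p}}(\Omega)\hookrightarrow\mathbf{L}^{s}(\Omega)$ hold simultaneously with the constraint that the conjugate-type products of exponents sum correctly — this is where the argument could go wrong in $d=3$ if one is careless, but the slack provided by $\mathbf{y}_i\in\mathbf{W}^{1,\kappa}(\Omega)\hookrightarrow\mathbf{C}(\bar\Omega)$ (Theorem \ref{thm:reg_velocity}) and by $\mathbf{W}_0^{1,\mathsf{q}}(\Omega)\hookrightarrow\mathbf{L}^\infty(\Omega)$ makes it work. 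A secondary point is ensuring all hidden constants are uniform over $\mathcal{O}(\bar{\mathbf{u}})$, which follows by shrinking this neighborhood to be bounded and applying the uniform bounds from Theorems \ref{thm:properties_C_to_S} and \ref{thm:Lipschitz_property}.
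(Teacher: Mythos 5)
Your proposal is correct and follows essentially the same route as the paper: subtract the two linearized systems, regard $(\boldsymbol{\varphi}_1-\boldsymbol{\varphi}_2,\zeta_1-\zeta_2)$ as the solution of the linearized problem at $\mathbf{y}_1$ with the commutator forcing $b(\mathbf{y}_2-\mathbf{y}_1;\boldsymbol{\varphi}_2,\cdot)+b(\boldsymbol{\varphi}_2;\mathbf{y}_2-\mathbf{y}_1,\cdot)$, bound this forcing in $\mathbf{W}^{-1,\mathsf{q}}(\Omega)$, and conclude via the stability estimate \eqref{eq:stab_rho'} together with the Lipschitz properties of the control-to-state map. The only (immaterial) difference is the H\"older splitting of the convective terms: the paper places $\mathbf{L}^{\infty}$ and $\mathbf{L}^{3}$ weights on $\mathbf{y}_2-\mathbf{y}_1$ (hence leaning on Theorem \ref{thm:Lipschitz_property} and only $\mathbf{H}^1$-control of $\boldsymbol{\varphi}_2$), whereas you shift more integrability onto $\boldsymbol{\varphi}_2$ via its $\mathbf{W}^{1,\mathsf{q}}$-regularity and get by with the basic Lipschitz bound of Proposition \ref{pro:Lipschitz_property} for the state difference.
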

\begin{proof}
We begin the proof by noting that the pair $(\boldsymbol{\varphi}_1 - \boldsymbol{\varphi}_2,\zeta_1-\zeta_2)$ solves the following weak problem: Find $(\boldsymbol{\varphi}_1 - \boldsymbol{\varphi}_2,\zeta_1-\zeta_2) \in \mathbf{H}_0^1(\Omega)\times L_0^2(\Omega)$ such that
\begin{multline*}
\nu(\nabla(\boldsymbol{\varphi}_1 - \boldsymbol{\varphi}_2), \nabla \mathbf{v})_{\mathbf{L}^2(\Omega)} \!+ b(\mathbf{y}_{1};\boldsymbol{\varphi}_1 - \boldsymbol{\varphi}_2,\mathbf{v}) \!+ b(\boldsymbol{\varphi}_1 - \boldsymbol{\varphi}_2;\mathbf{y}_{1},\mathbf{v}) 
- (\zeta_1-\zeta_2, \textnormal{div }\mathbf{v})_{L^2(\Omega)}
\\
= b(\mathbf{y}_{2} - \mathbf{y}_{1};\boldsymbol{\varphi}_2,\mathbf{v}) + b(\boldsymbol{\varphi}_2;\mathbf{y}_{2} - \mathbf{y}_{1},\mathbf{v}), \quad  (q,\textnormal{div}(\boldsymbol{\varphi}_1 - \boldsymbol{\varphi}_2))_{L^2(\Omega)} = 0,
\end{multline*}
for all $(\mathbf{v},q)\in\mathbf{H}_0^1(\Omega)\times L_0^2(\Omega)$. Since $b(\mathbf{y}_{2} - \mathbf{y}_{1};\boldsymbol{\varphi}_2,\cdot) + b(\boldsymbol{\varphi}_2;\mathbf{y}_{2} - \mathbf{y}_{1},\cdot) \in \mathbf{W}^{-1,\mathsf{q}}(\Omega)$, the arguments in Lemma \ref{lemma:regularity_estimate} show that $(\boldsymbol{\varphi}_1 - \boldsymbol{\varphi}_2,\zeta_1-\zeta_2)\in \mathbf{W}_0^{1,\mathsf{q}}(\Omega)\times L_0^{\mathsf{q}}(\Omega)$ and
\begin{equation*}
\|\nabla(\boldsymbol{\varphi}_1 - \boldsymbol{\varphi}_2)\|_{\mathbf{L}^{\mathsf{q}}(\Omega)} 
\lesssim 
\|\mathbf{y}_{2} - \mathbf{y}_{1}\|_{\mathbf{L}^{\infty}(\Omega)} \|\nabla\boldsymbol{\varphi}_2\|_{\mathbf{L}^{2}(\Omega)} + \| \nabla \boldsymbol{\varphi}_2 \|_{\mathbf{L}^{2}(\Omega)}
\|\nabla(\mathbf{y}_{2} - \mathbf{y}_{1})\|_{\mathbf{L}^{3}(\Omega)},
\end{equation*}
upon using \eqref{eq:convective_estimate_1} and \eqref{eq:convective_estimate_2}, where $\mathtt{q}$ is replaced by $\mathsf{q}$. Since $\mathbf{u}_2 \in \mathcal{O}(\bar{\mathbf{u}})$ and $(\boldsymbol{\varphi}_2,\zeta_2) = \mathcal{S}'(\mathbf{u}_{2})\mathbf{g}$, the results of Theorem \ref{thm:properties_C_to_S} yield $\|\nabla\boldsymbol{\varphi}_2\|_{\mathbf{L}^{2}(\Omega)} \lesssim \|\mathbf{g}\|_{\mathbf{H}^{-1}(\Omega)}$. Thus,
\begin{equation}\label{eq:stab_hat_tilde}
\|\nabla(\boldsymbol{\varphi}_1 - \boldsymbol{\varphi}_2)\|_{\mathbf{L}^{\mathsf{q}}(\Omega)} 
\lesssim 
\left(\|\mathbf{y}_{2} - \mathbf{y}_{1}\|_{\mathbf{L}^{\infty}(\Omega)} + \|\nabla(\mathbf{y}_{2} - \mathbf{y}_{1})\|_{\mathbf{L}^{3}(\Omega)}\right) \|\mathbf{g}\|_{\mathbf{H}^{-1}(\Omega)}.
\end{equation}

The control of $\|\nabla(\mathbf{y}_{2} - \mathbf{y}_{1})\|_{\mathbf{L}^{3}(\Omega)}$ and $\|\mathbf{y}_{2} - \mathbf{y}_{1}\|_{\mathbf{L}^{\infty}(\Omega)}$ follows from a direct application of the Lipschitz property derived in Theorem \ref{thm:Lipschitz_property}. In fact,
\begin{equation}\label{eq:Lipschitz_NS_aux}
\|\mathbf{y}_{2} - \mathbf{y}_{1}\|_{\mathbf{L}^{\infty}(\Omega)}
\lesssim
\|\nabla(\mathbf{y}_{2} - \mathbf{y}_{1})\|_{\mathbf{L}^{\kappa}(\Omega)} \lesssim 
\|\mathbf{u}_{2} - \mathbf{u}_{1}\|_{\mathbf{W}^{-1,\kappa}(\Omega)}
\lesssim
\|\mathbf{u}_{2} - \mathbf{u}_{1}\|_{\mathbf{L}^2(\Omega)},
\end{equation}
where we have also used that $\mathbf{u}_{2} - \mathbf{u}_{1} \in  \mathbf{L}^2(\Omega)$ and that $\mathbf{L}^2(\Omega) \hookrightarrow \mathbf{W}^{-1,\kappa}(\Omega)$. Here, $\kappa$ is as in the statement of Theorem \ref{thm:reg_velocity}: $\kappa > 4$ when $d=2$ and $\kappa>3$ when $d=3$.

The desired estimate \eqref{eq:estimate_varphis} thus follows from replacing suitable estimates from \eqref{eq:Lipschitz_NS_aux} into the estimate \eqref{eq:stab_hat_tilde}. This concludes the proof.
\end{proof}

\begin{theorem}[$j$ is of class $C^2$ and $j''$ is locally Lipschitz]
\label{thm:diff_properties_j} 
Let $(\bar{\mathbf{y}}, \bar{p}, \bar{\mathbf{u}})$ be a local nonsingular solution to \eqref{eq:weak_cost}--\eqref{eq:weak_st_eq}. Let $\mathsf{p}<d/(d-1)$ be arbitrarily close to $d/(d-1)$. Then, the functional $j: \mathcal{O}(\bar{\mathbf{u}}) \rightarrow \mathbb{R}$ is of class $C^2$. Moreover, for $\mathbf{u}\in \mathcal{O}(\bar{\mathbf{u}})$ and $\mathbf{g} \in \mathbf{L}^{2}(\Omega)$, we have the identity
\begin{equation}\label{eq:charac_j2}
j''(\mathbf{u})\mathbf{g}^2
=
\alpha\|\mathbf{g}\|_{\mathbf{L}^2(\Omega)}^2 - 2b(\boldsymbol{\varphi};\boldsymbol{\varphi},\mathbf{z}) + \sum_{t\in \mathcal{D}}\boldsymbol{\varphi}^2(t),
\end{equation}
where $(\mathbf{z},r)$ solves \eqref{eq:adj_eq} and $(\boldsymbol{\varphi},\zeta)=\mathcal{S}'(\mathbf{u})\mathbf{g}$. Finally, for $\mathbf{u}_{1},\mathbf{u}_{2}\in\mathcal{O}(\bar{\mathbf{u}})$, we have
\begin{equation}\label{eq:continuity_of_j2}
|j''(\mathbf{u}_1)\mathbf{g}^2-j''(\mathbf{u}_2)\mathbf{g}^2|
\lesssim
\|\mathbf{u}_1-\mathbf{u}_2\|_{\mathbf{L}^2(\Omega)}\|\mathbf{g}\|_{\mathbf{L}^2(\Omega)}^2.
\end{equation}
\end{theorem}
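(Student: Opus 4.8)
The plan is to establish the three claims in sequence, relying on the chain rule together with the $C^2$-regularity of the solution map $\mathcal{S}$ from Theorem \ref{thm:properties_C_to_S} and the regularity/Lipschitz machinery of Theorem \ref{thm:Lipschitz_property}, Lemma \ref{lemma:regularity_estimate}, and Lemma \ref{lemma:auxiliary_estimate}. First, to see that $j = J \circ (\mathcal{S}, \mathrm{id})$ is of class $C^2$ on $\mathcal{O}(\bar{\mathbf{u}})$, I would note that $J$ splits into the smooth quadratic term $\tfrac{\alpha}{2}\|\mathbf{u}\|_{\mathbf{L}^2(\Omega)}^2$ and the term $\tfrac12\sum_{t\in\mathcal D}|\mathbf{y}(t)-\mathbf{y}_t|^2$. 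The latter is the composition of the $C^\infty$ map $(\mathbf{y},p)\mapsto \tfrac12\sum_t |\mathbf{y}(t)-\mathbf{y}_t|^2$ — which is well defined and smooth because point evaluation $\mathbf{V}(\Omega)\cap\mathbf{W}^{1,\kappa}(\Omega)\ni\mathbf{y}\mapsto\mathbf{y}(t)$ is linear and continuous by the embedding $\mathbf{W}^{1,\kappa}(\Omega)\hookrightarrow\mathbf{C}(\bar\Omega)$ (Theorem \ref{thm:reg_velocity}) — with the $C^2$ map $\mathcal{S}$. Hence $j\in C^2(\mathcal{O}(\bar{\mathbf{u}}))$ by the chain rule.

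Second, for the identity \eqref{eq:charac_j2} I would differentiate twice. Writing $(\mathbf{y},p)=\mathcal S(\mathbf u)$, $(\boldsymbol\varphi,\zeta)=\mathcal S'(\mathbf u)\mathbf g$, and $(\boldsymbol\Psi,\xi)=\mathcal S''(\mathbf u)(\mathbf g,\mathbf g)$, the chain rule gives
\begin{equation*}
j''(\mathbf u)\mathbf g^2 = \alpha\|\mathbf g\|_{\mathbf L^2(\Omega)}^2 + \sum_{t\in\mathcal D}\boldsymbol\varphi^2(t) + \sum_{t\in\mathcal D}(\mathbf y(t)-\mathbf y_t)\cdot\boldsymbol\Psi(t).
\end{equation*}
To eliminate the term involving $\boldsymbol\Psi$ I would test the adjoint equation \eqref{eq:adj_eq} (with this $\mathbf y$) against $\mathbf w=\boldsymbol\Psi$, which is admissible since $\boldsymbol\Psi\in\mathbf H_0^1(\Omega)$ and $\mathbf z\in\mathbf W_0^{1,\mathsf p}(\Omega)$ with $\mathsf q>d$ so the pairing $\langle\sum_t(\mathbf y(t)-\mathbf y_t)\delta_t,\boldsymbol\Psi\rangle$ makes sense; this yields $\sum_t(\mathbf y(t)-\mathbf y_t)\cdot\boldsymbol\Psi(t)=\nu(\nabla\boldsymbol\Psi,\nabla\mathbf z)+b(\mathbf y;\boldsymbol\Psi,\mathbf z)+b(\boldsymbol\Psi;\mathbf y,\mathbf z)$. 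Then I would test the second-derivative equation \eqref{eq:second_deriv_2} (with $\mathbf g_1=\mathbf g_2=\mathbf g$) against $\mathbf v=\mathbf z$ — checking that $\mathbf z$ is an admissible test function by a density argument, since \eqref{eq:second_deriv_2} extends to test functions in $\mathbf W_0^{1,\mathsf q}(\Omega)$ by the regularity $(\boldsymbol\Psi,\xi)\in\mathbf W_0^{1,\mathsf q}(\Omega)\times L_0^{\mathsf q}(\Omega)$ coming from Lemma \ref{lemma:regularity_estimate} applied to the right-hand side $-b(\boldsymbol\varphi;\boldsymbol\varphi,\cdot)-b(\boldsymbol\varphi;\boldsymbol\varphi,\cdot)\in\mathbf W^{-1,\mathsf q}(\Omega)$ — to get $\nu(\nabla\boldsymbol\Psi,\nabla\mathbf z)+b(\mathbf y;\boldsymbol\Psi,\mathbf z)+b(\boldsymbol\Psi;\mathbf y,\mathbf z)=-2b(\boldsymbol\varphi;\boldsymbol\varphi,\mathbf z)$, using $(\xi,\mathrm{div}\,\mathbf z)=0$ and $(r,\mathrm{div}\,\boldsymbol\Psi)=0$. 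Combining the last two displays gives \eqref{eq:charac_j2}.

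Third, for the local Lipschitz bound \eqref{eq:continuity_of_j2} I would subtract the two representations \eqref{eq:charac_j2} at $\mathbf u_1$ and $\mathbf u_2$ and control the three resulting differences. The $\alpha\|\mathbf g\|^2$ terms cancel. For the point-evaluation term, $\sum_t(\boldsymbol\varphi_1^2(t)-\boldsymbol\varphi_2^2(t)) = \sum_t(\boldsymbol\varphi_1(t)+\boldsymbol\varphi_2(t))\cdot(\boldsymbol\varphi_1(t)-\boldsymbol\varphi_2(t))$, and using $\mathbf W_0^{1,\mathsf q}(\Omega)\hookrightarrow\mathbf C(\bar\Omega)$ ($\mathsf q>d$) together with the uniform bound $\|\nabla\boldsymbol\varphi_i\|_{\mathbf L^{\mathsf q}(\Omega)}\lesssim\|\mathbf g\|_{\mathbf L^2(\Omega)}$ (from Lemma \ref{lemma:regularity_estimate}, since $\mathbf g\in\mathbf L^2(\Omega)\hookrightarrow\mathbf W^{-1,\mathsf q}(\Omega)$) and the key estimate \eqref{eq:estimate_varphis} of Lemma \ref{lemma:auxiliary_estimate}, this term is bounded by $\|\mathbf u_1-\mathbf u_2\|_{\mathbf L^2(\Omega)}\|\mathbf g\|_{\mathbf L^2(\Omega)}^2$. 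For the trilinear term I would split
\begin{equation*}
b(\boldsymbol\varphi_1;\boldsymbol\varphi_1,\mathbf z_1)-b(\boldsymbol\varphi_2;\boldsymbol\varphi_2,\mathbf z_2)
\end{equation*}
into $b(\boldsymbol\varphi_1-\boldsymbol\varphi_2;\boldsymbol\varphi_1,\mathbf z_1)+b(\boldsymbol\varphi_2;\boldsymbol\varphi_1-\boldsymbol\varphi_2,\mathbf z_1)+b(\boldsymbol\varphi_2;\boldsymbol\varphi_2,\mathbf z_1-\mathbf z_2)$ and bound each piece using H\"older's inequality with exponents adapted to $\boldsymbol\varphi_i\in\mathbf W_0^{1,\mathsf q}(\Omega)\hookrightarrow\mathbf L^\infty(\Omega)$, $\mathbf z_i\in\mathbf W_0^{1,\mathsf p}(\Omega)$, and $\nabla\mathbf z_i\in\mathbf L^{\mathsf p}(\Omega)$: the first two pieces use \eqref{eq:estimate_varphis} plus the uniform $\mathbf W^{1,\mathsf p}$-bound on $\mathbf z_1$ from Theorem \ref{thm:well-posedness-adjoint} (with $\mathbf h=\sum_t(\mathbf y_1(t)-\mathbf y_{1,t})\delta_t$, uniformly bounded since $\mathbf y_1$ ranges in a bounded subset of $\mathbf C(\bar\Omega)$); the third uses the uniform $\mathbf W_0^{1,\mathsf q}$-bound on $\boldsymbol\varphi_2$ and a Lipschitz estimate $\|\nabla(\mathbf z_1-\mathbf z_2)\|_{\mathbf L^{\mathsf p}(\Omega)}\lesssim\|\mathbf u_1-\mathbf u_2\|_{\mathbf L^2(\Omega)}$, obtained by noting $\mathbf z_1-\mathbf z_2$ solves an adjoint-type Stokes system whose right-hand side combines $\sum_t(\mathbf y_1(t)-\mathbf y_2(t))\delta_t$ — controlled via $\|\mathbf y_1-\mathbf y_2\|_{\mathbf C(\bar\Omega)}\lesssim\|\mathbf u_1-\mathbf u_2\|_{\mathbf L^2(\Omega)}$ from Theorem \ref{thm:Lipschitz_property} — together with convective terms of the form $b(\mathbf y_1-\mathbf y_2;\cdot,\mathbf z_1)$ and $b(\cdot;\mathbf y_1-\mathbf y_2,\mathbf z_1)$, again estimated using the $\mathbf L^\infty$-Lipschitz property of $\mathbf y\mapsto\mathbf y$ and the stability bound \eqref{eq:stab_aux_adj}. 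Summing the three contributions yields \eqref{eq:continuity_of_j2}.

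The main obstacle is the last Lipschitz estimate for the adjoint states, $\|\nabla(\mathbf z_1-\mathbf z_2)\|_{\mathbf L^{\mathsf p}(\Omega)}\lesssim\|\mathbf u_1-\mathbf u_2\|_{\mathbf L^2(\Omega)}$: one must set up the difference of the two adjoint problems as a perturbed Stokes system, verify that the perturbation (Dirac-measure difference plus the difference of convective coefficients acting on $\mathbf z_1$) lies in $\mathbf W^{-1,\mathsf p}(\Omega)$ with norm controlled by $\|\mathbf u_1-\mathbf u_2\|_{\mathbf L^2(\Omega)}$, and then invoke the stability estimate \eqref{eq:stab_aux_adj} of Theorem \ref{thm:well-posedness-adjoint}. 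This requires the reduced regularity $\mathbf W^{1,\mathsf p}$ of the adjoint velocity to interact correctly with the $\mathbf W^{1,\kappa}$/$\mathbf L^\infty$ regularity of the states in each H\"older splitting of the convective terms, which is exactly the interweaving of norms emphasized in the introduction.
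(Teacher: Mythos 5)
Your proposal is correct and follows essentially the same route as the paper: the chain-rule computation of $j''$, the elimination of the $\boldsymbol{\Psi}$-term by testing \eqref{eq:adj_eq} with $\boldsymbol{\Psi}$ and \eqref{eq:second_deriv_2} with $\mathbf{z}$ via a density argument, and the same three-way splitting of the trilinear difference plus the point-evaluation term, each controlled by \eqref{eq:estimate_varphis}, the embedding $\mathbf{W}_0^{1,\mathsf{q}}(\Omega)\hookrightarrow\mathbf{C}(\bar{\Omega})$, and the Lipschitz bound $\|\nabla(\mathbf{z}_1-\mathbf{z}_2)\|_{\mathbf{L}^{\mathsf{p}}(\Omega)}\lesssim\|\mathbf{u}_1-\mathbf{u}_2\|_{\mathbf{L}^2(\Omega)}$ obtained from the perturbed adjoint system and the stability estimate \eqref{eq:stab_aux_adj}. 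The only differences are cosmetic (your splitting swaps the roles of the indices $1$ and $2$ relative to \eqref{eq:j''u1_j''u2}).
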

\begin{proof}
The fact that $j$ is of class $C^2$ on $\mathcal{O}(\bar{\mathbf{u}})$ is a direct consequence of the differentiability properties of the map $\mathcal{S}$ given in Theorem \ref{thm:properties_C_to_S}, so it suffices to derive the identity \eqref{eq:charac_j2} and the estimate \eqref{eq:continuity_of_j2}. To accomplish this task, we start with a simple calculation which shows that for $\mathbf{u}\in\mathcal{O}(\bar{\mathbf{u}})$ and $\mathbf{g}\in \mathbf{L}^2(\Omega)$ we have 
\begin{equation}\label{eq:charac_j2_prev}
j''(\mathbf{u})\mathbf{g}^2
=
\alpha\|\mathbf{g}\|_{\mathbf{L}^2(\Omega)}^2
+
\sum_{t\in \mathcal{D}}
\left[\mathbf{\Psi}(t)\cdot(\mathcal{G}\mathbf{u}(t) - \mathbf{y}_t) +\boldsymbol{\varphi}^2(t)\right],
\end{equation}
where $(\boldsymbol{\varphi},\zeta) = \mathcal{S}'(\mathbf{u})\mathbf{g} \in \mathbf{H}_0^{1}(\Omega) \times L_0^2(\Omega)$ and $(\mathbf{\Psi},\xi) = \mathcal{S}''(\mathbf{u})\mathbf{g}^2  \in \mathbf{H}_0^{1}(\Omega) \times L_0^2(\Omega)$ solve \eqref{eq:first_deriv_S*} and \eqref{eq:second_deriv_2}, respectively. We immediately note that, since $\mathbf{g} \in \mathbf{W}^{-1,\mathsf{q}}(\Omega)$ with $\mathsf{p}^{-1} + \mathsf{q}^{-1} = 1$, analogous arguments to those in Lemma \ref{lemma:regularity_estimate} show that $\boldsymbol{\varphi},\mathbf{\Psi} \in \mathbf{W}_0^{1,\mathsf{q}}(\Omega)\hookrightarrow \mathbf{C}(\bar \Omega)$. Consequently, the point evaluations of $\boldsymbol{\varphi}$ and $\mathbf{\Psi}$ in \eqref{eq:charac_j2_prev} are well defined. We now set $(\mathbf{w},s) = (\mathbf{\Psi},0)$ in \eqref{eq:adj_eq} and invoke an approximation argument based on the fact that $(\mathbf{\Psi},\xi)\in \mathbf{W}_0^{1,\mathsf{q}}(\Omega) \times L_0^{\mathsf{q}}(\Omega)$, which essentially allows us to set $(\mathbf{v},q) = (\mathbf{z},0)$ in \eqref{eq:second_deriv_2} to obtain
\[
\sum_{t\in \mathcal{D}}\mathbf{\Psi}(t)\cdot(\mathcal{G}\mathbf{u}(t) - \mathbf{y}_t) = - 2b(\boldsymbol{\varphi};\boldsymbol{\varphi},\mathbf{z}).
\]
Replacing the previous identity in \eqref{eq:charac_j2_prev}, we get \eqref{eq:charac_j2}.

We now prove \eqref{eq:continuity_of_j2}. Let $\mathbf{u}_1,\mathbf{u}_2 \in \mathcal{O(\bar{\mathbf{u}}})$ and $\mathbf{g}\in \mathbf{L}^2(\Omega)$. Define $(\boldsymbol{\varphi}_1,\zeta_1)= \mathcal{S}'(\mathbf{u}_{1})\mathbf{g}$ and $(\boldsymbol{\varphi}_2,\zeta_2) = \mathcal{S}'(\mathbf{u}_{2})\mathbf{g}$. Given the identity \eqref{eq:charac_j2}, we obtain
\begin{multline}\label{eq:j''u1_j''u2}
[j''(\mathbf{u}_{1}) - j''(\mathbf{u}_{2})]\mathbf{g}^2
\!=
2b(\boldsymbol{\varphi}_2 - \boldsymbol{\varphi}_1;\boldsymbol{\varphi}_2,\mathbf{z}_{2}) 
+
2b(\boldsymbol{\varphi}_1;\boldsymbol{\varphi}_2 - \boldsymbol{\varphi}_1,\mathbf{z}_{2})
\\
+
2b(\boldsymbol{\varphi}_1;\boldsymbol{\varphi}_1,\mathbf{z}_{2} - \mathbf{z}_{1}) 
+ 
\sum_{t\in \mathcal{D}}(\boldsymbol{\varphi}_1^2(t)  - \boldsymbol{\varphi}_2^2(t))
=: \mathbf{I} + \mathbf{II} + \mathbf{III} + \mathbf{IV}.
\end{multline}
Here, $(\mathbf{z}_{i},r_i) \in \mathbf{W}_0^{1,\mathsf{p}}(\Omega) \times L_0^p(\Omega)$ denotes the solution to \eqref{eq:adj_eq}, where $\mathbf{y}$ is replaced by $\mathbf{y}_{i} = \mathcal{G}(\mathbf{u}_i)$; $i\in\{1,2\}$. We bound each term on the right-hand side of \eqref{eq:j''u1_j''u2} separately. We begin by estimating $\mathbf{I}$. To do this, we use H\"older's inequality, \eqref{eq:estimate_varphis} combined with the Sobolev embedding $\mathbf{W}_0^{1,\mathsf{q}}(\Omega)\hookrightarrow \mathbf{C}(\bar{\Omega})$, and $\|\nabla\boldsymbol{\varphi}_2\|_{\mathbf{L}^{2}(\Omega)} \lesssim \|\mathbf{g}\|_{\mathbf{L}^2(\Omega)}$:
\begin{equation*}
\label{eq:estimation_of_I}
|\mathbf{I}| 
\lesssim
\|\boldsymbol{\varphi}_2 - \boldsymbol{\varphi}_1\|_{\mathbf{L}^\infty(\Omega)}
\|\nabla\boldsymbol{\varphi}_2\|_{\mathbf{L}^{2}(\Omega)}
\|\mathbf{z}_{2}\|_{\mathbf{L}^{2}(\Omega)}
\lesssim
\|\mathbf{u}_1-\mathbf{u}_2\|_{\mathbf{L}^2(\Omega)}
\|\mathbf{g}\|_{\mathbf{L}^2(\Omega)}^2
\|\mathbf{z}_{2}\|_{\mathbf{L}^{2}(\Omega)}.
\end{equation*}
We control  $\|\mathbf{z}_{2}\|_{\mathbf{L}^{2}(\Omega)}$ in view of the Sobolev embedding $\mathbf{W}_0^{1,\mathsf{p}}(\Omega) \hookrightarrow \mathbf{L}^2(\Omega)$, the estimate \eqref{eq:stab_aux_adj}, and the regularity results of Theorem \ref{thm:reg_velocity}. These arguments yield
\begin{equation*}
\label{eq:estimate_z2}
\|\mathbf{z}_{2}\|_{\mathbf{L}^{2}(\Omega)} 
\lesssim
\|\mathbf{y}_{2}\|_{\mathbf{L}^{\infty}(\Omega)} 
+ 
\Lambda(\{ \mathbf{y}_t\})
\lesssim
\| \nabla \mathbf{y}_{2} \|_{\mathbf{L}^{\kappa}(\Omega)}
+ \Lambda(\{ \mathbf{y}_t\}),
\quad
\Lambda(\{ \mathbf{y}_t \}) = \sum_{t\in\mathcal{D}}|\mathbf{y}_{t}|,
\end{equation*}
where $\{ \mathbf{y}_t\} = \{ \mathbf{y}_t\}_{t \in \mathcal{D}}$. Thus, $|\mathbf{I}| \lesssim \|\mathbf{u}_1-\mathbf{u}_2\|_{\mathbf{L}^2(\Omega)}\|\mathbf{g}\|_{\mathbf{L}^2(\Omega)}^2$, with a hidden constant depending on $\Omega$, $\mathbf{a}$, $\mathbf{b}$, and $\Lambda(\{ \mathbf{y}_t\})$. The control of $\mathbf{II}$ follows similar arguments. In fact, in view of the estimate \eqref{eq:estimate_varphis} we have
\begin{equation*}
|\mathbf{II}| 
\lesssim
\|\boldsymbol{\varphi}_1\|_{\mathbf{L}^\infty(\Omega)}\|\nabla(\boldsymbol{\varphi}_2 - \boldsymbol{\varphi}_1)\|_{\mathbf{L}^{\mathsf{q}}(\Omega)}\| \mathbf{z}_{2} \|_{\mathbf{L}^{\mathsf{p}}(\Omega)}
\lesssim
\|\mathbf{u}_1-\mathbf{u}_2\|_{\mathbf{L}^2(\Omega)}\|\mathbf{g}\|_{\mathbf{L}^2(\Omega)}^2.
\end{equation*}
Note that we also used $\|\boldsymbol{\varphi}_1 \|_{\mathbf{L}^{\infty}(\Omega)} \lesssim \|\nabla\boldsymbol{\varphi}_1 \|_{\mathbf{L}^{\mathsf{q}}(\Omega)} \lesssim \|\mathbf{g}\|_{\mathbf{L}^2(\Omega)}$; the second estimate is a consequence of Theorem \ref{thm:well-posedness-derivative}. To control $\mathbf{III}$ we proceed as follows:
\begin{equation}\label{eq:estimation_of_III}
|\mathbf{III}| 
\lesssim 
\|\boldsymbol{\varphi}_1\|_{\mathbf{L}^{\infty}(\Omega)}\|\nabla\boldsymbol{\varphi}_1\|_{\mathbf{L}^{\mathsf{q}}(\Omega)}\|\mathbf{z}_{2} - \mathbf{z}_{1}\|_{\mathbf{L}^{\mathsf{p}}(\Omega)}
\lesssim
\|\mathbf{z}_{2} - \mathbf{z}_{1}\|_{\mathbf{L}^{\mathsf{p}}(\Omega)}\|\mathbf{g}\|_{\mathbf{L}^2(\Omega)}^2.
\end{equation}
Thus, it is sufficient to bound $\|\mathbf{z}_{2} - \mathbf{z}_{1}\|_{\mathbf{L}^{\mathsf{p}}(\Omega)}$. Note that $(\mathbf{z}_{2} - \mathbf{z}_{1},r_{2} - r_{1})\in \mathbf{W}_0^{1,\mathsf{p}}\times L_0^\mathsf{p}(\Omega)$ can be seen as the solution to
\begin{multline*}
\nu(\nabla \mathbf{w}, \nabla (\mathbf{z}_{2} - \mathbf{z}_{1}))_{\mathbf{L}^2(\Omega)} +
b(\mathbf{y}_{2};\mathbf{w},\mathbf{z}_{2} - \mathbf{z}_{1})  + b(\mathbf{w};\mathbf{y}_{2},\mathbf{z}_{2} - \mathbf{z}_{1}) - (r_{2} - r_{1},\text{div } \mathbf{w})_{L^2(\Omega)} 
\\
= b(\mathbf{y}_{1} - \mathbf{y}_{2};\mathbf{w},\mathbf{z}_{1}) + b(\mathbf{w};\mathbf{y}_{1} - \mathbf{y}_{2},\mathbf{z}_{1})
+ \sum_{t\in\mathcal{D}}\langle (\mathbf{y}_{2} - \mathbf{y}_{1})(t)\delta_{t},\mathbf{w}\rangle_{\mathbf{W}^{-1,\mathsf{p}}(\Omega),\mathbf{W}^{1,\mathsf{q}}_0(\Omega)},
\end{multline*}
and $(s,\text{div}(\mathbf{z}_{2} - \mathbf{z}_{1}))_{L^2(\Omega)} = 0$ for all $(\mathbf{w},s)\in \mathbf{W}_{0}^{1,\mathsf{q}}(\Omega)\times L_0^{\mathsf{q}}(\Omega)$. The results in Theorem \ref{thm:well-posedness-adjoint} guarantee that the above problem is well posed. In particular, we have a stability bound which can be combined with H\"older's inequality to obtain 
\begin{equation*}
\|\nabla(\mathbf{z}_{2} - \mathbf{z}_{1})\|_{\mathbf{L}^{\mathsf{p}}(\Omega)} 
\lesssim
\|\mathbf{y}_{1} - \mathbf{y}_{2}\|_{\mathbf{L}^{\infty}(\Omega)}(1 + \|\mathbf{z}_{1}\|_{\mathbf{L}^{\mathsf{p}}(\Omega)}) + \|\nabla(\mathbf{y}_{1} - \mathbf{y}_{2})\|_{\mathbf{L}^{\mathsf{q}}(\Omega)} \|\mathbf{z}_{1}\|_{\mathbf{L}^{\mathsf{p}}(\Omega)}.
\end{equation*}
The term $\| \mathbf{z}_{1} \|_{\mathbf{L}^{\mathsf{p}}(\Omega)}$ can be controlled by the optimal control problem data as described above. The terms $\|\mathbf{y}_{1} - \mathbf{y}_{2}\|_{\mathbf{L}^{\infty}(\Omega)}$ and $\|\nabla(\mathbf{y}_{1} - \mathbf{y}_{2})\|_{\mathbf{L}^{\mathsf{q}}(\Omega)}$ can be bounded using the estimates in \eqref{eq:Lipschitz_NS_aux}. A collection of these arguments yields 
$\|\nabla(\mathbf{z}_{2} - \mathbf{z}_{1})\|_{\mathbf{L}^{\mathsf{p}}(\Omega)} 
\lesssim \|\mathbf{u}_{2} - \mathbf{u}_{1}\|_{\mathbf{L}^{2}(\Omega)}$. We now invoke a Poincar\'e inequality and replace the bound previously obtained into \eqref{eq:estimation_of_III} to obtain $|\mathbf{III}|  \lesssim  \|\mathbf{u}_{1} - \mathbf{u}_{2}\|_{\mathbf{L}^{2}(\Omega)}\|\mathbf{g}\|_{\mathbf{L}^2(\Omega)}^2$. Finally, we control $\mathbf{IV}$: 
\begin{equation*}
|\mathbf{IV}|
\lesssim
\|\boldsymbol{\varphi}_1  - \boldsymbol{\varphi}_2\|_{\mathbf{L}^{\infty}(\Omega)}(\|\boldsymbol{\varphi}_1\|_{\mathbf{L}^{\infty}(\Omega)} + \|\boldsymbol{\varphi}_2\|_{\mathbf{L}^{\infty}(\Omega)})
\lesssim 
\|\mathbf{u}_{1} - \mathbf{u}_{2}\|_{\mathbf{L}^{2}(\Omega)}\|\mathbf{g}\|_{\mathbf{L}^2(\Omega)}^2,
\end{equation*}
upon using $\mathbf{W}_0^{1,\mathsf{q}}(\Omega)\hookrightarrow \mathbf{C}(\bar{\Omega})$ and estimate \eqref{eq:estimate_varphis}. Note that the results of Theorem \ref{thm:well-posedness-derivative} guarantee that $\|\boldsymbol{\varphi}_1 \|_{\mathbf{L}^{\infty}(\Omega)} + \|\boldsymbol{\varphi}_2 \|_{\mathbf{L}^{\infty}(\Omega)}  \lesssim 
\|\nabla \boldsymbol{\varphi}_1 \|_{\mathbf{L}^{\mathsf{q}}(\Omega)} + \|\nabla \boldsymbol{\varphi}_2 \|_{\mathbf{L}^{\mathsf{q}}(\Omega)}
\lesssim
\|\mathbf{g}\|_{\mathbf{L}^2(\Omega)}$.
The desired bound \eqref{eq:continuity_of_j2} follows from \eqref{eq:j''u1_j''u2} and a collection of the the estimates obtained for $\mathbf{I}$, $\mathbf{II}$, $\mathbf{III}$, and $\mathbf{IV}$. This concludes the proof.
\end{proof}

We end this section with the following convergence property.

\begin{lemma}[convergence property]
Let $(\bar{\mathbf{y}}, \bar{p}, \bar{\mathbf{u}})$ be a local nonsingular solution to \eqref{eq:weak_cost}--\eqref{eq:weak_st_eq}. If $\mathbf{g}_{k} \rightharpoonup \mathbf{g}$ in $\mathbf{L}^2(\Omega)$ as $k \uparrow \infty$, then $j''(\bar{\mathbf{u}})\mathbf{g}^2 \leq \liminf_{k \uparrow \infty}j''(\bar{\mathbf{u}})\mathbf{g}_{k}^2$.
\label{lemma:covergence_property}
\end{lemma}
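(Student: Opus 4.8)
The plan is to expand both $j''(\bar{\mathbf{u}})\mathbf{g}_k^2$ and $j''(\bar{\mathbf{u}})\mathbf{g}^2$ via the representation formula \eqref{eq:charac_j2} evaluated at $\mathbf{u}=\bar{\mathbf{u}}$. Writing $(\boldsymbol{\varphi}_k,\zeta_k):=\mathcal{S}'(\bar{\mathbf{u}})\mathbf{g}_k$, $(\boldsymbol{\varphi},\zeta):=\mathcal{S}'(\bar{\mathbf{u}})\mathbf{g}$, and letting $(\bar{\mathbf{z}},\bar{r})$ denote the solution to \eqref{eq:adj_eq} with $\mathbf{y}$ replaced by $\bar{\mathbf{y}}$, we have
\begin{equation*}
j''(\bar{\mathbf{u}})\mathbf{g}_k^2=\alpha\|\mathbf{g}_k\|_{\mathbf{L}^2(\Omega)}^2-2b(\boldsymbol{\varphi}_k;\boldsymbol{\varphi}_k,\bar{\mathbf{z}})+\sum_{t\in\mathcal{D}}\boldsymbol{\varphi}_k^2(t),
\end{equation*}
and similarly for $\mathbf{g}$. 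The first summand is weakly lower semicontinuous in $\mathbf{g}_k$, so it suffices to prove that the last two summands converge to their counterparts for $\boldsymbol{\varphi}$; for this I will show that $\boldsymbol{\varphi}_k\to\boldsymbol{\varphi}$ strongly in $\mathbf{H}_0^1(\Omega)$ and strongly in $\mathbf{C}(\bar{\Omega})$.

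To obtain these convergences, first note that $\{\mathbf{g}_k\}$ is bounded in $\mathbf{L}^2(\Omega)$ and that the embedding $\mathbf{L}^2(\Omega)\hookrightarrow\mathbf{H}^{-1}(\Omega)$ is compact, being dual to the compact embedding $\mathbf{H}_0^1(\Omega)\hookrightarrow\mathbf{L}^2(\Omega)$; hence $\mathbf{g}_k\to\mathbf{g}$ strongly in $\mathbf{H}^{-1}(\Omega)$. Since $\mathcal{S}'(\bar{\mathbf{u}}):\mathbf{H}^{-1}(\Omega)\to\mathbf{V}(\Omega)\times L_0^2(\Omega)$ is continuous (Theorem \ref{thm:properties_C_to_S}), we get $\boldsymbol{\varphi}_k\to\boldsymbol{\varphi}$ strongly in $\mathbf{H}_0^1(\Omega)$. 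On the other hand, with $\mathsf{p}<d/(d-1)$ close to $d/(d-1)$ and $\mathsf{q}$ its conjugate exponent, one has $\mathbf{L}^2(\Omega)\hookrightarrow\mathbf{W}^{-1,\mathsf{q}}(\Omega)$, so Theorem \ref{thm:well-posedness-derivative} (equivalently, estimate \eqref{eq:stab_rho'} in Lemma \ref{lemma:regularity_estimate}) shows that $\{\boldsymbol{\varphi}_k\}$ is bounded in $\mathbf{W}_0^{1,\mathsf{q}}(\Omega)$, with $\mathsf{q}>d$. Combining this boundedness with the already established strong $\mathbf{H}_0^1(\Omega)$-convergence and the compact embedding $\mathbf{W}_0^{1,\mathsf{q}}(\Omega)\hookrightarrow\mathbf{C}(\bar{\Omega})$ (a standard subsequence argument identifies the limit as $\boldsymbol{\varphi}$) yields $\boldsymbol{\varphi}_k\to\boldsymbol{\varphi}$ strongly in $\mathbf{C}(\bar{\Omega})$.

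It then remains to pass to the limit term by term. For the point-evaluation term, $\boldsymbol{\varphi}_k(t)\to\boldsymbol{\varphi}(t)$ for each of the finitely many $t\in\mathcal{D}$, so $\sum_{t\in\mathcal{D}}\boldsymbol{\varphi}_k^2(t)\to\sum_{t\in\mathcal{D}}\boldsymbol{\varphi}^2(t)$. For the convective term, write $b(\boldsymbol{\varphi}_k;\boldsymbol{\varphi}_k,\bar{\mathbf{z}})-b(\boldsymbol{\varphi};\boldsymbol{\varphi},\bar{\mathbf{z}})=b(\boldsymbol{\varphi}_k-\boldsymbol{\varphi};\boldsymbol{\varphi}_k,\bar{\mathbf{z}})+b(\boldsymbol{\varphi};\boldsymbol{\varphi}_k-\boldsymbol{\varphi},\bar{\mathbf{z}})$ and estimate, using that $\bar{\mathbf{z}}\in\mathbf{W}_0^{1,\mathsf{p}}(\Omega)\hookrightarrow\mathbf{L}^{\mathsf{p}}(\Omega)\cap\mathbf{L}^2(\Omega)$ and $\mathsf{q}'=\mathsf{p}$,
\begin{align*}
|b(\boldsymbol{\varphi}_k-\boldsymbol{\varphi};\boldsymbol{\varphi}_k,\bar{\mathbf{z}})| &\lesssim\|\boldsymbol{\varphi}_k-\boldsymbol{\varphi}\|_{\mathbf{L}^\infty(\Omega)}\|\nabla\boldsymbol{\varphi}_k\|_{\mathbf{L}^{\mathsf{q}}(\Omega)}\|\bar{\mathbf{z}}\|_{\mathbf{L}^{\mathsf{p}}(\Omega)},\\
|b(\boldsymbol{\varphi};\boldsymbol{\varphi}_k-\boldsymbol{\varphi},\bar{\mathbf{z}})| &\lesssim\|\boldsymbol{\varphi}\|_{\mathbf{L}^\infty(\Omega)}\|\nabla(\boldsymbol{\varphi}_k-\boldsymbol{\varphi})\|_{\mathbf{L}^2(\Omega)}\|\bar{\mathbf{z}}\|_{\mathbf{L}^2(\Omega)},
\end{align*}
both of which tend to zero (the $\mathbf{W}_0^{1,\mathsf{q}}$-bound of $\boldsymbol{\varphi}_k$ and the strong $\mathbf{H}_0^1$- and $\mathbf{C}(\bar\Omega)$-convergences). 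Hence $b(\boldsymbol{\varphi}_k;\boldsymbol{\varphi}_k,\bar{\mathbf{z}})\to b(\boldsymbol{\varphi};\boldsymbol{\varphi},\bar{\mathbf{z}})$, and since the sum of the last two terms converges while the first is weakly lower semicontinuous, the elementary identity $\liminf_k(a_k+b_k)=\liminf_k a_k+\lim_k b_k$ (valid when $\{b_k\}$ converges) gives $j''(\bar{\mathbf{u}})\mathbf{g}^2\leq\liminf_{k\uparrow\infty}j''(\bar{\mathbf{u}})\mathbf{g}_k^2$. The only genuinely delicate point is the strong $\mathbf{C}(\bar{\Omega})$-convergence of $\boldsymbol{\varphi}_k$ and the accompanying bookkeeping of integrability exponents: because $\bar{\mathbf{z}}$ enjoys only the reduced regularity $\mathbf{W}_0^{1,\mathsf{p}}(\Omega)$ with $\mathsf{p}<d/(d-1)$, strong $\mathbf{H}_0^1(\Omega)$-convergence alone does not allow passing to the limit in $b(\boldsymbol{\varphi}_k-\boldsymbol{\varphi};\boldsymbol{\varphi}_k,\bar{\mathbf{z}})$, and one must invoke the extra $\mathbf{W}^{1,\mathsf{q}}(\Omega)$-regularity of $\boldsymbol{\varphi}_k$ furnished by Theorem \ref{thm:well-posedness-derivative}, together with the point evaluations in the cost, to close the argument.
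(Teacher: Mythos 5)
Your proof is correct and follows essentially the same route as the paper's: expand via \eqref{eq:charac_j2}, handle the $\alpha\|\mathbf{g}_k\|_{\mathbf{L}^2(\Omega)}^2$ term by weak lower semicontinuity, and pass to the limit in the convective and point-evaluation terms using the uniform $\mathbf{W}_0^{1,\mathsf{q}}(\Omega)$ bound on $\boldsymbol{\varphi}_k$ and the compact embedding $\mathbf{W}_0^{1,\mathsf{q}}(\Omega)\hookrightarrow\mathbf{C}(\bar{\Omega})$. The only (harmless) cosmetic difference is that you derive strong $\mathbf{H}_0^1(\Omega)$-convergence of $\boldsymbol{\varphi}_k$ from the compactness of $\mathbf{L}^2(\Omega)\hookrightarrow\mathbf{H}^{-1}(\Omega)$ to treat $b(\boldsymbol{\varphi};\boldsymbol{\varphi}_k-\boldsymbol{\varphi},\bar{\mathbf{z}})$, where the paper works directly with the weak convergence $\boldsymbol{\varphi}_k\rightharpoonup\boldsymbol{\varphi}$ in $\mathbf{W}_0^{1,\mathsf{q}}(\Omega)$.
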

\begin{proof}
Given the identity \eqref{eq:charac_j2}, we decompose $j''(\bar{\mathbf{u}})\mathbf{g}^2 - j''(\bar{\mathbf{u}})(\mathbf{g}_{k})^2$ as follows:
\begin{multline}\label{eq:j''u-j''uk}
j''(\bar{\mathbf{u}})\mathbf{g}^2 - j''(\bar{\mathbf{u}})(\mathbf{g}_{k})^2
=
\alpha (\|\mathbf{g}\|_{\mathbf{L}^2(\Omega)}^2 - \|\mathbf{g}_{k}\|_{\mathbf{L}^2(\Omega)}^2) - 2b(\boldsymbol{\varphi};\boldsymbol{\varphi} - \boldsymbol{\varphi}_{k},\bar{\mathbf{z}})\\
 - 2b(\boldsymbol{\varphi} - \boldsymbol{\varphi}_{k};\boldsymbol{\varphi}_{k},\bar{\mathbf{z}}) + \sum_{t\in \mathcal{D}}(\boldsymbol{\varphi} - \boldsymbol{\varphi}_{k})(t)\cdot(\boldsymbol{\varphi} + \boldsymbol{\varphi}_{k})(t) =:\mathbf{I}_k + \mathbf{II}_k + \mathbf{III}_k + \mathbf{IV}_k.
\end{multline}
Here, $(\boldsymbol{\varphi},\zeta) = \mathcal{S}'(\bar{\mathbf{u}}) \mathbf{g}$, $(\boldsymbol{\varphi}_k,\zeta_k) = \mathcal{S}'(\bar{\mathbf{u}}) \mathbf{g}_k$, and $(\bar{\mathbf{z}},\bar{r})$ denotes the solution to \eqref{eq:adj_eq} where $\mathbf{y}$ is replaced by $\bar{\mathbf{y}}$. Let us now note that $(\boldsymbol{\varphi} - \boldsymbol{\varphi}_{k},\zeta - \zeta_{k})$ solves the following weak problem: Find $(\boldsymbol{\varphi} - \boldsymbol{\varphi}_{k},\zeta - \zeta_{k}) \in \mathbf{H}_0^1(\Omega)\times L_0^2(\Omega)$ such that
\begin{multline*}
\nu(\nabla(\boldsymbol{\varphi} - \boldsymbol{\varphi}_{k}), \nabla \mathbf{v})_{\mathbf{L}^2(\Omega)} + b(\bar{\mathbf{y}};\boldsymbol{\varphi} - \boldsymbol{\varphi}_{k},\mathbf{v}) + b(\boldsymbol{\varphi} - \boldsymbol{\varphi}_{k};\bar{\mathbf{y}},\mathbf{v}) \\
- (\zeta - \zeta_{k}, \textnormal{div }\mathbf{v})_{L^2(\Omega)}
= (\mathbf{g} - \mathbf{g}_{k},\mathbf{v})_{\mathbf{L}^2(\Omega)}, \qquad  (q,\textnormal{div}(\boldsymbol{\varphi} - \boldsymbol{\varphi}_{k}))_{L^2(\Omega)} = 0,
\end{multline*}
for all $(\mathbf{v},q)\in\mathbf{H}_0^1(\Omega)\times L_0^2(\Omega)$. Since $\mathbf{g} - \mathbf{g}_k \in \mathbf{W}^{-1,\mathsf{q}}(\Omega)$, where $\mathsf{q} > d$ is arbitrarily close to $d$, the results of Theorem \ref{thm:well-posedness-derivative} guarantee that $\boldsymbol{\varphi} - \boldsymbol{\varphi}_{k}\in \mathbf{W}_0^{1,\mathsf{q}}(\Omega)$ and that the convergence property $\mathbf{g}_k \rightharpoonup \mathbf{g}$ in $\mathbf{L}^2(\Omega)$ implies that $\boldsymbol{\varphi}_{k}\rightharpoonup \boldsymbol{\varphi}$ in $\mathbf{W}_0^{1,\mathsf{q}}(\Omega)$ as $k \uparrow \infty$.

We are now in a position to examine $\mathbf{I}_k$, $\mathbf{II}_k$, $\mathbf{III}_k$, and $\mathbf{IV}_k$ as $k \uparrow \infty$. First, $\mathbf{g}_k \rightharpoonup \mathbf{g}$ in $\mathbf{L}^2(\Omega)$ combined with the fact that the square of $\|\cdot\|_{\mathbf{L}^2(\Omega)}$ is weakly lower semicontinuous in $\mathbf{L}^2(\Omega)$ shows that $\liminf_{k \uparrow \infty}\mathbf{I}_k \leq 0$. The terms $\mathbf{II}_k$, $\mathbf{III}_k$, and $\mathbf{IV}_k$ can be treated simultaneously in view of 
\[
\boldsymbol{\varphi}_{k}\rightharpoonup \boldsymbol{\varphi} \textrm{ in } \mathbf{W}_0^{1,\mathsf{q}}(\Omega) \textrm{ as } k \uparrow \infty,
~~
\|\nabla\boldsymbol{\varphi}\|_{\mathbf{L}^{\mathsf{q}}(\Omega)} \lesssim \|\mathbf{g}\|_{\mathbf{L}^2(\Omega)},
~~
\|\nabla\boldsymbol{\varphi}_k\|_{\mathbf{L}^{\mathsf{q}}(\Omega)} 
\lesssim \mathfrak{M} \quad \forall k \in \mathbb{N},
\]
and the compact Sobolev embedding $\mathbf{W}_0^{1,\mathsf{q}}(\Omega)\hookrightarrow \mathbf{C}(\Omega)$. These arguments allow us to show that $|\mathbf{II}_k|$, $|\mathbf{III}_k|$, $|\mathbf{IV}_k| \rightarrow 0$
as $k \uparrow \infty$. This concludes the proof.
\end{proof}


\subsubsection{Second order necessary conditions}

Let $(\bar{\mathbf{y}}, \bar{p}, \bar{\mathbf{u}})$ be a local nonsingular solution to \eqref{eq:weak_cost}--\eqref{eq:weak_st_eq}. Recall that $\bar{\mathbf{u}}$ satisfies \eqref{eq:var_ineq}; see Theorem \ref{thm:optimality_cond}. Define $\bar{\mathbf{d}} :=  \bar{\mathbf{z}} + \alpha \bar{\mathbf{u}}$. It follows immediately from the inequality \eqref{eq:var_ineq} that, a.e.~$x\in \Omega$,
\begin{equation}\label{eq:derivative_j}
\bar{\mathbf{d}}_{i}(x) 
\begin{cases}
= 0  \quad \text{if} \quad \mathbf{a}_{i} < \bar{\mathbf{u}}_{i}(x)  < \mathbf{b}_{i}, 
\\
\geq 0 \quad   \text{if} \quad \bar{\mathbf{u}}_{i}(x)  = \mathbf{a}_{i}, 
\\
\leq  0 \quad \text{if} \quad \bar{\mathbf{u}}_{i}(x)  = \mathbf{b}_{i}.
\end{cases}
\end{equation}
Here, $i\in\{1,\ldots,d\}$ and $\bar{\mathbf{d}}_{i}$ is the $i$-th component of the vector-valued function $\bar{\mathbf{d}}$.

Let us now introduce the \emph{cone of critical directions}
\begin{equation}\label{def:critical_cone}
\mathbf{C}_{\bar{\mathbf{u}}}:=\{\mathbf{g}\in \mathbf{L}^2(\Omega) \text{ satisfying } \eqref{eq:sign_cond} \text{ and } \mathbf{g}_{i}(x) = 0 \text{ if } \bar{\mathbf{d}}_{i}(x) \neq 0\},
\end{equation}
where $i\in\{1,\ldots,d\}$ and condition \eqref{eq:sign_cond} reads as follows:
\begin{equation}
\label{eq:sign_cond}
\mathbf{g}_{i}(x)
\geq 0 \text{ a.e.}~x\in\Omega \text{ if } \bar{\mathbf{u}}_{i}(x)=\mathbf{a}_{i},
\qquad
\mathbf{g}_{i}(x)
\leq 0 \text{ a.e.}~x\in\Omega \text{ if } \bar{\mathbf{u}}_{i}(x)=\mathbf{b}_{i}.
\end{equation}

We now follow \cite{MR2338434,MR2902693} and derive second order necessary optimality conditions.

\begin{theorem}[second order necessary optimality conditions]
\label{thm:nec_opt_cond}
Let $(\bar{\mathbf{y}}, \bar{p}, \bar{\mathbf{u}})$ be a local nonsingular solution to \eqref{eq:weak_cost}--\eqref{eq:weak_st_eq}. Then,
$
j''(\bar{\mathbf{u}})\mathbf{g}^2 \geq 0 
$
for all $\mathbf{g}\in \mathbf{C}_{\bar{\mathbf{u}}}$.
\end{theorem}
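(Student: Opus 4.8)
The plan is to derive the second order necessary condition in the classical way: start from the fact that $\bar{\mathbf{u}}$ is a local minimizer of the reduced functional $j$ over $\mathbf{U}_{ad} \cap \mathcal{O}(\bar{\mathbf{u}})$, and exploit a second order Taylor expansion of $j$ along admissible curves emanating from $\bar{\mathbf{u}}$ in the direction of a critical direction $\mathbf{g} \in \mathbf{C}_{\bar{\mathbf{u}}}$. First I would observe that, by Theorem \ref{thm:diff_properties_j}, $j$ is of class $C^2$ on $\mathcal{O}(\bar{\mathbf{u}})$ and $j''$ is locally Lipschitz in the sense of \eqref{eq:continuity_of_j2}; this is the analytic engine that will make the argument work. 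The subtlety with a general critical direction $\mathbf{g}$ is that $\bar{\mathbf{u}} + \rho \mathbf{g}$ need not belong to $\mathbf{U}_{ad}$ for any $\rho > 0$ (the active-constraint components may be violated), so one cannot test directly with $\mathbf{g}$.

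To get around this I would use the standard ``$\varepsilon$-truncation'' of the critical cone: for $\varepsilon > 0$ define
\[
\mathbf{g}^{\varepsilon}_{i}(x) :=
\begin{cases}
0 & \text{if } \mathbf{a}_i < \bar{\mathbf{u}}_i(x) < \mathbf{a}_i + \varepsilon \text{ or } \mathbf{b}_i - \varepsilon < \bar{\mathbf{u}}_i(x) < \mathbf{b}_i,\\
\mathbf{g}_i(x) & \text{otherwise},
\end{cases}
\]
so that $\mathbf{g}^{\varepsilon} \to \mathbf{g}$ in $\mathbf{L}^2(\Omega)$ as $\varepsilon \downarrow 0$ (by dominated convergence, since $\bar{\mathbf{d}}_i \ne 0$ on the sets being truncated and $\mathbf{g}_i = 0$ there already by \eqref{def:critical_cone}, the truncation only removes a shrinking set where $\mathbf{g}_i$ was already zero or where the sign condition holds strictly). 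For such $\mathbf{g}^{\varepsilon}$, the direction is feasible for small steps: there is $\rho_{\varepsilon} > 0$ with $\bar{\mathbf{u}} + \rho \mathbf{g}^{\varepsilon} \in \mathbf{U}_{ad} \cap \mathcal{O}(\bar{\mathbf{u}})$ for all $0 < \rho \le \rho_{\varepsilon}$, because on components where a bound is active we have moved strictly into the interior by at least $\varepsilon$, and on inactive components there is room. Then local optimality gives $j(\bar{\mathbf{u}} + \rho\mathbf{g}^{\varepsilon}) \ge j(\bar{\mathbf{u}})$, and a second order Taylor expansion with integral remainder yields
\[
0 \le j(\bar{\mathbf{u}} + \rho\mathbf{g}^{\varepsilon}) - j(\bar{\mathbf{u}}) = \rho\, j'(\bar{\mathbf{u}})\mathbf{g}^{\varepsilon} + \tfrac{\rho^2}{2} j''(\bar{\mathbf{u}})(\mathbf{g}^{\varepsilon})^2 + o(\rho^2).
\]
The first order term $j'(\bar{\mathbf{u}})\mathbf{g}^{\varepsilon} = (\bar{\mathbf{z}} + \alpha\bar{\mathbf{u}}, \mathbf{g}^{\varepsilon})_{\mathbf{L}^2(\Omega)} = (\bar{\mathbf{d}}, \mathbf{g}^{\varepsilon})_{\mathbf{L}^2(\Omega)}$ vanishes: indeed $\mathbf{g}^{\varepsilon}_i(x) = 0$ wherever $\bar{\mathbf{d}}_i(x) \ne 0$, by construction of $\mathbf{C}_{\bar{\mathbf{u}}}$ together with the truncation. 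Dividing by $\rho^2/2$ and letting $\rho \downarrow 0$ gives $j''(\bar{\mathbf{u}})(\mathbf{g}^{\varepsilon})^2 \ge 0$.

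Finally I would pass to the limit $\varepsilon \downarrow 0$. Since $\mathbf{g}^{\varepsilon} \to \mathbf{g}$ strongly in $\mathbf{L}^2(\Omega)$, the Lipschitz estimate \eqref{eq:continuity_of_j2} (applied with $\mathbf{u}_1 = \mathbf{u}_2 = \bar{\mathbf{u}}$ but varying the direction, together with bilinearity of $\mathbf{g} \mapsto j''(\bar{\mathbf{u}})\mathbf{g}^2$ in the obvious polarized sense) shows $j''(\bar{\mathbf{u}})(\mathbf{g}^{\varepsilon})^2 \to j''(\bar{\mathbf{u}})\mathbf{g}^2$; more directly one uses the explicit formula \eqref{eq:charac_j2} and the continuous dependence $\mathcal{S}'(\bar{\mathbf{u}})\mathbf{g}^{\varepsilon} \to \mathcal{S}'(\bar{\mathbf{u}})\mathbf{g}$ in $\mathbf{W}_0^{1,\mathsf{q}}(\Omega) \hookrightarrow \mathbf{C}(\bar{\Omega})$ guaranteed by Theorem \ref{thm:well-posedness-derivative}, so that each of the three terms in \eqref{eq:charac_j2} converges. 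Hence $j''(\bar{\mathbf{u}})\mathbf{g}^2 \ge 0$ for every $\mathbf{g} \in \mathbf{C}_{\bar{\mathbf{u}}}$. The main obstacle, and the only place where the reduced regularity of the adjoint state matters, is ensuring that the point-evaluation term $\sum_{t\in\mathcal{D}}\boldsymbol{\varphi}^2(t)$ and the trilinear term $b(\boldsymbol{\varphi};\boldsymbol{\varphi},\bar{\mathbf{z}})$ in \eqref{eq:charac_j2} are continuous under $\mathbf{L}^2$-perturbations of the direction — this is exactly what the $\mathbf{W}_0^{1,\mathsf{q}} \hookrightarrow \mathbf{C}(\bar{\Omega})$ regularity from Lemma \ref{lemma:regularity_estimate} and Theorem \ref{thm:well-posedness-derivative}, combined with $\bar{\mathbf{z}} \in \mathbf{W}_0^{1,\mathsf{p}}(\Omega)$, delivers, and it has already been packaged into \eqref{eq:continuity_of_j2}.
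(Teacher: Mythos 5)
Your overall strategy is the same as the paper's: truncate the critical direction so that small perturbations of $\bar{\mathbf{u}}$ in that direction stay admissible, Taylor-expand $j$, kill the first order term using $\mathbf{g}\in\mathbf{C}_{\bar{\mathbf{u}}}$, and pass to the limit using the strong $\mathbf{L}^2$ convergence of the truncations together with \eqref{eq:charac_j2} and \eqref{eq:continuity_of_j2}. However, there is a genuine gap in the feasibility step. Your truncation $\mathbf{g}^{\varepsilon}$ only zeroes out $\mathbf{g}_i$ on the strips $\mathbf{a}_i<\bar{\mathbf{u}}_i(x)<\mathbf{a}_i+\varepsilon$ and $\mathbf{b}_i-\varepsilon<\bar{\mathbf{u}}_i(x)<\mathbf{b}_i$; it does not bound the size of $\mathbf{g}^{\varepsilon}$. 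Since $\mathbf{g}$ is merely an $\mathbf{L}^2(\Omega)$ function, $\mathbf{g}^{\varepsilon}$ is in general unbounded, and the claim that there exists $\rho_{\varepsilon}>0$ with $\bar{\mathbf{u}}+\rho\,\mathbf{g}^{\varepsilon}\in\mathbf{U}_{ad}$ for all $0<\rho\le\rho_{\varepsilon}$ is false: on the set where $\mathbf{a}_i+\varepsilon\le\bar{\mathbf{u}}_i(x)\le\mathbf{b}_i-\varepsilon$ one needs $\rho\,|\mathbf{g}_i(x)|\le\varepsilon$ pointwise, and where $\bar{\mathbf{u}}_i(x)=\mathbf{a}_i$ one still needs $\rho\,\mathbf{g}_i(x)\le\mathbf{b}_i-\mathbf{a}_i$; neither can be guaranteed by a single $\rho_{\varepsilon}$ when $\mathbf{g}_i\notin L^{\infty}(\Omega)$. ``Having room'' of width $\varepsilon$ is not enough without a pointwise bound on the direction.

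The fix is exactly the paper's choice of truncation: replace $\mathbf{g}_i(x)$ by $\Pi_{[-k,k]}(\mathbf{g}_i(x))$ outside strips of width $k^{-1}$ around the bounds, so that $\|\mathbf{g}^{k}\|_{\mathbf{L}^{\infty}(\Omega)}\le k$ and a step size $\gamma\in(0,k^{-2}]$ gives $\gamma|\mathbf{g}_i^{k}(x)|\le k^{-1}$, whence $\bar{\mathbf{u}}+\gamma\mathbf{g}^{k}\in\mathbf{U}_{ad}$. One still has $|\mathbf{g}_i^k|\le|\mathbf{g}_i|$, $\mathbf{g}^{k}\in\mathbf{C}_{\bar{\mathbf{u}}}$, and $\mathbf{g}^{k}\to\mathbf{g}$ a.e., hence $\mathbf{g}^k\to\mathbf{g}$ in $\mathbf{L}^2(\Omega)$ by dominated convergence, and the rest of your argument (vanishing of $j'(\bar{\mathbf{u}})\mathbf{g}^k$, Taylor expansion, limit in the step size via \eqref{eq:continuity_of_j2}, limit $k\uparrow\infty$ via \eqref{eq:charac_j2} and the continuity of $\mathcal{S}'(\bar{\mathbf{u}})$) goes through unchanged. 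A minor additional remark: the correct justification of $\mathbf{g}^{\varepsilon}\to\mathbf{g}$ in $\mathbf{L}^2(\Omega)$ is that the truncation strips shrink to a null set as $\varepsilon\downarrow 0$, not that $\mathbf{g}_i$ already vanishes there; the strips need not be contained in $\{\bar{\mathbf{d}}_i\neq 0\}$.
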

\begin{proof}
Let $\mathbf{g}$ be an arbitrary element in the cone of critical directions $\mathbf{C}_{\bar{\mathbf{u}}}$. With $\mathbf{g}$ at hand, we introduce the vector-valued function $\mathbf{g}^k$, for $k \in \mathbb{N}$, as follows:
\begin{equation*}
\mathbf{g}_{i}^{k}(x):=
\begin{cases}
\qquad \quad 0 \quad &\text{ if }\quad x: \mathbf{a}_{i} < \bar{\mathbf{u}}_i(x) < \mathbf{a}_{i} + k^{-1}, 
\quad 
\mathbf{b}_{i} - k^{-1} < \bar{\mathbf{u}}_{i}(x) < \mathbf{b}_{i}, 
\\
\Pi_{[-k,k]}(\mathbf{g}_{i}(x)) &\text{ otherwise}. 
\end{cases}
\end{equation*}
Here, $i\in\{1,\ldots,d\}$. Since $\mathbf{g}$ belongs to $\mathbf{C}_{\bar{\mathbf{u}}}$, the definition of $\mathbf{g}^k$ immediately implies that, for every $k \in \mathbb{N}$, $\mathbf{g}^{k} \in \mathbf{C}_{\bar{\mathbf{u}}}$. On the other hand,  for every $i\in\{1,\ldots,d\}$ and $k \in \mathbb{N}$, $|\mathbf{g}_{i}^{k}(x)| \leq |\mathbf{g}_{i}(x)|$ and, as $k \uparrow \infty$, $\mathbf{g}_{i}^{k}(x)\rightarrow \mathbf{g}_{i}(x)$ for a.e.~$x \in \Omega$. Therefore $\mathbf{g}^{k} \to \mathbf{g}$ in $\mathbf{L}^2(\Omega)$ as $k \uparrow \infty$. We now note that based on simple calculations, we can conclude that, for $\gamma \in (0, k^{-2}]$, $\bar{\mathbf{u}} + \gamma \mathbf{g}^{k}\in\mathbf{U}_{ad}$. So we can rely on the fact that $\bar{\mathbf{u}}$ is a local minimum to conclude that $j(\bar{\mathbf{u}}) \leq j(\bar{\mathbf{u}}+\gamma \mathbf{g}^{k})$ for $\gamma$ small enough. With this bound at hand, we apply Taylor's theorem to $j$ at $\bar{\mathbf{u}}$ and use $j'(\bar{\mathbf{u}})\mathbf{g}^{k}=0$, which follows from the fact that $\mathbf{g}^{k} \in \mathbf{C}_{\bar{\mathbf{u}}}$, to conclude that, for $\gamma$ sufficiently small and $\theta_{k} \in (0,1)$,
\begin{equation*}
0
\leq 
j(\bar{\mathbf{u}}+\gamma \mathbf{g}^{k})-j(\bar{\mathbf{u}}) 
=
\gamma j'(\bar{\mathbf{u}})\mathbf{g}^{k}+\frac{\gamma^2}{2}j''(\bar{\mathbf{u}}+\gamma\theta_{k}\mathbf{g}^{k})(\mathbf{g}^{k})^2
=
\frac{\gamma^2}{2}j''(\bar{\mathbf{u}}+\gamma\theta_{k}\mathbf{g}^{k})(\mathbf{g}^{k})^2.
\end{equation*}
From the previous inequality and relations we can deduce that $j''(\bar{\mathbf{u}}+\gamma\theta_{k}\mathbf{g}^{k})(\mathbf{g}^{k})^2 \geq 0$. We invoke \eqref{eq:continuity_of_j2} and let $\gamma\downarrow 0$ to obtain $j''(\bar{\mathbf{u}})(\mathbf{g}^{k})^2 \geq 0$. We now let $k \uparrow \infty$ and invoke similar arguments to those elaborated in the proof of Lemma \ref{lemma:covergence_property} to conclude that $j''(\bar{\mathbf{u}})\mathbf{g}^2 \geq 0$. Since $\mathbf{g}$ is arbitrary, we have thus arrived at the desired result.
\end{proof}


\subsubsection{Second order sufficient conditions}
We now formulate and derive our second order sufficient conditions for local optimality with a minimal gap with respect to the necessary conditions derived in Theorem \ref{thm:nec_opt_cond}.

\begin{theorem}[second order sufficient optimality conditions]
\label{thm:suff_opt_cond}
Let $(\bar{\mathbf{y}}, \bar{p}, \bar{\mathbf{u}})$ be a local nonsingular solution to \eqref{eq:weak_cost}--\eqref{eq:weak_st_eq}. If $j''(\bar{\mathbf{u}})\mathbf{g}^2 > 0$ for all $\mathbf{g}\in \mathbf{C}_{\bar{\mathbf{u}}}\setminus\{\mathbf{0}\}$, then there exist $\mathfrak{m} > 0$ and $\mathfrak{s} > 0$ such that
\begin{equation}\label{eq:optimal_minimum}
j(\mathbf{u})\geq j(\bar{\mathbf{u}})+\tfrac{\mathfrak{m}}{2}\|\mathbf{u}-\bar{\mathbf{u}}\|_{\mathbf{L}^2(\Omega)}^2\quad \forall \mathbf{u}\in \mathbf{U}_{ad}: \|\mathbf{u}-\bar{\mathbf{u}}\|_{\mathbf{L}^2(\Omega)}\leq \mathfrak{s}.
\end{equation}
In particular, $\bar{\mathbf{u}}$ is a locally optimal control in the sense of $\mathbf{L}^2(\Omega)$. 
\end{theorem}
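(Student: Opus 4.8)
The plan is a contradiction argument in the spirit of \cite{MR2338434,MR2902693}. Assume \eqref{eq:optimal_minimum} fails for every admissible pair of constants; then for each $k\in\mathbb{N}$ there is $\mathbf{u}_{k}\in\mathbf{U}_{ad}$ with $0<\rho_{k}:=\|\mathbf{u}_{k}-\bar{\mathbf{u}}\|_{\mathbf{L}^{2}(\Omega)}\leq 1/k$ and $j(\mathbf{u}_{k})<j(\bar{\mathbf{u}})+\tfrac{1}{2k}\rho_{k}^{2}$. I would set $\mathbf{g}_{k}:=\rho_{k}^{-1}(\mathbf{u}_{k}-\bar{\mathbf{u}})$, so $\|\mathbf{g}_{k}\|_{\mathbf{L}^{2}(\Omega)}=1$, and pass to a nonrelabeled subsequence with $\mathbf{g}_{k}\rightharpoonup\mathbf{g}$ in $\mathbf{L}^{2}(\Omega)$. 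For $k$ large $\mathbf{u}_{k}\in\mathcal{O}(\bar{\mathbf{u}})$, which may be taken convex; since $j\in C^{2}(\mathcal{O}(\bar{\mathbf{u}}))$ (Theorem \ref{thm:diff_properties_j}), Taylor's theorem furnishes $\theta_{k}\in(0,1)$, with $\hat{\mathbf{u}}_{k}:=\bar{\mathbf{u}}+\theta_{k}(\mathbf{u}_{k}-\bar{\mathbf{u}})$, such that $j'(\bar{\mathbf{u}})(\mathbf{u}_{k}-\bar{\mathbf{u}})+\tfrac{1}{2}j''(\hat{\mathbf{u}}_{k})(\mathbf{u}_{k}-\bar{\mathbf{u}})^{2}<\tfrac{1}{2k}\rho_{k}^{2}$. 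Dividing by $\rho_{k}^{2}$, using \eqref{eq:continuity_of_j2} with $\|\mathbf{g}_{k}\|_{\mathbf{L}^{2}(\Omega)}=1$ to replace $j''(\hat{\mathbf{u}}_{k})\mathbf{g}_{k}^{2}$ by $j''(\bar{\mathbf{u}})\mathbf{g}_{k}^{2}+o(1)$, and recalling $j'(\bar{\mathbf{u}})(\mathbf{u}_{k}-\bar{\mathbf{u}})=(\bar{\mathbf{d}},\mathbf{u}_{k}-\bar{\mathbf{u}})_{\mathbf{L}^{2}(\Omega)}\geq 0$ from \eqref{eq:var_ineq}, I obtain
\[
j''(\bar{\mathbf{u}})\mathbf{g}_{k}^{2}<\tfrac1k+o(1),\qquad\text{hence}\qquad \limsup_{k\uparrow\infty}j''(\bar{\mathbf{u}})\mathbf{g}_{k}^{2}\leq 0.
\]

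Next I would show $\mathbf{g}\in\mathbf{C}_{\bar{\mathbf{u}}}$. Since $\mathbf{u}_{k},\bar{\mathbf{u}}\in\mathbf{U}_{ad}$, each $\mathbf{g}_{k}$ satisfies the sign conditions \eqref{eq:sign_cond}, and the set of functions satisfying \eqref{eq:sign_cond} is closed and convex, so $\mathbf{g}$ satisfies \eqref{eq:sign_cond}; by \eqref{eq:derivative_j} this gives $\bar{\mathbf{d}}_{i}\mathbf{g}_{i}\geq 0$ a.e.\ for each component, whence $j'(\bar{\mathbf{u}})\mathbf{g}=(\bar{\mathbf{d}},\mathbf{g})_{\mathbf{L}^{2}(\Omega)}\geq 0$. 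On the other hand, $\{j''(\bar{\mathbf{u}})\mathbf{g}_{k}^{2}\}_{k}$ is bounded: by \eqref{eq:charac_j2}, Theorem \ref{thm:well-posedness-derivative}, and the stability estimate \eqref{eq:stab_aux_adj}, each term is controlled by $\|\mathbf{g}_{k}\|_{\mathbf{L}^{2}(\Omega)}^{2}=1$. Combining this with the Taylor inequality above shows $\rho_{k}^{-2}(\bar{\mathbf{d}},\mathbf{u}_{k}-\bar{\mathbf{u}})_{\mathbf{L}^{2}(\Omega)}$ is bounded, so $(\bar{\mathbf{d}},\mathbf{g}_{k})_{\mathbf{L}^{2}(\Omega)}=\rho_{k}\,O(1)\to 0$; since $(\bar{\mathbf{d}},\cdot)_{\mathbf{L}^{2}(\Omega)}$ is sequentially weakly continuous, $(\bar{\mathbf{d}},\mathbf{g})_{\mathbf{L}^{2}(\Omega)}=0$. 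As the integrands $\bar{\mathbf{d}}_{i}\mathbf{g}_{i}$ are nonnegative, this forces $\mathbf{g}_{i}(x)=0$ wherever $\bar{\mathbf{d}}_{i}(x)\neq 0$; hence $\mathbf{g}\in\mathbf{C}_{\bar{\mathbf{u}}}$.

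Then I would rule out $\mathbf{g}\neq\mathbf{0}$: by Lemma \ref{lemma:covergence_property}, $j''(\bar{\mathbf{u}})\mathbf{g}^{2}\leq\liminf_{k\uparrow\infty}j''(\bar{\mathbf{u}})\mathbf{g}_{k}^{2}\leq 0$, contradicting the hypothesis $j''(\bar{\mathbf{u}})\mathbf{g}^{2}>0$ on $\mathbf{C}_{\bar{\mathbf{u}}}\setminus\{\mathbf{0}\}$. Hence $\mathbf{g}=\mathbf{0}$ and $\mathbf{g}_{k}\rightharpoonup\mathbf{0}$ in $\mathbf{L}^{2}(\Omega)$. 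Writing $\boldsymbol{\varphi}_{k}:=\mathcal{S}'(\bar{\mathbf{u}})\mathbf{g}_{k}$, Theorem \ref{thm:well-posedness-derivative} yields $\boldsymbol{\varphi}_{k}\rightharpoonup\mathbf{0}$ in $\mathbf{W}_{0}^{1,\mathsf{q}}(\Omega)$ with $\|\nabla\boldsymbol{\varphi}_{k}\|_{\mathbf{L}^{\mathsf{q}}(\Omega)}\lesssim 1$; the compact embedding $\mathbf{W}_{0}^{1,\mathsf{q}}(\Omega)\hookrightarrow\mathbf{C}(\bar{\Omega})$ then gives $\|\boldsymbol{\varphi}_{k}\|_{\mathbf{L}^{\infty}(\Omega)}\to 0$, so $\sum_{t\in\mathcal{D}}\boldsymbol{\varphi}_{k}^{2}(t)\to 0$ and $|b(\boldsymbol{\varphi}_{k};\boldsymbol{\varphi}_{k},\bar{\mathbf{z}})|\leq\|\boldsymbol{\varphi}_{k}\|_{\mathbf{L}^{\infty}(\Omega)}\|\nabla\boldsymbol{\varphi}_{k}\|_{\mathbf{L}^{\mathsf{q}}(\Omega)}\|\bar{\mathbf{z}}\|_{\mathbf{L}^{\mathsf{p}}(\Omega)}\to 0$. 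Substituting into \eqref{eq:charac_j2} gives $j''(\bar{\mathbf{u}})\mathbf{g}_{k}^{2}\to\alpha\|\mathbf{g}_{k}\|_{\mathbf{L}^{2}(\Omega)}^{2}=\alpha>0$, contradicting $\limsup_{k\uparrow\infty}j''(\bar{\mathbf{u}})\mathbf{g}_{k}^{2}\leq 0$. This contradiction establishes \eqref{eq:optimal_minimum}; the last assertion follows since $\tfrac{\mathfrak{m}}{2}\|\mathbf{u}-\bar{\mathbf{u}}\|_{\mathbf{L}^{2}(\Omega)}^{2}\geq 0$.

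I expect the second paragraph — placing the weak limit $\mathbf{g}$ in the critical cone, in particular extracting $(\bar{\mathbf{d}},\mathbf{g})_{\mathbf{L}^{2}(\Omega)}=0$ — to be the delicate point, since it requires interweaving the first order condition \eqref{eq:var_ineq}, the uniform bound on $\{j''(\bar{\mathbf{u}})\mathbf{g}_{k}^{2}\}$ (which rests on the $\mathbf{W}^{1,\mathsf{q}}$/$\mathbf{W}^{1,\mathsf{p}}$ regularity theory of Theorems \ref{thm:well-posedness-derivative} and \ref{thm:well-posedness-adjoint}), and the rate $\rho_{k}\to 0$. The remaining steps are routine, relying on the compact embedding $\mathbf{W}_{0}^{1,\mathsf{q}}(\Omega)\hookrightarrow\mathbf{C}(\bar{\Omega})$ and the explicit form \eqref{eq:charac_j2} of $j''$.
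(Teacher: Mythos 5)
Your proof is correct, and it follows the same overall contradiction skeleton as the paper (normalize the differences $\mathbf{g}_k=(\mathbf{u}_k-\bar{\mathbf{u}})/\rho_k$, show the weak limit lies in $\mathbf{C}_{\bar{\mathbf{u}}}$, force $\mathbf{g}=\mathbf{0}$, then contradict $\alpha>0$), but you execute the two delicate steps differently and, I would say, more economically. First, to place $\mathbf{g}$ in the critical cone the paper uses the first-order mean value theorem, $j'(\tilde{\mathbf{u}}_k)\mathbf{g}^k=\rho_k^{-1}(j(\mathbf{u}_k)-j(\bar{\mathbf{u}}))\to 0$, and must then prove $\tilde{\mathbf{d}}_k\to\bar{\mathbf{d}}$ in $\mathbf{L}^2(\Omega)$, which rests on the Lipschitz property of Theorem \ref{thm:Lipschitz_property} and the adjoint stability of Theorem \ref{thm:well-posedness-adjoint}; you instead extract the quantitative bound $0\leq(\bar{\mathbf{d}},\mathbf{g}_k)_{\mathbf{L}^2(\Omega)}\lesssim\rho_k$ directly from the second-order Taylor expansion and the uniform bound on $j''(\hat{\mathbf{u}}_k)\mathbf{g}_k^2$, so weak convergence of $\mathbf{g}_k$ alone closes the step---no adjoint-state continuity is needed there. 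Second, the paper carries the moving base points $\hat{\mathbf{u}}_k$ all the way through Steps 2 and 3 and must re-derive lower semicontinuity of $j''$ along $\hat{\mathbf{u}}_k\to\bar{\mathbf{u}}$, $\mathbf{g}_k\rightharpoonup\mathbf{g}$ via the convergence properties \eqref{eq:convergence_properties}; you use the Lipschitz estimate \eqref{eq:continuity_of_j2} once, at the outset, to replace $j''(\hat{\mathbf{u}}_k)\mathbf{g}_k^2$ by $j''(\bar{\mathbf{u}})\mathbf{g}_k^2+o(1)$, after which Lemma \ref{lemma:covergence_property} and the explicit formula \eqref{eq:charac_j2} at the fixed point $\bar{\mathbf{u}}$ finish both the ``$\mathbf{g}=\mathbf{0}$'' step and the final contradiction $\alpha\leq 0$. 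What the paper's route buys is that it never needs the full strength of \eqref{eq:continuity_of_j2} in this proof; what your route buys is a shorter argument that localizes all the hard analysis in the two auxiliary results (Theorem \ref{thm:diff_properties_j} and Lemma \ref{lemma:covergence_property}) already proved. Both are sound.
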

\begin{proof}
We proceed by contradiction and assume that for every $k\in\mathbb{N}$ there is an element $\mathbf{u}_{k} \in \mathbf{U}_{ad}$ such that
\begin{equation}\label{eq:contradic_I}
\|\bar{\mathbf{u}} - \mathbf{u}_{k}\|_{\mathbf{L}^2(\Omega)} < k^{-1},
\qquad 
j(\mathbf{u}_{k}) < j(\bar{\mathbf{u}}) + (2k)^{-1}\|\bar{\mathbf{u}} -\mathbf{u}_{k}\|_{\mathbf{L}^2(\Omega)}^2.
\end{equation}
Define
$
\rho_{k}:=\|\mathbf{u}_{k}-\bar{\mathbf{u}}\|_{\mathbf{L}^2(\Omega)}
$
and
$
\mathbf{g}^{k}:=(\mathbf{u}_{k}-\bar{\mathbf{u}})/\rho_{k}
$
for $k \in \mathbb{N}$. Note that there exists a nonrelabeled subsequence $\{\mathbf{g}^{k}\}_{k \in \mathbb{N}} \subset \mathbf{L}^2(\Omega)$ such that $\mathbf{g}^{k}\rightharpoonup \mathbf{g}$ in $\mathbf{L}^2(\Omega)$ as $k \uparrow \infty$.

We now proceed in three steps.

\emph{Step 1:} $\mathbf{g}\in \mathbf{C}_{\bar{\mathbf{u}}}$. We note that the set of elements satisfying condition \eqref{eq:sign_cond} is weakly closed in $\mathbf{L}^2(\Omega)$. Consequently, $\mathbf{g}$ also satisfies condition \eqref{eq:sign_cond}. To verify the remaining condition in \eqref{def:critical_cone}, we apply the mean value theorem and \eqref{eq:contradic_I} to obtain
\begin{equation}\label{eq:contradic_II}
 j'(\tilde{\mathbf{u}}_{k})\mathbf{g}^{k}  = \rho_k^{-1}(j(\mathbf{u}_{k}) - j(\bar{\mathbf{u}}))<  (2k)^{-1}\rho_{k} \to 0,
\quad
k \uparrow \infty,
\end{equation}
where $\tilde{\mathbf{u}}_{k} = \bar{\mathbf{u}} + \theta_{k}(\mathbf{u}_{k} - \bar{\mathbf{u}})$ and $\theta_{k} \in (0,1)$. For each $k$, we define $(\tilde{\mathbf{y}}_{k},\tilde{p}_{k}) := \mathcal{S}\tilde{\mathbf{u}}_{k}$ and let $(\tilde{\mathbf{z}}_{k}, \tilde{r}_{k})$ be the unique solution to \eqref{eq:adj_eq}, where $\mathbf{y}$ is replaced by $\tilde{\mathbf{y}}_{k}$. Since $\tilde{ \mathbf{u}}_k \rightarrow \bar{\mathbf{u}}$ in $\mathbf{L}^2(\Omega)$ as $k \uparrow \infty$, the Lipschitz property of Theorem \ref{thm:Lipschitz_property} implies that $\tilde{\mathbf{y}}_{k} \to \bar{\mathbf{y}}$ in $\mathbf{H}_0^{1}(\Omega) \cap \mathbf{C}(\bar{\Omega})$ as $k \uparrow \infty$. This implies, in view of Theorem \ref{thm:well-posedness-adjoint}, that $\tilde{\mathbf{z}}_{k}\to \bar{\mathbf{z}}$ in $\mathbf{W}_0^{1,\mathsf{p}}(\Omega)$ as $k \uparrow\infty$. Here, $\mathsf{p} < d/(d-1)$ is arbitrarily close to $d/(d-1)$. In view of the continuous embedding $\mathbf{W}_0^{1,\mathsf{p}}(\Omega)\hookrightarrow \mathbf{L}^2(\Omega)$ \cite[Theorem 4.12]{MR2424078}, we deduce that $\tilde{\mathbf{z}}_{k}\to \bar{\mathbf{z}}$ in $\mathbf{L}^2(\Omega)$ as $k \uparrow\infty$. Consequently, $\tilde{\mathbf{z}}_k + \alpha \tilde{\mathbf{u}}_k =: \tilde{\mathbf{d}}_k \rightarrow \bar{\mathbf{d}} = \bar{\mathbf{z}} + \alpha \bar{\mathbf{u}}$ in $\mathbf{L}^2(\Omega)$ as $k \uparrow\infty$. This convergence result combined with $\mathbf{g}^{k}\rightharpoonup \mathbf{g}$ in $\mathbf{L}^2(\Omega)$ as $k \uparrow\infty$ and \eqref{eq:contradic_II} yield
\begin{equation*}
j'(\bar{\mathbf{u}})\mathbf{g}  = (\bar{\mathbf{d}},\mathbf{g})_{\mathbf{L}^2(\Omega)} = \lim_{k \uparrow \infty}
(\tilde{\mathbf{d}}_k,\mathbf{g}^k)_{\mathbf{L}^2(\Omega)} = \lim_{k \uparrow \infty}j'(\tilde{\mathbf{u}}_{k})\mathbf{g}^{k} \leq 0.
\end{equation*}
On the other hand, from \eqref{eq:var_ineq} we obtain 
$
(\bar{\mathbf{d}}, \mathbf{g}^k)_{\mathbf{L}^2(\Omega)} = \rho_k^{-1} (\bar{\mathbf{d}}, \mathbf{u}_k - \bar{\mathbf{u}})_{\mathbf{L}^2(\Omega)}$ $\geq 0.
$
This implies $(\bar{\mathbf{d}},\mathbf{g})_{\mathbf{L}^2(\Omega)} \geq 0$. Consequently, $(\bar{\mathbf{d}}, \mathbf{g})_{\mathbf{L}^2(\Omega)} = 0$. Now, since $\mathbf{g}$ satisfies the sign condition \eqref{eq:sign_cond}, the characterization of $\bar{\mathbf{d}}$ in \eqref{eq:derivative_j} allows us to conclude that
\begin{equation*}
(\bar{\mathbf{d}},\mathbf{g})_{\mathbf{L}^2(\Omega)}
=
\sum_{i=1}^d(\bar{\mathbf{d}}_{i},\mathbf{g}_{i})_{L^2(\Omega)} = \sum_{i=1}^d\int_{\Omega} |\bar{\mathbf{d}}_{i}\mathbf{g}_{i}|  = 0.
\end{equation*}
This proves that $\bar{\mathbf{d}}_{i}(x) \neq 0$ implies that $\mathbf{g}_{i}(x) = 0$ for a.e.~$x \in \Omega$ and $i\in\{1,\ldots,d\}$. Since we have already proved that $\mathbf{g}$ satisfies \eqref{eq:sign_cond}, we conclude that $\mathbf{g}\in \mathbf{C}_{\bar{\mathbf{u}}}$.

\emph{Step 2:} We prove that $\mathbf{g}=\mathbf{0}$. To do this, we use Taylor's theorem, the fact that $j'(\bar{\mathbf{u}})(\mathbf{u}_{k} - \bar{\mathbf{u}}) \geq 0$, for every $k\in\mathbb{N}$, and the inequality on the right-hand side of \eqref{eq:contradic_I}:
\begin{equation*}
2^{-1}\rho_{k}^2j''(\hat{\mathbf{u}}_{k})(\mathbf{g}^{k})^2 = j(\mathbf{u}_{k}) - j(\bar{\mathbf{u}}) - j'(\bar{\mathbf{u}})(\mathbf{u}_{k} - \bar{\mathbf{u}})  < (2k)^{-1}\rho_{k}^2,
\end{equation*}
where $\hat{\mathbf{u}}_{k} = \bar{\mathbf{u}} + \theta_{k}(\mathbf{u}_{k} - \bar{\mathbf{u}})$ and $\theta_{k} \in (0,1)$. Thus, $\lim_{k\uparrow\infty}j''(\hat{\mathbf{u}}_{k})(\mathbf{g}^{k})^2 \leq 0$. We now prove that $j''(\bar{\mathbf{u}})\mathbf{g}^2 \leq \liminf_{k\uparrow\infty}j''(\hat{\mathbf{u}}_{k})(\mathbf{g}^{k})^2$. Define, for $k\in\mathbb{N}$, $(\hat{\mathbf{y}}_k,\hat{p}_k):= \mathcal{S}(\hat{\mathbf{u}}_k)$ and $(\hat{\mathbf{z}}_k,\hat{r}_k)$ as the solution \eqref{eq:adj_eq}, where $\mathbf{y}$ is replaced by $ \hat{\mathbf{y}}_k$. Invoke \eqref{eq:charac_j2} and write
\[
j''(\hat{\mathbf{u}}_{k})(\mathbf{g}^{k})^2 = \alpha \| \mathbf{g}^{k} \|^2_{\mathbf{L}^2(\Omega)} -2b(\boldsymbol{\varphi}_k; \boldsymbol{\varphi}_k,\hat{\mathbf{z}}_k) + \sum_{t \in \mathcal{D}} \boldsymbol{\varphi}_k^2(t),
\]
where $(\boldsymbol{\varphi}_k,\zeta_k):= \mathcal{S}'(\hat{\mathbf{u}}_{k})\mathbf{g}^{k}$, i.e., $(\boldsymbol{\varphi}_k,\zeta_k)$ solves \eqref{eq:first_deriv_S*}, where $\mathbf{g}$ and $\mathbf{y}$ are replaced by $\mathbf{g}^k$ and $\hat{\mathbf{y}}_k$, respectively. Since $\hat{\mathbf{u}}_{k} \rightarrow \bar{\mathbf{u}}$ and $\mathbf{g}^k \rightharpoonup \mathbf{g}$ in $\mathbf{L}^2(\Omega)$, as $k \uparrow \infty$, we have 
\begin{equation}
\hat{\mathbf{y}}_k \rightarrow \bar{\mathbf{y}} \textrm{ in } \mathbf{W}_0^{1,\mathsf{q}}(\Omega) \cap \mathbf{C}(\bar \Omega),
\quad
\hat{\mathbf{z}}_k \rightarrow \bar{\mathbf{z}} \textrm{ in } \mathbf{W}_0^{1,\mathsf{p}}(\Omega), 
\quad
\boldsymbol{\varphi}_k \rightharpoonup  \boldsymbol{\varphi} \textrm{ in } \mathbf{W}_0^{1,\mathsf{q}}(\Omega)
\label{eq:convergence_properties}
\end{equation}
as $k \uparrow \infty$. From the fact that $\mathsf{q}>d$ and the compactness of the embedding $\mathbf{W}_0^{1,\mathsf{q}}(\Omega) \hookrightarrow \mathbf{C}(\bar \Omega)$ \cite[Theorem 6.3, Part III]{MR2424078} we conclude that $\| \boldsymbol{\varphi}_k - \boldsymbol{\varphi} \|_{\mathbf{L}^{\infty}(\Omega)} \rightarrow 0$ as $k\uparrow \infty$. This convergence property in combination with the ones in \eqref{eq:convergence_properties} yield
\[
-2b(\boldsymbol{\varphi}_k; \boldsymbol{\varphi}_k,\hat{\mathbf{z}}_k) + \sum_{t \in \mathcal{D}} \boldsymbol{\varphi}_k^2(t) 
\rightarrow 
-2b(\boldsymbol{\varphi}; \boldsymbol{\varphi},\bar{\mathbf{z}}) + \sum_{t \in \mathcal{D}} \boldsymbol{\varphi}^2(t),
\quad
k \uparrow \infty.
\]
The fact that the square of $\| \cdot \|_{\mathbf{L}^2(\Omega)}$ is weakly lower semicontinuous allows us to conclude. Thus, a collection of the derived estimates shows that $j''(\bar{\mathbf{u}})\mathbf{g}^2 \leq 0$. Therefore, the second order condition $j''(\bar{\mathbf{u}})\mathbf{g}^2 > 0$ for all $\mathbf{g}\in\mathbf{C}_{\bar{\mathbf{u}}}\setminus\{\mathbf{0}\}$ implies that $\mathbf{g}=\mathbf{0}$.

\emph{Step 3:} $\alpha \leq 0$. Finally, we notice that
\begin{equation}\label{eq:j''_=_alpha}
\alpha = \alpha \|\mathbf{g}^{k}\|^{2}_{\mathbf{L}^2(\Omega)}
=
j''(\hat{\mathbf{u}}_{k})(\mathbf{g}^{k})^2
+ 2b(\boldsymbol{\varphi}_{k};\boldsymbol{\varphi}_{k},\hat{\mathbf{z}}_{k}) - \sum_{t\in \mathcal{D}}\boldsymbol{\varphi}_{k}^2(t),
\end{equation} 
with $(\boldsymbol{\varphi}_k,\zeta_k)$ given as in the previous step. Arguments similar to those elaborated in Step 2, which are based on the convergence properties \eqref{eq:convergence_properties} reveal that $\alpha = \liminf_{k\uparrow\infty}j''(\hat{\mathbf{u}}_{k})(\mathbf{g}^{k})^2 \leq 0$, which is a contradiction. This concludes the proof.
\end{proof}

We conclude this section with an equivalent representation of the conditions stated in Theorem \ref{thm:suff_opt_cond}. To present it,  we introduce, for $\tau > 0$, the cone $\mathbf{C}_{\bar{\mathbf{u}}}^\tau:=\{\mathbf{g}\in \mathbf{L}^{2}(\Omega) \textnormal{ satisfying \eqref{eq:sign_cond} and } \mathbf{g}_{i}(x)=0 \textnormal{ if } |\bar{\mathbf{d}}_{i}(x)|>\tau\}$, where $i\in \{1,\ldots,d\}$.

\begin{theorem}[equivalent optimality conditions]\label{thm:equivalent_opt_cond}
Let $(\bar{\mathbf{y}}, \bar{p}, \bar{\mathbf{u}})$ be a local nonsingular solution to \eqref{eq:weak_cost}--\eqref{eq:weak_st_eq}. Then, the following statements are equivalent:
\begin{equation}
\label{eq:second_order_2_2}
j''(\bar{\mathbf{u}})\mathbf{g}^2 > 0 \, \forall \mathbf{g} \in \mathbf{C}_{\bar{\mathbf{u}}}\setminus \{\mathbf{0}\}
\Longleftrightarrow
\exists \mu, \tau >0: 
\,\,
j''(\bar{\mathbf{u}})\mathbf{g}^2 \geq \mu \|\mathbf{g}\|_{\mathbf{L}^2(\Omega)}^2 
\,\, 
\forall \mathbf{g} \in \mathbf{C}_{\bar{\mathbf{u}}}^\tau.
\end{equation}
\end{theorem}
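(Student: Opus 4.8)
The plan is to establish the two implications in \eqref{eq:second_order_2_2} separately. The implication ``$\Leftarrow$'' is immediate once one records the inclusion $\mathbf{C}_{\bar{\mathbf{u}}} \subseteq \mathbf{C}_{\bar{\mathbf{u}}}^{\tau}$, valid for every $\tau > 0$: if $\mathbf{g}$ satisfies \eqref{eq:sign_cond} and $\mathbf{g}_{i}(x) = 0$ whenever $\bar{\mathbf{d}}_{i}(x) \neq 0$, then a fortiori $\mathbf{g}_{i}(x) = 0$ whenever $|\bar{\mathbf{d}}_{i}(x)| > \tau$. Hence, if $j''(\bar{\mathbf{u}})\mathbf{g}^2 \geq \mu\|\mathbf{g}\|_{\mathbf{L}^2(\Omega)}^2$ for all $\mathbf{g} \in \mathbf{C}_{\bar{\mathbf{u}}}^{\tau}$, the same inequality holds on $\mathbf{C}_{\bar{\mathbf{u}}}$, which yields $j''(\bar{\mathbf{u}})\mathbf{g}^2 \geq \mu\|\mathbf{g}\|_{\mathbf{L}^2(\Omega)}^2 > 0$ for every $\mathbf{g} \in \mathbf{C}_{\bar{\mathbf{u}}} \setminus \{\mathbf{0}\}$.

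For the implication ``$\Rightarrow$'' I would argue by contradiction. If the right-hand side of \eqref{eq:second_order_2_2} fails, then, choosing $\mu = \tau = 1/k$, for every $k \in \mathbb{N}$ there is $\mathbf{g}_{k} \in \mathbf{C}_{\bar{\mathbf{u}}}^{1/k}$ which, after normalization (legitimate since $\mathbf{C}_{\bar{\mathbf{u}}}^{1/k}$ is a cone), satisfies $\|\mathbf{g}_{k}\|_{\mathbf{L}^2(\Omega)} = 1$ and $j''(\bar{\mathbf{u}})\mathbf{g}_{k}^2 < 1/k$; in particular $\limsup_{k\uparrow\infty} j''(\bar{\mathbf{u}})\mathbf{g}_{k}^2 \leq 0$. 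Passing to a nonrelabeled subsequence, $\mathbf{g}_{k} \rightharpoonup \mathbf{g}$ in $\mathbf{L}^2(\Omega)$, and I would proceed in three steps mirroring the proof of Theorem \ref{thm:suff_opt_cond}. \emph{Step 1:} $\mathbf{g} \in \mathbf{C}_{\bar{\mathbf{u}}}$. Since the family $\{\mathbf{C}_{\bar{\mathbf{u}}}^{\tau}\}_{\tau>0}$ is nondecreasing in $\tau$, for each fixed $j \in \mathbb{N}$ and all $k \geq j$ one has $\mathbf{g}_{k} \in \mathbf{C}_{\bar{\mathbf{u}}}^{1/j}$; this set is convex and closed in $\mathbf{L}^2(\Omega)$, hence weakly closed, so $\mathbf{g} \in \mathbf{C}_{\bar{\mathbf{u}}}^{1/j}$. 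Letting $j \uparrow \infty$ and using $\bigcap_{j\in\mathbb{N}} \mathbf{C}_{\bar{\mathbf{u}}}^{1/j} = \mathbf{C}_{\bar{\mathbf{u}}}$ gives $\mathbf{g} \in \mathbf{C}_{\bar{\mathbf{u}}}$. \emph{Step 2:} $\mathbf{g} = \mathbf{0}$. By Lemma \ref{lemma:covergence_property}, $j''(\bar{\mathbf{u}})\mathbf{g}^2 \leq \liminf_{k\uparrow\infty} j''(\bar{\mathbf{u}})\mathbf{g}_{k}^2 \leq 0$; since $\mathbf{g} \in \mathbf{C}_{\bar{\mathbf{u}}}$, the hypothesis $j''(\bar{\mathbf{u}})\mathbf{g}^2 > 0$ on $\mathbf{C}_{\bar{\mathbf{u}}} \setminus \{\mathbf{0}\}$ forces $\mathbf{g} = \mathbf{0}$. \emph{Step 3:} contradiction. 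With $(\boldsymbol{\varphi}_{k},\zeta_{k}) = \mathcal{S}'(\bar{\mathbf{u}})\mathbf{g}_{k}$, linearity and Theorem \ref{thm:well-posedness-derivative} give $\boldsymbol{\varphi}_{k} \rightharpoonup \mathbf{0}$ in $\mathbf{W}_0^{1,\mathsf{q}}(\Omega)$ with $\|\nabla\boldsymbol{\varphi}_{k}\|_{\mathbf{L}^{\mathsf{q}}(\Omega)} \lesssim 1$, and the compact embedding $\mathbf{W}_0^{1,\mathsf{q}}(\Omega) \hookrightarrow \mathbf{C}(\bar{\Omega})$ ($\mathsf{q} > d$) yields $\boldsymbol{\varphi}_{k} \to \mathbf{0}$ in $\mathbf{C}(\bar{\Omega})$. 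The representation \eqref{eq:charac_j2} then reads $j''(\bar{\mathbf{u}})\mathbf{g}_{k}^2 = \alpha - 2b(\boldsymbol{\varphi}_{k};\boldsymbol{\varphi}_{k},\bar{\mathbf{z}}) + \sum_{t\in\mathcal{D}}\boldsymbol{\varphi}_{k}^2(t)$, and since $|b(\boldsymbol{\varphi}_{k};\boldsymbol{\varphi}_{k},\bar{\mathbf{z}})| \lesssim \|\boldsymbol{\varphi}_{k}\|_{\mathbf{L}^{\infty}(\Omega)}\|\nabla\boldsymbol{\varphi}_{k}\|_{\mathbf{L}^{\mathsf{q}}(\Omega)}\|\bar{\mathbf{z}}\|_{\mathbf{L}^{\mathsf{p}}(\Omega)} \to 0$ and $\sum_{t\in\mathcal{D}}\boldsymbol{\varphi}_{k}^2(t) \to 0$, we conclude $\alpha = \lim_{k\uparrow\infty} j''(\bar{\mathbf{u}})\mathbf{g}_{k}^2 \leq 0$, contradicting $\alpha > 0$.

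The only genuinely new ingredient beyond the already-developed machinery is Step 1, which is precisely why the $\tau$-enlarged cones $\mathbf{C}_{\bar{\mathbf{u}}}^{\tau}$ are introduced: $\mathbf{C}_{\bar{\mathbf{u}}}$ itself is not robust under weak limits of directions arising from minimizing-type sequences, whereas each $\mathbf{C}_{\bar{\mathbf{u}}}^{\tau}$ is weakly closed and the intersection over $\tau > 0$ recovers $\mathbf{C}_{\bar{\mathbf{u}}}$. I therefore expect the (mild) main obstacle to be the bookkeeping in Step 1 — checking the monotonicity and weak closedness of $\{\mathbf{C}_{\bar{\mathbf{u}}}^{\tau}\}$ and the identity $\bigcap_{\tau>0}\mathbf{C}_{\bar{\mathbf{u}}}^{\tau} = \mathbf{C}_{\bar{\mathbf{u}}}$ — while the analytic estimates in Step 3 are identical to those already performed in the proofs of Theorems \ref{thm:diff_properties_j} and \ref{thm:suff_opt_cond} and can simply be quoted.
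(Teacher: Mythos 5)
Your proposal is correct and follows exactly the route the paper intends: the paper's proof of Theorem \ref{thm:equivalent_opt_cond} is only a pointer to \cite[Theorem 3.10]{MR2338434} combined with the arguments of Theorem \ref{thm:suff_opt_cond}, and your normalization/contradiction argument, with the weak closedness of each $\mathbf{C}_{\bar{\mathbf{u}}}^{1/j}$, the identity $\bigcap_{j}\mathbf{C}_{\bar{\mathbf{u}}}^{1/j}=\mathbf{C}_{\bar{\mathbf{u}}}$, Lemma \ref{lemma:covergence_property}, and the limit $j''(\bar{\mathbf{u}})\mathbf{g}_k^2\to\alpha$ via \eqref{eq:charac_j2}, is precisely that combination written out in full. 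The trivial direction via $\mathbf{C}_{\bar{\mathbf{u}}}\subseteq\mathbf{C}_{\bar{\mathbf{u}}}^{\tau}$ and all the supporting estimates you quote are used as in the paper, so I see no gap.
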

\begin{proof}
The proof of \eqref{eq:second_order_2_2} follows from a combination of the arguments elaborated in the proofs of \cite[Theorem 3.10]{MR2338434} and Theorem \ref{thm:suff_opt_cond}. For brevity, we skip details.
\end{proof}


\section{Finite element approximation}\label{sec:fem}
In the following sections, we present discretization methods based on finite elements for the state equations, the optimal control problem \eqref{eq:weak_cost}--\eqref{eq:weak_st_eq}, and the adjoint equations. In order to derive convergence results and obtain error estimates, we assume in all the following sections that $\Omega$ is a \emph{convex polytope}. In particular, this ensures the regularity properties for solutions of the Navier--Stokes system described in Remark \ref{remark:further_reg_NS}.

Let us introduce the discrete setting in which we will be working \cite{MR2373954,MR0520174,MR2050138}. We denote by $\mathscr{T}_h = \{ T\}$ a conforming partition, or mesh, of $\bar{\Omega}$ into \emph{closed simplices} $T$ of size $h_T = \text{diam}(T)$. Here, $h:=\max \{ h_T: T \in \mathscr{T}_h \}$. By $\mathbb{T} = \{\mathscr{T}_h \}_{h>0}$, we denote a collection of conforming and quasi-uniform meshes $\mathscr{T}_h$.

We denote by $\mathbf{V}_h$ and $Q_h$ the finite element spaces approximating the velocity field and the pressure, respectively, constructed over the mesh $\mathscr{T}_h$. In particular, in our work we will consider the following well-known finite element pairs: 
\begin{itemize}
\item[(a)] \emph{The MINI element}. This pair is considered in \cite[\S 4.2.4]{MR2050138} and is defined by
\begin{align}\label{def:discrete_spaces_P1B}
\begin{split}
Q_h &=\{ q_h \in C(\bar{\Omega})\, : \, q_h|_T \in \mathbb{P}_{1}(T) \ \forall \: T \in\mathscr{T}_h \}  \cap L_0^2(\Omega), \\
\mathbf{V}_h &=\{ \mathbf{v}_h \in \mathbf{C}(\bar{\Omega})\, : \, \mathbf{v}_h|_T \in [ \mathbb{P}_1(T) \oplus \mathbb{B}(T) ]^d \ \forall \: T\in\mathscr{T}_h \}\cap \mathbf{H}_0^1(\Omega),
\end{split}
\end{align}
where $\mathbb{B}(T)$ denotes the space spanned by local bubble functions.
\item[(b)] \emph{The lowest order Taylor--Hood pair.}  This pair is defined by \cite[\S 4.2.5]{MR2050138}
\begin{align}\label{def:discrete_spaces_TH}
\begin{split}
Q_h &=\{ q_h \in C(\bar{\Omega})\, : \, q_h|_T \in \mathbb{P}_{1}(T) \ \forall \: T \in\mathscr{T}_h \}  \cap L_0^2(\Omega), \\
\mathbf{V}_h &=\{ \mathbf{v}_h \in \mathbf{C}(\bar{\Omega})\, : \, \mathbf{v}_h|_T \in [\mathbb{P}_2(T)]^d \ \forall \: T\in\mathscr{T}_h \}\cap \mathbf{H}_0^1(\Omega).
\end{split}
\end{align}
\end{itemize}

In the following analysis, the pair $(\mathbf{V}_{h},Q_{h})$ will represent indistinctly both the \emph{MINI element} and the \emph{lowest order Taylor--Hood element}. An important property satisfied by these pairs is the following compatibility condition: Let $\mathsf{r} \in (1,\infty)$ and let $\mathsf{s} \in (1,\infty)$ be the H\"older conjugate of $\mathsf{r}$. Then, there exists a positive constant $\tilde{\beta}$, independent of $h$, such that \cite[Lemmas 4.20 and 4.24]{MR2050138}
\begin{equation}\label{eq:infsup_div}
\inf_{q_h\in Q_{h}}\sup_{\mathbf{v}_h\in\mathbf{V}_{h}}\frac{(q_{h},\text{div }\mathbf{v}_{h})_{L^2(\Omega)}}{\|\nabla\mathbf{v}_{h}\|_{\mathbf{L}^{\mathsf{r}}(\Omega)}\|q_{h}\|_{\mathrm{L}^{\mathsf{s}}(\Omega)}}\geq\tilde{\beta}.
\end{equation}


\subsection{Discrete state equations}\label{sec:fem_state}

Let $\mathbf{u}\in \mathbf{L}^2(\Omega)$. We introduce the following finite element approximation of problem \eqref{eq:weak_st_eq}: Find $(\mathbf{y}_{h},p_{h})\in \mathbf{V}_{h}\times Q_{h}$ such that 
\begin{multline}
\label{eq:discrete_state_eq}
\nu(\nabla \mathbf{y}_{h},\nabla \mathbf{v}_{h})_{\mathbf{L}^2(\Omega)} + b(\mathbf{y}_{h};\mathbf{y}_{h},\mathbf{v}_{h}) - (p_{h},\text{div }\mathbf{v}_{h})_{L^2(\Omega)}
= (\mathbf{u},\mathbf{v}_{h})_{\mathbf{L}^2(\Omega)},
\\
(q_{h},\text{div }\mathbf{y}_{h})_{L^2(\Omega)} = 0
\qquad
\forall (\mathbf{v}_{h},q_{h})\in \mathbf{V}_{h}\times Q_{h}.
\end{multline}
For any $\mathbf{u}\in\mathbf{L}^2(\Omega)$, problem \eqref{eq:discrete_state_eq} does not necessarily have a unique solution $(\mathbf{y}_{h},p_{h})$. However, if a local nonsingular solution $(\bar{\mathbf{y}}, \bar{p},\bar{\mathbf{u}})$ to \eqref{eq:weak_cost}--\eqref{eq:weak_st_eq} is given and $\mathbf{u}$ is close enough to the locally optimal control $\bar{\mathbf{u}}$, then there is a unique discrete solution to \eqref{eq:discrete_state_eq} that is sufficiently close to $(\bar{\mathbf{y}}, \bar{p})$. 
This is summarized in the following result.

\begin{theorem}[existence of a unique discrete solution]\label{thm:existence_of_d_s}
Let $(\bar{\mathbf{y}}, \bar{p}, \bar{\mathbf{u}})$ be a local nonsingular solution to \eqref{eq:weak_cost}--\eqref{eq:weak_st_eq}. Then, there exist $\mathfrak{s},\mathfrak{r}> 0$, independent of $h$, and $h_{\star} > 0$ such that for all $h \in (0, h_{\star})$ and $\mathbf{u}\in B_{\mathfrak{s}}(\bar{\mathbf{u}})\subset\mathbf{L}^2(\Omega)$, problem  \eqref{eq:discrete_state_eq} admits a unique solution $(\mathbf{y}_{h},p_{h})\in B_{\mathfrak{r}}(\bar{\mathbf{y}})\times B_{\mathfrak{r}}(\bar{p})\subset \mathbf{H}_0^1(\Omega)\times L_0^2(\Omega)$.
\end{theorem}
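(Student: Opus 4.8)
The plan is to obtain the existence and local uniqueness of the discrete solution by a fixed-point/implicit-function-theorem argument centered at the continuous local nonsingular solution $(\bar{\mathbf{y}},\bar{p})$, using the regularity $\bar{\mathbf{y}}$ is regular (so that the linearized Navier--Stokes operator is an isomorphism) together with the discrete inf-sup condition \eqref{eq:infsup_div} and the convexity of $\Omega$ (which gives $\mathbf{H}^2\times H^1$ regularity of $(\bar{\mathbf{y}},\bar{p})$ by Remark \ref{remark:further_reg_NS} and hence optimal interpolation rates). First I would write the discrete problem \eqref{eq:discrete_state_eq} in residual form: define $F_h:\mathbf{V}_h\times Q_h\to(\mathbf{V}_h\times Q_h)'$ so that $F_h(\mathbf{y}_h,p_h)=0$ is exactly \eqref{eq:discrete_state_eq}, and note that its linearization at $(\bar{\mathbf{y}},\bar{p})$ is the discrete counterpart of the operator $T$ of Remark \ref{rmk:isomorphism} (equivalently, of problem \eqref{eq:first_deriv_S*}).

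The key steps, in order, are: (i) Show that the discrete linearized problem is uniformly well posed for $h$ small: since $\bar{\mathbf{y}}$ is regular, the continuous operator associated to \eqref{eq:first_deriv_S*} is an isomorphism on $\mathbf{H}_0^1(\Omega)\times L_0^2(\Omega)$; by a standard Fortin/consistency argument (approximability of $\mathbf{V}_h\times Q_h$ plus the discrete inf-sup \eqref{eq:infsup_div}), the Galerkin operator $\mathcal{B}_h$ built on $\mathbf{V}_h\times Q_h$ is invertible with $h$-uniformly bounded inverse for $h\le h_\star$. This is the place where one invokes that the bilinear form $\mathcal B$ from the "$\mathbf{y}$ is regular" remark satisfies a discrete inf-sup condition; a short perturbation argument off the coercive case or a duality/Aubin--Nitsche-type argument does the job. (ii) Establish a consistency estimate: $\|F_h(\Pi_h\bar{\mathbf{y}},\pi_h\bar{p})\|_{(\mathbf{V}_h\times Q_h)'}\to 0$ as $h\to 0$, where $(\Pi_h,\pi_h)$ are suitable (e.g. Stokes or interpolation) projections; using $(\bar{\mathbf{y}},\bar{p})\in\mathbf{H}^2\times H^1$ and \eqref{eq:infsup_div} one actually gets $\mathcal O(h)$, but for existence only $o(1)$ is needed, combined with $\|(\Pi_h\bar{\mathbf{y}},\pi_h\bar{p})-(\bar{\mathbf{y}},\bar{p})\|\to 0$. (iii) Run a contraction-mapping argument: on the ball $B_{\mathfrak r}(\bar{\mathbf{y}})\times B_{\mathfrak r}(\bar{p})$ the map $\Phi_h:=\mathrm{id}-\mathcal B_h^{-1}F_h$ is a well-defined self-map and a contraction for $\mathfrak r$ and $h$ small enough and for $\mathbf{u}\in B_{\mathfrak s}(\bar{\mathbf{u}})$; here one controls the nonlinear remainder using the continuity bound \eqref{eq:trilinear_embedding} on $b$ and the fact that the data perturbation $(\mathbf{u}-\bar{\mathbf{u}})$ enters linearly, which yields the dependence on $\|\mathbf{u}-\bar{\mathbf{u}}\|_{\mathbf{L}^2(\Omega)}$. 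The Banach fixed point theorem then gives a unique fixed point $(\mathbf{y}_h,p_h)$ in that ball, i.e. a unique solution to \eqref{eq:discrete_state_eq} in $B_{\mathfrak r}(\bar{\mathbf{y}})\times B_{\mathfrak r}(\bar{p})$, with $\mathfrak s,\mathfrak r,h_\star$ all independent of $h$.

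Alternatively, and perhaps more cleanly for the write-up, I would cite and adapt the Brezzi--Rappaz--Raviart theory for approximation of branches of nonsingular solutions of nonlinear problems: the hypotheses are exactly (a) $\bar{\mathbf{y}}$ regular (nonsingularity of the linearization), (b) the discrete spaces are approximating and inf-sup stable, (c) the nonlinearity $b$ is $C^\infty$ with locally bounded derivatives; under these, there is $h_\star$ such that for $h<h_\star$ and $\mathbf{u}$ near $\bar{\mathbf{u}}$ the discrete problem has a unique solution in a fixed neighborhood of $(\bar{\mathbf{y}},\bar{p})$, continuous in $\mathbf{u}$. This is essentially the route taken in \cite{MR2338434}; since the only new ingredient here is that the source is the control $\mathbf u$ rather than a fixed datum — which only affects the problem linearly — the argument of \cite[Theorem~4.x]{MR2338434} transfers verbatim, and I would state the proof as such with the $\mathbf u$-dependence made explicit.

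The main obstacle is Step (i): proving that the discrete linearized operator $\mathcal B_h$ is invertible with an $h$-uniform bound, because the bilinear form $\mathcal B(\mathbf w,\mathbf v)=\nu(\nabla\mathbf w,\nabla\mathbf v)+b(\bar{\mathbf{y}};\mathbf w,\mathbf v)+b(\mathbf w;\bar{\mathbf{y}},\mathbf v)$ is in general only inf-sup stable (not coercive) at the continuous level, so one must transfer this inf-sup stability to the discrete level. The standard way is a duality argument: given $\mathbf v_h$, solve the adjoint linearized continuous problem with data $\mathbf v_h$, use its $\mathbf H^2$-regularity (guaranteed by convexity of $\Omega$ via Remark \ref{remark:further_reg_NS} applied to the linearized system) to get an $\mathcal O(h)$ approximation, and absorb the resulting term for $h$ small. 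Everything else — the consistency estimate and the contraction estimate — is routine given \eqref{eq:trilinear_embedding}, \eqref{eq:infsup_div}, and the regularity of $(\bar{\mathbf{y}},\bar{p})$.
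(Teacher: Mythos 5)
Your proposal is correct and follows essentially the same route as the paper, which proves this theorem simply by citing \cite[Theorem 4.8]{MR2338434}; that reference's argument is precisely the Brezzi--Rappaz--Raviart/fixed-point scheme you reconstruct (uniform invertibility of the discrete linearized operator via nonsingularity of $\bar{\mathbf{y}}$ plus a duality argument, consistency, then contraction), with the control entering only linearly as data. Your more detailed write-up is a faithful expansion of what the paper delegates to the reference.
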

\begin{proof}
See \cite[Theorem 4.8]{MR2338434}.
\end{proof}

We conclude this section with the following instrumental error estimate.

\begin{theorem}[error estimate]\label{thm:convergence_in_Linfty}
Let $(\bar{\mathbf{y}}, \bar{p}, \bar{\mathbf{u}})$ be a local nonsingular solution to \eqref{eq:weak_cost}--\eqref{eq:weak_st_eq}. Let $\mathfrak{s},\mathfrak{r}$, and $h_{\star}$ be as in the statement of Theorem \ref{thm:existence_of_d_s} and let $\mathbf{u}\in B_{\mathfrak{s}}(\bar{\mathbf{u}})$. Let $(\mathbf{y}_h,p_h)\in \mathbf{V}_h\times Q_{h}$ be the unique solution to  \eqref{eq:discrete_state_eq}. Let $(\mathbf{y},p)\in \mathbf{H}_0^1(\Omega)\times L_0^2(\Omega)$ be the unique solution to \eqref{eq:weak_st_eq} upon redefining $\mathfrak{s}$ if necessary. Then, we have
\begin{equation*}
\|\mathbf{y} - \mathbf{y}_{h}\|_{\mathbf{L}^{\infty}(\Omega)} \lesssim h^{2-\frac{d}{2}}\|\mathbf{u}\|_{\mathbf{L}^2(\Omega)} \quad \forall h < h_{\star}.
\end{equation*}
\end{theorem}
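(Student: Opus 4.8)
The plan is to prove the $\mathbf{L}^\infty$ error estimate via a duality (Aubin--Nitsche type) argument combined with an intermediate Galerkin projection. First I would introduce an auxiliary discrete problem: the \emph{linearized} or Stokes-type projection $(\mathbf{y}_h^{\mathrm{S}},p_h^{\mathrm{S}})\in\mathbf{V}_h\times Q_h$ of $(\mathbf{y},p)$, i.e., the Galerkin solution of the Stokes problem whose momentum forcing is $\mathbf{u}-(\mathbf{y}\cdot\nabla)\mathbf{y}$. Since $\Omega$ is a convex polytope and $\mathbf{u}\in\mathbf{L}^2(\Omega)$, Remark \ref{remark:further_reg_NS} gives $(\mathbf{y},p)\in\mathbf{H}^2(\Omega)\times H^1(\Omega)$, so standard $\mathbf{L}^\infty$ estimates for the Stokes projection on quasi-uniform meshes (the inf--sup condition \eqref{eq:infsup_div} is available in the $\mathsf{r}=\infty$/$\mathsf{s}=1$ borderline sense, or one uses the known $W^{1,\infty}$/$L^\infty$ stability of the discrete Stokes projection for MINI and Taylor--Hood on convex polytopes) yield
\[
\|\mathbf{y}-\mathbf{y}_h^{\mathrm{S}}\|_{\mathbf{L}^\infty(\Omega)}\lesssim h^{2-\frac d2}\|\mathbf{u}\|_{\mathbf{L}^2(\Omega)},
\]
after absorbing $\|\mathbf{y}\|_{\mathbf{H}^2(\Omega)}\lesssim\|\mathbf{u}\|_{\mathbf{L}^2(\Omega)}$ (the latter coming from \eqref{eq:stability_state_eq}, the regularity estimate and interpolation). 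The same argument gives the intermediate energy bound $\|\nabla(\mathbf{y}-\mathbf{y}_h^{\mathrm{S}})\|_{\mathbf{L}^2(\Omega)}\lesssim h\|\mathbf{u}\|_{\mathbf{L}^2(\Omega)}$.

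Next I would estimate $\mathbf{e}_h:=\mathbf{y}_h-\mathbf{y}_h^{\mathrm{S}}$, the difference of two discrete functions. Subtracting the equation defining $\mathbf{y}_h^{\mathrm{S}}$ from \eqref{eq:discrete_state_eq}, one sees that $(\mathbf{e}_h,p_h-p_h^{\mathrm{S}})$ solves a discrete problem whose right-hand side involves only the convective defect $b(\mathbf{y}_h;\mathbf{y}_h,\cdot)-b(\mathbf{y};\mathbf{y},\cdot)$, which splits as $b(\mathbf{y}_h;\mathbf{e}_h,\cdot)+b(\mathbf{e}_h;\mathbf{y}_h^{\mathrm{S}},\cdot)+(\text{terms with }\mathbf{y}-\mathbf{y}_h^{\mathrm{S}})$. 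Using Theorem \ref{thm:existence_of_d_s} to control $\|\nabla\mathbf{y}_h\|_{\mathbf{L}^2(\Omega)}$ uniformly, the local nonsingularity of $\bar{\mathbf{y}}$ (which transfers, for $h$ small and $\mathbf{u}$ close to $\bar{\mathbf{u}}$, to a uniform discrete inf--sup/coercivity bound for the linearized operator, exactly the stability used in \cite[Theorem 4.8]{MR2338434}), and the already-established $\mathbf{L}^2$-gradient bound on $\mathbf{y}-\mathbf{y}_h^{\mathrm{S}}$, I would first obtain $\|\nabla\mathbf{e}_h\|_{\mathbf{L}^2(\Omega)}\lesssim h^2\|\mathbf{u}\|_{\mathbf{L}^2(\Omega)}$ (a superconvergence of the discrete-to-discrete difference). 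An inverse inequality on quasi-uniform meshes, $\|\mathbf{e}_h\|_{\mathbf{L}^\infty(\Omega)}\lesssim h^{-d/2}\|\mathbf{e}_h\|_{\mathbf{L}^2(\Omega)}\lesssim h^{1-d/2}\|\nabla\mathbf{e}_h\|_{\mathbf{L}^2(\Omega)}$, then gives $\|\mathbf{e}_h\|_{\mathbf{L}^\infty(\Omega)}\lesssim h^{3-d/2}\|\mathbf{u}\|_{\mathbf{L}^2(\Omega)}$, which is of higher order than required. Combining with the Stokes-projection estimate via the triangle inequality $\|\mathbf{y}-\mathbf{y}_h\|_{\mathbf{L}^\infty(\Omega)}\le\|\mathbf{y}-\mathbf{y}_h^{\mathrm{S}}\|_{\mathbf{L}^\infty(\Omega)}+\|\mathbf{e}_h\|_{\mathbf{L}^\infty(\Omega)}$ finishes the proof.

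The main obstacle, and the step deserving the most care, is the $\mathbf{L}^\infty$ bound for the discrete Stokes projection on a convex polytope: in $d=3$ the exponent $2-d/2=1/2$ is not the generic $\mathbf{H}^2\to\mathbf{L}^\infty$ interpolation rate, so one must use the $h^{2-d/2}$ nodal-interpolation estimate $\|\mathbf{y}-I_h\mathbf{y}\|_{\mathbf{L}^\infty(\Omega)}\lesssim h^{2-d/2}\|\mathbf{y}\|_{\mathbf{H}^2(\Omega)}$ together with the (known, but nontrivial) $\mathbf{L}^\infty$-stability of the discrete Stokes projection for the chosen element pairs on quasi-uniform meshes over convex polytopes; this is exactly the ingredient invoked in the analogous pointwise-tracking Stokes references \cite{MR4013930,MR4304887}. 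A secondary technical point is ensuring that the uniform discrete stability of the linearized Navier--Stokes operator (needed to bound $\mathbf{e}_h$) holds for all $h<h_\star$ and $\mathbf{u}\in B_{\mathfrak s}(\bar{\mathbf{u}})$; this follows by the same fixed-point/perturbation argument underlying Theorem \ref{thm:existence_of_d_s}, shrinking $\mathfrak s$ and $h_\star$ if necessary, which is why the statement allows redefining $\mathfrak s$.
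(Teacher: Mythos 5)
Your proposal is correct in substance but follows a genuinely different, and considerably heavier, route than the paper. The paper's proof is three lines: write $\|\mathbf{y}-\mathbf{y}_h\|_{\mathbf{L}^\infty(\Omega)}\le\|\mathbf{y}-\mathbf{I}_h\mathbf{y}\|_{\mathbf{L}^\infty(\Omega)}+\|\mathbf{I}_h\mathbf{y}-\mathbf{y}_h\|_{\mathbf{L}^\infty(\Omega)}$ with $\mathbf{I}_h$ the Lagrange interpolant, bound the first term by $h^{2-d/2}|\mathbf{y}|_{\mathbf{H}^2(\Omega)}$ (Remark \ref{remark:further_reg_NS} gives $\mathbf{y}\in\mathbf{H}^2(\Omega)$ with $\|\mathbf{y}\|_{\mathbf{H}^2(\Omega)}\lesssim\|\mathbf{u}\|_{\mathbf{L}^2(\Omega)}$), and treat the second by the inverse estimate $h^{-d/2}\|\mathbf{I}_h\mathbf{y}-\mathbf{y}_h\|_{\mathbf{L}^2(\Omega)}$ together with the $\mathcal{O}(h^2)$ error bound in $\mathbf{L}^2(\Omega)$ quoted directly from \cite[estimate (4.4)]{MR2338434}. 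You instead pivot through the discrete Stokes projection, which forces you to (i) invoke the $\mathbf{L}^\infty$-stability of that projection for MINI/Taylor--Hood on convex polytopes --- a nontrivial external result the paper never needs (and note that \eqref{eq:infsup_div} does \emph{not} extend to the endpoint $\mathsf{r}=\infty$, so your parenthetical fallback there does not work) --- and (ii) essentially re-derive the $\mathbf{L}^2$ superconvergence of the discrete-to-discrete difference that the paper simply cites. What your approach buys is self-containedness of the nonlinear part of the argument; what it costs is reliance on pointwise Stokes estimates that are delicate in $d=3$.

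One concrete slip: the chain $\|\mathbf{e}_h\|_{\mathbf{L}^2(\Omega)}\lesssim h\|\nabla\mathbf{e}_h\|_{\mathbf{L}^2(\Omega)}$ is false --- the inverse inequality runs in the opposite direction, and Poincar\'e only gives $\|\mathbf{e}_h\|_{\mathbf{L}^2(\Omega)}\lesssim\|\nabla\mathbf{e}_h\|_{\mathbf{L}^2(\Omega)}$. This is not fatal: with $\|\nabla\mathbf{e}_h\|_{\mathbf{L}^2(\Omega)}\lesssim h^2\|\mathbf{u}\|_{\mathbf{L}^2(\Omega)}$ you still get $\|\mathbf{e}_h\|_{\mathbf{L}^\infty(\Omega)}\lesssim h^{-d/2}\|\mathbf{e}_h\|_{\mathbf{L}^2(\Omega)}\lesssim h^{2-d/2}\|\mathbf{u}\|_{\mathbf{L}^2(\Omega)}$, which matches the required rate, but your claimed extra order $h^{3-d/2}$ is not justified.
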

\begin{proof}
Let $\mathbf{I}_{h}: \mathbf{H}^2(\Omega) \cap \mathbf{H}_0^1(\Omega) \to \mathbf{V}_{h}$ be the Lagrange interpolation operator. An application of \cite[Remark 1.112 and Corollary 1.109]{MR2050138}, a standard inverse estimate, and \cite[estimate (4.4)]{MR2338434} yield
\begin{align*}
\|\mathbf{y} - \mathbf{y}_{h}\|_{\mathbf{L}^{\infty}(\Omega)} 
&\lesssim 
\|\mathbf{y} - \mathbf{I}_{h}\mathbf{y}\|_{\mathbf{L}^{\infty}(\Omega)} + \|\mathbf{I}_{h}\mathbf{y} - \mathbf{y}_{h}\|_{\mathbf{L}^{\infty}(\Omega)} \\
&\lesssim  
h^{2-\frac{d}{2}}|\mathbf{y}|_{\mathbf{H}^2(\Omega)} + h^{-\frac{d}{2}}\|\mathbf{I}_{h}\mathbf{y} - \mathbf{y}_{h}\|_{\mathbf{L}^{2}(\Omega)} \lesssim h^{2-\frac{d}{2}}\|\mathbf{u}\|_{\mathbf{L}^2(\Omega)},
\end{align*}
upon using the fact that $\mathbf{y}\in \mathbf{H}^2(\Omega)$ and that $\|\mathbf{y} \|_{\mathbf{H}^2(\Omega)} \lesssim \|\mathbf{u}\|_{\mathbf{L}^2(\Omega)}$; see Remark \ref{remark:further_reg_NS}.
\end{proof}


\subsection{Discrete optimal control problems}\label{sec:fem_ocp}

In what follows, we propose two strategies for discretizing problem \eqref{eq:weak_cost}--\eqref{eq:weak_st_eq}, namely a \emph{fully discrete approach} in which the control variable is \emph{discretized} with piecewise constant functions, and a \emph{semi-discrete approach} in which the control variable is \emph{not discretized} \cite{MR2122182}.


\subsubsection{A fully discrete scheme}
\label{sec:fully_disc_scheme}
We propose the following \emph{fully discrete} approximation of problem \eqref{eq:weak_cost}--\eqref{eq:weak_st_eq}: Find $\min J(\mathbf{y}_h,\mathbf{u}_h)$ subject to
\begin{multline}
\label{eq:discrete_state_eq_fully}
\nu(\nabla \mathbf{y}_{h},\nabla \mathbf{v}_{h})_{\mathbf{L}^2(\Omega)} + b(\mathbf{y}_{h};\mathbf{y}_{h},\mathbf{v}_{h}) - (p_{h},\text{div }\mathbf{v}_{h})_{L^2(\Omega)} \\
= (\mathbf{u}_h,\mathbf{v}_{h})_{\mathbf{L}^2(\Omega)},
\qquad (q_{h},\text{div }\mathbf{y}_{h})_{L^2(\Omega)} = 0,
\end{multline}
for all $(\mathbf{v}_{h},q_{h})\in \mathbf{V}_{h}\times Q_{h}$, and the discrete constraints $\mathbf{u}_{h}\in\mathbf{U}_{ad,h}$. Here, $\mathbf{U}_{ad,h}:= \mathbf{U}_h\cap \mathbf{U}_{ad}$, where $\mathbf{U}_h = \{\mathbf{u}_{h} \in \mathbf{L}^{\infty}(\Omega)  : \mathbf{u}_{h}|_{T}\in [\mathbb{P}_{0}(T)]^{d} ~ \forall T\in\mathscr{T}_{h}\}$. 

We now show the existence of solutions to the \emph{fully discrete} optimal control problem. Moreover, we prove that strict local nonsingular solutions of \eqref{eq:weak_cost}--\eqref{eq:weak_st_eq} can be approximated by local solutions of the \emph{fully discrete} optimal control problems.

\begin{theorem}[existence and convergence]\label{thm:exist_and_conv_sol}
Let $(\bar{\mathbf{y}}, \bar{p}, \bar{\mathbf{u}})$ be a local nonsingular solution to \eqref{eq:weak_cost}--\eqref{eq:weak_st_eq}. Then, there exists $h_{\nabla} > 0$ such that for all $ h \in (0, h_{\nabla})$ the fully discrete problem has a solution $(\bar{\mathbf{y}}_{h}, \bar{p}_{h}, \bar{\mathbf{u}}_{h})$. Moreover, if $(\bar{\mathbf{y}}, \bar{p}, \bar{\mathbf{u}})$ is a strict local minimum of \eqref{eq:weak_cost}--\eqref{eq:weak_st_eq}, then there exists a sequence $\{ (\bar{\mathbf{y}}_h, \bar{p}_h, \bar{\mathbf{u}}_h)\}_{h<h_{\nabla}}$ of local minima of the fully discrete optimal control problems such that
\begin{equation}\label{eq:convergence_ct_h_in_L2}
\lim_{h\rightarrow 0} J(\bar{\mathbf{y}}_{h},\bar{\mathbf{u}}_{h}) = J(\bar{\mathbf{y}},\bar{\mathbf{u}}), \qquad \lim_{h\rightarrow 0} \|  \bar{\mathbf{u}} - \bar{\mathbf{u}}_{h} \|_{\mathbf{L}^2(\Omega)} = 0.
\end{equation}
\end{theorem}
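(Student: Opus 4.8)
The plan is to follow the classical strategy for discretized optimal control problems (as in \cite[Theorem 4.11]{MR2338434} and \cite[Theorem 4.4]{MR4438718}), split into an existence part and a convergence part. For existence: fix a strict local nonsingular solution $(\bar{\mathbf{y}},\bar p,\bar{\mathbf{u}})$ and let $\mathfrak{s}>0$ be the radius from Theorem \ref{thm:existence_of_d_s}, so that for $h<h_\star$ and every $\mathbf{u}\in \overline{B_{\mathfrak{s}}(\bar{\mathbf{u}})}$ the discrete state equation \eqref{eq:discrete_state_eq_fully} has a unique solution $(\mathbf{y}_h,p_h)$ near $(\bar{\mathbf{y}},\bar p)$. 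Define the \emph{localized} discrete problem: minimize $J(\mathbf{y}_h,\mathbf{u}_h)$ over $\mathbf{u}_h\in \mathbf{U}_{ad,h}\cap \overline{B_{\mathfrak{s}}(\bar{\mathbf{u}})}$, with $(\mathbf{y}_h,p_h)$ the associated discrete state. This admissible set is nonempty (it contains a suitable projection/interpolant of $\bar{\mathbf{u}}$, which lies in $\overline{B_{\mathfrak{s}}(\bar{\mathbf{u}})}$ for $h$ small since piecewise-constant $L^2$-projection converges), closed, bounded, convex, hence weakly compact in the finite-dimensional $\mathbf{U}_h$; the reduced cost is continuous on it (the discrete control-to-state map is continuous by the implicit function theorem underlying Theorem \ref{thm:existence_of_d_s}), so a minimizer $(\bar{\mathbf{y}}_h,\bar p_h,\bar{\mathbf{u}}_h)$ exists. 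Setting $h_\nabla\le h_\star$, this gives the first assertion.

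For the convergence part, assume in addition that $(\bar{\mathbf{y}},\bar p,\bar{\mathbf{u}})$ is a \emph{strict} local minimum, so there is a ball $\overline{B_{\sigma}(\bar{\mathbf{u}})}$ on which $\bar{\mathbf{u}}$ is the unique global minimizer of $j$ over $\mathbf{U}_{ad}\cap \overline{B_\sigma(\bar{\mathbf{u}})}$; shrink $\mathfrak{s}$ so that $\mathfrak{s}\le\sigma$ and redefine $h_\nabla$ accordingly. The minimizers $\bar{\mathbf{u}}_h$ of the localized problems then satisfy $\|\bar{\mathbf{u}}_h-\bar{\mathbf{u}}\|_{\mathbf{L}^2(\Omega)}\le\mathfrak{s}$, so $\{\bar{\mathbf{u}}_h\}$ is bounded in $\mathbf{L}^2(\Omega)$; extract a nonrelabeled subsequence with $\bar{\mathbf{u}}_h\rightharpoonup \mathbf{u}^\star$ in $\mathbf{L}^2(\Omega)$, with $\mathbf{u}^\star\in \mathbf{U}_{ad}\cap\overline{B_{\mathfrak{s}}(\bar{\mathbf{u}})}$ (weak closedness of that set). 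One then passes to the limit in the discrete state equation \eqref{eq:discrete_state_eq_fully}: using that $\{(\bar{\mathbf{y}}_h,\bar p_h)\}$ is bounded in $\mathbf{H}_0^1(\Omega)\times L_0^2(\Omega)$ (Theorem \ref{thm:existence_of_d_s}), extract weak limits $(\mathbf{y}^\star,p^\star)$; the diffusion, pressure and divergence terms pass to the limit by linearity and density of $\bigcup_h\mathbf{V}_h$, $\bigcup_h Q_h$, while the convective term is handled exactly as in the proof of Theorem \ref{thm:existence_of_sol_ocp} via the compact embedding $\mathbf{H}_0^1(\Omega)\hookrightarrow\mathbf{L}^4(\Omega)$ and strong convergence of $\bar{\mathbf{y}}_h$ in $\mathbf{L}^4(\Omega)$. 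Since $\|(\mathbf{y}^\star,p^\star)-(\bar{\mathbf{y}},\bar p)\|\le\mathfrak{r}$, uniqueness in Theorem \ref{thm:existence_of_d_s} forces $(\mathbf{y}^\star,p^\star)=\mathcal{S}(\mathbf{u}^\star)$, i.e.\ $(\mathbf{y}^\star,p^\star)$ solves \eqref{eq:weak_st_eq} with datum $\mathbf{u}^\star$. A standard $\Gamma$-type argument now gives optimality of $\mathbf{u}^\star$: for any $h$, comparing $J(\bar{\mathbf{y}}_h,\bar{\mathbf{u}}_h)$ with $J(\mathbf{y}_h(\Pi_h^0\bar{\mathbf{u}}),\Pi_h^0\bar{\mathbf{u}})$ where $\Pi_h^0$ is the $L^2$-projection onto $\mathbf{U}_h$ (which lies in $\mathbf{U}_{ad,h}\cap\overline{B_{\mathfrak{s}}(\bar{\mathbf{u}})}$ for $h$ small, by convergence of $\Pi_h^0$ and preservation of the box constraints by a piecewise-constant projection), together with Theorem \ref{thm:convergence_in_Linfty} (so $\mathbf{y}_h(\Pi_h^0\bar{\mathbf{u}})\to\bar{\mathbf{y}}$ in $\mathbf{C}(\bar\Omega)$) yields $\limsup_h J(\bar{\mathbf{y}}_h,\bar{\mathbf{u}}_h)\le J(\bar{\mathbf{y}},\bar{\mathbf{u}})$; on the other hand, strong convergence $\bar{\mathbf{y}}_h\to\mathbf{y}^\star$ in $\mathbf{C}(\bar\Omega)$ (combine the $\mathbf{L}^\infty$ estimate of Theorem \ref{thm:convergence_in_Linfty} applied to $\bar{\mathbf{u}}_h$ with the Lipschitz/compactness machinery) plus weak lower semicontinuity of $\|\cdot\|_{\mathbf{L}^2(\Omega)}^2$ gives $J(\mathbf{y}^\star,\mathbf{u}^\star)\le\liminf_h J(\bar{\mathbf{y}}_h,\bar{\mathbf{u}}_h)$. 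Hence $J(\mathbf{y}^\star,\mathbf{u}^\star)\le J(\bar{\mathbf{y}},\bar{\mathbf{u}})$ with $\mathbf{u}^\star\in \mathbf{U}_{ad}\cap\overline{B_{\mathfrak{s}}(\bar{\mathbf{u}})}\subset\mathbf{U}_{ad}\cap\overline{B_\sigma(\bar{\mathbf{u}})}$; strictness of the local minimum forces $\mathbf{u}^\star=\bar{\mathbf{u}}$, and then the chain of inequalities collapses to $\lim_h J(\bar{\mathbf{y}}_h,\bar{\mathbf{u}}_h)=J(\bar{\mathbf{y}},\bar{\mathbf{u}})$ and $\|\bar{\mathbf{u}}_h\|_{\mathbf{L}^2(\Omega)}\to\|\bar{\mathbf{u}}\|_{\mathbf{L}^2(\Omega)}$, which together with weak convergence yields strong convergence $\bar{\mathbf{u}}_h\to\bar{\mathbf{u}}$ in $\mathbf{L}^2(\Omega)$, proving \eqref{eq:convergence_ct_h_in_L2}. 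A subsequence-of-subsequences argument removes the extraction. Finally, one must check that for $h$ small the localized minimizer $\bar{\mathbf{u}}_h$ lies in the \emph{interior} of $B_{\mathfrak{s}}(\bar{\mathbf{u}})$, so that it is genuinely a local minimizer of the (un-localized) fully discrete problem, not just of the localized one; this follows from $\|\bar{\mathbf{u}}_h-\bar{\mathbf{u}}\|_{\mathbf{L}^2(\Omega)}\to0$.

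I expect the main obstacle to be the passage to the limit producing a constrained minimizer that is forced to coincide with $\bar{\mathbf{u}}$: one must carefully combine the discretization error estimates (Theorem \ref{thm:convergence_in_Linfty}) for the state with the convergence of the piecewise-constant projection of the control and with the weak/strong interplay for the cost functional, and one must be attentive to keeping all iterates inside the neighborhoods where Theorem \ref{thm:existence_of_d_s} guarantees well-posedness and uniqueness of the discrete state. The technical heart is establishing $\bar{\mathbf{y}}_h\to\mathbf{y}^\star$ strongly in $\mathbf{C}(\bar\Omega)$ for the \emph{variable} controls $\bar{\mathbf{u}}_h$ (as opposed to the fixed comparison control), which requires the uniform estimate of Theorem \ref{thm:convergence_in_Linfty} together with a compactness argument for $\mathbf{y}(\bar{\mathbf{u}}_h)$; everything else is a routine adaptation of \cite[Theorem 4.11]{MR2338434}.
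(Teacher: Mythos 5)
Your proposal is correct and follows essentially the same route as the paper: a localized discrete problem over $\mathbf{U}_{ad,h}\cap B_{\mathfrak{s}}(\bar{\mathbf{u}})$, feasibility via the piecewise-constant $L^2$-projection of $\bar{\mathbf{u}}$, a weak-limit/sandwich argument with that projection as comparison control, strictness of the local minimum forcing the limit to be $\bar{\mathbf{u}}$, and norm convergence upgrading weak to strong so the localization constraint becomes inactive. The only cosmetic difference is that the paper obtains $\bar{\mathbf{y}}_h\to\mathbf{y}(\hat{\mathbf{u}})$ in $\mathbf{C}(\bar\Omega)$ by citing \cite[Lemma 4.10]{MR2338434}, whereas you reconstruct that convergence by hand from the discrete state equation and the $\mathbf{L}^\infty$ estimate.
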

\begin{proof}
We proceed on the basis of two steps.

\emph{Step 1:} \emph{Existence of a solution:} Let $\Pi_{\mathbf{L}^2}: \mathbf{L}^2(\Omega) \rightarrow \mathbf{U}_{h}$ be the orthogonal projection operator and define $\hat{\mathbf{u}}_{h}=\Pi_{\mathbf{L}^2}\bar{\mathbf{u}}\in\mathbf{U}_{ad,h}$. Since $\bar{\mathbf{u}}\in \mathbf{W}^{1,\mathsf{p}}(\Omega)$, we have that $\|\bar{\mathbf{u}} - \hat{\mathbf{u}}_{h}\|_{\mathbf{L}^2(\Omega)} \to 0$  as $h \rightarrow 0$; recall that $\mathsf{p} < d/(d - 1)$ is arbitrarily close to $d/(d - 1)$. Then, there exists $h_{\Delta} >0$ such that $\hat{\mathbf{u}}_{h}\in B_{\mathfrak{s}}(\bar{\mathbf{u}})$ for all $h \in (0,h_{\Delta})$. Here, $B_{\mathfrak{s}}(\bar{\mathbf{u}})$ is as in the statement of Theorem \ref{thm:existence_of_d_s}. This theorem guarantees that if $h < h_{\nabla}:= \min\{ h_{\star}, h_{\Delta} \}$, then there exists a unique solution $(\hat{\mathbf{y}}_{h},\hat{p}_{h})$ to \eqref{eq:discrete_state_eq_fully}, where $\mathbf{u}_{h}$ is replaced by $\hat{\mathbf{u}}_{h}$. It follows that $(\hat{\mathbf{y}}_{h},\hat{\mathbf{u}}_{h})$ is a feasible pair. The existence of a discrete solution follows from the fact that we are minimizing a continuous and coercive function on a nonempty closed subset of a finite dimensional space.

\emph{Step 2:} \emph{Convergence}. Let us now assume that, in addition, $(\bar{\mathbf{y}}, \bar{p}, \bar{\mathbf{u}})$ is a \emph{strict} local minimum of \eqref{eq:weak_cost}--\eqref{eq:weak_st_eq} in $B_{\mathfrak{r}}(\bar{\mathbf{y}})\times B_{\mathfrak{r}}(\bar{p}) \times \mathbf{U}_{ad} \cap B_{\mathfrak{s}}(\bar{\mathbf{u}})$, where $\mathfrak{r}$ and $\mathfrak{s}$ are as in the statement of Theorem \ref{thm:existence_of_d_s}. 
Let us now introduce, for $h < h_{\nabla}$, the problem
\begin{equation}\label{eq:discrete_continuous_problem}
\min \{ j_{h}(\mathbf{u}_h): \, \mathbf{u}_h\in \mathbf{U}_{ad,h} \cap B_{\mathfrak{s}}(\bar{\mathbf{u}}) \}.
\end{equation}
Here, $j_{h}(\mathbf{u}_h) := J(\mathbf{y}_{h}(\mathbf{u}_h),\mathbf{u}_h)$, where $(\mathbf{y}_{h}(\mathbf{u}_h),p_{h}(\mathbf{u}_h))\in\mathbf{V}_{h}\times Q_{h}$ corresponds to the unique solution to \eqref{eq:discrete_state_eq}, where $\mathbf{u}$ is replaced by $\mathbf{u}_h$. Problem \eqref{eq:discrete_continuous_problem} admits at least one optimal solution $\bar{\mathbf{u}}_h$. We then have a sequence of optimal solutions $\{ \bar{\mathbf{u}}_h \}_{0<h<h_{\nabla}}$ such that it is uniformly bounded in $\mathbf{L}^2(\Omega)$. We can thus extract a nonrelabeled subsequence such that $ \bar{\mathbf{u}}_h \rightharpoonup \hat{\mathbf{u}}$ in $\mathbf{L}^2(\Omega)$ as $h \rightarrow 0$. Note that $\hat{\mathbf{u}} \in B_{\mathfrak{s}}(\bar{\mathbf{u}})$. 
We now prove that $\hat{\mathbf{u}}=\bar{\mathbf{u}}$. Since $\bar{\mathbf{u}}_{h} \rightharpoonup \hat{\mathbf{u}}$ in $\mathbf{L}^2(\Omega)$ as $h \rightarrow 0$, the convergence result from \cite[Lemma 4.10]{MR2338434} guarantees that $\bar{\mathbf{y}}_{h} \to \mathbf{y}(\hat{\mathbf{u}})$ in $\mathbf{C}(\bar{\Omega})$ as $h \rightarrow 0$. We can thus obtain that
\begin{equation*}
j(\hat{\mathbf{u}}) \leq \liminf_{h\rightarrow 0}j_{h}(\bar{\mathbf{u}}_{h}) \leq \limsup_{h\rightarrow 0}j_{h}(\bar{\mathbf{u}}_{h}) \leq \limsup_{h\rightarrow 0}j_{h}\left(\Pi_{\mathbf{L}^2}\bar{\mathbf{u}}\right) = j(\bar{\mathbf{u}}),
\end{equation*}
upon using that $\| \bar{\mathbf{u}} - \Pi_{\mathbf{L}^2}\bar{\mathbf{u}} \|_{\mathbf{L}^2(\Omega)} \to 0$ as $h \to 0$. Since $\hat{\mathbf{u}} \in \mathbf{U}_{ad} \cap B_{\mathfrak{s}}(\bar{\mathbf{u}})$, $j(\hat{\mathbf{u}}) \leq j(\bar{\mathbf{u}})$, and $\bar{\mathbf{u}}$ is a strict local minimum in $\mathbf{U}_{ad} \cap B_{\mathfrak{s}}(\bar{\mathbf{u}})$, we must have that $\hat{\mathbf{u}}=\bar{\mathbf{u}}$ and that $\bar{\mathbf{u}}_{h} \rightharpoonup \hat{\mathbf{u}}$ in $\mathbf{L}^2(\Omega)$ as $h \rightarrow 0$ for the entire sequence. Consequently, $j_{h}(\bar{\mathbf{u}}_{h}) \rightarrow j(\bar{\mathbf{u}})$ as $h \rightarrow 0$. From this, and from the fact that $\bar{\mathbf{y}}_{h} \to \mathbf{y}(\bar{\mathbf{u}})$ in $\mathbf{C}(\bar{\Omega})$, it can be deduced that the square of $\|\bar{\mathbf{u}}_{h}\|_{\mathbf{L}^2(\Omega)}$ converges to the square of $\|\bar{\mathbf{u}}\|_{\mathbf{L}^2(\Omega)}$ as $h \to 0$. As a result, $\bar{\mathbf{u}}_{h} \to \bar{\mathbf{u}}$ in $\mathbf{L}^2(\Omega)$ as $h \rightarrow 0$. Thus, the constraint $\bar{\mathbf{u}}_{h} \in  B_{\mathfrak{s}}(\bar{\mathbf{u}})$ is not active for $h$ that is small enough, and $(\bar{\mathbf{y}}_{h},\bar{\mathbf{u}}_{h})$ is a local solution to the fully discrete problem. 
\end{proof}


\subsubsection{A semidiscrete scheme}
\label{sec:semi_disc_scheme}
We propose the following \emph{semidiscrete} approximation of problem \eqref{eq:weak_cost}--\eqref{eq:weak_st_eq}: Find $\min J(\mathbf{y}_h,\boldsymbol{u})$ subject to 
\begin{multline}
\label{eq:discrete_state_eq_semi}
\nu(\nabla \mathbf{y}_{h},\nabla \mathbf{v}_{h})_{\mathbf{L}^2(\Omega)} + b(\mathbf{y}_{h};\mathbf{y}_{h},\mathbf{v}_{h}) - (p_{h},\text{div }\mathbf{v}_{h})_{L^2(\Omega)} \\
= (\boldsymbol{u},\mathbf{v}_{h})_{\mathbf{L}^2(\Omega)},
\qquad (q_{h},\text{div }\mathbf{y}_{h})_{L^2(\Omega)} = 0,
\end{multline}
for all $(\mathbf{v}_{h},q_{h})\in \mathbf{V}_{h}\times Q_{h}$, and $\boldsymbol u\in\mathbf{U}_{ad}$. 

The \emph{semidiscrete} scheme discretizes only the state spaces, i.e., the control space is not discretized. The scheme induces a discretization of optimal controls by projecting the optimal discrete adjoint state into the admissible control set $\mathbf{U}_{ad}$. Since an optimal control $\bar{\boldsymbol u}$ implicitly depends on $h$, we will use the notation $\bar{\boldsymbol u}_{h}$ in the following.

We now provide a version of Theorem \ref{thm:exist_and_conv_sol} for the \emph{semidiscrete} scheme.

\begin{theorem}[existence and convergence]\label{thm:exist_and_conv_sol_var}
Let $(\bar{\mathbf{y}},\bar{p},\bar{\mathbf{u}})$ be a local nonsingular solution to \eqref{eq:weak_cost}--\eqref{eq:weak_st_eq}. Then, there exists $h_{\Box} > 0$ such that for all $h \in (0,h_{\Box})$ the semidiscrete problem has a solution $(\bar{\mathbf{y}}_{h}, \bar{p}_{h}, \bar{\boldsymbol{u}}_h)$. Moreover, if $(\bar{\mathbf{y}}, \bar{p}, \bar{\mathbf{u}})$ is a strict local minimum of \eqref{eq:weak_cost}--\eqref{eq:weak_st_eq}, then there exists a sequence $\{ (\bar{\mathbf{y}}_{h}, \bar{p}_{h}, \bar{\boldsymbol{u}}_h) \}_{h<h_{\Box}}$ of local minimina of the semidiscrete optimal control problems such that the convergence properties stated in \eqref{eq:convergence_ct_h_in_L2} hold with $\bar{\mathbf{u}}_h$ replaced by $\bar{\boldsymbol{u}}_h$.
\end{theorem}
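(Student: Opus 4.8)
The plan is to follow the two‑step structure of the proof of Theorem \ref{thm:exist_and_conv_sol}, adapted to the fact that the control is now \emph{not} discretized: the roles played there by the discrete admissible set $\mathbf{U}_{ad,h}$ and by the auxiliary control $\Pi_{\mathbf{L}^2}\bar{\mathbf{u}}$ are here played by $\mathbf{U}_{ad}$ itself and by $\bar{\mathbf{u}}$, for which the analogue of the projection error is trivially $\|\bar{\mathbf{u}}-\bar{\mathbf{u}}\|_{\mathbf{L}^2(\Omega)}=0$ (so no Sobolev regularity of $\bar{\mathbf{u}}$ is invoked). I set $h_\Box := h_\star$, with $h_\star$ as in Theorem \ref{thm:existence_of_d_s}, the value being dictated by Step 2 below.

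\emph{Step 1 (existence).} This is where the argument genuinely differs from the fully discrete case, since the control space is infinite dimensional. I would run the direct method of the calculus of variations on the semidiscrete problem: pick a minimizing sequence $\{(\mathbf{y}_{h,n},p_{h,n},\boldsymbol{u}_n)\}$; since $\mathbf{U}_{ad}$ is bounded and convex it is weakly sequentially compact in $\mathbf{L}^2(\Omega)$, so $\boldsymbol{u}_n\rightharpoonup\boldsymbol{u}^*\in\mathbf{U}_{ad}$ along a subsequence; the discrete stability estimate together with \eqref{eq:infsup_div} keeps $(\mathbf{y}_{h,n},p_{h,n})$ in a bounded subset of the finite‑dimensional space $\mathbf{V}_h\times Q_h$, so $(\mathbf{y}_{h,n},p_{h,n})\to(\mathbf{y}_h^*,p_h^*)$ along a further subsequence. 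Passing to the limit in \eqref{eq:discrete_state_eq_semi} is then elementary: the forms $(\cdot,\cdot)_{\mathbf{L}^2(\Omega)}$ and $b(\cdot;\cdot,\cdot)$ are continuous on the finite‑dimensional state space and $(\boldsymbol{u}_n,\mathbf{v}_h)_{\mathbf{L}^2(\Omega)}\to(\boldsymbol{u}^*,\mathbf{v}_h)_{\mathbf{L}^2(\Omega)}$ for each fixed $\mathbf{v}_h$, so $(\mathbf{y}_h^*,p_h^*,\boldsymbol{u}^*)$ is feasible. As $J$ is continuous in the (finite‑dimensional) discrete state and $\|\cdot\|_{\mathbf{L}^2(\Omega)}^2$ is weakly lower semicontinuous, $(\mathbf{y}_h^*,p_h^*,\boldsymbol{u}^*)=:(\bar{\mathbf{y}}_h,\bar{p}_h,\bar{\boldsymbol{u}}_h)$ is a solution of the semidiscrete problem.

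\emph{Step 2 (convergence).} This is a verbatim adaptation of Step 2 of the proof of Theorem \ref{thm:exist_and_conv_sol}, using \cite[Lemma 4.10]{MR2338434} in place of the $\mathbf{L}^2$‑projection estimate. Assuming $(\bar{\mathbf{y}},\bar{p},\bar{\mathbf{u}})$ is a strict local minimum in $B_\mathfrak{r}(\bar{\mathbf{y}})\times B_\mathfrak{r}(\bar{p})\times\mathbf{U}_{ad}\cap B_\mathfrak{s}(\bar{\mathbf{u}})$, I consider, for $h<h_\Box$, the localized auxiliary problem $\min\{j_h(\boldsymbol{u}):\boldsymbol{u}\in\mathbf{U}_{ad}\cap B_\mathfrak{s}(\bar{\mathbf{u}})\}$ — with $j_h$ defined through the locally unique discrete state branch from Theorem \ref{thm:existence_of_d_s} — whose solvability follows exactly as in Step 1 (the extra ball constraint being weakly closed), producing minimizers $\bar{\boldsymbol{u}}_h$. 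These are uniformly bounded in $\mathbf{L}^2(\Omega)$; extract $\bar{\boldsymbol{u}}_h\rightharpoonup\hat{\mathbf{u}}\in\mathbf{U}_{ad}\cap B_\mathfrak{s}(\bar{\mathbf{u}})$. By \cite[Lemma 4.10]{MR2338434}, $\bar{\mathbf{y}}_h\to\mathbf{y}(\hat{\mathbf{u}})$ in $\mathbf{C}(\bar\Omega)$, and the same lemma applied to the fixed control $\bar{\mathbf{u}}$ gives $j_h(\bar{\mathbf{u}})\to j(\bar{\mathbf{u}})$, so that
\[
j(\hat{\mathbf{u}})\le\liminf_{h\to 0}j_h(\bar{\boldsymbol{u}}_h)\le\limsup_{h\to 0}j_h(\bar{\boldsymbol{u}}_h)\le\limsup_{h\to 0}j_h(\bar{\mathbf{u}})=j(\bar{\mathbf{u}}),
\]
the third inequality because $\bar{\mathbf{u}}$ is admissible for the auxiliary problem at every $h$. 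Strict local optimality of $\bar{\mathbf{u}}$ forces $\hat{\mathbf{u}}=\bar{\mathbf{u}}$, and hence convergence of the whole family; then $j_h(\bar{\boldsymbol{u}}_h)\to j(\bar{\mathbf{u}})$ combined with $\bar{\mathbf{y}}_h\to\bar{\mathbf{y}}$ in $\mathbf{C}(\bar\Omega)$ yields $\|\bar{\boldsymbol{u}}_h\|_{\mathbf{L}^2(\Omega)}^2\to\|\bar{\mathbf{u}}\|_{\mathbf{L}^2(\Omega)}^2$, which upgrades the weak convergence to $\bar{\boldsymbol{u}}_h\to\bar{\mathbf{u}}$ in $\mathbf{L}^2(\Omega)$. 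Consequently the ball constraint is inactive for $h$ small, $(\bar{\mathbf{y}}_h,\bar{p}_h,\bar{\boldsymbol{u}}_h)$ is a local minimum of the semidiscrete problem, and \eqref{eq:convergence_ct_h_in_L2} holds with $\bar{\mathbf{u}}_h$ replaced by $\bar{\boldsymbol{u}}_h$. The only point that is not pure bookkeeping — and the sole genuine departure from the fully discrete proof — is passing to the limit in the nonlinear discrete equation \eqref{eq:discrete_state_eq_semi} under merely weak convergence of the controls in Step 1, which works precisely because the discrete velocity and pressure live in a finite‑dimensional space where the trilinear form is sequentially continuous.
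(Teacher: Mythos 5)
Your overall route coincides with the paper's: the paper's own proof is essentially a deferral (existence via the argument of \cite[Theorem 4.11]{MR2338434}, convergence by repeating Step~2 of Theorem \ref{thm:exist_and_conv_sol} with $\Pi_{\mathbf{L}^2}\bar{\mathbf{u}}$ replaced by $\bar{\mathbf{u}}$), and your Step~2 is exactly that adaptation, correctly executed, including the observation that the projection error term disappears so no $\mathbf{W}^{1,\mathsf{p}}$-regularity of $\bar{\mathbf{u}}$ is needed.

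The weak point is Step~1. You run the direct method on the \emph{global} semidiscrete problem and invoke ``the discrete stability estimate'' to keep the minimizing states $(\mathbf{y}_{h,n},p_{h,n})$ bounded. That estimate is not available here: for the MINI and Taylor--Hood pairs a discretely divergence-free field satisfies $(q_h,\textnormal{div}\,\mathbf{y}_h)_{L^2(\Omega)}=0$ only for $q_h\in Q_h$, so $b(\mathbf{y}_h;\mathbf{y}_h,\mathbf{y}_h)=-\tfrac12\int_\Omega \textnormal{div}(\mathbf{y}_h)\,|\mathbf{y}_h|^2$ need not vanish (the paper uses the plain convective form $b$, not its skew-symmetrization), and the analogue of \eqref{eq:stability_state_eq} fails at the discrete level. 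For the same reason it is not even clear that every $\boldsymbol{u}\in\mathbf{U}_{ad}$ is feasible for \eqref{eq:discrete_state_eq_semi}, nor that the states along a minimizing sequence stay bounded ($J$ controls only point values of $\mathbf{y}_h$ and the $\mathbf{L}^2$-norm of the control). The repair is the one used in \cite[Theorem 4.11]{MR2338434} and which you in fact already carry out in Step~2: formulate the semidiscrete problem locally from the outset, i.e.\ minimize $j_h(\boldsymbol{u})=J(\mathbf{y}_h(\boldsymbol{u}),\boldsymbol{u})$ over $\mathbf{U}_{ad}\cap B_{\mathfrak{s}}(\bar{\mathbf{u}})$ with the locally unique, uniformly bounded state branch $\mathbf{y}_h(\boldsymbol{u})\in B_{\mathfrak{r}}(\bar{\mathbf{y}})$ supplied by Theorem \ref{thm:existence_of_d_s} for $h<h_\star$; then the feasible set is nonempty (take $\boldsymbol{u}=\bar{\mathbf{u}}$), weakly sequentially compact, the states are bounded by construction, and your passage to the limit in the finite-dimensional discrete equation together with weak lower semicontinuity of $\|\cdot\|_{\mathbf{L}^2(\Omega)}^2$ closes the existence argument. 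With Step~1 restated in this localized form, the rest of your proof is correct and matches the intended argument.
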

\begin{proof}
The existence of a feasible pair is direct. The existence of a solution to the semidiscrete problem follows from the arguments given in the proof of \cite[Theorem 4.11]{MR2338434}. The convergence properties \eqref{eq:convergence_ct_h_in_L2} follow the arguments from Theorem \ref{thm:exist_and_conv_sol}. For brevity, we skip the details.
\end{proof}


\subsection{Discrete adjoint equations}

In this section, assuming a suitable discrete inf-sup condition (estimate \eqref{eq:discrete_inf_sup}), we provide a well-posedness result for a discretization of the adjoint equations \eqref{eq:adj_eq} with finite elements and derive error estimates.

Let $\mathfrak{s}$ and $h_{\star}$ be as given in the statement of Theorem \ref{thm:existence_of_d_s}. Let $\mathbf{u}\in B_{\mathfrak{s}}(\bar{\mathbf{u}})$. Let $\mathsf{p} < d/(d-1)$ be arbitrarily close to $d/(d-1)$. We consider the following finite element discretization of the adjoint equations \eqref{eq:adj_eq}: Find $(\mathbf{z}_{h},r_{h}) \in \mathbf{V}_{h}\times Q_h$ such that
\begin{multline}\label{eq:adj_eq_discrete}
\nu(\nabla \mathbf{w}_{h}, \nabla \mathbf{z}_{h})_{\mathbf{L}^2(\Omega)} +
b(\mathbf{y}_{h};\mathbf{w}_{h},\mathbf{z}_{h}) + b(\mathbf{w}_{h};\mathbf{y}_{h},\mathbf{z}_{h}) - (r_{h},\text{div } \mathbf{w}_{h})_{L^2(\Omega)} \\ =  \sum_{t\in\mathcal{D}}\langle (\mathbf{y}_{h}(t)-\mathbf{y}_{t})\delta_{t},\mathbf{w}_{h}\rangle_{\mathbf{W}^{-1,\mathsf{p} }(\Omega),\mathbf{W}^{1,\mathsf{q} }_0(\Omega)}, \qquad
(s_{h},\text{div } \mathbf{z}_{h})_{L^2(\Omega)} = 0
\end{multline}
for all $(\mathbf{w}_{h},s_{h})\in \mathbf{V}_{h} \times Q_{h}$. Here, $\mathsf{p}^{-1} + \mathsf{q}^{-1} = 1$, $h < h_{\star}$, and $(\mathbf{y}_{h},p_{h}) \in \mathbf{V}_{h} \times Q_{h}$ corresponds to the unique solution to \eqref{eq:discrete_state_eq}.

Let $\mathsf{r} \in (1,\infty)$ and let $\mathfrak{L} :\mathbf{W}^{-1,\mathsf{r}}(\Omega) \to \mathbf{W}_0^{1,\mathsf{r}}(\Omega)\times L_0^{\mathsf{r}}(\Omega)$ be the linear and bounded operator defined by $\mathfrak{L}\mathbf{g} := (\mathfrak{Z}, \mathfrak{p})$, where $(\mathfrak{Z}, \mathfrak{p})$ corresponds to the solution of the following Stokes system: Find $(\mathfrak{Z}, \mathfrak{p})$ such that
\begin{equation}
-\nu \Delta \mathfrak{Z} + \nabla \mathfrak{p} = \mathbf{g} \text{ in } \Omega, \quad
\text{div }\mathfrak{Z} = 0 \text{ in } \Omega, 
\quad \mathfrak{Z}=\mathbf{0} \text{ on }\partial\Omega.
\label{eq:Stokes_Z}
\end{equation}
Since $\Omega$ is convex, the first item in \cite[Section 5.5]{MR2321139} shows that $\mathfrak{L}$ is an isomorphism when $\mathsf{r}>2$. The well-posedness of \eqref{eq:Stokes_Z} when $\mathsf{r}<2$ follows from the equivalent characterization of well-posedness via inf-sup conditions. The case $\mathsf{r} = 2$ is trivial.

Let us now introduce, for $h>0$, the discrete operator $\mathfrak{L}_h: \mathbf{W}^{-1,\mathsf{r}}(\Omega)\to \mathbf{V}_{h}\times Q_{h}$ defined by $\mathfrak{L}_h\mathbf{g} := (\mathfrak{Z}_{h}, \mathfrak{p}_{h})$, where $(\mathfrak{Z}_{h},\mathfrak{p}_{h})\in \mathbf{V}_{h} \times Q_{h}$ solves the discrete problem
\begin{equation*}
\nu(\nabla \mathfrak{Z}_{h}, \nabla \mathbf{v}_{h})_{\mathbf{L}^2(\Omega)} - (\mathfrak{p}_{h}, \text{div }\mathbf{v}_{h})_{L^2(\Omega)} = \langle \mathbf{g} ,\mathbf{v}_{h}\rangle, \qquad (q_{h}, \text{div }\mathfrak{Z}_{h})_{L^2(\Omega)} = 0,
\end{equation*}
for all $(\mathbf{v}_{h},q_{h})\in \mathbf{V}_{h}\times Q_{h}$. Define $\mathbf{V}_{h}^{0}:=\{\mathbf{v}_{h}\in\mathbf{V}_{h} : (q_{h},\text{div }\mathbf{v}_{h})_{L^2(\Omega)} = 0 \, \forall q_{h}\in Q_{h}\}$. In what follows we assume that there is a constant $\bar{\beta}>0$, independent of $h$, so that
\begin{equation}\label{eq:discrete_inf_sup}
\inf_{\mathbf{w}_{h}\in\mathbf{V}_{h}^{0}}\sup_{\mathbf{v}_{h}\in\mathbf{V}_{h}^{0}}\frac{\nu(\nabla \mathbf{w}_{h}, \nabla \mathbf{v}_{h})_{\mathbf{L}^2(\Omega)}}{\|\nabla \mathbf{w}_{h}\|_{\mathbf{L}^\mathsf{r}(\Omega)}\|\nabla \mathbf{v}_{h}\|_{\mathbf{L}^\mathsf{s}(\Omega)}} \geq \bar{\beta},
\end{equation}
where $\mathsf{r}^{-1} + \mathsf{s}^{-1} = 1$. From this condition in conjunction with \eqref{eq:infsup_div} we can deduce that $\mathfrak{L}_h$ is an isomorphism \cite[Corollary 2.2]{MR972452} (see also \cite[Exercise 2.14]{MR2050138}) and
\begin{equation}\label{eq:estimate_Lh}
\|\mathfrak{L}_{h}\|_{\mathcal{L}(\mathbf{W}^{-1,\mathsf{r}}(\Omega),\mathbf{W}_0^{1,\mathsf{r}}(\Omega)\times L_0^{\mathsf{r}}(\Omega))}\leq C, 
\end{equation}
where $\mathcal{L}(\mathbf{W}^{-1,\mathsf{r}}(\Omega),\mathbf{W}_0^{1,\mathsf{r}}(\Omega)\times L_0^{\mathsf{r}}(\Omega))$ corresponds to the space of linear and continuous operators from $\mathbf{W}^{-1,\mathsf{r}}(\Omega)$ to $\mathbf{W}_0^{1,\mathsf{r}}(\Omega)\times L_0^{\mathsf{r}}(\Omega)$.

Inspired by \cite[page 957]{MR2338434} we now introduce, for $\mathbf{y} \in \mathbf{H}_0^1(\Omega)$, the operator
\begin{equation}\label{def:bilinear_B_p_q}
\mathfrak{B}^{\star}(\mathbf{y}): \mathbf{W}_0^{1,\mathsf{p}}(\Omega)\to \mathbf{W}^{-1,\mathsf{p}}(\Omega),
\qquad
\langle \mathfrak{B}^{\star}(\mathbf{y}) \mathbf{v}, \mathbf{w} \rangle := b(\mathbf{y};\mathbf{w},\mathbf{v}) + b(\mathbf{w};\mathbf{y},\mathbf{v}),
\end{equation}
for all $(\mathbf{v},\mathbf{w}) \in \mathbf{W}_0^{1,\mathsf{p}}(\Omega) \times \mathbf{W}_0^{1,\mathsf{q}}(\Omega)$. Note that $\mathfrak{B}^{\star}(\mathbf{y}) \in \mathcal{L}(\mathbf{W}_0^{1,\mathsf{p}}(\Omega), \mathbf{W}^{-1,\mathsf{p}}(\Omega))$.

\begin{lemma}[auxiliary result]\label{lemma:aux_result_I}
Let $\mathsf{p} < d/(d-1)$ be arbitrarily close to $d/(d-1)$ and let $(\mathbf{y},p) \in \mathbf{H}_0^1(\Omega) \times L_0^2(\Omega)$ be a solution to the Navier--Stokes equations \eqref{eq:weak_st_eq} such that $\mathbf{y}$ is regular. Then, for every $\delta > 0$ there exist $h_{\delta}>0$ and $\mathfrak{s}_{\delta}>0$ such that
\begin{equation}\label{eq:estimate_L-Lh}
\mathfrak{I}_h:= \|\mathfrak{L}[\mathfrak{B}^{\star}(\mathbf{y})] - \mathfrak{L}_{h}[\mathfrak{B}^{\star}(\tilde{\mathbf{y}})]\|_{\mathcal{L}(\mathbf{W}_0^{1,\mathsf{p}}(\Omega),\mathbf{W}_0^{1,\mathsf{p}}(\Omega)\times L_0^{\mathsf{p}}(\Omega))} < \delta 
\end{equation}
for all $h \in (0,h_{\delta})$ and for all $\tilde{\mathbf{y}}\in \mathbf{H}_0^1(\Omega)$ satisfying $\|\nabla (\mathbf{y} - \tilde{\mathbf{y}})\|_{\mathbf{L}^2(\Omega)} \leq \mathfrak{s}_{\delta}$.
\end{lemma}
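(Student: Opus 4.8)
The plan is to estimate $\mathfrak{I}_h$ via the decomposition
\[
\mathfrak{L}[\mathfrak{B}^{\star}(\mathbf{y})]-\mathfrak{L}_{h}[\mathfrak{B}^{\star}(\tilde{\mathbf{y}})]
=
\big(\mathfrak{L}-\mathfrak{L}_{h}\big)[\mathfrak{B}^{\star}(\mathbf{y})]
+
\mathfrak{L}_{h}\big[\mathfrak{B}^{\star}(\mathbf{y}-\tilde{\mathbf{y}})\big],
\]
where we used that $\mathfrak{B}^{\star}$ is linear in its argument. Throughout we take $\mathsf{r}=\mathsf{p}$; since $\mathsf{p}<d/(d-1)<2$, the uniform bound \eqref{eq:estimate_Lh} on $\mathfrak{L}_h$ is available thanks to the standing assumption \eqref{eq:discrete_inf_sup}. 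It then suffices to make each of the two terms on the right-hand side smaller than $\delta/2$.

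\emph{The $\tilde{\mathbf{y}}$–perturbation term.} First I would show that $\|\mathfrak{B}^{\star}(\mathbf{y}-\tilde{\mathbf{y}})\|_{\mathcal{L}(\mathbf{W}_0^{1,\mathsf{p}}(\Omega),\mathbf{W}^{-1,\mathsf{p}}(\Omega))}\lesssim\|\nabla(\mathbf{y}-\tilde{\mathbf{y}})\|_{\mathbf{L}^{2}(\Omega)}$. This is a direct Hölder estimate of the two convective contributions in \eqref{def:bilinear_B_p_q}: for $\mathbf{v}\in\mathbf{W}_0^{1,\mathsf{p}}(\Omega)$ and $\mathbf{w}\in\mathbf{W}_0^{1,\mathsf{q}}(\Omega)$ one bounds $|b(\mathbf{y}-\tilde{\mathbf{y}};\mathbf{w},\mathbf{v})|$ and $|b(\mathbf{w};\mathbf{y}-\tilde{\mathbf{y}},\mathbf{v})|$ using $\mathbf{W}_0^{1,\mathsf{q}}(\Omega)\hookrightarrow\mathbf{L}^{\infty}(\Omega)$ (valid since $\mathsf{q}>d$), $\mathbf{W}_0^{1,\mathsf{p}}(\Omega)\hookrightarrow\mathbf{L}^{2}(\Omega)$, and suitable Sobolev embeddings of $\mathbf{H}_0^1(\Omega)$, the ``arbitrarily close'' latitude on $\mathsf{p}$ making the Hölder exponents admissible for $d\in\{2,3\}$. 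Combining this with \eqref{eq:estimate_Lh} gives $\|\mathfrak{L}_{h}[\mathfrak{B}^{\star}(\mathbf{y}-\tilde{\mathbf{y}})]\|\le C\,\|\nabla(\mathbf{y}-\tilde{\mathbf{y}})\|_{\mathbf{L}^{2}(\Omega)}$ with $C$ independent of $h$, so one simply chooses $\mathfrak{s}_{\delta}$ with $C\mathfrak{s}_{\delta}<\delta/2$.

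\emph{The discretization term.} The core of the proof is to show $\|(\mathfrak{L}-\mathfrak{L}_{h})[\mathfrak{B}^{\star}(\mathbf{y})]\|\to 0$ as $h\to 0$, which I would obtain from a compactness argument resting on two facts. \emph{(i) $\mathfrak{B}^{\star}(\mathbf{y})\colon\mathbf{W}_0^{1,\mathsf{p}}(\Omega)\to\mathbf{W}^{-1,\mathsf{p}}(\Omega)$ is compact.} Integrating by parts and using that $\mathbf{y}$ is divergence-free, $\langle\mathfrak{B}^{\star}(\mathbf{y})\mathbf{v},\mathbf{w}\rangle=\big(-(\mathbf{y}\cdot\nabla)\mathbf{v}+(\nabla\mathbf{y})^{\intercal}\mathbf{v},\,\mathbf{w}\big)_{\mathbf{L}^2(\Omega)}$ for all $\mathbf{v}\in\mathbf{W}_0^{1,\mathsf{p}}(\Omega)$, $\mathbf{w}\in\mathbf{W}_0^{1,\mathsf{q}}(\Omega)$; since $\mathbf{y}\in\mathbf{L}^{\infty}(\Omega)\cap\mathbf{W}^{1,\kappa}(\Omega)$ with $\kappa\ge d$ (Theorem \ref{thm:reg_velocity}), Hölder's inequality and the Sobolev embeddings of $\mathbf{W}_0^{1,\mathsf{p}}(\Omega)$ show that $\mathbf{v}\mapsto-(\mathbf{y}\cdot\nabla)\mathbf{v}+(\nabla\mathbf{y})^{\intercal}\mathbf{v}$ is bounded from $\mathbf{W}_0^{1,\mathsf{p}}(\Omega)$ into $\mathbf{L}^{\mathsf{p}}(\Omega)$. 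As the inclusion $\mathbf{L}^{\mathsf{p}}(\Omega)\hookrightarrow\mathbf{W}^{-1,\mathsf{p}}(\Omega)$ is compact (its adjoint is the Rellich--Kondrachov embedding $\mathbf{W}_0^{1,\mathsf{q}}(\Omega)\hookrightarrow\mathbf{L}^{\mathsf{q}}(\Omega)$), the operator $\mathfrak{B}^{\star}(\mathbf{y})$ is compact. \emph{(ii) $\mathfrak{L}_{h}\to\mathfrak{L}$ strongly in $\mathcal{L}(\mathbf{W}^{-1,\mathsf{p}}(\Omega),\mathbf{W}_0^{1,\mathsf{p}}(\Omega)\times L_0^{\mathsf{p}}(\Omega))$.} For $\mathbf{g}\in\mathbf{L}^2(\Omega)\subset\mathbf{W}^{-1,\mathsf{p}}(\Omega)$ the Stokes pair $\mathfrak{L}\mathbf{g}$ belongs to $\mathbf{H}^2(\Omega)\times H^1(\Omega)$ by convexity of $\Omega$, so the standard $\mathbf{H}^1\times L^2$ finite element estimate for the Stokes problem, together with the continuous inclusions $\mathbf{H}^1(\Omega)\hookrightarrow\mathbf{W}^{1,\mathsf{p}}(\Omega)$ and $L^2(\Omega)\hookrightarrow L^{\mathsf{p}}(\Omega)$ (recall $\mathsf{p}<2$), gives $\mathfrak{L}_{h}\mathbf{g}\to\mathfrak{L}\mathbf{g}$ in $\mathbf{W}_0^{1,\mathsf{p}}(\Omega)\times L_0^{\mathsf{p}}(\Omega)$; since $\mathbf{L}^2(\Omega)$ is dense in $\mathbf{W}^{-1,\mathsf{p}}(\Omega)$ and $\sup_{h}\|\mathfrak{L}_{h}\|<\infty$ by \eqref{eq:estimate_Lh}, this strong convergence extends to all of $\mathbf{W}^{-1,\mathsf{p}}(\Omega)$. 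Finally, the elementary fact that $A_{h}\to A$ strongly with $\sup_h\|A_{h}\|<\infty$ and $K$ compact imply $A_{h}K\to AK$ in operator norm (a subsequence/contradiction argument exploiting the compactness of $K$), applied with $A_{h}=\mathfrak{L}_{h}$, $A=\mathfrak{L}$, $K=\mathfrak{B}^{\star}(\mathbf{y})$, yields $\|(\mathfrak{L}-\mathfrak{L}_{h})[\mathfrak{B}^{\star}(\mathbf{y})]\|\to 0$; one then picks $h_{\delta}$ so that this quantity is $<\delta/2$ for $h\in(0,h_{\delta})$.

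Adding the two bounds shows $\mathfrak{I}_h<\delta$ whenever $h\in(0,h_{\delta})$ and $\|\nabla(\mathbf{y}-\tilde{\mathbf{y}})\|_{\mathbf{L}^2(\Omega)}\le\mathfrak{s}_{\delta}$, which is the claim. The main obstacle is item (ii): establishing the strong convergence $\mathfrak{L}_{h}\to\mathfrak{L}$ in the $\mathbf{W}^{1,\mathsf{p}}$-topology with $\mathsf{p}<2$, which is exactly where the assumed discrete inf-sup condition \eqref{eq:discrete_inf_sup} (and its consequence \eqref{eq:estimate_Lh}) is indispensable and where one must route the argument through the convex-domain $\mathbf{H}^1$-conforming Stokes estimate together with a density argument; verifying the compactness in (i) is secondary but relies on the improved integrability $\mathbf{y}\in\mathbf{W}^{1,\kappa}(\Omega)$ supplied by Theorem \ref{thm:reg_velocity}.
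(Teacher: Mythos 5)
Your proposal is correct, and it uses the same splitting as the paper: $\mathfrak{I}_h \le \mathbf{I}_h + \mathbf{II}_h$ with $\mathbf{I}_h$ the pure Stokes-discretization error applied to $\mathfrak{B}^{\star}(\mathbf{y})$ and $\mathbf{II}_h$ the perturbation $\mathfrak{L}_h[(\mathfrak{B}^{\star}(\mathbf{y})-\mathfrak{B}^{\star}(\tilde{\mathbf{y}}))\,\cdot\,]$, the latter being handled identically in both arguments via \eqref{eq:estimate_Lh} and a H\"older bound $\|\mathfrak{B}^{\star}(\mathbf{y}-\tilde{\mathbf{y}})\|_{\mathcal{L}(\mathbf{W}_0^{1,\mathsf{p}}(\Omega),\mathbf{W}^{-1,\mathsf{p}}(\Omega))}\lesssim\|\nabla(\mathbf{y}-\tilde{\mathbf{y}})\|_{\mathbf{L}^2(\Omega)}$. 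Where you diverge is in the treatment of $\mathbf{I}_h$. The paper bounds $\mathbf{I}_h$ by the best-approximation error of $(\mathfrak{Z}_{\mathbf{v}},\mathfrak{p}_{\mathbf{v}})=\mathfrak{L}[\mathfrak{B}^{\star}(\mathbf{y})\mathbf{v}]$ in $\mathbf{W}_0^{1,\mathsf{p}}(\Omega)\times L_0^{\mathsf{p}}(\Omega)$ (quasi-optimality from \cite[Theorems 2.2, 2.3]{MR972452} under \eqref{eq:infsup_div} and \eqref{eq:discrete_inf_sup}) and then invokes a density argument; as written, that density argument gives convergence for each fixed $\mathbf{v}$, and the uniformity over the unit ball $\|\nabla\mathbf{v}\|_{\mathbf{L}^{\mathsf{p}}(\Omega)}\le 1$ is left implicit. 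You instead make that uniformity explicit: you prove $\mathfrak{B}^{\star}(\mathbf{y})$ is compact from $\mathbf{W}_0^{1,\mathsf{p}}(\Omega)$ into $\mathbf{W}^{-1,\mathsf{p}}(\Omega)$ (by factoring it through $\mathbf{L}^{\mathsf{p}}(\Omega)$ using $\mathbf{y}\in\mathbf{L}^{\infty}(\Omega)\cap\mathbf{W}^{1,\kappa}(\Omega)$ and Schauder's theorem), prove $\mathfrak{L}_h\to\mathfrak{L}$ strongly (via the convex-domain $\mathbf{H}^2\times H^1$ Stokes regularity for $\mathbf{L}^2$ data, the standard $\mathbf{H}^1$-conforming error bound, the embeddings for $\mathsf{p}<2$, and density of $\mathbf{L}^2(\Omega)$ in $\mathbf{W}^{-1,\mathsf{p}}(\Omega)$ together with the uniform bound \eqref{eq:estimate_Lh}), and conclude by the standard fact that a uniformly bounded, strongly convergent family composed with a fixed compact operator converges in operator norm. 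This buys a fully self-contained justification of the uniform smallness of $\mathbf{I}_h$, at the price of needing the Rellich compactness and the $\mathbf{H}^2$ regularity explicitly; the paper's route is shorter because it delegates the $\mathbf{W}^{1,\mathsf{p}}$ quasi-optimality and the uniformity to the cited references. Your identification of where the discrete inf-sup condition \eqref{eq:discrete_inf_sup} is indispensable (the uniform bound on $\mathfrak{L}_h$ in the $\mathbf{W}^{1,\mathsf{p}}$-topology) matches the paper's use of it.
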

\begin{proof}
We proceed as in \cite[Lemma 4.6]{MR2338434} and control $\mathfrak{I}_h$ as follows:
\begin{multline*}
\mathfrak{I}_h
\leq  
\sup_{\|\nabla\mathbf{v}\|_{\mathbf{L}^{\mathsf{p}}(\Omega)}\leq 1}
\|(\mathfrak{L} - \mathfrak{L}_{h})[\mathfrak{B}^{\star}(\mathbf{y})\mathbf{v}]\|_{\mathbf{W}_0^{1,\mathsf{p}}(\Omega)\times L_0^{\mathsf{p}}(\Omega)} 
\\ 
+ \sup_{\|\nabla\mathbf{v}\|_{\mathbf{L}^{\mathsf{p}}(\Omega)}\leq 1}
\|\mathfrak{L}_{h}[(\mathfrak{B}^{\star}(\mathbf{y}) - \mathfrak{B}^{\star}(\tilde{\mathbf{y}}))\mathbf{v}]\|_{\mathbf{W}_0^{1,\mathsf{p}}(\Omega)\times L_0^{\mathsf{p}}(\Omega)} =: \mathbf{I}_h + \mathbf{II}_h.
\end{multline*}
To control $\mathbf{I}_h$, we must essentially control the error that occurs in the finite element approximation of the Stokes system \eqref{eq:Stokes_Z}. Using estimates \eqref{eq:infsup_div} and \eqref{eq:discrete_inf_sup} and applying \cite[Theorems 2.2 and 2.3 and Proposition 2.1]{MR972452}, we obtain the error bound
\[
\mathbf{I}_h
\lesssim 
\inf_{\mathbf{w}_{h}\in \mathbf{V}_{h}}
\|\nabla(\mathfrak{Z}_{\mathbf{v}} - \mathbf{w}_{h})\|_{\mathbf{L}^{\mathsf{p}}(\Omega)}
+ 
\inf_{q_{h}\in Q_{h}}\|\mathfrak{p}_{\mathbf{v}} - q_{h}\|_{L^\mathsf{p}(\Omega)},
\]
where $(\mathfrak{Z}_{\mathbf{v}}, \mathfrak{p}_{\mathbf{v}}) := \mathfrak{L}[\mathfrak{B}^{\star}(\mathbf{y})\mathbf{v})]$. Since $(\mathfrak{Z}_{\mathbf{v}}, \mathfrak{p}_{\mathbf{v}}) \in \mathbf{W}_0^{1,\mathsf{p}}(\Omega)\times L_0^{\mathsf{p}}(\Omega)$, a density argument like the one elaborated in the proof of \cite[estimate (1.99)]{MR2050138} shows that $\mathbf{I}_h < \delta/2$ for all $h>0$ sufficiently small. To control the term $\mathbf{II}_h$, we first use the estimate \eqref{eq:estimate_Lh} and then similar arguments as in the proof of Theorem \ref{thm:Lipschitz_property}:
\begin{equation*}
\mathbf{II}_h \lesssim  \sup_{\|\nabla\mathbf{v}\|_{\mathbf{L}^\mathsf{p}(\Omega)}\leq 1}\|(\mathfrak{B}^{\star}(\mathbf{y}) - \mathfrak{B}^{\star}(\tilde{\mathbf{y}}))\mathbf{v})\|_{\mathbf{W}^{-1,\mathsf{p}}(\Omega)} \lesssim \|\nabla (\mathbf{y} - \tilde{\mathbf{y}})\|_{\mathbf{L}^2(\Omega)}.
\end{equation*}
Consider $\mathfrak{s}_{\delta}$ sufficiently small so that $\mathbf{II}_{h} < \delta/2$. This completes the proof.
\end{proof}

Let us now introduce, for $\mathbf{y} \in \mathbf{H}_0^1(\Omega)$, the mapping
\[
 \mathfrak{N}_{\mathbf{y}} : \mathbf{W}_{0}^{1,\mathsf{p}}(\Omega)\times L_0^{\mathsf{p}}(\Omega) \to \mathbf{W}_{0}^{1,\mathsf{p}}(\Omega)\times L_0^{\mathsf{p}}(\Omega),
 \quad
 \mathfrak{N}_{\mathbf{y}}(\mathbf{z},r) = (\mathbf{z},r) + \mathfrak{L}[\mathfrak{B}^{\star}(\mathbf{y})\mathbf{z}].
\]

\begin{lemma}[auxiliary result]\label{lemma:N_y_automorphism}
Let $\mathsf{p} < d/(d-1)$ be arbitrarily close to $d/(d-1)$ and let $(\mathbf{y},p) \in \mathbf{H}_0^1(\Omega) \times L_0^2(\Omega)$ be a solution to the Navier--Stokes equations \eqref{eq:weak_st_eq}. If $\mathbf{y}$ is regular, then the mapping $\mathfrak{N}_{\mathbf{y}}$ is an automorphism.
\end{lemma}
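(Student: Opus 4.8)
The plan is to write $\mathfrak{N}_{\mathbf{y}}=\mathrm{Id}+K$, where $K(\mathbf{z},r):=\mathfrak{L}[\mathfrak{B}^{\star}(\mathbf{y})\mathbf{z}]$, and to argue by the Fredholm alternative. First I would observe that $\mathfrak{N}_{\mathbf{y}}$ is a bounded linear operator on $\mathbf{W}_0^{1,\mathsf{p}}(\Omega)\times L_0^{\mathsf{p}}(\Omega)$: this is immediate since $\mathfrak{B}^{\star}(\mathbf{y})\in\mathcal{L}(\mathbf{W}_0^{1,\mathsf{p}}(\Omega),\mathbf{W}^{-1,\mathsf{p}}(\Omega))$ and $\mathfrak{L}\in\mathcal{L}(\mathbf{W}^{-1,\mathsf{p}}(\Omega),\mathbf{W}_0^{1,\mathsf{p}}(\Omega)\times L_0^{\mathsf{p}}(\Omega))$. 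The heart of the matter is to show that $K$ is \emph{compact}; once this is known, $\mathfrak{N}_{\mathbf{y}}$ is Fredholm of index zero, so it is an automorphism as soon as it is injective, and the bounded inverse theorem handles the continuity of $\mathfrak{N}_{\mathbf{y}}^{-1}$.

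For the compactness of $K$ it suffices to prove that $\mathfrak{B}^{\star}(\mathbf{y}):\mathbf{W}_0^{1,\mathsf{p}}(\Omega)\to\mathbf{W}^{-1,\mathsf{p}}(\Omega)$ is compact, since $K$ factors as the projection onto the velocity component followed by $\mathfrak{B}^{\star}(\mathbf{y})$ and then the bounded operator $\mathfrak{L}$. Writing $\mathfrak{B}^{\star}(\mathbf{y})\mathbf{v}=-(\mathbf{y}\cdot\nabla)\mathbf{v}+(\nabla\mathbf{y})^{\intercal}\mathbf{v}$ and using that $\mathbf{y}\in\mathbf{C}(\bar{\Omega})$ and $\nabla\mathbf{y}\in\mathbf{L}^{\kappa}(\Omega)$ (Theorem \ref{thm:reg_velocity}), one checks that $\mathbf{v}\mapsto\mathfrak{B}^{\star}(\mathbf{y})\mathbf{v}$ is bounded from $\mathbf{W}_0^{1,\mathsf{p}}(\Omega)$ into $\mathbf{L}^{m}(\Omega)$ for a suitable $m>1$: for the first term one takes $m=\mathsf{p}$ (since $\mathbf{y}\in\mathbf{L}^{\infty}(\Omega)$), and for the second term one combines the embedding $\mathbf{W}^{1,\mathsf{p}}(\Omega)\hookrightarrow\mathbf{L}^{r}(\Omega)$, $r<d\mathsf{p}/(d-\mathsf{p})$, with H\"older's inequality. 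Since $\mathsf{p}<d/(d-1)$ gives $\mathsf{p}'>d$, the inclusion $\mathbf{L}^{m}(\Omega)\hookrightarrow\mathbf{W}^{-1,\mathsf{p}}(\Omega)$ is compact — it is the adjoint of the compact embedding $\mathbf{W}_0^{1,\mathsf{p}'}(\Omega)\hookrightarrow\mathbf{L}^{m'}(\Omega)$ — so $\mathfrak{B}^{\star}(\mathbf{y})$, being a bounded map into $\mathbf{L}^{m}(\Omega)$ followed by a compact inclusion, is compact, and hence so is $K$.

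It remains to verify injectivity of $\mathfrak{N}_{\mathbf{y}}$. If $\mathfrak{N}_{\mathbf{y}}(\mathbf{z},r)=\mathbf{0}$, then $(\mathbf{z},r)=-\mathfrak{L}[\mathfrak{B}^{\star}(\mathbf{y})\mathbf{z}]$ lies in the range of $\mathfrak{L}$; in particular $\mathbf{z}$ is solenoidal, with the regularity inherited from that range. Applying $\mathfrak{L}^{-1}$ gives $\mathfrak{L}^{-1}(\mathbf{z},r)=-\mathfrak{B}^{\star}(\mathbf{y})\mathbf{z}$, which by the very definitions of $\mathfrak{L}$ and of $\mathfrak{B}^{\star}(\mathbf{y})$ says exactly that $(\mathbf{z},r)$ solves the weak adjoint problem \eqref{eq:aux_eq_adj} with $\mathbf{h}=\mathbf{0}$; the uniqueness part of Theorem \ref{thm:well-posedness-adjoint} then forces $(\mathbf{z},r)=\mathbf{0}$. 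The same computation yields the equivalence ``$\mathfrak{N}_{\mathbf{y}}(\mathbf{z},r)=\mathfrak{L}\mathbf{h}$ $\Longleftrightarrow$ $(\mathbf{z},r)$ solves \eqref{eq:aux_eq_adj} with datum $\mathbf{h}$'', so one could alternatively bypass the Fredholm step altogether and read off bijectivity and the stability bound \eqref{eq:stab_aux_adj} directly from Theorem \ref{thm:well-posedness-adjoint} together with the isomorphism property of $\mathfrak{L}$. Either way, the only genuinely technical point is the compactness of $\mathfrak{B}^{\star}(\mathbf{y})$: the top-order convective contribution $(\mathbf{y}\cdot\nabla)\mathbf{v}$, merely bounded from $\mathbf{W}^{1,\mathsf{p}}(\Omega)$ into $\mathbf{L}^{\mathsf{p}}(\Omega)$, becomes compact only after composition with the compact inclusion $\mathbf{L}^{\mathsf{p}}(\Omega)\hookrightarrow\mathbf{W}^{-1,\mathsf{p}}(\Omega)$, and this forces one to track Sobolev exponents carefully in the regime $\mathsf{p}\uparrow d/(d-1)$ — precisely the regularity mechanism already used in Lemma \ref{lemma:regularity_estimate} and Theorem \ref{thm:well-posedness-adjoint}.
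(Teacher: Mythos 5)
Your main argument is correct but follows a genuinely different route from the paper. The paper proves bijectivity of $\mathfrak{N}_{\mathbf{y}}$ directly from Theorem \ref{thm:well-posedness-adjoint}: for surjectivity it makes the ansatz $(\mathbf{z},r)=(\tilde{\mathbf{z}}+\mathbf{z}_0,\tilde{r}+r_0)$, where $(\mathbf{z}_0,r_0)$ solves \eqref{eq:eq_z0_r0} with datum $-\mathfrak{B}^{\star}(\mathbf{y})\tilde{\mathbf{z}}$, and observes that $(\mathbf{z}_0,r_0)=-\mathfrak{L}[\mathfrak{B}^{\star}(\mathbf{y})(\tilde{\mathbf{z}}+\mathbf{z}_0)]$; injectivity is exactly your argument. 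You instead establish that $K=\mathfrak{L}[\mathfrak{B}^{\star}(\mathbf{y})\,\cdot\,]$ is compact --- correctly, by representing $\mathfrak{B}^{\star}(\mathbf{y})\mathbf{v}=-(\mathbf{y}\cdot\nabla)\mathbf{v}+(\nabla\mathbf{y})^{\intercal}\mathbf{v}$ as an $\mathbf{L}^{m}(\Omega)$-function with $m>1$ (using $\mathbf{y}\in\mathbf{L}^{\infty}(\Omega)$, $\nabla\mathbf{y}\in\mathbf{L}^{\kappa}(\Omega)$) and noting that $\mathbf{L}^{m}(\Omega)\hookrightarrow\mathbf{W}^{-1,\mathsf{p}}(\Omega)$ is compact as the adjoint of the compact embedding $\mathbf{W}_0^{1,\mathsf{q}}(\Omega)\hookrightarrow\mathbf{L}^{m'}(\Omega)$, $\mathsf{q}=\mathsf{p}'>d$ --- so that the Fredholm alternative reduces everything to injectivity. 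Your route costs an extra compactness lemma but buys a cleaner surjectivity step and makes the structural reason for invertibility (compact perturbation of the identity) explicit, together with the boundedness of $\mathfrak{N}_{\mathbf{y}}^{-1}$; the paper's route needs no compactness but must produce the preimage by hand. One caution about your closing remark: the equivalence ``$\mathfrak{N}_{\mathbf{y}}(\mathbf{z},r)=\mathfrak{L}\mathbf{h}\Leftrightarrow(\mathbf{z},r)$ solves \eqref{eq:aux_eq_adj} with datum $\mathbf{h}$'' only yields surjectivity onto the range of $\mathfrak{L}$, whose velocity components are solenoidal; to reach an arbitrary target $(\tilde{\mathbf{z}},\tilde{r})\in\mathbf{W}_0^{1,\mathsf{p}}(\Omega)\times L_0^{\mathsf{p}}(\Omega)$ one still needs either the paper's shift by $(\tilde{\mathbf{z}},\tilde{r})$ or your Fredholm step, so the proposed ``bypass'' is not a complete substitute as stated.
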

\begin{proof}
To prove the result, we proceed in two steps.

\emph{Step 1. $\mathfrak{N}_{\mathbf{y}}$ is surjective.} Let $(\tilde{\mathbf{z}},\tilde{r})\in \mathbf{W}_{0}^{1,\mathsf{p}}(\Omega)\times L_0^{\mathsf{p}}(\Omega)$. We prove the existence of $(\mathbf{z},r)\in \mathbf{W}_{0}^{1,\mathsf{p}}(\Omega)\times L_0^{\mathsf{p}}(\Omega)$ such that $\mathfrak{N}_{\mathbf{y}}(\mathbf{z},r) = (\tilde{\mathbf{z}},\tilde{r})$. For this purpose, we let $(\mathbf{z}_0,r_{0})$ be the solution to: Find $(\mathbf{z}_0,r_{0})\in \mathbf{W}_{0}^{1,\mathsf{p}}(\Omega)\times L_0^{\mathsf{p}}(\Omega)$ such that
\begin{multline}\label{eq:eq_z0_r0}
\nu(\nabla \mathbf{w}, \nabla \mathbf{z}_0)_{\mathbf{L}^2(\Omega)} +
b(\mathbf{y};\mathbf{w},\mathbf{z}_0) + b(\mathbf{w};\mathbf{y},\mathbf{z}_0) - (r_0,\text{div } \mathbf{w})_{L^2(\Omega)} \\ =  -\langle \mathfrak{B}^{\star}(\mathbf{y}) \tilde{\mathbf{z}}, \mathbf{w} \rangle,
\qquad
(s,\text{div } \mathbf{z}_0)_{L^2(\Omega)} = 0,
\end{multline}
for all $(\mathbf{w},s)\in \mathbf{W}_{0}^{1,\mathsf{q} }(\Omega)\times L_0^{\mathsf{q}}(\Omega)$. Since $\mathfrak{B}^{\star}(\mathbf{y}) \tilde{\mathbf{z}} \in \mathbf{W}^{-1,\mathsf{p}}(\Omega)$, a direct application of Theorem \ref{thm:well-posedness-adjoint} shows that problem \eqref{eq:eq_z0_r0} is well-posed.
We now rewrite problem \eqref{eq:eq_z0_r0} as a Stokes problem with $-\mathfrak{B}^{\star}(\mathbf{y})( \tilde{\mathbf{z}} + \mathbf{z}_0)$ as a forcing term in the momentum equation and invoke the definition of the mapping $\mathfrak{L}$ to deduce that $(\mathbf{z}_0,r_0) = - \mathfrak{L}[\mathfrak{B}^{\star}(\mathbf{y})
(\tilde{\mathbf{z}} + \mathbf{z}_0)]$. Consequently, $(\mathbf{z},r)=(\mathbf{z}_0 + \tilde{\mathbf{z}},r_0 + \tilde{r})$. In fact, $\mathfrak{N}_{\mathbf{y}}(\mathbf{z},r) = (\mathbf{z}_0 + \tilde{\mathbf{z}},r_0 + \tilde{r}) + \mathfrak{L}[\mathfrak{B}^{\star}(\mathbf{y})(\mathbf{z}_0 + \tilde{\mathbf{z}})] = (\mathbf{z}_0 + \tilde{\mathbf{z}},r_0 + \tilde{r}) - (\mathbf{z}_0,r_0) = (\tilde{\mathbf{z}},\tilde{r})$.

\emph{Step 2. $\mathfrak{N}_{\mathbf{y}}$ is injective.} Let $(\mathbf{z},r)\in \mathbf{W}_{0}^{1,\mathsf{p}}(\Omega)\times L_0^{\mathsf{p}}(\Omega)$ be such that $\mathfrak{N}_{\mathbf{y}}(\mathbf{z},r) = (\mathbf{0},0)$. It follows from the definition of $\mathfrak{N}_{\mathbf{y}}$ that $(\mathbf{z},r)$ solves problem \eqref{eq:aux_eq_adj} with $\mathbf{h} = \mathbf{0}$. An application of Theorem \ref{thm:well-posedness-adjoint} shows that $(\mathbf{z},r) = (\mathbf{0},0)$.
\end{proof}

We are now in a position to present the most important result of this section.

\begin{theorem}[existence of a unique discrete solution]\label{thm:existence_of_d_s_adj}
Let $(\bar{\mathbf{y}}, \bar{p}, \bar{\mathbf{u}})$ be a local nonsingular solution to \eqref{eq:weak_cost}--\eqref{eq:weak_st_eq}. Let $\mathsf{p} < d/(d-1)$ be arbitrarily close to $d/(d-1)$. Then, there exist $\mathtt{s}>0$ and $h_{\dagger}>0$ such that for all $\mathbf{u}\in B_{\mathtt{s}}(\bar{\mathbf{u}})$ and $h < h_{\dagger}$, the problem \eqref{eq:adj_eq_discrete} admits a unique discrete solution $(\mathbf{z}_{h},r_{h}) \in \mathbf{V}_{h}\times Q_h$.
\end{theorem}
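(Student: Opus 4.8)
The plan is to recast the discrete adjoint problem \eqref{eq:adj_eq_discrete} as a fixed-point equation on the finite-dimensional space $\mathbf{V}_{h}\times Q_{h}$ that is a small perturbation of the automorphism $\mathfrak{N}_{\bar{\mathbf{y}}}$ of Lemma \ref{lemma:N_y_automorphism}, and then invoke a Neumann-type perturbation argument together with finite dimensionality. First I would fix $\mathtt{s} \leq \mathfrak{s}$ and $h_{\dagger}\leq h_{\star}$ so small that $B_{\mathtt{s}}(\bar{\mathbf{u}})\subset \mathcal{O}(\bar{\mathbf{u}})$ and Theorem \ref{thm:existence_of_d_s} yields, for every $\mathbf{u}\in B_{\mathtt{s}}(\bar{\mathbf{u}})$ and $h<h_{\dagger}$, a unique $(\mathbf{y}_{h},p_{h})\in\mathbf{V}_{h}\times Q_{h}$ solving \eqref{eq:discrete_state_eq}. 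Writing $(\mathbf{y},p)=\mathcal{S}(\mathbf{u})$, I would then use
\begin{equation*}
\|\nabla(\bar{\mathbf{y}}-\mathbf{y}_{h})\|_{\mathbf{L}^2(\Omega)} \leq \|\nabla(\bar{\mathbf{y}}-\mathbf{y})\|_{\mathbf{L}^2(\Omega)} + \|\nabla(\mathbf{y}-\mathbf{y}_{h})\|_{\mathbf{L}^2(\Omega)},
\end{equation*}
where the first term is $\lesssim \|\bar{\mathbf{u}}-\mathbf{u}\|_{\mathbf{L}^2(\Omega)}\leq \mathtt{s}$ by Proposition \ref{pro:Lipschitz_property} and the second tends to zero, uniformly in $\mathbf{u}\in B_{\mathtt{s}}(\bar{\mathbf{u}})$, as $h\to 0$ by the standard a priori estimate for the finite element discretization of the Navier--Stokes equations (cf. \cite[estimate (4.4)]{MR2338434}). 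Consequently, given any $\mathfrak{s}_{\delta}>0$ one can shrink $\mathtt{s}$ and $h_{\dagger}$ so that $\|\nabla(\bar{\mathbf{y}}-\mathbf{y}_{h})\|_{\mathbf{L}^2(\Omega)}\leq \mathfrak{s}_{\delta}$.

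Next, using the discrete Stokes solver $\mathfrak{L}_{h}$ and the operator $\mathfrak{B}^{\star}$ of \eqref{def:bilinear_B_p_q}, I would rewrite \eqref{eq:adj_eq_discrete} equivalently as: find $(\mathbf{z}_{h},r_{h})\in\mathbf{V}_{h}\times Q_{h}$ such that
\begin{equation*}
\mathfrak{N}_{\mathbf{y}_{h},h}(\mathbf{z}_{h},r_{h}) := (\mathbf{z}_{h},r_{h}) + \mathfrak{L}_{h}[\mathfrak{B}^{\star}(\mathbf{y}_{h})\mathbf{z}_{h}] = \mathfrak{L}_{h}\Big[\textstyle\sum_{t\in\mathcal{D}}(\mathbf{y}_{h}(t)-\mathbf{y}_{t})\delta_{t}\Big].
\end{equation*}
This is meaningful because $\mathbf{y}_{h}\in\mathbf{V}_{h}\subset\mathbf{C}(\bar{\Omega})$, $\delta_{t}\in\mathbf{W}^{-1,\mathsf{p}}(\Omega)$, the piecewise-polynomial functions in $\mathbf{V}_{h}$ belong to $\mathbf{W}_0^{1,\mathsf{q}}(\Omega)$, and $\mathfrak{L}_{h}$ maps $\mathbf{W}^{-1,\mathsf{p}}(\Omega)$ into $\mathbf{V}_{h}\times Q_{h}$; thus $\mathfrak{N}_{\mathbf{y}_{h},h}$ is an endomorphism of $\mathbf{V}_{h}\times Q_{h}$. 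Since this is a linear problem on a finite-dimensional space, it suffices to prove that $\mathfrak{N}_{\mathbf{y}_{h},h}$ is injective.

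The core step is the comparison of $\mathfrak{N}_{\mathbf{y}_{h},h}$ with $\mathfrak{N}_{\bar{\mathbf{y}}}$. Regarding $\mathbf{V}_{h}\times Q_{h}$ as a subspace of $\mathbf{W}_0^{1,\mathsf{p}}(\Omega)\times L_0^{\mathsf{p}}(\Omega)$ and denoting by $\|\cdot\|_{\star}$ the norm of the latter, one has, for $(\mathbf{z}_{h},r_{h})\in\mathbf{V}_{h}\times Q_{h}$,
\begin{equation*}
\|\mathfrak{N}_{\bar{\mathbf{y}}}(\mathbf{z}_{h},r_{h}) - \mathfrak{N}_{\mathbf{y}_{h},h}(\mathbf{z}_{h},r_{h})\|_{\star} = \|(\mathfrak{L}[\mathfrak{B}^{\star}(\bar{\mathbf{y}})]-\mathfrak{L}_{h}[\mathfrak{B}^{\star}(\mathbf{y}_{h})])\mathbf{z}_{h}\|_{\star} \leq \mathfrak{I}_{h}\,\|(\mathbf{z}_{h},r_{h})\|_{\star},
\end{equation*}
with $\mathfrak{I}_{h}$ as in \eqref{eq:estimate_L-Lh} for $\tilde{\mathbf{y}}=\mathbf{y}_{h}$. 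By Lemma \ref{lemma:N_y_automorphism}, $\mathfrak{N}_{\bar{\mathbf{y}}}^{-1}$ is bounded, say with norm $M$; I would set $\delta:=1/(2M)$, take the thresholds $h_{\delta},\mathfrak{s}_{\delta}$ from Lemma \ref{lemma:aux_result_I}, and then shrink $\mathtt{s}$ and $h_{\dagger}$ as in the first paragraph so that $\|\nabla(\bar{\mathbf{y}}-\mathbf{y}_{h})\|_{\mathbf{L}^2(\Omega)}\leq\mathfrak{s}_{\delta}$ and $h<h_{\delta}$, which forces $\mathfrak{I}_{h}<\delta$. Then, for every $(\mathbf{z}_{h},r_{h})\in\mathbf{V}_{h}\times Q_{h}$,
\begin{equation*}
\|(\mathbf{z}_{h},r_{h})\|_{\star} \leq M\|\mathfrak{N}_{\bar{\mathbf{y}}}(\mathbf{z}_{h},r_{h})\|_{\star} \leq M\|\mathfrak{N}_{\mathbf{y}_{h},h}(\mathbf{z}_{h},r_{h})\|_{\star} + M\mathfrak{I}_{h}\|(\mathbf{z}_{h},r_{h})\|_{\star} \leq M\|\mathfrak{N}_{\mathbf{y}_{h},h}(\mathbf{z}_{h},r_{h})\|_{\star} + \tfrac12\|(\mathbf{z}_{h},r_{h})\|_{\star},
\end{equation*}
so that $\|(\mathbf{z}_{h},r_{h})\|_{\star}\leq 2M\|\mathfrak{N}_{\mathbf{y}_{h},h}(\mathbf{z}_{h},r_{h})\|_{\star}$. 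In particular $\mathfrak{N}_{\mathbf{y}_{h},h}$ is injective, hence bijective on the finite-dimensional space $\mathbf{V}_{h}\times Q_{h}$, and \eqref{eq:adj_eq_discrete} admits a unique solution. I expect the only real obstacle to be the bookkeeping of the constants: the perturbation radius $\mathfrak{s}_{\delta}$ in Lemma \ref{lemma:aux_result_I} depends on $\delta$, which itself depends on the operator norm $M$ of $\mathfrak{N}_{\bar{\mathbf{y}}}^{-1}$, so $\mathtt{s}$ and $h_{\dagger}$ must be chosen last; one must also be sure that a uniform $\mathbf{H}^1$-error estimate for $\mathbf{y}-\mathbf{y}_{h}$ is available in the present setting, which is precisely why convexity of $\Omega$ and the resulting $\mathbf{H}^2\times H^1$ regularity (Remark \ref{remark:further_reg_NS}) are assumed throughout this section. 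Lemmas \ref{lemma:aux_result_I} and \ref{lemma:N_y_automorphism} carry the analytic weight, so once the constants are aligned the conclusion is a soft perturbation-plus-finite-dimension argument.
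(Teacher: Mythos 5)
Your proposal is correct and follows essentially the same route as the paper: both recast \eqref{eq:adj_eq_discrete} as a fixed-point identity $\mathfrak{N}(\mathbf{z}_h,r_h)=\mathfrak{L}_h[\sum_{t\in\mathcal{D}}(\mathbf{y}_h(t)-\mathbf{y}_t)\delta_t]$ and treat the discrete operator as a small perturbation of the continuous automorphism of Lemma \ref{lemma:N_y_automorphism}, with the smallness quantified by Lemma \ref{lemma:aux_result_I} and the closeness $\|\nabla(\mathbf{y}-\mathbf{y}_h)\|_{\mathbf{L}^2(\Omega)}\leq\mathfrak{s}_\delta$ supplied by Theorem \ref{thm:existence_of_d_s} and the finite element error estimate. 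The only (harmless) deviations are that you anchor the perturbation at $\mathfrak{N}_{\bar{\mathbf{y}}}$ rather than at $\mathfrak{N}_{\mathcal{S}(\mathbf{u})}$ as the paper does --- which in fact makes the uniformity of the constants over $\mathbf{u}\in B_{\mathtt{s}}(\bar{\mathbf{u}})$ more transparent --- and that you conclude by injectivity plus finite dimensionality on $\mathbf{V}_h\times Q_h$ instead of invoking the perturbation lemma \cite[Lemma 4.5]{MR2338434} on the full space $\mathbf{W}_0^{1,\mathsf{p}}(\Omega)\times L_0^{\mathsf{p}}(\Omega)$.
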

\begin{proof}
To prove the desired result, we proceed in three steps.

\textit{Step 1. $\mathfrak{N}_{\mathbf{y}}$ is an automorphism}. Let $\mathbf{u}\in \mathcal{O}(\bar{\mathbf{u}})$,  where $\mathcal{O}(\bar{\mathbf{u}})$ is given as in Theorem \ref{thm:properties_C_to_S}. The pair $(\mathbf{y},p) = \mathcal{S}(\mathbf{u}) \in\mathbf{H}_0^1(\Omega)\times L_0^2(\Omega)$ solves \eqref{eq:weak_st_eq} uniquely in $\mathcal{O}(\bar{\mathbf{y}}) \times \mathcal{O}(\bar{p})$. Moreover, $\mathcal{S}'(\mathbf{u})$ is an isomorphism and hence $\mathbf{y}$ is a regular solution. Therefore, a direct application of Lemma \ref{lemma:N_y_automorphism} yields that the operator $\mathfrak{N}_{\mathbf{y}}$ is an automorphism.

\textit{Step 2. $\mathfrak{N}_{\mathbf{y}_{h}}^{h}$ is an automorphism}. We introduce, for $\mathbf{v} \in \mathbf{H}_0^1(\Omega)$, the mapping
\[
 \mathfrak{N}_{\mathbf{v}}^{h}:\mathbf{W}_0^{1,\mathsf{p}}(\Omega)\times L_0^{\mathsf{p}}(\Omega)\to\mathbf{W}_0^{1,\mathsf{p}}(\Omega)\times L_0^{\mathsf{p}}(\Omega),
 \quad
 \mathfrak{N}_{\mathbf{v}}^{h}(\mathbf{z},r) = (\mathbf{z}, r) + \mathfrak{L}_{h}[\mathfrak{B}^{\star}(\mathbf{v})\mathbf{z}].
\]
Let $\mathfrak{s}$ and $h_{\star}$ be as in Theorem \ref{thm:existence_of_d_s} and let $\mathbf{u}\in B_{\mathfrak{s}}(\bar{\mathbf{u}}) \subset \mathcal{O}(\bar{\mathbf{u}})$; this may further restrict $\mathfrak{s}$. Theorem \ref{thm:existence_of_d_s} guarantees the existence of a unique solution $(\mathbf{y}_h,p_h)$ to problem \eqref{eq:discrete_state_eq} for $h < h_{\star}$. We now prove the inequality
\begin{equation}
 \|\mathfrak{N}_{\mathbf{y}} - \mathfrak{N}_{\mathbf{y}_{h}}^{h}\|_{\mathcal{L}(\mathbf{W}_0^{1,\mathsf{p}}(\Omega)\times L_0^{\mathsf{p}}(\Omega))} < 2^{-1}\|\mathfrak{N}_{\mathbf{y}}^{-1}\|^{-1}_{\mathcal{L}(\mathbf{W}_0^{1,\mathsf{p}}(\Omega)\times L_0^{\mathsf{p}}(\Omega))},
 \label{eq:difference_N_y}
\end{equation}
and apply \cite[Lemma 4.5]{MR2338434} to deduce that $\mathfrak{N}_{\mathbf{y}_{h}}^{h}$ is an isomorphism. 
By definition,
\begin{equation*}
\|
\mathfrak{N}_{\mathbf{y}} - \mathfrak{N}_{\mathbf{y}_{h}}^{h}
\|_{\mathcal{L}(\mathbf{W}_0^{1,\mathsf{p}}(\Omega)\times L_0^{\mathsf{p}}(\Omega))} 
= 
\|\mathfrak{L}[\mathfrak{B}^{\star}(\mathbf{y})] - \mathfrak{L}_{h}[\mathfrak{B}^{\star}(\mathbf{y}_{h})]\|_{\mathcal{L}(\mathbf{W}_0^{1,\mathsf{p}}(\Omega),\mathbf{W}_0^{1,\mathsf{p}}(\Omega)\times L_0^{\mathsf{p}}(\Omega))}.
\end{equation*}
Let $\delta = (2\|\mathfrak{N}_{\mathbf{y}}^{-1}\|)^{-1}$, where $\| \cdot \|$ is the norm in $\mathcal{L}(\mathbf{W}_0^{1,\mathsf{p}}(\Omega)\times L_0^{\mathsf{p}}(\Omega))$. From Theorem \ref{thm:existence_of_d_s} and \cite[estimate (4.5)]{MR2338434} we obtain the existence of $\mathtt{s} \leq \mathfrak{s}$ such that for all $\mathbf{u} \in B_{\mathtt{s}}(\bar{\mathbf{u}})$ it holds that $\|\nabla(\mathbf{y} - \mathbf{y}_{h})\|_{\mathbf{L}^2(\Omega)} \leq \mathfrak{s}_{\delta}$, where $\mathfrak{s}_{\delta}$ is as in the statement of Lemma \ref{lemma:aux_result_I}. This Lemma guarantees the existence of $h_{\dagger} < \min \{ h_{\delta}, h_{\star} \}$ so that the inequality \eqref{eq:difference_N_y} holds. This proves that $\mathfrak{N}_{\mathbf{y}_{h}}^{h}$ is an automorphism \cite[Lemma 4.5]{MR2338434}.

\textit{Step 3. Well-posedness of \eqref{eq:adj_eq_discrete}}. Let $\mathbf{u} \in B_{\mathtt{s}}(\bar{\mathbf{u}})$ and $h < h_{\dagger}$. We prove that $(\mathbf{z}_{h},r_{h})\in\mathbf{V}_{h}\times Q_{h}$ solves \eqref{eq:adj_eq_discrete} if and only if it satisfies $\mathfrak{N}_{\mathbf{y}_{h}}^{h}(\mathbf{z}_{h},r_{h}) = (\mathfrak{Z}_{h},\mathfrak{p}_{h})$, where $(\mathfrak{Z}_{h},\mathfrak{p}_{h}) \in \mathbf{V}_{h}\times Q_{h}$ corresponds to the unique solution to the discrete problem
\begin{equation*}
\nu (\nabla \mathfrak{Z}_{h}, \nabla \mathbf{v}_{h})_{\mathbf{L}^2(\Omega)} + (\mathfrak{p}_{h},\text{div } \mathbf{v}_{h})_{L^2(\Omega)}  =  \sum_{t\in\mathcal{D}}\langle (\mathbf{y}_{h}(t) - \mathbf{y}_{t})\delta_{t},\mathbf{v}_{h}\rangle, 
\, \, \, \,
(q_{h},\text{div } \mathfrak{Z}_{h})_{L^2(\Omega)} = 0,
\end{equation*}
for all $(\mathbf{v}_{h},q_{h})\in\mathbf{V}_{h} \times Q_{h}$. Here, $(\mathbf{y}_{h},p_{h})$ corresponds to the unique solution of problem \eqref{eq:discrete_state_eq}. To do this, we note that the relation $\mathfrak{N}_{\mathbf{y}_{h}}^{h}(\mathbf{z}_{h},r_{h}) = (\mathfrak{Z}_{h},\mathfrak{p}_{h})$ is equivalent to 
\begin{equation*}
(\mathbf{z}_{h},r_{h}) = \mathfrak{L}_{h}\left[\sum_{t\in\mathcal{D}}(\mathbf{y}_{h}(t) - \mathbf{y}_{t})\delta_{t} - [\mathfrak{B}^{\star}(\mathbf{y}_{h})\mathbf{z}_{h}]\right],
\end{equation*}
which in turns is equivalent to the fact that $(\mathbf{z}_{h},r_{h})$ solves problem \eqref{eq:adj_eq_discrete}. Since $\mathfrak{N}_{\mathbf{y}_{h}}^{h}$ is an automorphism, we conclude that problem \eqref{eq:adj_eq_discrete} is well posed.
\end{proof}


\subsection{First order optimality conditions}
With the well-posedness of problem \eqref{eq:adj_eq_discrete}, we are now in a position to establish first order optimality conditions.

\begin{theorem}[first order optimality conditions: the fully discrete scheme]
\label{thm:discre_opt_cond} 
If $h < \min \{ h_{\nabla}, h_{\dagger} \}$ and $(\bar{\mathbf{y}}_{h}, \bar{p}_{h}, \bar{\mathbf{u}}_{h})$ is a local solution of the fully discrete problem, then
\begin{equation}\label{eq:var_ineq_fully}
(\bar{\mathbf{z}}_{h} + \alpha\bar{\mathbf{u}}_{h}, \mathbf{u}_{h} - \bar{\mathbf{u}}_{h})_{\mathbf{L}^2(\Omega)} \geq 0 \quad \forall \mathbf{u}_h \in\mathbf{U}_{ad,h},
\end{equation}
where $(\bar{\mathbf{z}}_{h},\bar{r}_{h})\in\mathbf{V}_{h}\times Q_{h}$ is the solution of \eqref{eq:adj_eq_discrete}, where $\mathbf{y}_{h}$ is  replaced by $\bar{\mathbf{y}}_{h}$.
\end{theorem}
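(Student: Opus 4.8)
The plan is to follow, at the discrete level, the argument of Theorem~\ref{thm:optimality_cond}. First I would argue that for $h<\min\{h_{\nabla},h_{\dagger}\}$ and $\mathbf{u}_h$ in a neighborhood of $\bar{\mathbf{u}}_h$, the discrete state equation \eqref{eq:discrete_state_eq} defines a $C^2$ solution map $\mathcal{S}_h:\mathbf{u}_h\mapsto(\mathbf{y}_h,p_h)$ into $\mathbf{V}_h\times Q_h$ — the discrete counterpart of Theorem~\ref{thm:properties_C_to_S}, which rests on Theorem~\ref{thm:existence_of_d_s} and the discrete inf-sup conditions \eqref{eq:infsup_div} and \eqref{eq:discrete_inf_sup} making the linearized discrete operator an isomorphism (cf.\ the treatment of $\mathfrak{N}_{\mathbf{y}_h}^h$ in Theorem~\ref{thm:existence_of_d_s_adj}). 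Introducing the reduced functional $j_h(\mathbf{u}_h):=J(\mathbf{y}_h(\mathbf{u}_h),\mathbf{u}_h)$ on a neighborhood of $\bar{\mathbf{u}}_h$ in $\mathbf{U}_{ad,h}$, local optimality of $\bar{\mathbf{u}}_h$ together with the convexity of $\mathbf{U}_{ad,h}$ yields the abstract variational inequality $j_h'(\bar{\mathbf{u}}_h)(\mathbf{u}_h-\bar{\mathbf{u}}_h)\geq 0$ for all $\mathbf{u}_h\in\mathbf{U}_{ad,h}$.

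Next I would make $j_h'$ explicit. Writing $\mathcal{G}_h\mathbf{u}_h$ for the velocity component of $\mathcal{S}_h\mathbf{u}_h$, the chain rule gives
\[
j_h'(\bar{\mathbf{u}}_h)(\mathbf{u}_h-\bar{\mathbf{u}}_h)=\sum_{t\in\mathcal{D}}(\bar{\mathbf{y}}_h(t)-\mathbf{y}_t)\cdot\boldsymbol{\varphi}_h(t)+\alpha(\bar{\mathbf{u}}_h,\mathbf{u}_h-\bar{\mathbf{u}}_h)_{\mathbf{L}^2(\Omega)},
\]
where $(\boldsymbol{\varphi}_h,\zeta_h)=\mathcal{G}_h'(\bar{\mathbf{u}}_h)(\mathbf{u}_h-\bar{\mathbf{u}}_h)\in\mathbf{V}_h\times Q_h$ solves the discrete linearized state equation with datum $\mathbf{g}=\mathbf{u}_h-\bar{\mathbf{u}}_h$ and with $\mathbf{y}_h$ replaced by $\bar{\mathbf{y}}_h$; the point evaluations are well defined since $\boldsymbol{\varphi}_h$ is a continuous finite element function. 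I would then test the discrete adjoint equation \eqref{eq:adj_eq_discrete} (with $\mathbf{y}_h=\bar{\mathbf{y}}_h$) with $(\mathbf{w}_h,s_h)=(\boldsymbol{\varphi}_h,0)$ and, symmetrically, test the discrete linearized equation with $(\mathbf{v}_h,q_h)=(\bar{\mathbf{z}}_h,0)$. Since $\bar{\mathbf{z}}_h,\boldsymbol{\varphi}_h\in\mathbf{V}_h^{0}$, the pressure pairings $(\bar{r}_h,\mathrm{div}\,\boldsymbol{\varphi}_h)_{L^2(\Omega)}$ and $(\zeta_h,\mathrm{div}\,\bar{\mathbf{z}}_h)_{L^2(\Omega)}$ vanish, and subtracting the two identities makes the viscous term and the two convective terms $b(\bar{\mathbf{y}}_h;\boldsymbol{\varphi}_h,\bar{\mathbf{z}}_h)$, $b(\boldsymbol{\varphi}_h;\bar{\mathbf{y}}_h,\bar{\mathbf{z}}_h)$ cancel, leaving
\[
\sum_{t\in\mathcal{D}}(\bar{\mathbf{y}}_h(t)-\mathbf{y}_t)\cdot\boldsymbol{\varphi}_h(t)=(\mathbf{u}_h-\bar{\mathbf{u}}_h,\bar{\mathbf{z}}_h)_{\mathbf{L}^2(\Omega)}.
\]
Inserting this into the expression for $j_h'(\bar{\mathbf{u}}_h)(\mathbf{u}_h-\bar{\mathbf{u}}_h)$ and using the abstract variational inequality yields \eqref{eq:var_ineq_fully}.

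The main obstacle, and the only step that genuinely requires the apparatus of Section~\ref{sec:fem}, is justifying that the discrete linearized state problem is uniquely solvable in $\mathbf{V}_h\times Q_h$ for $h$ small and $\mathbf{u}_h$ near $\bar{\mathbf{u}}_h$, so that $\mathcal{S}_h$ is differentiable with the stated derivative. This is where one invokes the discrete automorphism property: the discrete linearized Navier--Stokes operator and the discrete adjoint operator $\mathfrak{N}_{\bar{\mathbf{y}}_h}^{h}$ appearing in Theorem~\ref{thm:existence_of_d_s_adj} are mutually adjoint, so the well-posedness of \eqref{eq:adj_eq_discrete} transfers to the linearized problem. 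Once this is in place, the remainder is the same bookkeeping as in the continuous proof, with the $\mathbf{H}_0^1(\Omega)$-pairings replaced by finite-dimensional ones; in particular, no density or regularity arguments are needed since all discrete functions are continuous and the Dirac data act on $\mathbf{V}_h\subset\mathbf{C}(\bar{\Omega})$.
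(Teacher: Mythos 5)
Your argument is correct and is precisely the ``standard'' proof that the paper omits (its proof of Theorem~\ref{thm:discre_opt_cond} reads only ``The proof is standard''): differentiability of the discrete control-to-state map via invertibility of the discrete linearized operator (equivalent, in finite dimensions, to the invertibility of its transpose established in Theorem~\ref{thm:existence_of_d_s_adj}), the abstract variational inequality from local optimality and convexity of $\mathbf{U}_{ad,h}$, and the duality cancellation between the discrete linearized and adjoint systems, which works exactly as you describe since both $\boldsymbol{\varphi}_h$ and $\bar{\mathbf{z}}_h$ lie in $\mathbf{V}_h^{0}$. No gaps.
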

\begin{proof}
The proof is standard. For brevity, we omit the details.
\end{proof}

\begin{theorem}[first order optimality conditions: the semidiscrete scheme] 
If $h < \min \{ h_{\Box}, h_{\dagger} \}$ and $(\bar{\mathbf{y}}_{h}, \bar{p}_{h}, \bar{\boldsymbol{u}}_{h})$ is a local solution of the semidiscrete problem, then
\begin{equation}\label{eq:var_ineq_semi}
(\bar{\mathbf{z}}_{h} + \alpha\bar{\boldsymbol{u}}_h, \boldsymbol{u} - \bar{\boldsymbol{u}}_h)_{\mathbf{L}^2(\Omega)} \geq 0 \quad \forall \boldsymbol{u} \in\mathbf{U}_{ad},
\end{equation}
where $(\bar{\mathbf{z}}_{h},\bar{r}_{h})\in\mathbf{V}_{h}\times Q_{h}$ is the solution of \eqref{eq:adj_eq_discrete}, where $\mathbf{y}_{h}$ is replaced by $\bar{\mathbf{y}}_{h}$.
\end{theorem}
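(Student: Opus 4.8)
The argument mirrors the continuous case (Theorem \ref{thm:optimality_cond}) and the fully discrete case (Theorem \ref{thm:discre_opt_cond}), the only genuine difference being that the control is \emph{not} discretized, so the test functions in the variational inequality range over all of $\mathbf{U}_{ad}$ rather than over $\mathbf{U}_{ad,h}$. First I would set up the discrete control-to-state map. Fix $h < \min\{h_{\Box}, h_{\dagger}\}$. By Theorem \ref{thm:existence_of_d_s}, on a neighborhood $B_{\mathfrak{s}}(\bar{\mathbf{u}})$ the discrete state equation \eqref{eq:discrete_state_eq} has a unique solution in $B_{\mathfrak{r}}(\bar{\mathbf{y}}) \times B_{\mathfrak{r}}(\bar{p})$; since everything lives in the finite-dimensional space $\mathbf{V}_h \times Q_h$, an implicit function theorem argument (as in \cite[Theorem 4.8 and its surroundings]{MR2338434}, or by adapting Theorem \ref{thm:properties_C_to_S} verbatim to the discrete spaces) shows that the map $\mathcal{S}_h : \boldsymbol{u} \mapsto (\mathbf{y}_h(\boldsymbol{u}), p_h(\boldsymbol{u}))$ is of class $C^1$ near $\bar{\boldsymbol{u}}_h$, with derivative $(\boldsymbol{\varphi}_h,\zeta_h) = \mathcal{S}_h'(\boldsymbol{u})\mathbf{v}$ characterized as the unique pair in $\mathbf{V}_h \times Q_h$ solving the linearized discrete state equation
\begin{equation*}
\nu(\nabla \boldsymbol{\varphi}_h, \nabla \mathbf{v}_h)_{\mathbf{L}^2(\Omega)} + b(\mathbf{y}_h;\boldsymbol{\varphi}_h,\mathbf{v}_h) + b(\boldsymbol{\varphi}_h;\mathbf{y}_h,\mathbf{v}_h) - (\zeta_h, \textnormal{div }\mathbf{v}_h)_{L^2(\Omega)} = (\mathbf{v},\mathbf{v}_h)_{\mathbf{L}^2(\Omega)},
\end{equation*}
together with $(q_h, \textnormal{div }\boldsymbol{\varphi}_h)_{L^2(\Omega)} = 0$ for all $(\mathbf{v}_h,q_h)\in\mathbf{V}_h\times Q_h$; the underlying bilinear form is nonsingular for $h$ small because the isomorphism property is inherited from Theorem \ref{thm:existence_of_d_s}.

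Next I would differentiate the reduced cost. Since $\bar{\mathbf{y}}_h \in \mathbf{V}_h \subset \mathbf{C}(\bar\Omega)$, the point evaluations in $J$ are well defined, and $j_h(\boldsymbol{u}) := J(\mathbf{y}_h(\boldsymbol{u}),\boldsymbol{u})$ is differentiable near $\bar{\boldsymbol{u}}_h$ with
\begin{equation*}
j_h'(\bar{\boldsymbol{u}}_h)\mathbf{v} = \sum_{t\in\mathcal{D}}\big(\bar{\mathbf{y}}_h(t)-\mathbf{y}_t\big)\cdot\boldsymbol{\varphi}_h(t) + \alpha(\bar{\boldsymbol{u}}_h,\mathbf{v})_{\mathbf{L}^2(\Omega)}, \qquad (\boldsymbol{\varphi}_h,\zeta_h) = \mathcal{S}_h'(\bar{\boldsymbol{u}}_h)\mathbf{v}.
\end{equation*}
Because $\bar{\boldsymbol{u}}_h$ is a \emph{local} minimizer over $\mathbf{U}_{ad}$ (the control space being undiscretized) and $\mathbf{U}_{ad}$ is convex, the standard variational inequality $j_h'(\bar{\boldsymbol{u}}_h)(\boldsymbol{u} - \bar{\boldsymbol{u}}_h) \geq 0$ holds for every $\boldsymbol{u}\in\mathbf{U}_{ad}$ (cf. \cite[Lemma 4.18]{Troltzsch}). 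It remains to rewrite the pointwise-evaluation term using the discrete adjoint state.

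To do this I would invoke Theorem \ref{thm:existence_of_d_s_adj} (valid since $h < h_{\dagger}$), which guarantees that \eqref{eq:adj_eq_discrete} with $\mathbf{y}_h$ replaced by $\bar{\mathbf{y}}_h$ has a unique solution $(\bar{\mathbf{z}}_h,\bar{r}_h)\in\mathbf{V}_h\times Q_h$. Setting $\mathbf{v} := \boldsymbol{u} - \bar{\boldsymbol{u}}_h$ and $(\boldsymbol{\varphi}_h,\zeta_h) := \mathcal{S}_h'(\bar{\boldsymbol{u}}_h)\mathbf{v}$, I would test \eqref{eq:adj_eq_discrete} with $(\mathbf{w}_h,s_h) = (\boldsymbol{\varphi}_h,0)$ and use $(\bar{r}_h, \textnormal{div }\boldsymbol{\varphi}_h)_{L^2(\Omega)} = 0$ to get
\begin{equation*}
\nu(\nabla \boldsymbol{\varphi}_h, \nabla \bar{\mathbf{z}}_h)_{\mathbf{L}^2(\Omega)} + b(\bar{\mathbf{y}}_h;\boldsymbol{\varphi}_h,\bar{\mathbf{z}}_h) + b(\boldsymbol{\varphi}_h;\bar{\mathbf{y}}_h,\bar{\mathbf{z}}_h) = \sum_{t\in\mathcal{D}}\big(\bar{\mathbf{y}}_h(t)-\mathbf{y}_t\big)\cdot\boldsymbol{\varphi}_h(t);
\end{equation*}
then I would test the linearized discrete state equation (with $\mathbf{y}_h = \bar{\mathbf{y}}_h$, $\mathbf{v} = \boldsymbol{u}-\bar{\boldsymbol{u}}_h$) with $(\mathbf{v}_h,q_h) = (\bar{\mathbf{z}}_h, 0)$, using $(\zeta_h, \textnormal{div }\bar{\mathbf{z}}_h)_{L^2(\Omega)}=0$, to get the same left-hand side equal to $(\boldsymbol{u}-\bar{\boldsymbol{u}}_h, \bar{\mathbf{z}}_h)_{\mathbf{L}^2(\Omega)}$. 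Equating the two yields $\sum_{t\in\mathcal{D}}(\bar{\mathbf{y}}_h(t)-\mathbf{y}_t)\cdot\boldsymbol{\varphi}_h(t) = (\bar{\mathbf{z}}_h, \boldsymbol{u}-\bar{\boldsymbol{u}}_h)_{\mathbf{L}^2(\Omega)}$, and substituting this into $j_h'(\bar{\boldsymbol{u}}_h)(\boldsymbol{u}-\bar{\boldsymbol{u}}_h) \geq 0$ gives \eqref{eq:var_ineq_semi}. The main obstacle is not in this final bookkeeping but in the first paragraph: cleanly establishing $C^1$-regularity of $\mathcal{S}_h$ and the (uniform-in-$h$) nonsingularity of the linearized discrete operator near $\bar{\boldsymbol{u}}_h$; this is exactly where Theorem \ref{thm:existence_of_d_s} is needed, and for a polished proof one would either cite \cite{MR2338434} or note that the discrete inf-sup structure plus the isomorphism property of $\mathcal{S}'(\bar{\mathbf{u}})$ transfer to the discrete level for $h$ small. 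Since all of this is routine given the earlier results, it is reasonable to state the proof is "standard" and omit the details, as done for Theorem \ref{thm:discre_opt_cond}.
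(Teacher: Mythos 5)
Your proposal is correct and is precisely the "standard" argument the paper omits: the discrete analogue of the duality computation in Theorem \ref{thm:optimality_cond}, using the $C^1$ discrete control-to-state map, the variational inequality for a local minimizer over the convex set $\mathbf{U}_{ad}$, and the two symmetric testings of the discrete adjoint and linearized discrete state equations (which at the discrete level need no approximation argument since everything lies in $\mathbf{V}_h\times Q_h\subset \mathbf{C}(\bar\Omega)\times L^2_0(\Omega)$). The paper gives no details, so there is nothing to contrast; your write-up matches the intended route.
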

\begin{proof}
The proof is standard. For brevity, we omit the details.
\end{proof}

We conclude this section with the following projection formula for the \emph{semidiscrete scheme}: Since $\bar{\boldsymbol u}_h$ satisfies \eqref{eq:var_ineq_semi}, it holds that $\bar{\boldsymbol u}_h = \Pi_{[\textbf{a},\textbf{b}]}(-\alpha^{-1}\bar{\mathbf{z}}_{h})$.


\subsection{Auxiliary error estimates}

Let $(\bar{\mathbf{y}}, \bar{p}, \bar{\mathbf{u}})$ be a local nonsingular solution to \eqref{eq:weak_cost}--\eqref{eq:weak_st_eq} and let $\mathsf{p} < d/(d-1)$ be arbitrarily close to $d/(d-1)$. Let $h < h_{\dagger}$, where $h_{\dagger}$ is as in Theorem \ref{thm:existence_of_d_s_adj}.  We introduce the pair $(\hat{\mathbf{z}}_{h},\hat{r}_{h}) \in \mathbf{V}_{h}\times Q_h$ as the solution to
\begin{multline}\label{eq:adj_eq_hat}
\nu(\nabla \mathbf{w}_{h}, \nabla \hat{\mathbf{z}}_{h})_{\mathbf{L}^2(\Omega)} + b(\bar{\mathbf{y}};\mathbf{w}_{h},\hat{\mathbf{z}}_{h}) + b(\mathbf{w}_{h};\bar{\mathbf{y}},\hat{\mathbf{z}}_{h}) - (\hat{r}_{h},\text{div } \mathbf{w}_{h})_{L^2(\Omega)} \\ =  \sum_{t\in\mathcal{D}}\langle (\bar{\mathbf{y}}(t)-\mathbf{y}_{t})\delta_{t},\mathbf{w}_{h}\rangle_{\mathbf{W}^{-1,\mathsf{p} }(\Omega),\mathbf{W}^{1,\mathsf{q} }_0(\Omega)}, \qquad
(s_{h},\text{div } \hat{\mathbf{z}}_{h})_{L^2(\Omega)} = 0,
\end{multline}
for all $(\mathbf{w}_{h},s_{h})\in \mathbf{V}_{h} \times Q_{h}$. We note that \eqref{eq:adj_eq_hat} is well posed (cf.~Theorem \ref{thm:existence_of_d_s_adj}) and that $(\hat{\mathbf{z}}_{h},\hat{r}_{h})$ is a finite element approximation of $(\bar{\mathbf{z}},\bar{r})\in \mathbf{W}_0^{1,\mathsf{p}}(\Omega)\times L_0^{\mathsf{p}}(\Omega)$.

\begin{lemma}[auxiliary error estimate]\label{lemma:convergence_discr_adj}
Let $(\bar{\mathbf{y}}, \bar{p}, \bar{\mathbf{u}})$ be a local nonsingular solution to \eqref{eq:weak_cost}--\eqref{eq:weak_st_eq} and let $(\bar{\mathbf{z}},\bar{r})\in\mathbf{W}_0^{1,\mathsf{p}}(\Omega)\times L_0^{\mathsf{p}}(\Omega)$ be the associated adjoint state solving problem \eqref{eq:adj_eq}, where $\mathbf{y}$ is replaced by $\bar{\mathbf{y}}$. Then, we have the error estimate
\begin{equation}\label{eq:error_estimate_adj_hat}
\|\bar{\mathbf{z}} - \hat{\mathbf{z}}_{h}\|_{\mathbf{L}^{2}(\Omega)} \lesssim h^{2-\frac{d}{2}}\left[ \|\bar{\mathbf{u}}\|_{\mathbf{L}^{2}(\Omega)} + \sum_{t\in\mathcal{D}}|\mathbf{y}_{t}| \right] \quad \forall h <  h_{\dagger}. 
\end{equation}
\end{lemma}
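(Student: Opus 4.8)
The plan is to run an Aubin--Nitsche duality argument. Set $\mathbf{e}:=\bar{\mathbf{z}}-\hat{\mathbf{z}}_h\in\mathbf{W}_0^{1,\mathsf{p}}(\Omega)\hookrightarrow\mathbf{L}^2(\Omega)$ (the embedding holds because $\mathsf{p}$ is close to $d/(d-1)$). As dual problem I would take the linearized Navier--Stokes problem \eqref{eq:first_deriv_S_W01p} with $\mathbf{y}=\bar{\mathbf{y}}$ and forcing $\mathbf{g}=\mathbf{e}$, and call its solution $(\boldsymbol{\varphi},\zeta)$. By Theorem~\ref{thm:well-posedness-derivative} (applicable since $\mathbf{e}\in\mathbf{L}^2(\Omega)\hookrightarrow\mathbf{W}^{-1,\mathsf{q}}(\Omega)$), the identity \eqref{eq:first_deriv_S_W01p} holds for every test pair in $\mathbf{W}_0^{1,\mathsf{p}}(\Omega)\times L_0^{\mathsf{p}}(\Omega)$ and $(\boldsymbol{\varphi},\zeta)\in\mathbf{W}_0^{1,\mathsf{q}}(\Omega)\times L_0^{\mathsf{q}}(\Omega)$. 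The first key step is to upgrade this to $\mathbf{H}^2(\Omega)\times H^1(\Omega)$-regularity: rewrite the momentum equation as the Stokes system $-\nu\Delta\boldsymbol{\varphi}+\nabla\zeta=\mathbf{e}-(\bar{\mathbf{y}}\cdot\nabla)\boldsymbol{\varphi}-(\boldsymbol{\varphi}\cdot\nabla)\bar{\mathbf{y}}$, note that its right-hand side lies in $\mathbf{L}^2(\Omega)$ (using $\bar{\mathbf{y}}\in\mathbf{L}^\infty(\Omega)$ together with the regularity of $\bar{\mathbf{y}}$ from Remark~\ref{remark:further_reg_NS} and $\boldsymbol{\varphi}\in\mathbf{H}_0^1(\Omega)$), and invoke the $\mathbf{H}^2\times H^1$-regularity of the Stokes problem on convex polytopes (Remark~\ref{remark:further_reg_NS}) to conclude $\|\boldsymbol{\varphi}\|_{\mathbf{H}^2(\Omega)}+\|\zeta\|_{H^1(\Omega)}\lesssim\|\mathbf{e}\|_{\mathbf{L}^2(\Omega)}$.

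Next, denoting $\mathfrak{a}(\mathbf{w},\mathbf{v}):=\nu(\nabla\mathbf{w},\nabla\mathbf{v})_{\mathbf{L}^2(\Omega)}+b(\bar{\mathbf{y}};\mathbf{w},\mathbf{v})+b(\mathbf{w};\bar{\mathbf{y}},\mathbf{v})$ — precisely the bilinear form that, with its two velocity slots transposed, governs both \eqref{eq:adj_eq} and \eqref{eq:first_deriv_S_W01p} — I would test \eqref{eq:first_deriv_S_W01p} with $\mathbf{v}=\mathbf{e}$ to get $\|\mathbf{e}\|_{\mathbf{L}^2(\Omega)}^2=\mathfrak{a}(\boldsymbol{\varphi},\mathbf{e})-(\zeta,\text{div}\,\mathbf{e})_{L^2(\Omega)}$. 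Then: (i) test \eqref{eq:adj_eq} (with $\mathbf{y}=\bar{\mathbf{y}}$) with $\boldsymbol{\varphi}\in\mathbf{W}_0^{1,\mathsf{q}}(\Omega)$ and use $\text{div}\,\boldsymbol{\varphi}=0$ to obtain $\mathfrak{a}(\boldsymbol{\varphi},\bar{\mathbf{z}})=\sum_{t\in\mathcal{D}}(\bar{\mathbf{y}}(t)-\mathbf{y}_t)\cdot\boldsymbol{\varphi}(t)$; (ii) test \eqref{eq:adj_eq_hat} with $\mathbf{I}_h\boldsymbol{\varphi}\in\mathbf{V}_h$ ($\mathbf{I}_h$ the Lagrange interpolant, which maps into $\mathbf{V}_h$ for both the MINI and Taylor--Hood pairs and is well defined since $\boldsymbol{\varphi}\in\mathbf{H}^2(\Omega)\hookrightarrow\mathbf{C}(\bar\Omega)$) to replace $\mathfrak{a}(\mathbf{I}_h\boldsymbol{\varphi},\hat{\mathbf{z}}_h)$; (iii) use $\text{div}\,\bar{\mathbf{z}}=0$, $(\Pi_h\zeta,\text{div}\,\hat{\mathbf{z}}_h)_{L^2(\Omega)}=0$ ($\Pi_h$ the $L^2$-projection onto $Q_h$), and $(\hat r_h,\text{div}\,\boldsymbol{\varphi})_{L^2(\Omega)}=0$. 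Collecting terms yields the representation
\[
\|\mathbf{e}\|_{\mathbf{L}^2(\Omega)}^2=\sum_{t\in\mathcal{D}}(\bar{\mathbf{y}}(t)-\mathbf{y}_t)\cdot(\boldsymbol{\varphi}-\mathbf{I}_h\boldsymbol{\varphi})(t)-\mathfrak{a}(\boldsymbol{\varphi}-\mathbf{I}_h\boldsymbol{\varphi},\hat{\mathbf{z}}_h)+(\hat r_h,\text{div}(\boldsymbol{\varphi}-\mathbf{I}_h\boldsymbol{\varphi}))_{L^2(\Omega)}+(\zeta-\Pi_h\zeta,\text{div}\,\hat{\mathbf{z}}_h)_{L^2(\Omega)}.
\]
The leading term is controlled by the classical interpolation estimate $\|\boldsymbol{\varphi}-\mathbf{I}_h\boldsymbol{\varphi}\|_{\mathbf{L}^\infty(\Omega)}\lesssim h^{2-d/2}\|\boldsymbol{\varphi}\|_{\mathbf{H}^2(\Omega)}$ (valid for $d\le 3$, exactly as in the proof of Theorem~\ref{thm:convergence_in_Linfty}), which is what produces the advertised order, combined with $\sum_{t\in\mathcal{D}}|\bar{\mathbf{y}}(t)-\mathbf{y}_t|\lesssim\|\bar{\mathbf{y}}\|_{\mathbf{L}^\infty(\Omega)}+\sum_{t\in\mathcal{D}}|\mathbf{y}_t|\lesssim\|\bar{\mathbf{u}}\|_{\mathbf{L}^2(\Omega)}+\sum_{t\in\mathcal{D}}|\mathbf{y}_t|$ (Remark~\ref{remark:further_reg_NS}).

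For the remaining three terms the main ingredient is a uniform-in-$h$ discrete stability bound $\|\nabla\hat{\mathbf{z}}_h\|_{\mathbf{L}^{\mathsf{p}}(\Omega)}+\|\hat r_h\|_{L^{\mathsf{p}}(\Omega)}\lesssim\sum_{t\in\mathcal{D}}|\bar{\mathbf{y}}(t)-\mathbf{y}_t|\lesssim\|\bar{\mathbf{u}}\|_{\mathbf{L}^2(\Omega)}+\sum_{t\in\mathcal{D}}|\mathbf{y}_t|$; this follows from the proof of Theorem~\ref{thm:existence_of_d_s_adj}, since $\mathfrak{N}_{\bar{\mathbf{y}}}^{h}$ is invertible with norm of the inverse bounded independently of $h$ (via the perturbation estimate \eqref{eq:difference_N_y} and the bound \eqref{eq:estimate_Lh} on $\mathfrak{L}_h$), while $\delta_t\in\mathbf{W}^{-1,\mathsf{p}}(\Omega)$ with $\|\delta_t\|_{\mathbf{W}^{-1,\mathsf{p}}(\Omega)}\lesssim 1$ because $\mathbf{W}_0^{1,\mathsf{q}}(\Omega)\hookrightarrow\mathbf{C}(\bar\Omega)$. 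Using $\mathbf{W}_0^{1,\mathsf{p}}(\Omega)\hookrightarrow\mathbf{L}^2(\Omega)$, the standard estimates $\|\nabla(\boldsymbol{\varphi}-\mathbf{I}_h\boldsymbol{\varphi})\|_{\mathbf{L}^2(\Omega)}\lesssim h\|\boldsymbol{\varphi}\|_{\mathbf{H}^2(\Omega)}$, $\|\boldsymbol{\varphi}-\mathbf{I}_h\boldsymbol{\varphi}\|_{\mathbf{L}^2(\Omega)}\lesssim h^2\|\boldsymbol{\varphi}\|_{\mathbf{H}^2(\Omega)}$, $\|\zeta-\Pi_h\zeta\|_{L^2(\Omega)}\lesssim h\|\zeta\|_{H^1(\Omega)}$, together with inverse estimates on the quasi-uniform family $\mathbb{T}$ to pass from the $L^{\mathsf{p}}$-bounds on $\nabla\hat{\mathbf{z}}_h$, $\text{div}\,\hat{\mathbf{z}}_h$ and (after one integration by parts) $\nabla\hat r_h$ to $L^2$-bounds at the price of $h^{(2-d)/2}$, one checks that each of the three remainder terms is $\lesssim h^{2-d/2}\,\|\mathbf{e}\|_{\mathbf{L}^2(\Omega)}\,(\|\bar{\mathbf{u}}\|_{\mathbf{L}^2(\Omega)}+\sum_{t\in\mathcal{D}}|\mathbf{y}_t|)$ — the exponents match precisely because $2-\tfrac d2=1+\tfrac{2-d}{2}$. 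Dividing by $\|\mathbf{e}\|_{\mathbf{L}^2(\Omega)}$ gives \eqref{eq:error_estimate_adj_hat}.

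The hard part will be this last balancing: since $\bar{\mathbf{z}}$ and $\hat{\mathbf{z}}_h$ only live in $\mathbf{W}_0^{1,\mathsf{p}}(\Omega)$ with $\mathsf{p}<d/(d-1)$, the consistency remainder cannot be absorbed through a naive $\mathbf{H}^1$-bound on $\hat{\mathbf{z}}_h$; one must combine the weak, uniform-in-$h$ discrete stability with inverse estimates and verify that no spurious negative power of $h$ survives. The two supporting points — establishing the $\mathbf{H}^2\times H^1$-regularity (and stability) of the dual linearized problem on the convex polytope, and justifying the duality identities across the mismatched spaces $\mathbf{W}_0^{1,\mathsf{p}}(\Omega)$, $\mathbf{W}_0^{1,\mathsf{q}}(\Omega)$ and $\mathbf{V}_h$ by means of the density and extension arguments behind Theorems~\ref{thm:well-posedness-adjoint} and \ref{thm:well-posedness-derivative} — also require some care, but are otherwise routine.
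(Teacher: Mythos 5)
Your overall strategy -- an Aubin--Nitsche duality argument with the linearized state equation \eqref{eq:first_deriv_S_W01p} (data $\mathbf{g}=\bar{\mathbf{z}}-\hat{\mathbf{z}}_h$, $\mathbf{H}^2\times H^1$ regularity on the convex polytope) -- is exactly the paper's, and your treatment of the leading point-evaluation term is fine. The deviation, and the gap, is in the choice of discrete comparison function. The paper tests against $\hat{\boldsymbol{\varphi}}_h$, the \emph{Galerkin approximation} of the dual solution (the solution of \eqref{eq:hat_varphi_discrete}); with that choice the Galerkin orthogonality between \eqref{eq:first_deriv_S_W01p} and \eqref{eq:hat_varphi_discrete}, together with $\operatorname{div}\bar{\mathbf{z}}=0$, $(\hat{\zeta}_h,\operatorname{div}\hat{\mathbf{z}}_h)_{L^2(\Omega)}=0$ and $(\hat{r}_h,\operatorname{div}\hat{\boldsymbol{\varphi}}_h)_{L^2(\Omega)}=0$, makes \emph{all} of your three remainder terms vanish identically, leaving only $\|\mathbf{e}\|_{\mathbf{L}^2(\Omega)}^2=\sum_{t\in\mathcal{D}}(\bar{\mathbf{y}}(t)-\mathbf{y}_t)\cdot(\hat{\boldsymbol{\varphi}}-\hat{\boldsymbol{\varphi}}_h)(t)$, which is then closed by the $\mathbf{L}^\infty$ bound $\|\hat{\boldsymbol{\varphi}}-\hat{\boldsymbol{\varphi}}_h\|_{\mathbf{L}^\infty(\Omega)}\lesssim h^{2-d/2}\|\mathbf{e}\|_{\mathbf{L}^2(\Omega)}$ exactly as in Theorem \ref{thm:convergence_in_Linfty}. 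By using the Lagrange interpolant $\mathbf{I}_h\boldsymbol{\varphi}$ instead, you are forced to estimate the consistency terms, and this is where your exponent arithmetic does not close.

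Concretely, the inverse estimate on a quasi-uniform mesh gives $\|\nabla\hat{\mathbf{z}}_h\|_{\mathbf{L}^2(\Omega)}\lesssim h^{d/2-d/\mathsf{p}}\|\nabla\hat{\mathbf{z}}_h\|_{\mathbf{L}^{\mathsf{p}}(\Omega)}$, and since $\mathsf{p}<d/(d-1)$ strictly, we have $d/\mathsf{p}>d-1$, so the cost is $h^{1-d/2-\varepsilon}$ with $\varepsilon=d/\mathsf{p}-(d-1)>0$, \emph{not} $h^{(2-d)/2}$. The term $\nu(\nabla(\boldsymbol{\varphi}-\mathbf{I}_h\boldsymbol{\varphi}),\nabla\hat{\mathbf{z}}_h)_{\mathbf{L}^2(\Omega)}$ is therefore only $O(h^{2-d/2-\varepsilon})$ (the alternative pairing $\|\nabla(\boldsymbol{\varphi}-\mathbf{I}_h\boldsymbol{\varphi})\|_{\mathbf{L}^{\mathsf{q}}(\Omega)}\|\nabla\hat{\mathbf{z}}_h\|_{\mathbf{L}^{\mathsf{p}}(\Omega)}$ loses the same $\varepsilon$ through $d/\mathsf{q}<1$), and you cannot send $\mathsf{p}\uparrow d/(d-1)$ to remove $\varepsilon$ because the constant in the uniform discrete stability bound $\|\nabla\hat{\mathbf{z}}_h\|_{\mathbf{L}^{\mathsf{p}}(\Omega)}\lesssim\sum_{t\in\mathcal{D}}|\bar{\mathbf{y}}(t)-\mathbf{y}_t|$ degenerates in that limit (already $\|\delta_t\|_{\mathbf{W}^{-1,\mathsf{p}}(\Omega)}$ blows up as $\mathsf{q}\downarrow d$). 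So your route delivers $h^{2-d/2-\varepsilon}$ rather than \eqref{eq:error_estimate_adj_hat}; this shortfall is harmless for the fully discrete rate $h^{\ell}$ but would degrade the semidiscrete estimate \eqref{eq:error_estimate_semi}, which needs the clean $h^{2-d/2}$. Replacing $\mathbf{I}_h\boldsymbol{\varphi}$ by $\hat{\boldsymbol{\varphi}}_h$ repairs the argument and is the paper's proof.
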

\begin{proof}
Let $(\hat{\boldsymbol\varphi},\hat{\zeta})\in \mathbf{W}_0^{1,\mathsf{q}}(\Omega)\times L_0^{\mathsf{q}}(\Omega)$ be the unique solution to \eqref{eq:first_deriv_S_W01p}, where $\mathbf{y}$ and $\mathbf{g}$ are replaced by $\bar{\mathbf{y}}$ and $\bar{\mathbf{z}} - \hat{\mathbf{z}}_{h}$, respectively; observe that $\bar{\mathbf{z}} - \hat{\mathbf{z}}_{h} \in \mathbf{W}^{-1,\mathsf{q}}(\Omega)$. Let $(\hat{\boldsymbol\varphi}_{h},\hat{\zeta}_{h})\in\mathbf{V}_h\times Q_{h}$ be its finite element approximation given as the solution to
\begin{multline}\label{eq:hat_varphi_discrete}
\nu(\nabla \hat{\boldsymbol{\varphi}}_{h}, \nabla \mathbf{v}_{h})_{\mathbf{L}^2(\Omega)} + b(\bar{\mathbf{y}};\hat{\boldsymbol{\varphi}}_{h},\mathbf{v}_{h}) + b(\hat{\boldsymbol{\varphi}}_{h};\bar{\mathbf{y}},\mathbf{v}_{h}) \\
- (\hat{\zeta}_{h}, \textnormal{div }\mathbf{v}_{h})_{L^2(\Omega)}
= (\bar{\mathbf{z}} - \hat{\mathbf{z}}_{h},\mathbf{v}_{h})_{\mathbf{L}^2(\Omega)}, \qquad  (q_{h},\textnormal{div }\hat{\boldsymbol{\varphi}}_{h})_{L^2(\Omega)} = 0,
\end{multline}
for all $(\mathbf{v}_{h},q_{h})\in \mathbf{V}_{h}\times Q_{h}$. Similar arguments to those used to establish that the discrete adjoint equations \eqref{eq:adj_eq_discrete} are well posed show that \eqref{eq:hat_varphi_discrete} is also well posed. We now set $(\mathbf{v},q)$ $=$ $(\bar{\mathbf{z}} - \hat{\mathbf{z}}_{h},0)$ as a test function in \eqref{eq:first_deriv_S_W01p}, where $\mathbf{y}$ and $\mathbf{g}$ are replaced by $\bar{\mathbf{y}}$ and $\bar{\mathbf{z}} - \hat{\mathbf{z}}_{h}$, respectively, and use Galerkin orthogonality to obtain
\begin{equation*}
\|\bar{\mathbf{z}} - \hat{\mathbf{z}}_{h}\|_{\mathbf{L}^2(\Omega)}^2
=
\sum_{t\in\mathcal{D}}\langle (\bar{\mathbf{y}}(t) - \mathbf{y}_{t})\delta_{t}, \hat{\boldsymbol\varphi} - \hat{\boldsymbol\varphi}_{h}\rangle \lesssim \|\hat{\boldsymbol\varphi} - \hat{\boldsymbol\varphi}_{h}\|_{\mathbf{L}^{\infty}(\Omega)}\left[\|\bar{\mathbf{y}}\|_{\mathbf{L}^{\infty}(\Omega)} + \sum_{t\in\mathcal{D}}|\mathbf{y}_{t}|\right].
\end{equation*}
Note that $\|\hat{\boldsymbol\varphi} - \hat{\boldsymbol\varphi}_{h}\|_{\mathbf{L}^{\infty}(\Omega)} \lesssim h^{2-\frac{d}{2}}\|\bar{\mathbf{z}} - \hat{\mathbf{z}}_{h}\|_{\mathbf{L}^2(\Omega)}$. This error bound follows the same arguments as in the proof of Theorem \ref{thm:convergence_in_Linfty} upon using $\|\hat{\boldsymbol\varphi} - \hat{\boldsymbol\varphi}_{h}\|_{\mathbf{L}^{2}(\Omega)} \lesssim h^{2} \| \hat{\boldsymbol\varphi}\|_{\mathbf{H}^2(\Omega)}$. Note that since $\Omega$ is convex and $\bar{\mathbf{z}} - \hat{\mathbf{z}}_{h} \in \mathbf{L}^2(\Omega)$, $\| \hat{\boldsymbol\varphi}\|_{\mathbf{H}^2(\Omega)} \lesssim \|\bar{\mathbf{z}} - \hat{\mathbf{z}}_{h}\|_{\mathbf{L}^2(\Omega)}$. Finally, the trivial bound $\|\bar{\mathbf{y}}\|_{\mathbf{L}^{\infty}(\Omega)}\lesssim \|\bar{\mathbf{u}}\|_{\mathbf{L}^{2}(\Omega)}$ allows us to conclude \eqref{eq:error_estimate_adj_hat}.
\end{proof}

\begin{theorem}[auxiliary error bound]\label{thm:convergence_adj_eq}
Let $(\bar{\mathbf{y}}, \bar{p}, \bar{\mathbf{u}})$ be a local nonsingular solution to \eqref{eq:weak_cost}--\eqref{eq:weak_st_eq}. Let $h < \min \{h_{\nabla} ,h_{\dagger} \}¸ $ and let $(\bar{\mathbf{y}}_h, \bar{p}_h, \bar{\mathbf{u}}_h)$ be a solution to the fully discrete problem as stated in Theorem \ref{thm:exist_and_conv_sol}. Then, we have the error estimate
\begin{equation*}
\|\bar{\mathbf{z}} - \bar{\mathbf{z}}_{h}\|_{\mathbf{L}^{2}(\Omega)} \lesssim h^{2-\frac{d}{2}} + \|\bar{\mathbf{u}} - \bar{\mathbf{u}}_{h}\|_{\mathbf{L}^{2}(\Omega)}.
\end{equation*}
Here, $(\bar{\mathbf{z}},\bar{r})$ corresponds to the optimal adjoint state and $(\bar{\mathbf{z}}_{h},\bar{r}_{h})$ denotes its finite element approximation.
\end{theorem}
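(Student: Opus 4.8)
The plan is to interpose the auxiliary discrete adjoint state $(\hat{\mathbf{z}}_{h},\hat{r}_{h})$ of \eqref{eq:adj_eq_hat}, which is built from the \emph{continuous} optimal state $\bar{\mathbf{y}}$, and to split
\[
\|\bar{\mathbf{z}} - \bar{\mathbf{z}}_{h}\|_{\mathbf{L}^{2}(\Omega)}
\leq
\|\bar{\mathbf{z}} - \hat{\mathbf{z}}_{h}\|_{\mathbf{L}^{2}(\Omega)}
+
\|\hat{\mathbf{z}}_{h} - \bar{\mathbf{z}}_{h}\|_{\mathbf{L}^{2}(\Omega)}.
\]
The first term I would control directly by Lemma \ref{lemma:convergence_discr_adj}: since $\bar{\mathbf{u}}\in\mathbf{U}_{ad}$ and $\mathcal{D}$ is finite, it gives $\|\bar{\mathbf{z}} - \hat{\mathbf{z}}_{h}\|_{\mathbf{L}^{2}(\Omega)}\lesssim h^{2-d/2}$. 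Everything thus reduces to estimating the ``state-perturbation'' term $\|\hat{\mathbf{z}}_{h} - \bar{\mathbf{z}}_{h}\|_{\mathbf{L}^{2}(\Omega)}$.

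For this, subtracting \eqref{eq:adj_eq_discrete} (with $\mathbf{y}_{h}=\bar{\mathbf{y}}_{h}$) from \eqref{eq:adj_eq_hat} shows that $(\hat{\mathbf{z}}_{h}-\bar{\mathbf{z}}_{h},\hat{r}_{h}-\bar{r}_{h})\in\mathbf{V}_{h}\times Q_{h}$ solves the discrete adjoint problem associated to the state $\bar{\mathbf{y}}$ with right-hand side $\mathbf{F}_{h}:=\sum_{t\in\mathcal{D}}(\bar{\mathbf{y}}(t)-\bar{\mathbf{y}}_{h}(t))\delta_{t}-\mathfrak{B}^{\star}(\bar{\mathbf{y}}-\bar{\mathbf{y}}_{h})\bar{\mathbf{z}}_{h}\in\mathbf{W}^{-1,\mathsf{p}}(\Omega)$; equivalently, $\mathfrak{N}^{h}_{\bar{\mathbf{y}}}(\hat{\mathbf{z}}_{h}-\bar{\mathbf{z}}_{h},\hat{r}_{h}-\bar{r}_{h})=\mathfrak{L}_{h}\mathbf{F}_{h}$. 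Repeating the argument of Step 2 in the proof of Theorem \ref{thm:existence_of_d_s_adj} (using Lemma \ref{lemma:aux_result_I} with $\tilde{\mathbf{y}}=\mathbf{y}=\bar{\mathbf{y}}$, so that the term $\mathbf{II}_{h}$ there vanishes identically and the perturbation $\|\mathfrak{N}_{\bar{\mathbf{y}}}-\mathfrak{N}^{h}_{\bar{\mathbf{y}}}\|$ is arbitrarily small for $h$ small), for $h$ small $\mathfrak{N}^{h}_{\bar{\mathbf{y}}}$ is an automorphism of $\mathbf{W}_{0}^{1,\mathsf{p}}(\Omega)\times L_{0}^{\mathsf{p}}(\Omega)$ with $\|(\mathfrak{N}^{h}_{\bar{\mathbf{y}}})^{-1}\|\leq 2\|\mathfrak{N}_{\bar{\mathbf{y}}}^{-1}\|$, a bound independent of $h$. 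Combining this with the uniform estimate \eqref{eq:estimate_Lh} for $\mathfrak{L}_{h}$ and with $\mathbf{W}_{0}^{1,\mathsf{p}}(\Omega)\hookrightarrow\mathbf{L}^{2}(\Omega)$ yields $\|\hat{\mathbf{z}}_{h}-\bar{\mathbf{z}}_{h}\|_{\mathbf{L}^{2}(\Omega)}\lesssim\|\nabla(\hat{\mathbf{z}}_{h}-\bar{\mathbf{z}}_{h})\|_{\mathbf{L}^{\mathsf{p}}(\Omega)}\lesssim\|\mathbf{F}_{h}\|_{\mathbf{W}^{-1,\mathsf{p}}(\Omega)}$, with all constants independent of $h$.

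Next I would bound $\|\mathbf{F}_{h}\|_{\mathbf{W}^{-1,\mathsf{p}}(\Omega)}$. Because $\mathsf{q}>d$, one has $\mathbf{W}_{0}^{1,\mathsf{q}}(\Omega)\hookrightarrow\mathbf{C}(\bar{\Omega})$ and hence $\|\delta_{t}\|_{\mathbf{W}^{-1,\mathsf{p}}(\Omega)}\lesssim 1$, so the Dirac part of $\mathbf{F}_{h}$ is bounded by $\sum_{t\in\mathcal{D}}|\bar{\mathbf{y}}(t)-\bar{\mathbf{y}}_{h}(t)|\lesssim\|\bar{\mathbf{y}}-\bar{\mathbf{y}}_{h}\|_{\mathbf{L}^{\infty}(\Omega)}$. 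For the convective part I would first record that, since $\bar{\mathbf{z}}_{h}$ is the discrete adjoint of $\bar{\mathbf{y}}_{h}$ and $\|\bar{\mathbf{y}}_{h}\|_{\mathbf{L}^{\infty}(\Omega)}\lesssim 1$ for $h$ small (Theorem \ref{thm:exist_and_conv_sol}), the same automorphism argument applied to $\mathfrak{N}^{h}_{\bar{\mathbf{y}}_{h}}$, together with \eqref{eq:estimate_Lh} and $\|\delta_{t}\|_{\mathbf{W}^{-1,\mathsf{p}}(\Omega)}\lesssim 1$, gives the $h$-uniform bound $\|\nabla\bar{\mathbf{z}}_{h}\|_{\mathbf{L}^{\mathsf{p}}(\Omega)}\lesssim 1$. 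Writing $\langle\mathfrak{B}^{\star}(\bar{\mathbf{y}}-\bar{\mathbf{y}}_{h})\bar{\mathbf{z}}_{h},\mathbf{w}\rangle=b(\bar{\mathbf{y}}-\bar{\mathbf{y}}_{h};\mathbf{w},\bar{\mathbf{z}}_{h})+b(\mathbf{w};\bar{\mathbf{y}}-\bar{\mathbf{y}}_{h},\bar{\mathbf{z}}_{h})$, the first summand is estimated by H\"older's inequality with exponents $(\infty,\mathsf{q},\mathsf{p})$; in the second, I would integrate by parts to move the gradient off $\bar{\mathbf{y}}-\bar{\mathbf{y}}_{h}$ onto $\mathbf{w}$ and $\bar{\mathbf{z}}_{h}$, and then use $\|\mathbf{w}\|_{\mathbf{L}^{\infty}(\Omega)}+\|\mathbf{w}\|_{\mathbf{L}^{\mathsf{q}}(\Omega)}\lesssim\|\nabla\mathbf{w}\|_{\mathbf{L}^{\mathsf{q}}(\Omega)}$ together with $\|\nabla\bar{\mathbf{z}}_{h}\|_{\mathbf{L}^{\mathsf{p}}(\Omega)}\lesssim 1$. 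Both contributions are then $\lesssim\|\bar{\mathbf{y}}-\bar{\mathbf{y}}_{h}\|_{\mathbf{L}^{\infty}(\Omega)}\|\nabla\mathbf{w}\|_{\mathbf{L}^{\mathsf{q}}(\Omega)}$, whence $\|\mathbf{F}_{h}\|_{\mathbf{W}^{-1,\mathsf{p}}(\Omega)}\lesssim\|\bar{\mathbf{y}}-\bar{\mathbf{y}}_{h}\|_{\mathbf{L}^{\infty}(\Omega)}$.

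Finally, I would bound $\|\bar{\mathbf{y}}-\bar{\mathbf{y}}_{h}\|_{\mathbf{L}^{\infty}(\Omega)}$ by passing through the continuous state $\mathbf{y}(\bar{\mathbf{u}}_{h})$ associated with the discrete optimal control: for $h$ small $\bar{\mathbf{u}}_{h}\in B_{\mathfrak{s}}(\bar{\mathbf{u}})$, and since $\bar{\mathbf{y}}_{h}$ is precisely the discrete state for $\bar{\mathbf{u}}_{h}$, Theorem \ref{thm:convergence_in_Linfty} gives $\|\mathbf{y}(\bar{\mathbf{u}}_{h})-\bar{\mathbf{y}}_{h}\|_{\mathbf{L}^{\infty}(\Omega)}\lesssim h^{2-d/2}\|\bar{\mathbf{u}}_{h}\|_{\mathbf{L}^{2}(\Omega)}\lesssim h^{2-d/2}$, while Theorem \ref{thm:Lipschitz_property} (combined with $\mathbf{W}^{1,\kappa}_{0}(\Omega)\hookrightarrow\mathbf{C}(\bar{\Omega})$ and $\mathbf{L}^{2}(\Omega)\hookrightarrow\mathbf{W}^{-1,\kappa}(\Omega)$) gives $\|\bar{\mathbf{y}}-\mathbf{y}(\bar{\mathbf{u}}_{h})\|_{\mathbf{L}^{\infty}(\Omega)}\lesssim\|\bar{\mathbf{u}}-\bar{\mathbf{u}}_{h}\|_{\mathbf{L}^{2}(\Omega)}$. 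Hence $\|\bar{\mathbf{y}}-\bar{\mathbf{y}}_{h}\|_{\mathbf{L}^{\infty}(\Omega)}\lesssim h^{2-d/2}+\|\bar{\mathbf{u}}-\bar{\mathbf{u}}_{h}\|_{\mathbf{L}^{2}(\Omega)}$, and collecting the estimates above proves the claim. I expect the main obstacle to be the convective perturbation term $\mathfrak{B}^{\star}(\bar{\mathbf{y}}-\bar{\mathbf{y}}_{h})\bar{\mathbf{z}}_{h}$: the discrete adjoint $\bar{\mathbf{z}}_{h}$ inherits only the reduced $\mathbf{W}^{1,\mathsf{p}}$-regularity of $\bar{\mathbf{z}}$ and need not be $h$-uniformly bounded in $\mathbf{H}^{1}$, so it cannot be estimated naively; one must integrate by parts and lean on the $h$-uniform $\mathbf{W}^{1,\mathsf{p}}$-stability of $\bar{\mathbf{z}}_{h}$, which itself rests on the $h$-uniform invertibility of $\mathfrak{N}^{h}_{\bar{\mathbf{y}}_{h}}$ from Theorem \ref{thm:existence_of_d_s_adj}.
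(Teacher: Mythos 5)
Your proposal is correct and follows the paper's proof in all essential respects: the same splitting through the auxiliary discrete adjoint $(\hat{\mathbf{z}}_h,\hat{r}_h)$ of \eqref{eq:adj_eq_hat}, Lemma \ref{lemma:convergence_discr_adj} for the first term, a stability argument for the discrete error equation for the second term, and the final reduction of $\|\bar{\mathbf{y}}-\bar{\mathbf{y}}_h\|_{\mathbf{L}^\infty(\Omega)}$ through the intermediate state $\mathbf{y}(\bar{\mathbf{u}}_h)$ via Theorems \ref{thm:convergence_in_Linfty} and \ref{thm:Lipschitz_property}. The one place where you genuinely diverge is the treatment of $\hat{\mathbf{z}}_h-\bar{\mathbf{z}}_h$: the paper puts the discrete state $\bar{\mathbf{y}}_h$ in the principal part of the error equation, so the perturbation involves $\hat{\mathbf{z}}_h$ (controlled by the well-posedness of \eqref{eq:adj_eq_hat}) and the term $b(\mathbf{w}_h;\bar{\mathbf{y}}-\bar{\mathbf{y}}_h,\hat{\mathbf{z}}_h)$ is bounded by $\|\nabla(\bar{\mathbf{y}}-\bar{\mathbf{y}}_h)\|_{\mathbf{L}^2(\Omega)}\|\hat{\mathbf{z}}_h\|_{\mathbf{L}^2(\Omega)}$, producing an extra $\mathcal{O}(h)$ that is absorbed into $h^{2-d/2}$; you instead linearize around the continuous state $\bar{\mathbf{y}}$ (which requires the small extra observation, correctly supplied, that $\mathfrak{N}^h_{\bar{\mathbf{y}}}$ is uniformly invertible via Lemma \ref{lemma:aux_result_I} with $\tilde{\mathbf{y}}=\bar{\mathbf{y}}$), place the perturbation on $\bar{\mathbf{z}}_h$ (needing the $h$-uniform bound $\|\nabla\bar{\mathbf{z}}_h\|_{\mathbf{L}^{\mathsf{p}}(\Omega)}\lesssim 1$, which you justify through Theorem \ref{thm:existence_of_d_s_adj}), and integrate by parts to trade the $\|\nabla(\bar{\mathbf{y}}-\bar{\mathbf{y}}_h)\|_{\mathbf{L}^2(\Omega)}$ term for $\|\bar{\mathbf{y}}-\bar{\mathbf{y}}_h\|_{\mathbf{L}^\infty(\Omega)}$ alone (the non-solenoidal test function correctly generating the extra $\mathrm{div}\,\mathbf{w}$ contribution). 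Both routes yield the same rate; yours is marginally cleaner in that the final bound depends only on the $\mathbf{L}^\infty$ state error, at the cost of two extra uniform-stability observations that the paper's arrangement avoids.
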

\begin{proof}
We begin the proof with an application of the error estimate \eqref{eq:error_estimate_adj_hat}:
\begin{equation}\label{eq:estimate_barz-hatz}
\|\bar{\mathbf{z}} - \bar{\mathbf{z}}_{h}\|_{\mathbf{L}^{2}(\Omega)} \leq \|\bar{\mathbf{z}} - \hat{\mathbf{z}}_{h}\|_{\mathbf{L}^{2}(\Omega)} + \|\hat{\mathbf{z}}_{h} - \bar{\mathbf{z}}_{h}\|_{\mathbf{L}^{2}(\Omega)} \lesssim h^{2-\frac{d}{2}} + \|\hat{\mathbf{z}}_{h} - \bar{\mathbf{z}}_{h}\|_{\mathbf{L}^{2}(\Omega)}.
\end{equation} 
Here, $(\hat{\mathbf{z}}_{h},\hat{r}_{h})$ denotes the solution to \eqref{eq:adj_eq_hat}. It thus suffices to bound $\bar{\mathbf{z}}_{h} - \hat{\mathbf{z}}_{h}$ in $\mathbf{L}^2(\Omega)$ To do this, we note that $( \bar{\mathbf{z}}_{h} - \hat{\mathbf{z}}_{h}, \bar{r}_{h} - \hat{r}_{h})\in \mathbf{V}_{h}\times Q_{h}$ solves 
\begin{multline*}
\nu(\nabla \mathbf{w}_{h}, \nabla (\bar{\mathbf{z}}_{h} - \hat{\mathbf{z}}_{h}))_{\mathbf{L}^2(\Omega)} + b(\bar{\mathbf{y}}_{h};\mathbf{w}_{h},\bar{\mathbf{z}}_{h} - \hat{\mathbf{z}}_{h}) + b(\mathbf{w}_{h};\bar{\mathbf{y}}_{h},\bar{\mathbf{z}}_{h} - \hat{\mathbf{z}}_{h}) \\ 
- (\bar{r}_{h} - \hat{r}_{h}, \text{div }\mathbf{w}_{h})_{L^2(\Omega)} 
\!
=
\!
\sum_{t\in\mathcal{D}}\langle(\bar{\mathbf{y}}_{h} - \bar{\mathbf{y}})(t)\delta_{t}, \mathbf{w}_{h}\rangle 
+ 
b(\bar{\mathbf{y}} - \bar{\mathbf{y}}_{h}; \mathbf{w}_{h},\hat{\mathbf{z}}_{h}) 
+ 
b(\mathbf{w}_{h};\bar{\mathbf{y}} - \bar{\mathbf{y}}_{h},\hat{\mathbf{z}}_{h})
\end{multline*}
and $(s_{h},\text{div}(\bar{\mathbf{z}}_{h} - \hat{\mathbf{z}}_{h}))_{L^2(\Omega)} = 0$ for all $(\mathbf{w}_{h}, s_{h})\in \mathbf{V}_{h}\times Q_{h}$. An application of Theorem \ref{thm:existence_of_d_s_adj} shows that this problem is well-posed. In particular, we have the stability bound
\begin{equation*}
\|\nabla (\bar{\mathbf{z}}_{h} - \hat{\mathbf{z}}_{h})\|_{\mathbf{L}^{\mathsf{p}}(\Omega)} \lesssim 
\|\bar{\mathbf{y}} - \bar{\mathbf{y}}_{h}\|_{\mathbf{L}^{\infty}(\Omega)}(1 + \|\hat{\mathbf{z}}_{h}\|_{\mathbf{L}^{\mathsf{p}}(\Omega)}) + \|\nabla(\bar{\mathbf{y}} - \bar{\mathbf{y}}_{h})\|_{\mathbf{L}^2(\Omega)}\|\hat{\mathbf{z}}_{h}\|_{\mathbf{L}^{2}(\Omega)},
\end{equation*}
upon using 
$\mathbf{W}_{0}^{1,\mathsf{q}}(\Omega)\hookrightarrow \mathbf{C}(\bar{\Omega})$, where $\mathsf{q} > d$. We now use $\mathbf{W}_0^{1,\mathsf{p}}(\Omega)\hookrightarrow \mathbf{L}^2(\Omega)$, a Poincar\'e's inequality, and the well-posedness of \eqref{eq:adj_eq_hat} to obtain $\|\nabla (\bar{\mathbf{z}}_{h} - \hat{\mathbf{z}}_{h})\|_{\mathbf{L}^{\mathsf{p}}(\Omega)} \lesssim \|\bar{\mathbf{y}} - \bar{\mathbf{y}}_{h}\|_{\mathbf{L}^{\infty}(\Omega)} + \|\nabla(\bar{\mathbf{y}} - \bar{\mathbf{y}}_{h})\|_{\mathbf{L}^2(\Omega)}$. 

If necessary, we restrict $h$ further so that $\bar{\mathbf{u}}_h \in \mathcal{O}(\bar{\mathbf{u}})$. Let $(\hat{\mathbf{y}},\hat{p})\in \mathbf{H}_0^1(\Omega)\times L_0^2(\Omega)$ be the unique solution to problem \eqref{eq:weak_st_eq}, where $\mathbf{u}$ is replaced by $\bar{\mathbf{u}}_{h}$ (cf.~Theorem \ref{thm:properties_C_to_S}). We now use basic triangle inequalities, the Lipschitz properties from Proposition \ref{pro:Lipschitz_property} and Theorem \ref{thm:Lipschitz_property}, Theorem \ref{thm:convergence_in_Linfty}, and standard error estimates for the finite element discretizations we consider for the Navier--Stokes equations to obtain
\begin{multline*}
\|\nabla (\bar{\mathbf{z}}_{h} - \hat{\mathbf{z}}_{h})\|_{\mathbf{L}^{\mathsf{p}}(\Omega)} \lesssim
\|\bar{\mathbf{y}} - \hat{\mathbf{y}}\|_{\mathbf{L}^{\infty}(\Omega)} + \|\hat{\mathbf{y}} - \bar{\mathbf{y}}_{h}\|_{\mathbf{L}^{\infty}(\Omega)} \\ + \|\nabla(\bar{\mathbf{y}} - \hat{\mathbf{y}})\|_{\mathbf{L}^2(\Omega)} + \|\nabla(\hat{\mathbf{y}} - \bar{\mathbf{y}}_{h})\|_{\mathbf{L}^2(\Omega)} \lesssim \|\bar{\mathbf{u}} - \bar{\mathbf{u}}_{h}\|_{\mathbf{L}^{2}(\Omega)} + h^{2-\frac{d}{2}} + h.
\end{multline*}
The embedding $\mathbf{W}_0^{1,\mathsf{p}}(\Omega)\hookrightarrow \mathbf{L}^2(\Omega)$ shows that $\|\bar{\mathbf{z}}_{h} - \hat{\mathbf{z}}_{h}\|_{\mathbf{L}^{2}(\Omega)} \lesssim  \|\nabla (\bar{\mathbf{z}}_{h} - \hat{\mathbf{z}}_{h})\|_{\mathbf{L}^{\mathsf{p}}(\Omega)} \lesssim \|\bar{\mathbf{u}} - \bar{\mathbf{u}}_{h}\|_{\mathbf{L}^{2}(\Omega)} + h^{\iota}$, where $\iota = 2 - d/2$. Replace this estimate into \eqref{eq:estimate_barz-hatz} to conclude.
\end{proof}


\section{Error estimates}\label{sec:error_estimates}
In this section, we derive error estimates for the fully discrete and semidiscrete schemes presented in sections \ref{sec:fully_disc_scheme} and \ref{sec:semi_disc_scheme}, respectively.


\subsection{Error estimates: the fully discrete scheme} 

Let $(\bar{\mathbf{y}}, \bar{p}, \bar{\mathbf{u}})$ be a strict local minimum of \eqref{eq:weak_cost}--\eqref{eq:weak_st_eq} and let $\{ (\bar{\mathbf{y}}_h, \bar{p}_h, \bar{\mathbf{u}}_h)\}_{h<h_{\nabla}}$ be a sequence of local minima of the fully discrete optimal control problems such that $\bar{\mathbf{u}}_{h} \to \bar{\mathbf{u}}$ in $\mathbf{L}^2(\Omega)$ as $h \rightarrow 0$. The main goal of this section is to derive the error bound
\begin{equation}\label{eq:error_estimate_fully}
\|\bar{\mathbf{u}}-\bar{\mathbf{u}}_h\|_{\mathbf{L}^2(\Omega)}
\lesssim h^{\ell} 
\quad 
\forall h < h_{\ddagger},
\quad \
h_{\ddagger}>0,
\end{equation}
where $\ell = 1 - d/p + d/2$; $p<d/(d-1)$ is arbitrarily close to $d/(d-1)$.

The following result is useful for deriving the error estimate \eqref{eq:error_estimate_fully}.

\begin{lemma}[auxiliary error estimate]\label{lemma:aux_error_estimate}
If $\bar{\mathbf{u}}\in\mathbf{U}_{ad}$ satisfies the second order optimality conditions \eqref{eq:second_order_2_2}, and \eqref{eq:error_estimate_fully} is false, then there exists $h_{\ddagger} > 0$ such that
\begin{equation}\label{eq:aux_estimate}
\textgoth{C}
\|\bar{\mathbf{u}}-\bar{\mathbf{u}}_h\|_{\mathbf{L}^2(\Omega)}^2
\leq 
[j'(\bar{\mathbf{u}}_h)-j'(\bar{\mathbf{u}})](\bar{\mathbf{u}}_h-\bar{\mathbf{u}}) \quad \forall h < h_{\ddagger},
\quad
\textgoth{C} = 2^{-1} \min\{\mu,\alpha\}.
\end{equation}
Here, $\alpha$ is the control cost and $\mu$ is the constant appearing in \eqref{eq:second_order_2_2}.
\end{lemma}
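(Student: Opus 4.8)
The plan is to argue by contradiction, reducing \eqref{eq:aux_estimate} to a coercivity statement for $j''$ at $\bar{\mathbf{u}}$ and then feeding the limiting critical direction into the second order condition \eqref{eq:second_order_2_2}. First I would use that $j$ is of class $C^2$ (Theorem \ref{thm:diff_properties_j}) and apply the mean value theorem to write
\[
[j'(\bar{\mathbf{u}}_h)-j'(\bar{\mathbf{u}})](\bar{\mathbf{u}}_h-\bar{\mathbf{u}}) = j''(\hat{\mathbf{u}}_h)(\bar{\mathbf{u}}_h-\bar{\mathbf{u}})^2, \qquad \hat{\mathbf{u}}_h = \bar{\mathbf{u}}+\theta_h(\bar{\mathbf{u}}_h-\bar{\mathbf{u}}),\ \theta_h\in(0,1).
\]
Writing $\rho_h := \|\bar{\mathbf{u}}-\bar{\mathbf{u}}_h\|_{\mathbf{L}^2(\Omega)}$ and $\mathbf{g}_h := (\bar{\mathbf{u}}_h-\bar{\mathbf{u}})/\rho_h$ (the case $\rho_h=0$ being trivial), the inequality \eqref{eq:aux_estimate} is equivalent to $j''(\hat{\mathbf{u}}_h)\mathbf{g}_h^2\geq\textgoth{C}$. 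Since \eqref{eq:error_estimate_fully} is assumed false, I fix a sequence $h\downarrow 0$ along which $\rho_h/h^{\ell}\to\infty$, and suppose for contradiction that $j''(\hat{\mathbf{u}}_h)\mathbf{g}_h^2<\textgoth{C}$ along a relabeled subsequence. Using $\|\mathbf{g}_h\|_{\mathbf{L}^2(\Omega)}=1$, I extract a further subsequence with $\mathbf{g}_h\rightharpoonup\mathbf{g}$ in $\mathbf{L}^2(\Omega)$, noting $\|\mathbf{g}\|_{\mathbf{L}^2(\Omega)}\leq 1$ by weak lower semicontinuity.

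Next I would show $\mathbf{g}\in\mathbf{C}_{\bar{\mathbf{u}}}$. The sign conditions \eqref{eq:sign_cond} pass to the weak limit because each $\mathbf{g}_h$ satisfies them (as $\bar{\mathbf{u}}_h,\bar{\mathbf{u}}\in\mathbf{U}_{ad}$) and the set of functions obeying \eqref{eq:sign_cond} is convex and closed, hence weakly closed. For the complementarity condition I set $\bar{\mathbf{d}}=\bar{\mathbf{z}}+\alpha\bar{\mathbf{u}}$ and $\bar{\mathbf{d}}_h=\bar{\mathbf{z}}_h+\alpha\bar{\mathbf{u}}_h$, and observe that $\bar{\mathbf{d}}_h\to\bar{\mathbf{d}}$ strongly in $\mathbf{L}^2(\Omega)$ (from $\bar{\mathbf{u}}_h\to\bar{\mathbf{u}}$ and Theorem \ref{thm:convergence_adj_eq}), so $(\bar{\mathbf{d}}_h,\mathbf{g}_h)_{\mathbf{L}^2(\Omega)}\to(\bar{\mathbf{d}},\mathbf{g})_{\mathbf{L}^2(\Omega)}$. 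The continuous variational inequality \eqref{eq:var_ineq} gives $(\bar{\mathbf{d}},\mathbf{g}_h)_{\mathbf{L}^2(\Omega)}\geq 0$, whence $(\bar{\mathbf{d}},\mathbf{g})_{\mathbf{L}^2(\Omega)}\geq 0$. For the reverse inequality I test the discrete variational inequality \eqref{eq:var_ineq_fully} with $\mathbf{u}_h=\Pi_{\mathbf{L}^2}\bar{\mathbf{u}}\in\mathbf{U}_{ad,h}$, which yields $(\bar{\mathbf{d}}_h,\mathbf{g}_h)_{\mathbf{L}^2(\Omega)}\leq\rho_h^{-1}(\bar{\mathbf{d}}_h,\Pi_{\mathbf{L}^2}\bar{\mathbf{u}}-\bar{\mathbf{u}})_{\mathbf{L}^2(\Omega)}$. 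Since $\bar{\mathbf{u}}\in\mathbf{W}^{1,\mathsf{p}}(\Omega)$ gives the projection bound $\|\Pi_{\mathbf{L}^2}\bar{\mathbf{u}}-\bar{\mathbf{u}}\|_{\mathbf{L}^2(\Omega)}\lesssim h^{\ell}$, $\ell=1-d/\mathsf{p}+d/2$, and $\bar{\mathbf{d}}_h$ is bounded in $\mathbf{L}^2(\Omega)$, the right-hand side is $\lesssim h^{\ell}/\rho_h\to 0$ along the chosen sequence. Passing to the limit gives $(\bar{\mathbf{d}},\mathbf{g})_{\mathbf{L}^2(\Omega)}\leq 0$, so $(\bar{\mathbf{d}},\mathbf{g})_{\mathbf{L}^2(\Omega)}=0$; arguing as in Step~1 of the proof of Theorem \ref{thm:suff_opt_cond}, this forces $\mathbf{g}_i(x)=0$ whenever $\bar{\mathbf{d}}_i(x)\neq 0$, and therefore $\mathbf{g}\in\mathbf{C}_{\bar{\mathbf{u}}}\subset\mathbf{C}_{\bar{\mathbf{u}}}^{\tau}$.

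Finally I would identify the limit of $j''(\bar{\mathbf{u}})\mathbf{g}_h^2$. With $\boldsymbol{\varphi}_h=\mathcal{S}'(\bar{\mathbf{u}})\mathbf{g}_h$ and $\boldsymbol{\varphi}=\mathcal{S}'(\bar{\mathbf{u}})\mathbf{g}$, identity \eqref{eq:charac_j2} reads $j''(\bar{\mathbf{u}})\mathbf{g}_h^2=\alpha-2b(\boldsymbol{\varphi}_h;\boldsymbol{\varphi}_h,\bar{\mathbf{z}})+\sum_{t\in\mathcal{D}}\boldsymbol{\varphi}_h^2(t)$. Exactly as in Lemma \ref{lemma:covergence_property} and Step~2 of Theorem \ref{thm:suff_opt_cond}, $\mathbf{g}_h\rightharpoonup\mathbf{g}$ yields $\boldsymbol{\varphi}_h\rightharpoonup\boldsymbol{\varphi}$ in $\mathbf{W}_0^{1,\mathsf{q}}(\Omega)$ and, by the compact embedding $\mathbf{W}_0^{1,\mathsf{q}}(\Omega)\hookrightarrow\mathbf{C}(\bar{\Omega})$, $\boldsymbol{\varphi}_h\to\boldsymbol{\varphi}$ in $\mathbf{C}(\bar{\Omega})$, so the nonlinear terms converge and $j''(\bar{\mathbf{u}})\mathbf{g}_h^2\to\alpha-2b(\boldsymbol{\varphi};\boldsymbol{\varphi},\bar{\mathbf{z}})+\sum_t\boldsymbol{\varphi}^2(t)$. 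Since $\mathbf{g}\in\mathbf{C}_{\bar{\mathbf{u}}}^{\tau}$, \eqref{eq:second_order_2_2} gives $j''(\bar{\mathbf{u}})\mathbf{g}^2\geq\mu\|\mathbf{g}\|_{\mathbf{L}^2(\Omega)}^2$, i.e. $-2b(\boldsymbol{\varphi};\boldsymbol{\varphi},\bar{\mathbf{z}})+\sum_t\boldsymbol{\varphi}^2(t)\geq(\mu-\alpha)\|\mathbf{g}\|_{\mathbf{L}^2(\Omega)}^2$, so the limit is at least $\alpha+(\mu-\alpha)\|\mathbf{g}\|_{\mathbf{L}^2(\Omega)}^2=\alpha(1-\|\mathbf{g}\|_{\mathbf{L}^2(\Omega)}^2)+\mu\|\mathbf{g}\|_{\mathbf{L}^2(\Omega)}^2\geq\min\{\mu,\alpha\}$, using $\|\mathbf{g}\|_{\mathbf{L}^2(\Omega)}\leq 1$. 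The local Lipschitz bound \eqref{eq:continuity_of_j2} gives $|j''(\hat{\mathbf{u}}_h)\mathbf{g}_h^2-j''(\bar{\mathbf{u}})\mathbf{g}_h^2|\lesssim\|\hat{\mathbf{u}}_h-\bar{\mathbf{u}}\|_{\mathbf{L}^2(\Omega)}\leq\rho_h\to 0$, whence $\lim j''(\hat{\mathbf{u}}_h)\mathbf{g}_h^2\geq\min\{\mu,\alpha\}>\tfrac12\min\{\mu,\alpha\}=\textgoth{C}$, contradicting $j''(\hat{\mathbf{u}}_h)\mathbf{g}_h^2<\textgoth{C}$. This contradiction establishes \eqref{eq:aux_estimate} for all sufficiently small $h$. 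The main obstacle is the reverse inequality $(\bar{\mathbf{d}},\mathbf{g})_{\mathbf{L}^2(\Omega)}\leq 0$ that places $\mathbf{g}$ in the critical cone: it is the single point where the hypothesis that \eqref{eq:error_estimate_fully} fails is indispensable, since one needs $\rho_h/h^{\ell}\to\infty$ to absorb the $\mathbf{L}^2$-projection consistency error $h^{\ell}/\rho_h$ against $\bar{\mathbf{d}}_h$.
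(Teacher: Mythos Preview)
Your proof is correct and follows essentially the same route as the paper's: contradiction, extraction of the normalized directions $\mathbf{g}_h$, verification that the weak limit $\mathbf{g}$ lies in the critical cone via the discrete variational inequality tested against $\Pi_{\mathbf{L}^2}\bar{\mathbf{u}}$ together with the projection bound $\|\Pi_{\mathbf{L}^2}\bar{\mathbf{u}}-\bar{\mathbf{u}}\|_{\mathbf{L}^2(\Omega)}\lesssim h^{\ell}$ (this is indeed the sole place where the failure of \eqref{eq:error_estimate_fully} enters), and finally the lower bound $\alpha+(\mu-\alpha)\|\mathbf{g}\|_{\mathbf{L}^2(\Omega)}^2\geq\min\{\mu,\alpha\}$. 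The only noteworthy difference is how you pass from $j''(\bar{\mathbf{u}})\mathbf{g}_h^2$ to $j''(\hat{\mathbf{u}}_h)\mathbf{g}_h^2$: you invoke the Lipschitz estimate \eqref{eq:continuity_of_j2} directly, which is a clean shortcut, whereas the paper expands $j''(\hat{\mathbf{u}}_h)\mathbf{g}_h^2$ via \eqref{eq:charac_j2} and tracks the convergence of $\mathbf{y}(\hat{\mathbf{u}}_h)$, $\mathbf{z}(\hat{\mathbf{u}}_h)$, and $\boldsymbol{\varphi}(\mathbf{g}_h)$ separately.
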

\begin{proof}
We follow \cite[Section 7]{MR2272157} and proceed by contradiction. Let us assume that \eqref{eq:error_estimate_fully} does not hold. So, we can extract a subsequence $\{h_k\}_{k \in \mathbb{N}} \subset \mathbb{R}^{+}$ such that 
\begin{equation}\label{eq:estimate_contradiction}
\lim_{ h_{k}\rightarrow 0}\|\bar{\mathbf{u}}-\bar{\mathbf{u}}_{h_k}\|_{\mathbf{L}^2(\Omega)} = 0, 
\qquad
\lim_{h_{k} \rightarrow 0}  (h_{k}^{\ell})^{-1}  \|\bar{\mathbf{u}}-\bar{\mathbf{u}}_{h_{k}}\|_{\mathbf{L}^2(\Omega)}=+\infty.
\end{equation}
In what follows, to simplify notation we omit the subindex $k$.

Define $\mathbf{g}_h:= (\bar{\mathbf{u}}_h-\bar{\mathbf{u}})/\|\bar{\mathbf{u}}_h-\bar{\mathbf{u}}\|_{\mathbf{L}^2(\Omega)}$. Since $\{ \mathbf{g}_h \}_{0<h<h_{\nabla}}$ is uniformly bounded in $\mathbf{L}^2(\Omega)$, we can assume that $\mathbf{g}_{h} \rightharpoonup \mathbf{g}$ in $\mathbf{L}^{2}(\Omega)$ as $h \to 0$, up to a subsequence if necessary. We now prove that $\mathbf{g}\in \mathbf{C}_{\bar{\mathbf{u}}}$, where $\mathbf{C}_{\bar{\mathbf{u}}}$ is defined in \eqref{def:critical_cone}. Since for any $h \in (0,h_{\nabla})$, $\bar{\mathbf{u}}_h \in \mathbf{U}_{ad,h}\subset \mathbf{U}_{ad}$, $\mathbf{g}_h$ satisfies the sign conditions \eqref{eq:sign_cond}. The weak limit $\mathbf{g}$ also satisfies \eqref{eq:sign_cond}. We now show that $\bar{\mathbf{d}}_{i}(x) \neq 0$ implies that $\mathbf{g}_{i}(x) = 0$ for a.e.~$x\in\Omega$ and $i\in\{1,\ldots,d\}$. To this end, we introduce $\bar{\mathbf{d}}_h:= \bar{\mathbf{z}}_{h} + \alpha\bar{\mathbf{u}}_h$. Recall that $\bar{\mathbf{d}}= \bar{\mathbf{z}} + \alpha\bar{\mathbf{u}}$. Relying on $\|\bar{\mathbf{u}} - \bar{\mathbf{u}}_{h}\|_{\mathbf{L}^2(\Omega)}\to 0$ as $h \to 0$ and Theorem \ref{thm:convergence_adj_eq}, we obtain
\begin{equation*}
\| \bar{\mathbf{d}} - \bar{\mathbf{d}}_h\|_{\mathbf{L}^2(\Omega)} 
\leq 
\| \bar{\mathbf{z}} - \bar{\mathbf{z}}_h\|_{\mathbf{L}^2(\Omega)} + 
\alpha\| \bar{\mathbf{u}} - \bar{\mathbf{u}}_h\|_{\mathbf{L}^2(\Omega)} \to 0, \quad  h \to 0.
\end{equation*}
From this it follows that $\bar{\mathbf{d}}_{h} \to \bar{\mathbf{d}}$ in $\mathbf{L}^2(\Omega)$ as $h \to 0$. Consequently, we obtain
\begin{equation*}
(\bar{\mathbf{d}},\mathbf{g})_{\mathbf{L}^2(\Omega)}
\!
=
\!
\lim_{h \to 0}(\bar{\mathbf{d}}_{h},\mathbf{g}_{h})_{\mathbf{L}^2(\Omega)} 
\!
= 
\!
\lim_{h \to 0}\frac{(\bar{\mathbf{d}}_h,\Pi_{\mathbf{L}^2}(\bar{\mathbf{u}})-\bar{\mathbf{u}})_{\mathbf{L}^2(\Omega)} \!+\! (\bar{\mathbf{d}}_h,\bar{\mathbf{u}}_h - \Pi_{\mathbf{L}^2}(\bar{\mathbf{u}}))_{\mathbf{L}^2(\Omega)}}{\|\bar{\mathbf{u}}_h-\bar{\mathbf{u}}\|_{\mathbf{L}^2(\Omega)}},
\end{equation*}
where $\Pi_{\mathbf{L}^2}: \mathbf{L}^2(\Omega) \rightarrow \mathbf{U}_h$ is the orthogonal projection operator. 
Since $\Pi_{\mathbf{L}^2}(\bar{\mathbf{u}})\in \mathbf{U}_{ad,h}$, the discrete variational inequality \eqref{eq:var_ineq_fully} yields $ (\bar{\mathbf{d}}_h,\bar{\mathbf{u}}_h - \Pi_{\mathbf{L}^2}(\bar{\mathbf{u}}))_{\mathbf{L}^2(\Omega)}  \leq 0$. We now note that there exists $h_{\bowtie}>0$ such that $\{ \|\bar{\mathbf{d}}_{h}\|_{\mathbf{L}^2(\Omega)} \}_{h < h_{\bowtie}}$ is uniformly bounded. In fact, $\|\bar{\mathbf{d}}_{h}\|_{\mathbf{L}^2(\Omega)}\leq \|\bar{\mathbf{d}}_{h}-\bar{\mathbf{d}}\|_{\mathbf{L}^2(\Omega)}+\|\bar{\mathbf{d}}\|_{\mathbf{L}^2(\Omega)} \lesssim 1$ for every $h < h_{\bowtie}$. Thus,
\begin{equation*}
(\bar{\mathbf{d}},\mathbf{g})_{\mathbf{L}^2(\Omega)}
\leq 
\lim_{h \to 0}\frac{(\bar{\mathbf{d}}_h,\Pi_{\mathbf{L}^2}(\bar{\mathbf{u}})-\bar{\mathbf{u}})_{\mathbf{L}^2(\Omega)}}{\|\bar{\mathbf{u}}_h-\bar{\mathbf{u}}\|_{\mathbf{L}^2(\Omega)}}
\lesssim
\lim_{h\downarrow 0}
\frac{\|\Pi_{\mathbf{L}^2}(\bar{\mathbf{u}})-\bar{\mathbf{u}}\|_{\mathbf{L}^2(\Omega)}}{\|\bar{\mathbf{u}}_h-\bar{\mathbf{u}}\|_{\mathbf{L}^2(\Omega)}}
=0.
\end{equation*}
To obtain the latter equality, we used \eqref{eq:estimate_contradiction} and the bound $\|\bar{\mathbf{u}} - \Pi_{\mathbf{L}^2}(\bar{\mathbf{u}})\|_{\mathbf{L}^2(\Omega)} \lesssim h^{\ell}$, which follows from the fact that $\bar{\mathbf{u}}\in\mathbf{W}^{1,\mathsf{p}}(\Omega)$, where $\mathsf{p} < d/(d-1)$ is arbitrarily close to $d/(d-1)$. On the other hand, since $\mathbf{g}_i$ satisfies \eqref{eq:sign_cond} and $\mathbf{d}_i$ satisfies \eqref{eq:derivative_j}, we have $\bar{\mathbf{d}}_{i}(x)\mathbf{g}_{i}(x) \geq 0$ for each $i\in\{1,\ldots,d\}$. Thus, $\int_{\Omega} |\bar{\mathbf{d}}_{1}\mathbf{g}_{1}| + \cdots + |\bar{\mathbf{d}}_{d}\mathbf{g}_{d}| = 0$. We can deduce that if $\bar{\mathbf{d}}_{i}(x)\neq 0$, then $\mathbf{g}_{i}(x) = 0$ for a.e.~$x\in\Omega$ and $i\in\{1,\ldots,d\}$. 

We now derive \eqref{eq:aux_estimate} by using \eqref{eq:second_order_2_2}. As a first step, we note that
\begin{equation}\label{eq:difference_of_j}
[j'(\bar{\mathbf{u}}_h)-j'(\bar{\mathbf{u}})](\bar{\mathbf{u}}_h-\bar{\mathbf{u}})= j''(\hat{\mathbf{u}}_h)(\bar{\mathbf{u}}_h-\bar{\mathbf{u}})^2, 
\quad \hat{\mathbf{u}}_h = \bar{\mathbf{u}} + \theta_h (\bar{\mathbf{u}}_h - \bar{\mathbf{u}}),
\quad \theta_h \in (0,1). 
\end{equation}
If necessary, we restrict $h$ further so that $\hat{\mathbf{u}}_h \in \mathcal{O}(\bar{\mathbf{u}})$. Let $(\mathbf{y}(\hat{\mathbf{u}}_h),p(\hat{\mathbf{u}}_{h}))$ be the unique solution to \eqref{eq:weak_st_eq}, where $\mathbf{u}$ is replaced by $\hat{\mathbf{u}}_{h}$ (cf.~Theorem \ref{thm:properties_C_to_S}). Let $(\mathbf{z}(\hat{\mathbf{u}}_h),r(\hat{\mathbf{u}}_h))$ be the unique solution to \eqref{eq:adj_eq} where $\mathbf{y}$ is replaced by $\mathbf{y}(\hat{\mathbf{u}}_h)$ (cf.~Theorem \ref{thm:well-posedness-adjoint}). Note that $\mathbf{y}(\hat{\mathbf{u}}_h)$ is regular (cf.~Theorem \ref{thm:properties_C_to_S}). The Lipschitz property of Theorem \ref{thm:Lipschitz_property}, the compact embedding $\mathbf{W}_{0}^{1,\kappa}(\Omega)\hookrightarrow \mathbf{C}(\bar{\Omega})$, where $\kappa > d$, and the convergence $\bar{\mathbf{u}}_h \rightarrow \bar{\mathbf{u}}$ in $\mathbf{L}^2(\Omega)$ as $h \to 0$, show that $\mathbf{y}(\hat{\mathbf{u}}_h) \rightarrow \bar{\mathbf{y}}$ in $\mathbf{W}_{0}^{1,\kappa}(\Omega) \cap \mathbf{C}(\bar{\Omega})$ when $h \to 0$. Similarly, we obtain that $\mathbf{z}(\hat{\mathbf{u}}_h) \rightarrow \bar{\mathbf{z}}$ in $\mathbf{W}_{0}^{1,\mathsf{p}}(\Omega)$ when $h \to 0$, where $\mathsf{p} < d/(d-1)$ is arbitrarily close to $d/(d-1)$. Let us now define $(\boldsymbol\varphi(\mathbf{g}_h),\zeta(\mathbf{g}_h))$ as the unique solution to \eqref{eq:first_deriv_S*} replacing $\mathbf{y}$ and $\mathbf{g}$ by $\mathbf{y}(\hat{\mathbf{u}}_h)$ and $\mathbf{g}_h$, respectively. As in Step 2 of the proof of Theorem \ref{thm:suff_opt_cond}, we can show that $\mathbf{g}_{h} \rightharpoonup \mathbf{g}$ in $\mathbf{L}^2(\Omega)$ as $h \to 0$ guarantees that $\boldsymbol\varphi(\mathbf{g}_h) \to \boldsymbol\varphi$ in $\mathbf{C}(\bar{\Omega})$. Here, $\boldsymbol\varphi$ solves \eqref{eq:first_deriv_S*} where $\mathbf{y}$ is replaced by $\bar{\mathbf{y}}$. Therefore, \eqref{eq:charac_j2}, the convergence properties derived for $\{ \mathbf{z}(\hat{\mathbf{u}}_{h}) \}_h$ and $\{ \boldsymbol\varphi(\mathbf{g}_{h}) \}_h$, where $h < \min \{ h_{\nabla}, h_{\dagger}, h_{\bowtie} \}$, the definition of $\mathbf{g}_h$, and the second order optimality conditions \eqref{eq:second_order_2_2} allow us to conclude that
\begin{multline*}
\lim_{h \to 0}j''(\hat{\mathbf{u}}_{h})\mathbf{g}_{h}^2
= 
\lim_{h \to 0}
\left[
\alpha\|\mathbf{g}_{h}\|_{\mathbf{L}^2(\Omega)}^2 
- 
2b(\boldsymbol\varphi(\mathbf{g}_h);\boldsymbol\varphi(\mathbf{g}_h),\mathbf{z}(\hat{\mathbf{u}}_h)) 
+ 
\sum_{t\in \mathcal{D}}\boldsymbol\varphi(\mathbf{g}_h)^2(t)
 \right]
 \\
= \alpha
- 2b(\boldsymbol\varphi;\boldsymbol\varphi,\bar{\mathbf{z}}) + \sum_{t\in \mathcal{D}}\boldsymbol\varphi^2(t)
= 
\alpha + j''(\bar{\mathbf{u}})\mathbf{g}^2 - \alpha\|\mathbf{g}\|_{\mathbf{L}^2(\Omega)}^2
\geq \alpha +(\mu-\alpha) \|\mathbf{g}\|_{\mathbf{L}^2(\Omega)}^2.
\end{multline*}
Thus, since $\|\mathbf{g}\|_{\mathbf{L}^2(\Omega)}\leq 1$, we obtain $\lim_{h \to 0}j''(\hat{\mathbf{u}}_{h})\mathbf{g}_{h}^2 \geq \min\{\mu,\alpha\}>0$. We thus obtain the existence of $h_{\ddagger} \in (0,\min \{ h_{\nabla}, h_{\dagger}, h_{\bowtie} \})$ such that $j''(\hat{\mathbf{u}}_{h})\mathbf{g}_{h}^2 \geq 2^{-1}\min\{\mu,\alpha\}$ for each $h< h_{\ddagger}$. Given the definition of $\mathbf{g}_h$ and \eqref{eq:difference_of_j}, this allows us to conclude.
\end{proof}

We now proceed to derive the error bound \eqref{eq:error_estimate_fully}. 

\begin{theorem}[error estimate]\label{thm:error_estimate_fully}
If $\bar{\mathbf{u}}\in\mathbf{U}_{ad}$ satisfies the second order optimality conditions \eqref{eq:second_order_2_2}, then there exists $h_{\ddagger} > 0$ such that the error estimate \eqref{eq:error_estimate_fully} holds.
\end{theorem}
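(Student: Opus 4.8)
The plan is to argue by contradiction, exactly along the lines of Lemma~\ref{lemma:aux_error_estimate}. Suppose \eqref{eq:error_estimate_fully} is false; then, as in the proof of that lemma, there is a sequence $h_k\downarrow 0$ satisfying \eqref{eq:estimate_contradiction}, and for $h<h_{\ddagger}$ the auxiliary bound \eqref{eq:aux_estimate} holds, namely $\textgoth{C}\|\bar{\mathbf{u}}-\bar{\mathbf{u}}_h\|_{\mathbf{L}^2(\Omega)}^2\le [j'(\bar{\mathbf{u}}_h)-j'(\bar{\mathbf{u}})](\bar{\mathbf{u}}_h-\bar{\mathbf{u}})$. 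The whole proof then reduces to bounding the right-hand side by a quantity of the form $C\,h^{\ell}\|\bar{\mathbf{u}}-\bar{\mathbf{u}}_h\|_{\mathbf{L}^2(\Omega)}+C\,h^{2\ell}$; a Young inequality then absorbs the first term into $\tfrac{\textgoth{C}}{2}\|\bar{\mathbf{u}}-\bar{\mathbf{u}}_h\|_{\mathbf{L}^2(\Omega)}^2$ and yields $\|\bar{\mathbf{u}}-\bar{\mathbf{u}}_h\|_{\mathbf{L}^2(\Omega)}\lesssim h^{\ell}$, which contradicts the second limit in \eqref{eq:estimate_contradiction} and finishes the argument.

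To organize the estimate of the right-hand side I would introduce the discrete reduced cost $j_h$ defined through \eqref{eq:discrete_state_eq} and the discrete adjoint equations \eqref{eq:adj_eq_discrete}, so that $j_h'(\mathbf{v})\mathbf{w}=(\mathbf{z}_h(\mathbf{v})+\alpha\mathbf{v},\mathbf{w})_{\mathbf{L}^2(\Omega)}$ and, in particular, $j_h'(\bar{\mathbf{u}}_h)\mathbf{w}=(\bar{\mathbf{z}}_h+\alpha\bar{\mathbf{u}}_h,\mathbf{w})_{\mathbf{L}^2(\Omega)}$. Discarding the term $-j'(\bar{\mathbf{u}})(\bar{\mathbf{u}}_h-\bar{\mathbf{u}})\le 0$ by the continuous variational inequality \eqref{eq:var_ineq} with $\mathbf{u}=\bar{\mathbf{u}}_h\in\mathbf{U}_{ad}$, and the term $j_h'(\bar{\mathbf{u}}_h)(\bar{\mathbf{u}}_h-\Pi_{\mathbf{L}^2}\bar{\mathbf{u}})\le 0$ by the discrete variational inequality \eqref{eq:var_ineq_fully} with $\mathbf{u}_h=\Pi_{\mathbf{L}^2}\bar{\mathbf{u}}\in\mathbf{U}_{ad,h}$, one is left with
\[
 [j'(\bar{\mathbf{u}}_h)-j'(\bar{\mathbf{u}})](\bar{\mathbf{u}}_h-\bar{\mathbf{u}})\ \le\ (\mathbf{z}(\bar{\mathbf{u}}_h)-\bar{\mathbf{z}}_h,\ \bar{\mathbf{u}}_h-\bar{\mathbf{u}})_{\mathbf{L}^2(\Omega)}+(\bar{\mathbf{z}}_h,\ \Pi_{\mathbf{L}^2}\bar{\mathbf{u}}-\bar{\mathbf{u}})_{\mathbf{L}^2(\Omega)}=:T_1+T_2,
\]
where $\mathbf{z}(\bar{\mathbf{u}}_h)$ is the continuous adjoint state associated with $\bar{\mathbf{u}}_h$ via \eqref{eq:adj_eq}; in both $T_1$ and $T_2$ the contribution of $\alpha\bar{\mathbf{u}}_h$ drops, in $T_1$ because it cancels and in $T_2$ because $\bar{\mathbf{u}}_h$ is piecewise constant and $\Pi_{\mathbf{L}^2}$ is the $\mathbf{L}^2$-orthogonal projection onto $\mathbf{U}_h$.

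For $T_1$ the key observation is that $\mathbf{z}(\bar{\mathbf{u}}_h)-\bar{\mathbf{z}}_h$ is a \emph{pure} finite element discretization error of the adjoint equations at the fixed control $\bar{\mathbf{u}}_h$ (no comparison with $\bar{\mathbf{u}}$ enters): splitting through the discrete adjoint built on the continuous state $\mathbf{y}(\bar{\mathbf{u}}_h)$ and repeating the arguments of Lemma~\ref{lemma:convergence_discr_adj} (the Galerkin part) and of the proof of Theorem~\ref{thm:convergence_adj_eq} together with Theorem~\ref{thm:convergence_in_Linfty} (the state-perturbation part, which is now the plain error $\mathbf{y}(\bar{\mathbf{u}}_h)-\mathbf{y}_h(\bar{\mathbf{u}}_h)$ in $\mathbf{L}^\infty(\Omega)\cap\mathbf{H}_0^1(\Omega)$), and using that $\bar{\mathbf{u}}_h\in B_{\mathfrak{s}}(\bar{\mathbf{u}})$ keeps all constants uniform, one obtains $\|\mathbf{z}(\bar{\mathbf{u}}_h)-\bar{\mathbf{z}}_h\|_{\mathbf{L}^2(\Omega)}\lesssim h^{2-d/2}\lesssim h^{\ell}$, hence $|T_1|\lesssim h^{\ell}\|\bar{\mathbf{u}}_h-\bar{\mathbf{u}}\|_{\mathbf{L}^2(\Omega)}$. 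For $T_2$ I would subtract the piecewise constant $\Pi_{\mathbf{L}^2}\bar{\mathbf{z}}$ to write $T_2=(\bar{\mathbf{z}}_h-\Pi_{\mathbf{L}^2}\bar{\mathbf{z}},\,\Pi_{\mathbf{L}^2}\bar{\mathbf{u}}-\bar{\mathbf{u}})_{\mathbf{L}^2(\Omega)}$, and then use Cauchy--Schwarz with (i) Theorem~\ref{thm:convergence_adj_eq}, giving $\|\bar{\mathbf{z}}-\bar{\mathbf{z}}_h\|_{\mathbf{L}^2(\Omega)}\lesssim h^{2-d/2}+\|\bar{\mathbf{u}}-\bar{\mathbf{u}}_h\|_{\mathbf{L}^2(\Omega)}$, (ii) $\bar{\mathbf{z}}\in\mathbf{W}^{1,\mathsf{p}}(\Omega)$, hence $\|\bar{\mathbf{z}}-\Pi_{\mathbf{L}^2}\bar{\mathbf{z}}\|_{\mathbf{L}^2(\Omega)}\lesssim h^{\ell}$, and (iii) $\bar{\mathbf{u}}\in\mathbf{W}^{1,\mathsf{p}}(\Omega)$, hence $\|\bar{\mathbf{u}}-\Pi_{\mathbf{L}^2}\bar{\mathbf{u}}\|_{\mathbf{L}^2(\Omega)}\lesssim h^{\ell}$, to get $|T_2|\lesssim h^{2\ell}+h^{\ell}\|\bar{\mathbf{u}}-\bar{\mathbf{u}}_h\|_{\mathbf{L}^2(\Omega)}$. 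Since $\ell=1-d/\mathsf{p}+d/2<2-d/2$, every residual power of $h$ above is dominated by $h^{\ell}$, so collecting $T_1+T_2$ gives the required bound on the right-hand side of \eqref{eq:aux_estimate}, and the Young inequality/contradiction step described in the first paragraph closes the proof.

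The hard part is the treatment of $T_1$: one must verify that the adjoint finite element error at the perturbed control $\bar{\mathbf{u}}_h$ decays at the pure discretization rate \emph{with a constant uniform in $h$ and in $\bar{\mathbf{u}}_h$}, despite the low regularity $\bar{\mathbf{z}}\in\mathbf{W}_0^{1,\mathsf{p}}(\Omega)\setminus(\mathbf{H}_0^1(\Omega)\cap\mathbf{C}(\bar\Omega))$ with $\mathsf{p}<d/(d-1)$, which forces the use of $\mathbf{L}^\infty(\Omega)$-type estimates for the state and of the $\mathbf{W}_0^{1,\mathsf{q}}(\Omega)$-based duality arguments of Lemma~\ref{lemma:convergence_discr_adj} and Theorem~\ref{thm:convergence_adj_eq} rather than standard $\mathbf{H}^1$ bounds. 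A secondary, routine point is ensuring that $\bar{\mathbf{u}}_h$ lies in the neighborhoods ($B_{\mathtt{s}}(\bar{\mathbf{u}})$, $\mathcal{O}(\bar{\mathbf{u}})$) where the continuous and discrete state and adjoint maps and their stability estimates are available, which is legitimate for $h$ small since $\bar{\mathbf{u}}_h\to\bar{\mathbf{u}}$ in $\mathbf{L}^2(\Omega)$ by Theorem~\ref{thm:exist_and_conv_sol}.
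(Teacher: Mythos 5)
Your proposal is correct and follows essentially the same route as the paper: contradiction via Lemma \ref{lemma:aux_error_estimate}, testing \eqref{eq:variational_inequality} with $\bar{\mathbf{u}}_h$ and \eqref{eq:var_ineq_fully} with $\Pi_{\mathbf{L}^2}\bar{\mathbf{u}}$, splitting the remainder into an adjoint-discretization term and a projection term, and invoking Theorems \ref{thm:convergence_in_Linfty} and \ref{thm:convergence_adj_eq} together with Young's inequality and $2\ell<4-d$. Your decomposition $T_1+T_2$ differs from the paper's $\mathbf{I}_h+\mathbf{II}_h$ only by a rearrangement of the cross terms (your $T_2$ uses $\bar{\mathbf{z}}\in\mathbf{W}^{1,\mathsf{p}}(\Omega)$ and Theorem \ref{thm:convergence_adj_eq} where the paper uses $\tilde{\mathbf{z}}(\bar{\mathbf{u}}_h)\in\mathbf{W}^{1,\mathsf{p}}(\Omega)$ directly), which is immaterial.
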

\begin{proof}
We proceed by contradiction and assume that \eqref{eq:error_estimate_fully} does not hold. Thus, given Lemma \ref{lemma:aux_error_estimate}, the estimate \eqref{eq:aux_estimate} holds for each $h < 
h_{\ddagger}$. If we now set $\mathbf{u}=\bar{\mathbf{u}}_{h}$ in \eqref{eq:variational_inequality} and $\mathbf{u}_{h} = \Pi_{\mathbf{L}^2}(\bar{\mathbf{u}})$ in \eqref{eq:var_ineq_fully}, we obtain $- j'(\bar{\mathbf{u}})(\bar{\mathbf{u}}_{h} - \bar{\mathbf{u}}) \leq 0$ and $(\bar{\mathbf{\mathbf{z}}}_{h} + \alpha\bar{\mathbf{u}}_{h},\Pi_{\mathbf{L}^2}(\bar{\mathbf{u}}) - \bar{\mathbf{u}}_{h})_{\mathbf{L}^2(\Omega)} \geq 0$, respectively. These two bounds in combination with \eqref{eq:aux_estimate} result in
\begin{multline}
\label{eq:FD_ct_error_estimate_0}
\|\bar{\mathbf{u}} - \bar{\mathbf{u}}_{h}\|_{\mathbf{L}^2(\Omega)}^2 
\lesssim j'(\bar{\mathbf{u}}_{h})(\bar{\mathbf{u}}_{h} - \bar{\mathbf{u}}) + (\bar{\mathbf{\mathbf{z}}}_{h} + \alpha\bar{\mathbf{u}}_{h},\Pi_{\mathbf{L}^2}(\bar{\mathbf{u}}) - \bar{\mathbf{u}}_{h})_{\mathbf{L}^2(\Omega)}
\\
= (\tilde{\mathbf{z}}(\bar{\mathbf{u}}_{h}) + \alpha\bar{\mathbf{u}}_{h},\Pi_{\mathbf{L}^2}(\bar{\mathbf{u}}) - \bar{\mathbf{u}})_{\mathbf{L}^2(\Omega)} + (\tilde{\mathbf{z}}(\bar{\mathbf{u}}_{h}) - \bar{\mathbf{z}}_{h},\bar{\mathbf{u}}_{h} - \Pi_{\mathbf{L}^2}(\bar{\mathbf{u}}))_{\mathbf{L}^2(\Omega)} =:\mathbf{I}_h + \mathbf{II}_h;
\end{multline}
$(\tilde{\mathbf{z}}(\bar{\mathbf{u}}_{h}),\tilde{r}(\bar{\mathbf{u}}_{h})$ is the solution to \eqref{eq:adj_eq} (cf.~Theorem \ref{thm:well-posedness-adjoint}), where $\mathbf{y}$ is replaced by $\mathbf{y}(\bar{\mathbf{u}}_{h})$, and $(\mathbf{y}(\bar{\mathbf{u}}_{h}),p(\bar{\mathbf{u}}_{h}))$ is the solution to \eqref{eq:weak_st_eq} (cf.~Theorem \ref{thm:properties_C_to_S}), where $\mathbf{u}$ is replaced by $\bar{\mathbf{u}}_{h}$. If necessary, we further restrict $h$ so that $\bar{\mathbf{u}}_h \in \mathcal{O}(\bar{\mathbf{u}})$ and $\mathbf{y}(\bar{\mathbf{u}}_{h})$ is regular.

We first estimate $\mathbf{I}_h$ in \eqref{eq:FD_ct_error_estimate_0}. Using standard properties of  $\Pi_{\mathbf{L}^2}$ and the fact that $\bar{\mathbf{u}},\tilde{\mathbf{z}}(\bar{\mathbf{u}}_{h}) \in\mathbf{W}^{1,\mathsf{p}}(\Omega)$, where $\mathsf{p} < d/(d-1)$ is arbitrarily close to $d/(d-1)$, we obtain
\begin{align*}
\mathbf{I}_h &= (\tilde{\mathbf{z}}(\bar{\mathbf{u}}_{h})  - \Pi_{\mathbf{L}^2}(\tilde{\mathbf{z}}(\bar{\mathbf{u}}_{h})),\Pi_{\mathbf{L}^2}(\bar{\mathbf{u}}) - \bar{\mathbf{u}})_{\mathbf{L}^2(\Omega)} \\
&\leq \|\tilde{\mathbf{z}}(\bar{\mathbf{u}}_{h})  - \Pi_{\mathbf{L}^2}(\tilde{\mathbf{z}}(\bar{\mathbf{u}}_{h}))\|_{\mathbf{L}^2(\Omega)} \|\Pi_{\mathbf{L}^2}(\bar{\mathbf{u}}) - \bar{\mathbf{u}}\|_{\mathbf{L}^2(\Omega)} \lesssim h^{2\ell} \quad \forall h < h_{\ddagger}.
\end{align*}

We now bound $\mathbf{II}_h$. Note that $\mathbf{II}_h = (\tilde{\mathbf{z}}(\bar{\mathbf{u}}_{h}) - \bar{\mathbf{z}}_{h},\Pi_{\mathbf{L}^2}(\bar{\mathbf{u}}_{h} - \bar{\mathbf{u}}))_{\mathbf{L}^2(\Omega)}
$. Thus,
\begin{equation*}
\mathbf{II}_h
\leq 
 \dfrac{1}{2\mathfrak{c}}\|\tilde{\mathbf{z}}(\bar{\mathbf{u}}_{h}) - \bar{\mathbf{z}}_{h}\|_{\mathbf{L}^2(\Omega)}^2 + \dfrac{\mathfrak{c}}{2}\|\bar{\mathbf{u}}_{h} - \bar{\mathbf{u}}\|_{\mathbf{L}^2(\Omega)}^2, \quad
 \mathfrak{c} > 0.
\end{equation*}
To bound $\|\tilde{\mathbf{z}}(\bar{\mathbf{u}}_{h}) - \bar{\mathbf{z}}_{h}\|_{\mathbf{L}^2(\Omega)}$, we introduce 
$(\tilde{\mathbf{z}}_{h},\tilde{r}_{h})\in\mathbf{V}_{h}\times Q_{h}$ 
as the solution to \eqref{eq:adj_eq_hat}, where $\bar{\mathbf{y}}$ is replaced by $\mathbf{y}(\bar{\mathbf{u}}_{h})$ (cf.~Theorem \ref{thm:existence_of_d_s_adj}); $(\tilde{\mathbf{z}}_{h},\tilde{r}_{h})$ is a 
finite element approximation of $(\tilde{\mathbf{z}}(\bar{\mathbf{u}}_{h}),\tilde{r}(\bar{\mathbf{u}}_{h}))$. An analogous estimation as in \eqref{eq:error_estimate_adj_hat} thus yields
\begin{equation*}
\|\tilde{\mathbf{z}}(\bar{\mathbf{u}}_{h}) - \bar{\mathbf{z}}_{h}\|_{\mathbf{L}^2(\Omega)}
\lesssim 
\|\tilde{\mathbf{z}}(\bar{\mathbf{u}}_h) - \tilde{\mathbf{z}}_{h}\|_{\mathbf{L}^2(\Omega)} + \|\tilde{\mathbf{z}}_{h} - \bar{\mathbf{z}}_{h}\|_{\mathbf{L}^2(\Omega)}
\lesssim
h^{2 - \frac{d}{2}} + \|\tilde{\mathbf{z}}_{h} - \bar{\mathbf{z}}_{h}\|_{\mathbf{L}^2(\Omega)}.
\end{equation*}

Finally, we control $\|\tilde{\mathbf{z}}_{h} - \bar{\mathbf{z}}_{h}\|_{\mathbf{L}^2(\Omega)}$. To accomplish this task, we invoke a stability estimate for the problem that $(\tilde{\mathbf{z}}_{h} - \bar{\mathbf{z}}_{h}, \tilde{r}_{h}-\bar{r}_{h})$ solves and obtain that $\|\tilde{\mathbf{z}}_{h} - \bar{\mathbf{z}}_{h}\|_{\mathbf{L}^2(\Omega)} \lesssim \|\nabla(\tilde{\mathbf{z}}_{h} - \bar{\mathbf{z}}_{h})\|_{\mathbf{L}^{\mathsf{p}}(\Omega)} \lesssim \|\mathbf{y}(\bar{\mathbf{u}}_h) - \bar{\mathbf{y}}_{h}\|_{\mathbf{L}^{\infty}(\Omega)}$. 
With these bounds, using the error estimate from Theorem \ref{thm:convergence_in_Linfty}, we obtain $\|\tilde{\mathbf{z}}_{h} - \bar{\mathbf{z}}_{h}\|_{\mathbf{L}^2(\Omega)} \lesssim h^{\iota}$, where $\iota = 2-d/2$. It follows
\begin{equation}\label{eq:estimate_fully_II}
\mathbf{II}_h
\leq \mathfrak{E}h^{4 - d} + \frac{\mathfrak{c}}{2}\|\bar{\mathbf{u}}_{h} - \bar{\mathbf{u}}\|_{\mathbf{L}^2(\Omega)}^2, \quad \mathfrak{E}>0.
\end{equation}

We substitute the bounds for $\mathbf{I}_h$ and $\mathbf{II}_h$ in \eqref{eq:FD_ct_error_estimate_0} and take $\mathfrak{c}$ sufficiently small to obtain \eqref{eq:error_estimate_fully} (note that $2\ell < 4-d$). This is a contradiction and completes the proof.
\end{proof}


\subsection{Error estimates: the semidiscrete scheme} 
Let $(\bar{\mathbf{y}}, \bar{p}, \bar{\mathbf{u}})$ be a strict local minimum of \eqref{eq:weak_cost}--\eqref{eq:weak_st_eq} and let $\{ (\bar{\mathbf{y}}_h, \bar{p}_h, \bar{\boldsymbol{u}}_{h})\}_{h<h_{\Box}}$ be a sequence of local minima of the semidiscrete optimal control problems such that $\bar{\boldsymbol{u}}_{h} \to \bar{\mathbf{u}}$ in $\mathbf{L}^2(\Omega)$ as $h \rightarrow 0$; cf.~Theorem \ref{thm:exist_and_conv_sol_var}. The following result is an aid to the main error estimate.

\begin{lemma}[auxiliary error estimate]
If $\bar{\mathbf{u}}\in\mathbf{U}_{ad}$ satisfies the second order optimality conditions  \eqref{eq:second_order_2_2}, then there exists $h_{\S} > 0$ such that
\begin{equation}\label{eq:aux_estimate_semi}
\textgoth{C}
\|\bar{\mathbf{u}}-\bar{\boldsymbol u}_h\|_{\mathbf{L}^2(\Omega)}^2 \leq [j'(\bar{\boldsymbol u}_h)-j'(\bar{\mathbf{u}})](\bar{\boldsymbol u}_h-\bar{\mathbf{u}}) \quad \forall h < h_{\S},
\quad
\textgoth{C} = 2^{-1} \min\{\mu,\alpha\}.
\end{equation}
Here, $\alpha$ is the control cost and $\mu$ is the constant appearing in \eqref{eq:second_order_2_2}.
\end{lemma}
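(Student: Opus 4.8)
The plan is to adapt the proof of Lemma~\ref{lemma:aux_error_estimate}. The key structural simplification is that, in the semidiscrete scheme, the control is \emph{not} discretized, so the discrete variational inequality \eqref{eq:var_ineq_semi} holds for \emph{every} $\boldsymbol{u}\in\mathbf{U}_{ad}$; in particular we may use the continuous optimal control $\bar{\mathbf{u}}$ itself as a test function. This removes the $\mathbf{L}^2$-projection term that appeared in the analysis of the fully discrete scheme, which is precisely why here the estimate holds without the auxiliary hypothesis required in Lemma~\ref{lemma:aux_error_estimate}. If $\bar{\boldsymbol u}_h=\bar{\mathbf{u}}$, both sides of \eqref{eq:aux_estimate_semi} vanish, so we may assume $\rho_h:=\|\bar{\boldsymbol u}_h-\bar{\mathbf{u}}\|_{\mathbf{L}^2(\Omega)}>0$ and set $\mathbf{g}_h:=(\bar{\boldsymbol u}_h-\bar{\mathbf{u}})/\rho_h$. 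By \eqref{eq:difference_of_j} (with $\bar{\mathbf{u}}_h$ replaced by $\bar{\boldsymbol u}_h$) one has $[j'(\bar{\boldsymbol u}_h)-j'(\bar{\mathbf{u}})](\bar{\boldsymbol u}_h-\bar{\mathbf{u}})=\rho_h^2\,j''(\hat{\mathbf{u}}_h)\mathbf{g}_h^2$ with $\hat{\mathbf{u}}_h=\bar{\mathbf{u}}+\theta_h(\bar{\boldsymbol u}_h-\bar{\mathbf{u}})$, $\theta_h\in(0,1)$, so \eqref{eq:aux_estimate_semi} is equivalent to $j''(\hat{\mathbf{u}}_h)\mathbf{g}_h^2\ge\textgoth{C}$ for all small $h$. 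I would argue by contradiction: if this fails there is a sequence $h_k\downarrow0$ (with $\rho_{h_k}>0$) along which $j''(\hat{\mathbf{u}}_{h_k})\mathbf{g}_{h_k}^2<\textgoth{C}$; extracting a further subsequence I may assume $\mathbf{g}_{h_k}\rightharpoonup\mathbf{g}$ in $\mathbf{L}^2(\Omega)$, whence $\|\mathbf{g}\|_{\mathbf{L}^2(\Omega)}\le1$ by weak lower semicontinuity.

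The second step is to show $\mathbf{g}\in\mathbf{C}_{\bar{\mathbf{u}}}$. Since $\bar{\boldsymbol u}_h\in\mathbf{U}_{ad}$, each $\mathbf{g}_h$ satisfies the sign conditions \eqref{eq:sign_cond}, and so does the weak limit $\mathbf{g}$, because the set defined by \eqref{eq:sign_cond} is convex and closed, hence weakly closed. For the remaining requirement in \eqref{def:critical_cone}, take $\boldsymbol{u}=\bar{\mathbf{u}}$ in \eqref{eq:var_ineq_semi} to get $(\bar{\mathbf{d}}_h,\bar{\boldsymbol u}_h-\bar{\mathbf{u}})_{\mathbf{L}^2(\Omega)}\le0$, where $\bar{\mathbf{d}}_h:=\bar{\mathbf{z}}_h+\alpha\bar{\boldsymbol u}_h$; dividing by $\rho_h$, using $\mathbf{g}_{h_k}\rightharpoonup\mathbf{g}$ together with the strong convergence $\bar{\mathbf{d}}_{h_k}\to\bar{\mathbf{d}}:=\bar{\mathbf{z}}+\alpha\bar{\mathbf{u}}$ in $\mathbf{L}^2(\Omega)$ --- which follows from $\bar{\boldsymbol u}_h\to\bar{\mathbf{u}}$ in $\mathbf{L}^2(\Omega)$ (Theorem~\ref{thm:exist_and_conv_sol_var}) and the semidiscrete counterpart of Theorem~\ref{thm:convergence_adj_eq} (whose proof applies verbatim with $\bar{\mathbf{u}}_h$ replaced by $\bar{\boldsymbol u}_h$, since the discrete adjoint \eqref{eq:adj_eq_discrete} depends on the discrete data only through $\bar{\mathbf{y}}_h$) --- and passing to the limit along $h_k$ yields $(\bar{\mathbf{d}},\mathbf{g})_{\mathbf{L}^2(\Omega)}\le0$. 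Conversely, dividing $(\bar{\mathbf{d}},\bar{\boldsymbol u}_h-\bar{\mathbf{u}})_{\mathbf{L}^2(\Omega)}\ge0$ (from \eqref{eq:var_ineq} with $\mathbf{u}=\bar{\boldsymbol u}_h$) by $\rho_h$ and passing to the limit gives $(\bar{\mathbf{d}},\mathbf{g})_{\mathbf{L}^2(\Omega)}\ge0$; hence $(\bar{\mathbf{d}},\mathbf{g})_{\mathbf{L}^2(\Omega)}=0$. Since $\mathbf{g}$ satisfies \eqref{eq:sign_cond}, the characterization \eqref{eq:derivative_j} of $\bar{\mathbf{d}}$ forces $\sum_{i=1}^{d}\int_{\Omega}|\bar{\mathbf{d}}_i\mathbf{g}_i|=0$, so $\bar{\mathbf{d}}_i(x)\ne0$ implies $\mathbf{g}_i(x)=0$ a.e.\ for $i\in\{1,\dots,d\}$; therefore $\mathbf{g}\in\mathbf{C}_{\bar{\mathbf{u}}}\subseteq\mathbf{C}_{\bar{\mathbf{u}}}^{\tau}$.

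Finally, I would pass to the limit in $j''(\hat{\mathbf{u}}_{h_k})\mathbf{g}_{h_k}^2$. After restricting $h$ so that $\hat{\mathbf{u}}_h\in\mathcal{O}(\bar{\mathbf{u}})$ and arguing as in Step~2 of the proof of Theorem~\ref{thm:suff_opt_cond} (continuity of the maps $\mathcal{S}$ and $\mathcal{S}'$ and of the solution map of \eqref{eq:adj_eq}, so that the adjoint associated with $\hat{\mathbf{u}}_{h_k}$ converges to $\bar{\mathbf{z}}$ in $\mathbf{W}_0^{1,\mathsf{p}}(\Omega)$ and $(\boldsymbol{\varphi}_{h_k},\zeta_{h_k})=\mathcal{S}'(\hat{\mathbf{u}}_{h_k})\mathbf{g}_{h_k}$ satisfies $\boldsymbol{\varphi}_{h_k}\to\boldsymbol{\varphi}$ in $\mathbf{C}(\bar\Omega)$, with $\boldsymbol{\varphi}$ the solution of \eqref{eq:first_deriv_S*} for $\bar{\mathbf{y}}$ and direction $\mathbf{g}$), the identity \eqref{eq:charac_j2} and $\|\mathbf{g}_{h_k}\|_{\mathbf{L}^2(\Omega)}=1$ give
\[
\lim_{k\uparrow\infty}j''(\hat{\mathbf{u}}_{h_k})\mathbf{g}_{h_k}^2
=\alpha+\Bigl(j''(\bar{\mathbf{u}})\mathbf{g}^2-\alpha\|\mathbf{g}\|_{\mathbf{L}^2(\Omega)}^2\Bigr)
\ge\alpha+(\mu-\alpha)\|\mathbf{g}\|_{\mathbf{L}^2(\Omega)}^2
\ge\min\{\mu,\alpha\}>\textgoth{C},
\]
where we used \eqref{eq:second_order_2_2} (applicable since $\mathbf{g}\in\mathbf{C}_{\bar{\mathbf{u}}}^{\tau}$) and $\|\mathbf{g}\|_{\mathbf{L}^2(\Omega)}\le1$. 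This contradicts $j''(\hat{\mathbf{u}}_{h_k})\mathbf{g}_{h_k}^2<\textgoth{C}$ and proves \eqref{eq:aux_estimate_semi} for all $h$ below a suitable $h_{\S}>0$ (which we also take smaller than $h_{\Box}$ and $h_{\dagger}$). I expect the main obstacle to be the second step: establishing $\mathbf{g}\in\mathbf{C}_{\bar{\mathbf{u}}}$ cleanly hinges on the strong $\mathbf{L}^2(\Omega)$-convergence $\bar{\mathbf{d}}_{h_k}\to\bar{\mathbf{d}}$, i.e.\ on an adjoint error estimate for the semidiscrete scheme analogous to Theorem~\ref{thm:convergence_adj_eq}; once that is in place, testing \eqref{eq:var_ineq_semi} directly with $\bar{\mathbf{u}}$ makes the critical-cone membership essentially immediate.
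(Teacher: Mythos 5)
Your proposal is correct and follows essentially the same route as the paper: define $\mathbf{g}_h=(\bar{\boldsymbol u}_h-\bar{\mathbf{u}})/\|\bar{\boldsymbol u}_h-\bar{\mathbf{u}}\|_{\mathbf{L}^2(\Omega)}$, use the semidiscrete variational inequality tested directly with $\bar{\mathbf{u}}$ (which is exactly why no analogue of the extra hypothesis of Lemma~\ref{lemma:aux_error_estimate} is needed) together with the strong convergence $\bar{\mathbf{z}}_h\to\bar{\mathbf{z}}$ to place the weak limit $\mathbf{g}$ in $\mathbf{C}_{\bar{\mathbf{u}}}$, and then pass to the limit in $j''(\hat{\mathbf{u}}_h)\mathbf{g}_h^2$ via \eqref{eq:charac_j2} and \eqref{eq:second_order_2_2}. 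Your explicit contradiction framing of the subsequence extraction and your derivation of $(\bar{\mathbf{d}},\mathbf{g})_{\mathbf{L}^2(\Omega)}=0$ from both variational inequalities are only cosmetic variations on the paper's argument.
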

\begin{proof}
Define $\mathbf{g}_h:= (\bar{\boldsymbol u}_h-\bar{\mathbf{u}})/\|\bar{\boldsymbol u}_h-\bar{\mathbf{u}}\|_{\mathbf{L}^2(\Omega)}$. Since $\{ \mathbf{g}_h \}_{0<h<h_{\Box}}$ is uniformly bounded in $\mathbf{L}^2(\Omega)$, possibly up to a subsequence, we can assume that $\mathbf{g}_{h}  \rightharpoonup \mathbf{g}$ in $\mathbf{L}^{2}(\Omega)$ as $h \to 0$. In the following, we prove that $\mathbf{g}\in \mathbf{C}_{\bar{\mathbf{u}}}$. We proceed as in the proof of Lemma \ref{lemma:aux_error_estimate} and deduce that $\mathbf{g}$ satisfies \eqref{eq:sign_cond}.
To show that $\bar{\mathbf{d}}_{i}(x) \neq 0$ implies that $\mathbf{g}_{i}(x) = 0$ for a.e.~$x\in\Omega$ and $i\in\{1,\ldots,d \}$, we introduce $\bar{\boldsymbol{d}}_h:= \bar{\mathbf{z}}_{h} + \alpha \bar{\boldsymbol u}_{h}$. Recall that $\bar{\mathbf{d}}= \bar{\mathbf{z}} + \alpha \bar{\mathbf{u}}$. Let us now use $\|\bar{\mathbf{u}} - \bar{\boldsymbol u}_{h}\|_{\mathbf{L}^2(\Omega)}\to 0$ as $h \to 0$ and Theorem \ref{thm:convergence_adj_eq} to obtain
\begin{equation*}
\| \bar{\mathbf{d}} - \bar{\boldsymbol{d}}_h\|_{\mathbf{L}^2(\Omega)} 
\leq 
\| \bar{\mathbf{z}} - \bar{\mathbf{z}}_h\|_{\mathbf{L}^2(\Omega)} + 
\alpha\| \bar{\mathbf{u}} - \bar{\boldsymbol{u}}_h\|_{\mathbf{L}^2(\Omega)} \to 0, \quad h \to 0.
\end{equation*}
From this it follows that $\bar{\boldsymbol{d}}_{h} \to \bar{\mathbf{d}}$ in $\mathbf{L}^2(\Omega)$ as $h\to 0$. Consequently, we obtain
\begin{equation*}
(\bar{\mathbf{d}},\mathbf{g})_{\mathbf{L}^2(\Omega)}
=\lim_{h \to 0}(\bar{\boldsymbol{d}}_h,\mathbf{g}_h)_{\mathbf{L}^2(\Omega)}
=\lim_{h \to 0}\frac{1}{\|\bar{\boldsymbol{u}}_h-\bar{\mathbf{u}}\|_{\mathbf{L}^2(\Omega)}}(\bar{\mathbf{z}}_{h} + \alpha \bar{\boldsymbol u}_{h}, \bar{\boldsymbol{u}}_{h} - \bar{\mathbf{u}})_{\mathbf{L}^2(\Omega)}\leq 0,
\end{equation*}
after applying the variational inequality \eqref{eq:var_ineq_semi}. We proceed as in the proof
of Lemma \ref{lemma:aux_error_estimate} and deduce that if $\bar{\mathbf{d}}_{i}(x)\neq 0$, then $\mathbf{g}_{i}(x) = 0$ for a.e.~$x\in\Omega$ and $i\in\{1,\ldots,d\}$. Therefore, $\mathbf{g} \in \mathbf{C}_{\bar{\mathbf{u}}}$.
The rest of the proof follows the same arguments developed in the proof of Lemma \ref{lemma:aux_error_estimate}. For brevity, we omit details.
\end{proof}

The following error estimate is the most important result of this section and improves the error bound of Theorem \ref{thm:error_estimate_fully}.

\begin{theorem}[error estimate]
If $\bar{\mathbf{u}}\in\mathbf{U}_{ad}$ satisfies the second order optimality conditions  \eqref{eq:second_order_2_2}, then there exists $h_{\S} > 0$ such that
\begin{equation}\label{eq:error_estimate_semi}
\|\bar{\mathbf{u}}-\bar{\boldsymbol u}_h\|_{\mathbf{L}^2(\Omega)} 
\lesssim
h^{2 - \frac{d}{2}} \quad \forall h < h_{\S}.
\end{equation}
\end{theorem}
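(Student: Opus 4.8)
The plan is to reproduce the contradiction-free portion of the proof of Theorem \ref{thm:error_estimate_fully}, taking advantage of the fact that in the semidiscrete scheme the control space is \emph{not} discretized: this allows us to test the discrete variational inequality \eqref{eq:var_ineq_semi} directly with the continuous optimal control $\bar{\mathbf{u}}$, which removes the control-discretization (projection) term $\mathbf{I}_h$ that degrades the rate in the fully discrete estimate. We start from the auxiliary estimate \eqref{eq:aux_estimate_semi}, namely $\textgoth{C}\|\bar{\mathbf{u}}-\bar{\boldsymbol u}_h\|_{\mathbf{L}^2(\Omega)}^2 \leq [j'(\bar{\boldsymbol u}_h)-j'(\bar{\mathbf{u}})](\bar{\boldsymbol u}_h-\bar{\mathbf{u}})$, valid for $h<h_{\S}$ whenever \eqref{eq:second_order_2_2} holds.

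First I would set $\mathbf{u}=\bar{\boldsymbol u}_h$ in \eqref{eq:variational_inequality} to get $-j'(\bar{\mathbf{u}})(\bar{\boldsymbol u}_h-\bar{\mathbf{u}})\leq 0$, and $\boldsymbol{u}=\bar{\mathbf{u}}$ in \eqref{eq:var_ineq_semi} to get $(\bar{\mathbf{z}}_h+\alpha\bar{\boldsymbol u}_h,\bar{\mathbf{u}}-\bar{\boldsymbol u}_h)_{\mathbf{L}^2(\Omega)}\geq 0$. Restricting $h$ so that $\bar{\boldsymbol u}_h$ lies in the neighborhoods furnished by Theorems \ref{thm:properties_C_to_S}, \ref{thm:existence_of_d_s}, and \ref{thm:existence_of_d_s_adj} (possible since $\bar{\boldsymbol u}_h\to\bar{\mathbf{u}}$ in $\mathbf{L}^2(\Omega)$ by Theorem \ref{thm:exist_and_conv_sol_var}), I would use the representation $j'(\bar{\boldsymbol u}_h)(\bar{\boldsymbol u}_h-\bar{\mathbf{u}}) = (\tilde{\mathbf{z}}(\bar{\boldsymbol u}_h)+\alpha\bar{\boldsymbol u}_h,\bar{\boldsymbol u}_h-\bar{\mathbf{u}})_{\mathbf{L}^2(\Omega)}$, where $(\tilde{\mathbf{z}}(\bar{\boldsymbol u}_h),\tilde{r}(\bar{\boldsymbol u}_h))$ solves \eqref{eq:adj_eq} with $\mathbf{y}$ replaced by the (regular) state $\mathbf{y}(\bar{\boldsymbol u}_h)$ associated with $\bar{\boldsymbol u}_h$ (this representation is exactly the one used in the proof of Theorem \ref{thm:optimality_cond}). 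Adding the two sign inequalities to \eqref{eq:aux_estimate_semi}, the $\alpha\bar{\boldsymbol u}_h$ contributions cancel and one is left with $\textgoth{C}\|\bar{\mathbf{u}}-\bar{\boldsymbol u}_h\|_{\mathbf{L}^2(\Omega)}^2 \leq (\tilde{\mathbf{z}}(\bar{\boldsymbol u}_h)-\bar{\mathbf{z}}_h,\bar{\boldsymbol u}_h-\bar{\mathbf{u}})_{\mathbf{L}^2(\Omega)}$; a Cauchy--Schwarz inequality and division by $\|\bar{\mathbf{u}}-\bar{\boldsymbol u}_h\|_{\mathbf{L}^2(\Omega)}$ then give $\textgoth{C}\|\bar{\mathbf{u}}-\bar{\boldsymbol u}_h\|_{\mathbf{L}^2(\Omega)} \leq \|\tilde{\mathbf{z}}(\bar{\boldsymbol u}_h)-\bar{\mathbf{z}}_h\|_{\mathbf{L}^2(\Omega)}$.

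It then remains to bound $\|\tilde{\mathbf{z}}(\bar{\boldsymbol u}_h)-\bar{\mathbf{z}}_h\|_{\mathbf{L}^2(\Omega)}$ by $h^{2-d/2}$, which is done exactly as in the last part of the proof of Theorem \ref{thm:error_estimate_fully}. I would introduce $(\tilde{\mathbf{z}}_h,\tilde{r}_h)\in\mathbf{V}_h\times Q_h$, the solution of \eqref{eq:adj_eq_hat} with $\bar{\mathbf{y}}$ replaced by $\mathbf{y}(\bar{\boldsymbol u}_h)$ (well posed by Theorem \ref{thm:existence_of_d_s_adj}), and split $\|\tilde{\mathbf{z}}(\bar{\boldsymbol u}_h)-\bar{\mathbf{z}}_h\|_{\mathbf{L}^2(\Omega)} \leq \|\tilde{\mathbf{z}}(\bar{\boldsymbol u}_h)-\tilde{\mathbf{z}}_h\|_{\mathbf{L}^2(\Omega)} + \|\tilde{\mathbf{z}}_h-\bar{\mathbf{z}}_h\|_{\mathbf{L}^2(\Omega)}$. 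The first term is $\lesssim h^{2-d/2}$ by the duality / $\mathbf{L}^\infty$ finite element argument of Lemma \ref{lemma:convergence_discr_adj} (convexity of $\Omega$, $\mathbf{H}^2$-regularity, and $\|\mathbf{y}(\bar{\boldsymbol u}_h)\|_{\mathbf{L}^\infty(\Omega)}\lesssim\|\bar{\boldsymbol u}_h\|_{\mathbf{L}^2(\Omega)}\lesssim 1$). For the second term, $(\tilde{\mathbf{z}}_h-\bar{\mathbf{z}}_h,\tilde{r}_h-\bar{r}_h)$ solves a discrete Oseen-type system whose data involve $\mathbf{y}(\bar{\boldsymbol u}_h)-\bar{\mathbf{y}}_h$; the stability estimate underlying Theorem \ref{thm:existence_of_d_s_adj} together with $\mathbf{W}_0^{1,\mathsf{p}}(\Omega)\hookrightarrow\mathbf{L}^2(\Omega)$ and a Poincar\'e inequality gives $\|\tilde{\mathbf{z}}_h-\bar{\mathbf{z}}_h\|_{\mathbf{L}^2(\Omega)}\lesssim\|\mathbf{y}(\bar{\boldsymbol u}_h)-\bar{\mathbf{y}}_h\|_{\mathbf{L}^\infty(\Omega)}$, and Theorem \ref{thm:convergence_in_Linfty} applied with $\mathbf{u}=\bar{\boldsymbol u}_h$ (recalling that $\bar{\mathbf{y}}_h$ is the discrete state for $\bar{\boldsymbol u}_h$) bounds this by $h^{2-d/2}\|\bar{\boldsymbol u}_h\|_{\mathbf{L}^2(\Omega)}\lesssim h^{2-d/2}$. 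Collecting these bounds yields \eqref{eq:error_estimate_semi} after possibly shrinking $h_{\S}$.

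I expect no serious obstacle here: compared with Theorem \ref{thm:error_estimate_fully} the argument is strictly shorter, precisely because the $\mathbf{I}_h$-type projection error is absent, which is exactly why the semidiscrete rate $h^{2-d/2}$ improves on the fully discrete rate $h^{\ell}$ (recall $\ell<2-d/2$). The only points requiring care are bookkeeping — keeping track of the ``mismatched'' continuous adjoint $\tilde{\mathbf{z}}(\bar{\boldsymbol u}_h)$ and its finite element counterpart $\tilde{\mathbf{z}}_h$ — and verifying that all the smallness requirements on $h$ (so that $\bar{\boldsymbol u}_h$ and $\mathbf{y}(\bar{\boldsymbol u}_h)$ fall inside the nonsingularity/regularity neighborhoods of Theorems \ref{thm:properties_C_to_S}, \ref{thm:existence_of_d_s}, and \ref{thm:existence_of_d_s_adj}) can be met simultaneously, which is guaranteed by the $\mathbf{L}^2$-convergence $\bar{\boldsymbol u}_h\to\bar{\mathbf{u}}$ from Theorem \ref{thm:exist_and_conv_sol_var}.
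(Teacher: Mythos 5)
Your proposal is correct and follows essentially the same route as the paper: both start from the auxiliary estimate \eqref{eq:aux_estimate_semi}, test \eqref{eq:variational_inequality} with $\bar{\boldsymbol u}_h$ and \eqref{eq:var_ineq_semi} with $\bar{\mathbf{u}}$ so that the $\alpha$-terms cancel and only $(\tilde{\mathbf{z}}(\bar{\boldsymbol u}_h)-\bar{\mathbf{z}}_h,\bar{\boldsymbol u}_h-\bar{\mathbf{u}})_{\mathbf{L}^2(\Omega)}$ survives, and then bound $\|\tilde{\mathbf{z}}(\bar{\boldsymbol u}_h)-\bar{\mathbf{z}}_h\|_{\mathbf{L}^2(\Omega)}\lesssim h^{2-d/2}$ via the splitting through $\tilde{\mathbf{z}}_h$ already carried out in the proof of Theorem \ref{thm:error_estimate_fully}. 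Your observation that the absence of the projection term $\mathbf{I}_h$ is precisely what yields the improved rate matches the paper's reasoning.
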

\begin{proof}
Setting $\mathbf{u}=\bar{\boldsymbol{u}}_{h}$ in \eqref{eq:variational_inequality} and $\mathbf{u} = \bar{\mathbf{u}}$ in \eqref{eq:var_ineq_semi}, we get $ -j'(\bar{\mathbf{u}})(\bar{\boldsymbol{u}}_{h} - \bar{\mathbf{u}}) \leq 0$ and $- (\bar{\mathbf{z}}_{h} + \alpha\bar{\boldsymbol u}_{h},\bar{\boldsymbol{u}}_{h} - \bar{\mathbf{u}})_{\mathbf{L}^2(\Omega)}\geq 0$, respectively. 
It follows from \eqref{eq:aux_estimate_semi} that
\begin{equation*}
\|\bar{\mathbf{u}}-\bar{\boldsymbol u}_h\|_{\mathbf{L}^2(\Omega)}^2 \lesssim j'(\bar{\boldsymbol u}_h)(\bar{\boldsymbol u}_h-\bar{\mathbf{u}}) - (\bar{\mathbf{z}}_{h} + \alpha\bar{\boldsymbol u}_{h},\bar{\boldsymbol{u}}_{h} - \bar{\mathbf{u}})_{\mathbf{L}^2(\Omega)}
= 
(\tilde{\mathbf{z}}(\bar{\boldsymbol u}_h) - \bar{\mathbf{z}}_{h},\bar{\boldsymbol{u}}_{h} - \bar{\mathbf{u}})_{\mathbf{L}^2(\Omega)}.
\end{equation*}
Here, $(\tilde{\mathbf{z}}(\bar{\boldsymbol u}_h),\tilde{r}(\bar{\boldsymbol u}_h))$ is as in the proof of Theorem \ref{thm:error_estimate_fully}. The proof of Theorem \ref{thm:error_estimate_fully} also shows that $\| \tilde{\mathbf{z}}(\bar{\boldsymbol u}_h) - \bar{\mathbf{z}}_{h}\|_{\mathbf{L}^2(\Omega)} \lesssim h^{\iota}$, where $\iota = 2 - d/2$. This concludes the proof.
\end{proof}

\bibliographystyle{siamplain}
\bibliography{bi_NS_tracking}

\end{document}